\newif\ifpdf
\numberwithin{equation}{section} \swapnumbers
\newtheorem{satz}{Satz}[section]
\newtheorem{theorem}[satz]{Theorem}
\newtheorem{proposition}[satz]{Proposition}
\newtheorem{corollary}[satz]{Corollary}
\newtheorem{lemma}[satz]{Lemma}
\newtheorem{assumption}[satz]{Assumption}
\newtheorem{definition}[satz]{Definition}
\newtheorem{remark}[satz]{Remark}
\newtheorem{example}[satz]{Example}
\newcommand{\bbr}{\mathbb{R}}
\newcommand{\bbe}{\mathbb{E}}
\newcommand{\bbn}{\mathbb{N}}
\newcommand{\bbp}{\mathbb{P}}
\newcommand{\cala}{\mathcal{A}}
\newcommand{\scra}{\mathscr{A}}
\newcommand{\calf}{\mathscr{F}}
\newcommand{\call}{\mathcal{L}}
\newcommand{\calm}{\mathscr{M}}
\newcommand{\cals}{\mathscr{S}}
\newcommand{\calv}{\mathscr{V}}
\newcommand*{\cA}{\mathcal{A}}
\newcommand*{\cB}{\mathcal{B}}
\newcommand*{\cF}{\mathcal{F}}
\newcommand*{\cH}{\mathcal{H}}
\newcommand*{\fA}{\mathfrak{A}}
\newcommand*{\fB}{\mathfrak{B}}
\newcommand*{\IN}{\mathbb{N}}
\newcommand*{\IP}{\mathbb{P}}
\newcommand*{\E}{\mathbb{E}}
\newcommand*{\R}{\mathbb{R}}
\newcommand{\Id}{{\rm Id}}
\newcommand{\tr}{{\rm tr}}
\newcommand{\loc}{{\rm loc}}
\newcommand{\rank}{{\rm rank}}
\newcommand{\bild}{{\rm Im}}
\newcommand{\diag}{{\rm diag}}
\newcommand{\la}{\langle}
\newcommand{\ra}{\rangle}
\newcommand{\bbI}{\mathbbm{1}}
\begin{document}

\title[Stochastic Passivity in Stochastic Differential Equations]{Stochastic Passivity in Stochastic Differential Equations: A Port-Hamiltonian Perspective}
\author{Julia Ackermann}
\address{University of Wuppertal, Department of Mathematics and Natural Sciences, Gau\ss{}-stra\ss{}e 20, D-42097 Wuppertal, Germany}
\email{jackermann@uni-wuppertal.de}
\author{Thomas Kruse}
\address{University of Wuppertal, Department of Mathematics and Natural Sciences, Gau\ss{}-stra\ss{}e 20, D-42097 Wuppertal, Germany}
\email{tkruse@uni-wuppertal.de}
\author{Stefan Tappe}
\address{University of Wuppertal, Department of Mathematics and Natural Sciences, Gau\ss{}-stra\ss{}e 20, D-42097 Wuppertal, Germany}
\email{tappe@uni-wuppertal.de}
\begin{abstract}
We extend deterministic port-Hamiltonian systems (PHS) to a stochastic framework by means of stochastic differential equations. 
As the dissipation inequality plays a crucial role for deterministic PHS, 
we develop several passivity concepts for stochastic input-state-output systems and characterize these in terms of the parameters of the system. 
Afterwards, we examine properties of a certain class of linear stochastic systems that can be 
regarded 
as an extension of 
linear deterministic PHS to 
a stochastic passivity framework.
\end{abstract}
\keywords{Port-Hamiltonian system, stochastic differential equation, passivity property, local supermartingale, linear system}
\subjclass[2020]{60H10, 60G17, 93E03}

\maketitle

\tableofcontents

\section{Introduction}

Port-Hamiltonian systems (PHS) are widely used for the modeling 
and the control 
of complex physical systems with energy-storing and energy-dissipating elements. 
By now, there is a substantial literature about deterministic PHS, and a growing interest in stochastic extensions of PHS. 
For an introduction to deterministic PHS we refer to, for instance, the survey \cite{vanderschaftsurvey}
and the textbooks \cite{vanderschaft2009port}, \cite{jacob2012book}, and \cite[Section~6]{EhKr_vandSchaft2016l2}.
In this article we focus on finite-dimensional PHS in an input-state-output formulation.
More specifically, the deterministic literature frequently considers PHS to be 
$\bbr^d \times \bbr^n$-valued input-state-output systems of the form 
\begin{align}\label{det-system}
\left\{
\begin{array}{rcl}
\dot{x}(t) & = & b(x(t),u(t))
\\ y(t) & = & f(x(t),u(t))
\\ x(0) & = & x_0 \in \bbr^d
\\ u & \in & \cala
\end{array}
\right.
\end{align}
with certain structural properties.
Note that in \eqref{det-system}, $b : \bbr^d \times \bbr^n \to \bbr^d$ and $f : \bbr^d \times \bbr^n \to \bbr^n$ are appropriate mappings and $\cala\subseteq \{u\, \vert\, u : \bbr_+:=[0,\infty) \to \bbr^n\}$ is a suitable set of controls. 
In \eqref{det-system} we call $x(t)$ the \emph{state vector}, $u(t)$ the \emph{input vector} and $y(t)$ the \emph{output vector} at time $t \in \bbr_+$. Additionally, we also consider a function $H : \bbr^d \to \bbr$, which is called the \emph{storage function} or \emph{Hamiltonian}. It represents the energy of the system \eqref{det-system} and is typically a nonnegative function. 

A basic property of any PHS is 
that 
its \emph{energy balance}
leads to the \emph{dissipation inequality} 
\begin{align}\label{passive-ineqn}
\frac{d}{dt} H(x(t)) \leq \la u(t),y(t) \ra \quad \forall t \in \bbr_+
\end{align}
(see, e.g., \cite[Section~2.6.1]{vanderschaft2009port} or \cite[Section~6.1]{EhKr_vandSchaft2016l2}).
Note that this inequality is satisfied if and only if the function
\begin{align}\label{z-decreasing}
z : \bbr_+ \to \bbr, \quad z(t) := H(x(t)) - \int_0^t \la u(s),y(s) \ra ds
\end{align}
is decreasing. The inequality \eqref{passive-ineqn} states that the change of the system's internal energy $\frac{d}{dt} H(x(t))$ is always smaller than or equal to the external energy $\la u(t),y(t) \ra$ supplied to the system. This property is also known as the \emph{passivity property} of the system.

Of course, the passivity property \eqref{passive-ineqn} is not satisfied for every input-state-output system of the form \eqref{det-system}. It rather requires conditions on the parameters $(b,f,H)$. In the particular situation of 
deterministic linear systems, 
this problem has thoroughly been investigated in the literature (see, e.g., \cite{willems1972dissipative}) and it is well-established that, under a minimality assumption, such a system is passive if and only if it possesses a port-Hamiltonian structure (see, e.g., \cite[Corollary 4]{beattie2025port}).

From a practical perspective, it is desirable to take into account random perturbations of the system \eqref{det-system}. 
We therefore consider $\bbr^d \times \bbr^n$-valued input-state-output systems
\begin{align}\label{system-intro}
\left\{
\begin{array}{rcl}
dX_t & = & b(X_t,u_t) dt + \sigma(X_t,u_t) dW_t
\\ Y_t & = & f(X_t,u_t)
\\ X_0 & = & x_0 \in \bbr^d
\\ u & \in & \cala
\end{array}
\right.
\end{align}
driven by an $\bbr^k$-valued Wiener process $W$. In \eqref{system-intro} we have an additional diffusion coefficient $\sigma : \bbr^d \times \bbr^n \to \bbr^{d \times k}$, and now the set of controls $\cala$ is a suitable set of stochastic processes.
The purpose of this paper is to rigorously examine passivity properties of the stochastic system~\eqref{system-intro} and thereby extend PHS to a stochastic framework, in particular leading to a class of stochastic PHS that are automatically passive in a suitable sense.

Stochastic variants of finite-dimensional PHS have been investigated before in, for example, 
\cite{EhKr_cordoni2020variable}, 
\cite{EhKr_cordoni2019stochastic}, 
\cite{EhKr_cordoni2023weak}, 
\cite{EhKr_cordoni2022discrete},
\cite{EhKr_fang2017stabilization}, 
\cite{EhKr_haddad2018energy}, 
\cite{EhKr_satoh2017input},
\cite{EhKr_satoh2012passivity}, 
\cite{EhKr_satoh2014bounded} 
with applications in, for instance, teleoperation (\cite{EhKr_cordoni2021bilateral}, \cite{EhKr_cordoni2021stabilization}), 
statistical physics (\cite{EhKr_DELVENNE2014123},\cite{lanchares2023thermo}), 
or traffic flow modeling (\cite{EhKr_ackermann_tordeux24}, \cite{EhKr_Eh24}, 
\cite{ruediger2024carfollowing}). 
Most of these references consider stochastic variants of PHS in an input-state-output form that specializes~\eqref{system-intro}, whereas \cite{EhKr_cordoni2019stochastic} takes a geometric approach and \cite{EhKr_cordoni2022discrete} studies discrete versions of the stochastic PHS of \cite{EhKr_cordoni2019stochastic}. 
The recent article \cite{di2025port} builds on the geometric approach of  \cite{EhKr_cordoni2019stochastic} and proposes a framework that combines PHS with neural networks.

One of the early works on stochastic variants of PHS is \cite{EhKr_satoh2012passivity}, where under suitable assumptions stochastic generalized canonical transformations are employed to transform the system to a passive one and to subsequently perform stabilization by output feedback.
In \cite{EhKr_satoh2017input} an external disturbance term is added and stochastic input-to-output stability of the system is investigated by similar methods. 
A method for bounded stabilization that does not require the noise to vanish at the origin is studied in \cite{EhKr_satoh2014bounded}.
One of the few papers where the input, in addition, appears in the noise term and in the output is \cite{EhKr_fang2017stabilization}; with techniques similar to \cite{EhKr_satoh2012passivity}, stabilization results are derived. 
In \cite{EhKr_haddad2018energy}, 
a stochastic energy-shaping approach is developed to obtain stabilization results for stochastic variants of PHS. 
The works \cite{EhKr_cordoni2020variable} and \cite{EhKr_cordoni2023weak} 
consider stochastic variants of PHS and 
adopt a weak passivity notion suitable for weak stabilization involving invariant measures (see also \cite{fang2023weak}).
More broadly, there is literature on dissipative stochastic systems that does not specifically deal with PHS. 
For example, let us mention \cite{EhKr_Haddad2023DissStochDynSys} and their characterization results of what we will call the passive supermartingale property by the infinitesimal generator.

In the sequel, we will introduce the reader to the passivity notions that we consider in this article. 
Let us recall that the passivity property in the deterministic setting just means that the function $z$ defined in \eqref{z-decreasing} is decreasing. Therefore, for a fixed starting point $x_0 \in \bbr^d$ and a fixed control $u \in \cala$ we introduce the stochastic process $Z := Z^{(x_0,u)}$ as
\begin{align*}
Z_t := H(X_t) - \int_0^t \la u_s,Y_s \ra ds, \quad t \in \bbr_+.
\end{align*}
Then we can distinguish the following notions of passivity: 
i) We can demand that every sample path of $Z$ is decreasing. In this case we say that the system \eqref{system-intro} is \emph{passive}, or ii) we can relax this condition and merely demand that the expectation decreases; that is $t \mapsto \bbe[Z_t]$ is a decreasing function. In this case we say that the system \eqref{system-intro} is \emph{stochastically passive}. 
Our goal is to provide necessary and sufficient conditions for these two passivity concepts by means of the parameters $(b,\sigma,f,H)$ of the system \eqref{system-intro}. 
For the passivity of the system \eqref{system-intro} we will provide 
such a  
characterization in Theorem \ref{thm-system-1}. The characterization of stochastic passivity is more involved, and for this purpose it will be useful to consider further notions of passivity:
\begin{itemize}
\item Instead of stochastic passivity, we can demand the slightly stronger property that $Z$ is a supermartingale. In this case we say that the system \eqref{system-intro} has the \emph{passive supermartingale property}.

\item It will be useful to relax this property a bit and to demand that $Z$ is only a local supermartingale. Then we say that the system \eqref{system-intro} has the \emph{passive local supermartingale property}. The advantage of this concept is that local supermartingales can be characterized in terms of special semimartingales; see Proposition \ref{prop-super-MT}, which may be regarded as a local version of the Doob--Meyer decomposition theorem.
\end{itemize}
As we will see, the passive local supermartingale property can indeed be characterized by means of the parameters $(b,\sigma,f,H)$ of the system \eqref{system-intro}; see Theorem \ref{thm-system-2}. Moreover, in many situations the passive local supermartingale property even implies the passive supermartingale property, which in turn implies stochastic passivity; see Propositions \ref{prop-passive-SMP}, \ref{prop-system-3} and Theorems \ref{thm-H-p}, \ref{thm-f-p}.
A brief, simplified overview of the relations between our passivity notions is given in Figure~\ref{fig:passivity-notions}.

\begin{figure}[ht]
	\centering
	\resizebox{1\textwidth}{!}{%
		\begin{circuitikz}
			\tikzstyle{every node}=[font=\normalsize]
			\draw [->, >=Stealth, dashed] (12.75,8.75) -- (9.75,8.75)node[pos=0.5,below, fill=white]{Prop.~\ref{prop-passive-SMP}, \ref{prop-system-3}};
			\draw [<->, >=Stealth] (14.25,8.5) -- (14.25,7)node[pos=0.5, fill=white]{Thm.~\ref{thm-system-2}};
			\draw [<->, >=Stealth] (2.25,8.5) -- (2.25,7)node[pos=0.5, fill=white]{Thm.~\ref{thm-system-1}};
			\draw  (0.75,10) rectangle  node {\normalsize passive} (3.75,8.5);
			\draw  (6.75,13.75) rectangle  node {\normalsize \begin{tabular}{l}stochastically\\passive\end{tabular}} (9.75,12.25);
			\draw  (6.75,10) rectangle  node {\normalsize \begin{tabular}{l}passive\\supermartingale\\property\end{tabular}} (9.75,8.5);
			\draw  (12.75,10) rectangle  node {\normalsize \begin{tabular}{l}passive local\\supermartingale\\property\end{tabular}} (15.75,8.5);
			\draw [->, >=Stealth, dashed] (14.25,10) -- (9.75,13)node[pos=0.7,right]{Thm.~\ref{thm-H-p}, \ref{thm-f-p}};
			\draw [->, >=Stealth] (3.75,9.25) -- (6.75,9.25)node[pos=0.48,above]{if $Z_t \in L^1 \, \forall t\ge 0$};
			\draw [->, >=Stealth] (9.75,9.75) -- (12.75,9.75);
			\draw [->, >=Stealth] (8.25,10) -- (8.25,12.25);
			\draw [ rounded corners = 22.5] (0.75,7) rectangle  node {\normalsize \begin{tabular}{l}$\mathcal{L}H\le 0$\\and $\Sigma = 0$\end{tabular}} (3.75,5.5);
			\draw [ rounded corners = 22.5] (12.75,7) rectangle  node {\normalsize $\mathcal{L}H \le 0$} (15.75,5.5);
		\end{circuitikz}
	}%
	\caption{The diagram illustrates the relationship between the different passivity notions for the system~\eqref{system-intro} that we consider in this work. Note that $\mathcal{L}H$ and $\Sigma$ are defined in~\eqref{generator} and~\eqref{capital-sigma}, respectively.}
	\label{fig:passivity-notions}
\end{figure}
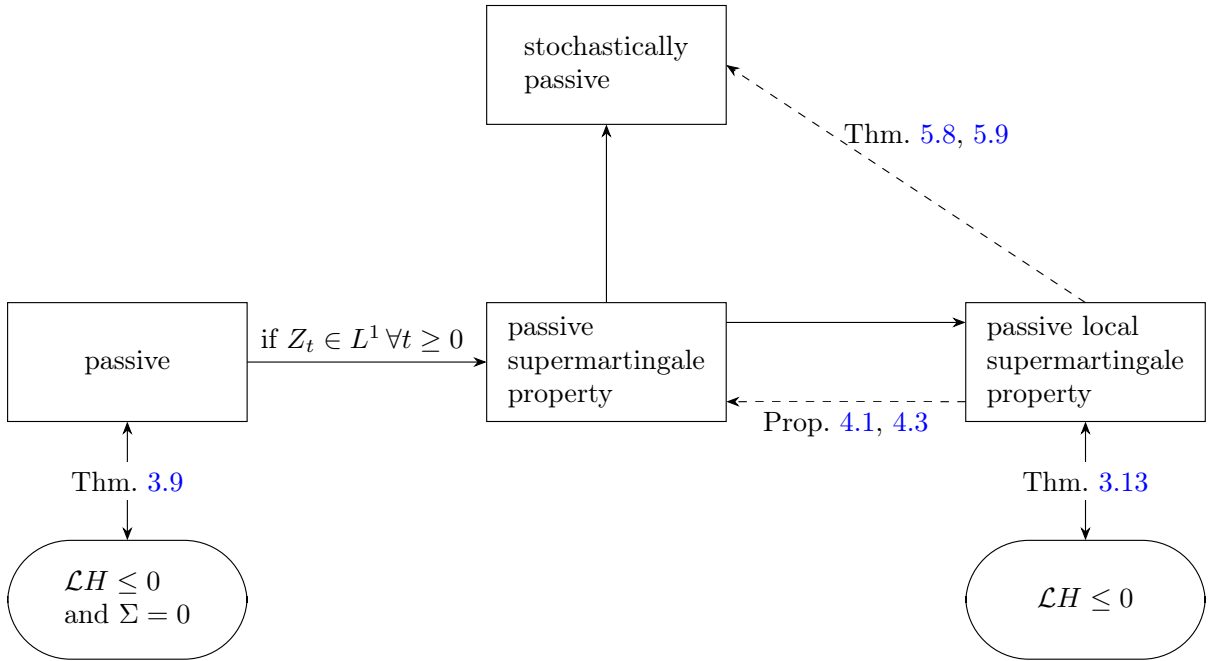

Note that for 
deterministic homogeneous linear time-invariant systems, 
the dissipation inequality~\eqref{passive-ineqn} is linked to a certain linear matrix inequality (see, e.g., \cite[Theorem~3]{willems1972dissipative}). This provides the possibility to check passivity, which is a property of the state trajectories, by an algebraic condition.  
We thus further investigate our passivity concepts in the 
case of a 
stochastic linear system of the form 
\begin{align}\label{system-intro-linear}
	\left\{
	\begin{array}{rcl}
		dX_t & = & (A X_t + B u_t) dt + \sum_{j=1}^k ( \mathfrak{A}(j) X_t + \mathfrak{B}(j) u_t ) dW_t^j
		\\ Y_t & = & C X_t + D u_t
		\\ X_0 & = & x_0 \in \bbr^d
		\\ u & \in & \cala 
	\end{array}
	\right.
\end{align}
with matrices $A \in \bbr^{d \times d}$, $B \in \bbr^{d \times n}$, $C \in \bbr^{n \times d}$, $D \in \bbr^{n \times n}$ and $\fA(j)\in\bbr^{d\times d}$, $\fB(j)\in\bbr^{d\times n}$, $j=1,\ldots,k$.  
Under suitable assumptions on the set of controls $\cala$, we obtain in Theorem~\ref{thm-linear-3} for the system~\eqref{system-intro-linear} with the quadratic Hamiltonian $H(x)=\frac12 \langle Qx,x \rangle$, $x\in\bbr^d$, where $Q\in\bbr^{d\times d}$ is a symmetric matrix, that 
negative semidefiniteness of the matrix $\mathfrak{M}_Q$ defined in~\eqref{drift-cond-matrix}, the passive local supermartingale property, the passive supermartingale property, and stochastic passivity are equivalent. 
In contrast, the characterization of the passivity of the system~\eqref{system-intro-linear} involves a further algebraic condition, see Theorem~\ref{thm-linear-passive}.

To extend linear deterministic PHS to stochastic ones, we propose a definition that preserves the structure of linear deterministic PHS and also ensures that the linear matrix inequality $\mathfrak{M}_Q\le 0$ is satisfied, see Definition~\ref{def-SLTIPHS} and Lemma~\ref{rem-SLTIPHS-params-SLTIS}. 
In particular, this guarantees that stochastic PHS are stochastically passive (see also Corollary~\ref{cor-SLTIPHS-passivityprop}). 
Generalizing results from the deterministic literature (see, e.g., \cite[Theorem~2.3]{vanderschaft2009port}), we moreover in Lemma~\ref{lemma:lmiimpliesphs} provide an algebraic condition that assures that $\mathfrak{M}_Q\le 0$ implies that the stochastic linear system~\eqref{system-intro-linear} is a stochastic PHS.
Note that already in the deterministic case, without such an additional assumption this implication can be false (see, e.g., \cite[Example~2]{cherifi2024difference}).
To summarize, under the algebraic condition of Lemma~\ref{lemma:lmiimpliesphs} and suitable assumptions on the set of controls~$\cala$, we obtain that the stochastic linear system~\eqref{system-intro-linear} with the quadratic Hamiltonian $H(x)=\frac12 \langle Qx,x \rangle$, $x\in\bbr^d$, where $Q\in\bbr^{d\times d}$ is a positive semidefinite matrix, 
is a stochastic PHS if and only if it is stochastically passive. 
In Corollary~\ref{cor:pass_and_obs_imply_phs} we replace the assumption of a quadratic Hamiltonian and the algebraic condition of Lemma~\ref{lemma:lmiimpliesphs} by the assumption that the stochastic linear system~\eqref{system-intro-linear} is observable (see Definition~\ref{def-observable}). 

A further central property of deterministic PHS is the fact that any energy-conserving interconnection of two such systems again results in a PHS. We verify in Proposition~\ref{lemma_interconnection} that this 
stability-under-interconnection property also 
holds for the linear stochastic PHS introduced above. Moreover, we extend simple mechanical and electrical examples to the stochastic setting.

The remainder of this paper is organized as follows. In Section \ref{sec-notation} we provide the stochastic framework and introduce the required concepts. In Section \ref{sec-results-iff} we present results which characterize the passivity and the passive local supermartingale property. In Section \ref{sec-local-pass-implies-pass} we present results about the passive supermartingale property; more precisely, we provide conditions which ensure that the passive local supermartingale property implies the passive supermartingale property. In Section \ref{sec-existence-sol} we present further results about the passive supermartingale property; this comes along with an existence and uniqueness result for strong solutions as well as moment estimates. In Section \ref{sec-uncontrolled} we apply our findings to the particular situation of uncontrolled SDEs. Afterwards, we focus on the linear case. 
Namely, in Section \ref{sec-SLTIS} we use the results from previous sections to obtain some basic results for stochastic, linear, time-invariant input-state-output systems (SLTIS). 
In Section \ref{sec-SLTIS-2} we examine observability and stochastic passivity of such systems. 
In Section \ref{sec-linear-phs} we 
introduce and characterize 
linear stochastic PHS. 
For convenience of the reader, in Appendix \ref{app-semimartingales} we provide the required results about semimartingales, including a local version of the Doob--Meyer decomposition theorem, and in Appendix \ref{app-SDEs} we provide an existence result for SDEs.

\section{Stochastic input-state-output systems}\label{sec-notation}

In this section we introduce the stochastic input-state-output system and the relevant concepts. Let $(\Omega,\calf,(\calf_t)_{t \geq 0}, \bbp)$ be a filtered probability space satisfying the usual conditions. We consider the following $\bbr^d \times \bbr^n$-valued input-state-output system
\begin{align}\label{system}
\left\{
\begin{array}{rcl}
dX_t & = & b(X_t,u_t) dt + \sigma(X_t,u_t) dW_t
\\ Y_t & = & f(X_t,u_t)
\\ X_0 & = & x_0 \in \bbr^d
\\ u & \in & \cala
\end{array}
\right.
\end{align}
driven by an $\bbr^k$-valued Wiener process $W$, where the set $\cala$ of control processes is a set of $\bbr^n$-valued progressively measurable processes. Furthermore, in \eqref{system} we have measurable mappings $b : \bbr^d \times \bbr^n \to \bbr^d$, $\sigma : \bbr^d \times \bbr^n \to \bbr^{d \times k}$ and $f : \bbr^d \times \bbr^n \to \bbr^n$. For each $j = 1,\ldots,k$ the mapping $\sigma^j : \bbr^d \times \bbr^n \to \bbr^d$ denotes the $j$-th column given by
\begin{align*}
\sigma^j(x,v) := \sigma(x,v)_{\bullet j} \quad \forall x \in \bbr^d \quad \forall v \in \bbr^n.
\end{align*}
We also fix a measurable mapping $H : \bbr^d \to \bbr$, the so-called storage function.

\begin{remark}\label{rem-notation}
We agree on the following notation:
\begin{enumerate}
\item For $x \in \bbr^d$ we denote by
\begin{align*}
\| x \| := \bigg( \sum_{i=1}^d |x_i|^2 \bigg)^{1/2}
\end{align*}
the \emph{Euclidean norm} of $x$.

\item For $A \in \bbr^{d \times k}$ we denote by
\begin{align*}
\| A \| := \bigg( \sum_{i=1}^d \sum_{j=1}^k |A_{ij}|^2 \bigg)^{1/2}
\end{align*}
the \emph{Frobenius norm} of $A$.
\end{enumerate}
\end{remark}

\begin{definition}
Let $x_0 \in \bbr^d$ and $u \in \cala$ be arbitrary. We call an adapted continuous process $X = (X_t)_{t \geq 0}$ a \emph{strong solution} to the system \eqref{system} with $X_0 = x_0$ and control~$u$ if we have $\bbp$-almost surely
\begin{align*}
\int_0^t \big( \| b(X_s,u_s) \| + \| \sigma(X_s,u_s) \|^2 \big) ds < \infty \quad \forall t \in \bbr_+:=[0,\infty) 
\end{align*}
as well as
\begin{align*}
X_t = x_0 + \int_0^t b(X_s,u_s) ds + \int_0^t \sigma(X_s,u_s) dW_s \quad \forall t \in \bbr_+.
\end{align*}
\end{definition}

\begin{remark}
Let $x_0 \in \bbr^d$ and $u \in \cala$ be arbitrary, and let $X$ be a strong solution to the system \eqref{system} with $X_0 = x_0$ and control $u$. As indicated in \eqref{system}, in this case we define the $\bbr^n$-valued process $Y = (Y_t)_{t \geq 0}$ as $Y_t := f(X_t,u_t)$ for each $t \in \bbr_+$, and we will also call $(X,Y) = (X^{(x_0,u)},Y^{(x_0,u)})$ a \emph{strong solution} to the system \eqref{system}.
\end{remark}

\begin{definition}
We say that \emph{strong uniqueness} for the system \eqref{system} holds true if for each $x_0 \in \bbr^d$, each $u \in \cala$, and any two strong solutions $X^1$ and $X^2$ to the system \eqref{system} with $X_0^1 = X_0^2 = x_0$ and control $u$ we have $X^1 = X^2$ up to indistinguishability.
\end{definition}

\begin{definition}
We say that \emph{existence and uniqueness of strong solutions} for the system \eqref{system} holds true if the following conditions are fulfilled:
\begin{enumerate}
\item For each $x_0 \in \bbr^d$ and each $u \in \cala$ there exists a strong solution to the system \eqref{system} with $X_0 = x_0$ and control $u$.

\item Strong uniqueness for the system \eqref{system} holds true.
\end{enumerate}
\end{definition}

In what follows, we assume that the following assumption is fulfilled.

\begin{assumption}\label{ass-solutions}
We suppose that existence and uniqueness of strong solutions for the system \eqref{system} holds true.
\end{assumption}

In Section \ref{sec-existence-sol} we will provide sufficient conditions which ensure that Assumption \ref{ass-solutions} is satisfied.

\begin{assumption}\label{ass-int-cond-for-passivity}
We suppose that for all $x_0 \in \bbr^d$ and every control $u \in \cala$ we have $\bbp$-almost surely
\begin{align}\label{int-cond-for-passivity}
\int_0^t | \la u_s,f(X_s^{(x_0,u)},u_s) \ra | ds < \infty \quad \forall t \in \bbr_+.
\end{align}
\end{assumption}

\begin{remark}
It is easy to find sufficient conditions such that Assumption \ref{ass-int-cond-for-passivity} is fulfilled. Indeed, suppose, for example, that the mapping $f$ is continuous and that each control process $u \in \cala$ is continuous. Then the integrability condition \eqref{int-cond-for-passivity} is satisfied, because for all $x_0 \in \bbr^d$ and all $u \in \cala$ the solution $X^{(x_0,u)}$ is continuous.
\end{remark}

For $x_0 \in \bbr^d$ and $u \in \cala$ we introduce the process $Z^{(x_0,u)}$ as
\begin{align}\label{Z-definition}
Z_t^{(x_0,u)} := H(X_t^{(x_0,u)}) - \int_0^t \la u_s,Y_s^{(x_0,u)} \ra ds, \quad t \in \bbr_+.
\end{align}
For the next definition, recall that $\calv^-$ denotes the convex cone of all c\`{a}dl\`{a}g, adapted and decreasing processes $A$ with $A_0 = 0$.

\begin{definition}
We say that the system \eqref{system} is \emph{passive} with respect to the storage function $H$ if for all $x_0 \in \bbr^d$ and $u \in \cala$ we have $Z - Z_0 \in \calv^-$, where $Z := Z^{(x_0,u)}$.
\end{definition}

\begin{definition}
We say that the system \eqref{system} is \emph{stochastically  passive} with respect to the storage function $H$ if for all $x_0 \in \bbr^d$ and $u \in \cala$ we have $Z_t^{(x_0,u)} \in L^1$, $t \in \bbr_+$ and the mapping $\bbr_+ \to \bbr$, $t \mapsto \bbe [ Z_t^{(x_0,u)} ]$ is decreasing.
\end{definition}

\begin{definition}
We say that the system \eqref{system} has the \emph{passive supermartingale property} with respect to the storage function $H$ if for all $x_0 \in \bbr^d$ and $u \in \cala$ the process $Z^{(x_0,u)}$ is a supermartingale.
\end{definition}

\begin{definition}
We say that the system \eqref{system} has the \emph{passive local supermartingale property} with respect to the storage function $H$ if for all $x_0 \in \bbr^d$ and $u \in \cala$ the process $Z^{(x_0,u)}$ is a local supermartingale.
\end{definition}

\begin{remark}\label{rem-concepts}
Note the following relations between these passivity concepts:
\begin{enumerate}
\item If the system \eqref{system} is passive, then it has the passive local supermartingale property. This is an immediate consequence of Proposition \ref{prop-super-MT}.

\item If the system \eqref{system} is passive, and for all $x_0 \in \bbr^d$ and $u \in \cala$ we have $Z_t^{(x_0,u)}\in L^1$ for each $t \geq 0$, then the system \eqref{system} has the passive supermartingale property.

\item If the system \eqref{system} has the passive supermartingale property, then it is stochastically passive and has the passive local supermartingale property.
\end{enumerate}
\end{remark}

\section{Results about passivity and the passive local supermartingale property}\label{sec-results-iff}

In this section we present results about passivity and the passive local supermartingale property. We will make the following additional assumption.

\begin{assumption}\label{ass-H-C2}
We assume that the storage function $H$ is of class $C^2$.
\end{assumption}

\begin{definition}
We define $\call H : \bbr^d \times \bbr^n \to \bbr$ as
\begin{align}\label{generator}
\call H(x,v) := \la \nabla H(x), b(x,v) \ra + \frac{1}{2} \tr \big( \sigma(x,v) \sigma(x,v)^{\top} D^2 H(x) \big) - \la v,f(x,v) \ra.
\end{align}
Furthermore, we define $\Sigma : \bbr^d \times \bbr^n \to \bbr^k$ as
\begin{align}\label{capital-sigma}
\Sigma(x,v) := \nabla H(x)^\top \cdot \sigma(x,v).
\end{align}
\end{definition}

The following auxiliary result is an immediate consequence of It\^{o}'s formula.

\begin{lemma}\label{lemma-generator}
Let $x_0 \in \bbr^d$ and $u \in \cala$ be arbitrary. Then we have
\begin{align*}
Z_t^{(x_0,u)} = H(x_0) + \int_0^t \call H(X_s,u_s) ds + \int_0^t \Sigma(X_s,u_s) dW_s, \quad t \geq 0.
\end{align*}
\end{lemma}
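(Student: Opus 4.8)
The plan is to apply It\^o's formula to the $C^2$ function $H$ composed with the semimartingale $X = X^{(x_0,u)}$, and then to incorporate the absolutely continuous term $-\int_0^t \la u_s, Y_s \ra\, ds$ coming from the definition \eqref{Z-definition} of $Z^{(x_0,u)}$. Since Assumption \ref{ass-solutions} guarantees that $X$ is a strong solution, it is in particular a continuous semimartingale with the integral representation
\[
X_t = x_0 + \int_0^t b(X_s,u_s)\, ds + \int_0^t \sigma(X_s,u_s)\, dW_s,
\]
so the standard multidimensional It\^o formula is directly applicable to $t \mapsto H(X_t)$.

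First I would write out It\^o's formula for $H(X_t)$. The drift part contributes $\la \nabla H(X_s), b(X_s,u_s) \ra\, ds$ from the finite-variation part of $X$, while the quadratic-variation part contributes $\tfrac12 \tr\big( D^2 H(X_s)\, d\la X, X\ra_s \big)$. Using $d\la X^i, X^j\ra_s = (\sigma\sigma^\top)_{ij}(X_s,u_s)\, ds$, this second-order term becomes $\tfrac12 \tr\big( \sigma(X_s,u_s)\sigma(X_s,u_s)^\top D^2 H(X_s) \big)\, ds$. The martingale part of $H(X_t)$ is the stochastic integral $\int_0^t \nabla H(X_s)^\top \sigma(X_s,u_s)\, dW_s$, which is exactly $\int_0^t \Sigma(X_s,u_s)\, dW_s$ by the definition \eqref{capital-sigma} of $\Sigma$. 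Thus
\[
H(X_t) = H(x_0) + \int_0^t \Big( \la \nabla H(X_s), b(X_s,u_s)\ra + \tfrac12 \tr\big( \sigma\sigma^\top(X_s,u_s)\, D^2 H(X_s) \big) \Big) ds + \int_0^t \Sigma(X_s,u_s)\, dW_s.
\]
Subtracting $\int_0^t \la u_s, Y_s\ra\, ds = \int_0^t \la u_s, f(X_s,u_s)\ra\, ds$ and grouping the three $ds$-integrands together, the bracketed drift coincides precisely with $\call H(X_s,u_s)$ as defined in \eqref{generator}, yielding the claimed identity.

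The only points requiring care are the well-definedness of the various integrals. The finite-variation drift integral and the subtracted term $\int_0^t \la u_s, f(X_s,u_s)\ra\, ds$ are finite by the strong-solution integrability condition and by Assumption \ref{ass-int-cond-for-passivity}, respectively; the stochastic integral $\int_0^t \Sigma(X_s,u_s)\, dW_s$ is well-defined as a local martingale because $\int_0^t \|\sigma(X_s,u_s)\|^2\, ds < \infty$ almost surely (again from the strong-solution definition) and $\nabla H$ is continuous, hence locally bounded along the continuous path of $X$. I do not anticipate a genuine obstacle here: the result is stated as an immediate consequence of It\^o's formula, so the main content is simply bookkeeping to verify that the coefficients reorganize into $\call H$ and $\Sigma$, and that all integrals appearing are finite under the standing assumptions.
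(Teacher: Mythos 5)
Your proposal is correct and matches the paper exactly: the paper gives no written proof, stating only that the lemma ``is an immediate consequence of It\^{o}'s formula,'' and your argument is precisely the bookkeeping that assertion suppresses --- applying the multidimensional It\^{o} formula to $H(X_t)$, identifying the drift with the first two terms of $\call H$ and the martingale part with $\Sigma$, and subtracting the supply term from \eqref{Z-definition}. Your verification of the integrability of the three integrals (via the strong-solution condition, Assumption \ref{ass-int-cond-for-passivity}, and local boundedness of $\nabla H$ along the continuous path of $X$) is a welcome addition rather than a deviation.
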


Furthermore, the following auxiliary result will be useful.

\begin{lemma}\label{lemma-integrals}
Let $\varphi : \bbr_+ \to \bbr$ be a locally integrable function which is continuous in zero. We define $\phi : [0,T] \to \bbr$ as
\begin{align*}
\phi(t) := \int_0^t \varphi(s) ds, \quad t \geq 0.
\end{align*}
Then the following statements are true:
\begin{enumerate}
\item If $\phi(t) = 0$ for all $t \geq 0$, then we have $\varphi(0) = 0$.

\item If $\phi$ is decreasing, then we have $\varphi(0) \leq 0$.
\end{enumerate}
\end{lemma}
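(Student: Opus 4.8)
The plan is to reduce both statements to a single averaging fact: since $\varphi$ is continuous at $0$, the normalized integral $\phi(t)/t$ converges to $\varphi(0)$ as $t \downarrow 0$. This is nothing but the fundamental theorem of calculus localized to the single point of continuity $0$, and it is the only analytic input needed. First I would verify that $\phi$ is well defined and finite on $\bbr_+$, which is immediate from local integrability of $\varphi$, so no existence issues arise.

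Next I would prove the convergence claim directly from the definition. Writing
\begin{align*}
\frac{\phi(t)}{t} - \varphi(0) = \frac{1}{t} \int_0^t \big( \varphi(s) - \varphi(0) \big) \, ds,
\end{align*}
and fixing $\varepsilon > 0$, continuity of $\varphi$ at $0$ furnishes a $\delta > 0$ with $|\varphi(s) - \varphi(0)| < \varepsilon$ for all $0 \le s < \delta$. Bounding the integrand by $\varepsilon$ then gives $|\phi(t)/t - \varphi(0)| \le \varepsilon$ for every $0 < t < \delta$, so that $\lim_{t \downarrow 0} \phi(t)/t = \varphi(0)$.

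With this in hand, both conclusions follow by inspecting the sign of $\phi(t)/t$ for small $t > 0$ and passing to the limit. For part~(1), the hypothesis $\phi \equiv 0$ gives $\phi(t)/t = 0$ for every $t > 0$, and letting $t \downarrow 0$ yields $\varphi(0) = 0$. For part~(2), since $\phi(0) = 0$ and $\phi$ is decreasing we have $\phi(t) \le 0$ for all $t \ge 0$, hence $\phi(t)/t \le 0$ for $t > 0$, and taking $t \downarrow 0$ yields $\varphi(0) \le 0$. I do not anticipate a serious obstacle; the one point requiring care is that one cannot simply differentiate $\phi$ pointwise, because $\varphi$ is only assumed locally integrable away from the origin, so $\phi$ need not be differentiable in general. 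The averaging argument deliberately sidesteps this by invoking continuity solely at $0$, which is precisely the hypothesis supplied.
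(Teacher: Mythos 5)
Your proof is correct, and it rests on the same analytic input as the paper's: continuity of $\varphi$ at zero used to control the integral over a small interval $[0,\delta]$. The only difference is organizational — you prove the unified limit statement $\lim_{t \downarrow 0} \phi(t)/t = \varphi(0)$ and read off both parts from the sign of $\phi(t)/t$, whereas the paper runs two short contradiction arguments with the bound $\varphi \geq \varphi(0)/2$ on $[0,\delta]$; both are equally valid and essentially the same proof.
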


\begin{proof}
(1) Suppose that $\varphi(0) \neq 0$. Without loss of continuity, we assume that $\varphi(0) > 0$. Set $c := \frac{\varphi(0)}{2} > 0$. By the continuity of $\varphi$ in zero there exists $\delta > 0$ such that $\varphi(t) \geq c$ for all $t \in [0,\delta]$. Hence we arrive at the contradiction $\Phi(\delta) \geq c \delta > 0$.

\noindent(2) Suppose that $\varphi(0) > 0$. Set $c := \frac{\varphi(0)}{2} > 0$. By the continuity of $\varphi$ in zero there exists $\delta > 0$ such that $\varphi(t) \geq c$ for all $t \in [0,\delta]$. Hence we deduce $\Phi(\delta) \geq c \delta > 0 = \Phi(0)$, contradicting the assumption that $\Phi$ is decreasing.
\end{proof}

From now on, let us also make the following assumptions.

\begin{assumption}\label{ass-b-sigma}
The mappings $b$ and $\sigma$ are continuous.
\end{assumption}

\begin{assumption}\label{ass-f}
The mapping $f$ is continuous.
\end{assumption}

\begin{remark}\label{rem-gen-cont}
Due to Assumptions \ref{ass-H-C2}, \ref{ass-b-sigma} and \ref{ass-f}, the mappings $\call H$ and $\Sigma$ given by \eqref{generator} and \eqref{capital-sigma} are also continuous.
\end{remark}

\begin{assumption}\label{ass-control-cont-in-zero}
For each $v \in \bbr^n$ there exists a control process $u \in \cala$ with $u_0 = v$ that is continuous at zero.
\end{assumption}

\begin{theorem}\label{thm-system-1}
Suppose that Assumptions \ref{ass-solutions}, \ref{ass-int-cond-for-passivity}, \ref{ass-H-C2}, \ref{ass-b-sigma}, \ref{ass-f} and \ref{ass-control-cont-in-zero} are fulfilled. Then the following statements are equivalent:
\begin{enumerate}
\item[(i)] The system \eqref{system} is passive with respect to the storage function $H$.

\item[(ii)] We have
\begin{align}\label{cond-drift}
\call H(x,v) \leq 0 \quad \forall x \in \bbr^d \quad \forall v \in \bbr^n,
\\ \label{cond-diffusion} \Sigma(x,v) = 0 \quad \forall x \in \bbr^d \quad \forall v \in \bbr^n.
\end{align}
\end{enumerate}
\end{theorem}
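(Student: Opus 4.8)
The plan is to build everything on the semimartingale representation of $Z$ furnished by Lemma~\ref{lemma-generator}. For fixed $x_0$ and $u$, writing $Z := Z^{(x_0,u)}$ and $X := X^{(x_0,u)}$ and using $Z_0 = H(x_0)$, this representation reads
\[
Z_t - Z_0 = \int_0^t \call H(X_s,u_s)\,ds + \int_0^t \Sigma(X_s,u_s)\,dW_s, \quad t \ge 0,
\]
where the first summand is an absolutely continuous (hence finite-variation) process—its integrand is locally integrable thanks to the strong-solution integrability conditions together with Assumption~\ref{ass-int-cond-for-passivity} and the local boundedness of $\nabla H$, $D^2H$ along the continuous path $X$—and the second summand is a continuous local martingale. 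The whole strategy is to separate these two parts and then to localize each of the two conditions \eqref{cond-drift}, \eqref{cond-diffusion} at $t=0$ by means of Lemma~\ref{lemma-integrals}.

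For the implication (ii) $\Rightarrow$ (i) I would argue directly. If $\Sigma \equiv 0$, the stochastic integral above vanishes identically, so $Z_t - Z_0 = \int_0^t \call H(X_s,u_s)\,ds$ is continuous, adapted, and starts at $0$; since $\call H \le 0$ makes the integrand nonpositive, every sample path is decreasing, whence $Z - Z_0 \in \calv^-$ and the system is passive.

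The substantive direction is (i) $\Rightarrow$ (ii), where I would first establish \eqref{cond-diffusion}. Passivity forces $Z - Z_0$ to be decreasing, hence of finite variation; subtracting the finite-variation drift term shows that the continuous local martingale $M := \int_0^\cdot \Sigma(X_s,u_s)\,dW_s$ also has paths of finite variation. Invoking the standard fact that a continuous local martingale of finite variation starting at $0$ is indistinguishable from $0$ (uniqueness of the continuous-semimartingale decomposition), we get $M \equiv 0$, so its quadratic variation $\int_0^t \|\Sigma(X_s,u_s)\|^2\,ds$ vanishes for all $t$, almost surely. Now fix $x \in \bbr^d$ and $v \in \bbr^n$, choose by Assumption~\ref{ass-control-cont-in-zero} a control $u \in \cala$ with $u_0 = v$ continuous at zero, and set $x_0 = x$. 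On the full-probability event where the quadratic-variation integral is zero, the integrand $s \mapsto \|\Sigma(X_s,u_s)\|^2$ is continuous at zero with value $\|\Sigma(x,v)\|^2$ (using Remark~\ref{rem-gen-cont} and continuity of $X$, $u$ at zero), so Lemma~\ref{lemma-integrals}(1) yields $\|\Sigma(x,v)\|^2 = 0$; since this conclusion is deterministic and $x,v$ were arbitrary, $\Sigma \equiv 0$.

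With \eqref{cond-diffusion} in hand the representation reduces to $Z_t - Z_0 = \int_0^t \call H(X_s,u_s)\,ds$, which is decreasing by passivity, and for the same choice of $x_0 = x$ and $u$ with $u_0 = v$ the integrand is continuous at zero with value $\call H(x,v)$; Lemma~\ref{lemma-integrals}(2) then gives $\call H(x,v) \le 0$, which is \eqref{cond-drift}. I expect the main obstacle to be the vanishing of the martingale part $M$: one must correctly invoke the finite-variation-local-martingale rigidity and then pass from the almost-sure pathwise statement about the quadratic-variation integral to the genuinely pointwise identity $\Sigma(x,v)=0$, taking care that Lemma~\ref{lemma-integrals} is applied pathwise on the relevant full-probability event and that its hypotheses (local integrability and continuity at zero of the integrand, with a deterministic limit at $s=0$) are verified via Assumptions~\ref{ass-b-sigma}, \ref{ass-f}, \ref{ass-control-cont-in-zero} and Remark~\ref{rem-gen-cont}.
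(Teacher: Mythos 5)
Your proposal is correct and follows essentially the same route as the paper: both use the It\^{o} representation of Lemma~\ref{lemma-generator}, kill the martingale part, identify the vanishing quadratic variation $\int_0^t \|\Sigma(X_s,u_s)\|^2\,ds$, and localize at $t=0$ via Lemma~\ref{lemma-integrals} with a control from Assumption~\ref{ass-control-cont-in-zero}. The only cosmetic difference is that you deduce $M\equiv 0$ from the rigidity of continuous finite-variation local martingales, whereas the paper invokes the uniqueness of the canonical decomposition of the special semimartingale $Z$ (Proposition~\ref{prop-can-decomp}) --- the same fact in different packaging.
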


\begin{proof}
(ii) $\Rightarrow$ (i): Let $x_0 \in \bbr^d$ and $u \in \cala$ be arbitrary, and denote by $(X,Y) = (X^{(x_0,u)},Y^{(x_0,u)})$ the solution to the system \eqref{system}. Furthermore, we set $Z := Z^{(x_0,u)}$. Taking into account \eqref{cond-diffusion}, by Lemma \ref{lemma-generator} we have
\begin{align*}
Z_t = H(x_0) + \int_0^t \call H(X_s,u_s) ds, \quad t \geq 0.
\end{align*}
Therefore, by condition \eqref{cond-drift} we have $Z - Z_0 \in \calv^-$.

\noindent(i) $\Rightarrow$ (ii): Let $x_0 \in \bbr^d$ and $v \in \bbr^n$ be arbitrary. By Assumption \ref{ass-control-cont-in-zero} there exists a control process $u \in \cala$ with $u_0 = v$ that is continuous at zero. We denote by $(X,Y) = (X^{(x_0,u)},Y^{(x_0,u)})$ the solution to the system \eqref{system}, and set $Z := Z^{(x_0,u)}$. By Lemma \ref{lemma-generator} we have
\begin{align*}
Z_t = H(x_0) + \int_0^t \call H(X_s,u_s) ds + \int_0^t \Sigma(X_s,u_s) dW_s, \quad t \geq 0.
\end{align*}
Hence $Z$ is a special semimartingale with canonical decomposition $Z = Z_0 + M + A$, where $M \in \calm_{\loc}^0$ and the predictable process $A \in \calv$ are given by
\begin{align*}
M_t &= \int_0^t \Sigma(X_s,u_s) dW_s, \quad t \geq 0,
\\ A_t &= \int_0^t \call H(X_s,u_s) ds, \quad t \geq 0.
\end{align*}
Furthermore, we have $Z = Z_0 + (Z - Z_0)$ with $Z - Z_0 \in \calv^-$. By the uniqueness of the canonical decomposition of $Z$ (see Proposition \ref{prop-can-decomp}) we deduce that $M = 0$ and $A = Z - Z_0$. Moreover, we have $\la M,M \ra = 0$ and
\begin{align*}
\la M,M \ra_t = \int_0^t \| \Sigma(X_s,u_s) \|^2 ds, \quad t \geq 0.
\end{align*}
By the continuity of $\Sigma$ (see Remark \ref{rem-gen-cont}) the sample paths of $s \mapsto \Sigma(X_s,u_s)$ are continuous in zero, and hence by Lemma \ref{lemma-integrals} we deduce that $\Sigma(x_0,v) = 0$.

Furthermore, recall that $A \in \calv^-$. By the continuity of $\call H$ (see Remark \ref{rem-gen-cont}), the sample paths of $s \mapsto \call H(X_s,u_s)$ are continuous at zero, and hence, by Lemma \ref{lemma-integrals} we deduce that $\call H(x_0,v) \leq 0$. Since $x_0 \in \bbr^d$ and $v \in \bbr^n$ were arbitrary, we arrive at \eqref{cond-drift} and \eqref{cond-diffusion}.
\end{proof}

In view of the upcoming results let us agree on the following notation. If $\sigma(\cdot,u) : \bbr^d \to \bbr^{d \times k}$ is differentiable for some $u \in \bbr^n$, then for any $j=1,\ldots,k$ the derivative of the mapping $\bbr^d \to \bbr^d$, $x \mapsto \sigma^j(x,u)$ is called the partial derivative of $\sigma^j$ at $u$, and is denoted by $\bbr^d \to \bbr^{d \times d}$, $x \mapsto D_x \sigma^j(x,u)$.

\begin{lemma}\label{lem:stratonovic}
Suppose that $\sigma(\cdot,u) \in C^1$ for all $u \in \bbr^n$, and that \eqref{cond-diffusion} is fulfilled. Then we have
\begin{align*}
\tr \big( \sigma(x,u) \sigma(x,u)^{\top} D^2 H(x) \big) = - \bigg\la \nabla H, \sum_{j=1}^k (D_x \sigma^j)(x,u) \sigma^j(x,u) \bigg\ra.
\end{align*}
In particular, condition \eqref{cond-drift} is fulfilled if and only if
\begin{align}\label{eq:cond-drift-strato}
\bigg\la \nabla H(x), b(x,u) - \frac{1}{2} \sum_{j=1}^k (D_x \sigma^j)(x,u)\sigma^j(x,u) \bigg\ra - \la u,f(x,u) \ra \leq 0.
\end{align}
\end{lemma}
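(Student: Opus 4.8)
The plan is to establish the trace identity first, by decomposing $\sigma$ into its columns and then differentiating the constraint \eqref{cond-diffusion}, after which the ``in particular'' statement follows by a direct substitution into the definition \eqref{generator} of $\call H$.

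First I would rewrite the trace in terms of the columns $\sigma^j$. Since $\sigma(x,u)\sigma(x,u)^\top = \sum_{j=1}^k \sigma^j(x,u)\,\sigma^j(x,u)^\top$, the cyclic property of the trace gives
\begin{align*}
\tr\big(\sigma(x,u)\sigma(x,u)^\top D^2 H(x)\big) = \sum_{j=1}^k \sigma^j(x,u)^\top D^2 H(x)\, \sigma^j(x,u) = \sum_{j=1}^k \big\la \sigma^j(x,u), D^2 H(x)\,\sigma^j(x,u)\big\ra.
\end{align*}
This reduces the claim to an identity for each quadratic form $\la \sigma^j, D^2 H\,\sigma^j\ra$ separately.

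The key step is to exploit that condition \eqref{cond-diffusion} holds for \emph{all} $x\in\bbr^d$, so that for every fixed $u\in\bbr^n$ and every $j$ the scalar function $x\mapsto \la \nabla H(x),\sigma^j(x,u)\ra$ vanishes identically on $\bbr^d$. Because $H\in C^2$ and $\sigma(\cdot,u)\in C^1$, I may differentiate this identity in $x$; the product rule then yields
\begin{align*}
D^2 H(x)\,\sigma^j(x,u) + \big(D_x\sigma^j(x,u)\big)^\top \nabla H(x) = 0,
\end{align*}
that is, $D^2 H\,\sigma^j = -(D_x\sigma^j)^\top \nabla H$. Substituting this into each quadratic form gives $\la \sigma^j, D^2 H\,\sigma^j\ra = -\la \sigma^j, (D_x\sigma^j)^\top \nabla H\ra = -\la \nabla H, (D_x\sigma^j)\sigma^j\ra$, and summing over $j$ produces exactly the asserted trace identity.

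For the final equivalence I would simply insert the trace identity into the definition \eqref{generator}, obtaining
\begin{align*}
\call H(x,u) = \Big\la \nabla H(x), b(x,u) - \tfrac12\sum_{j=1}^k (D_x\sigma^j)(x,u)\,\sigma^j(x,u)\Big\ra - \la u, f(x,u)\ra,
\end{align*}
so that $\call H(x,u)\le 0$ is literally condition \eqref{eq:cond-drift-strato}. I expect the only delicate point to be the bookkeeping in the differentiation step — in particular placing the Jacobian transpose correctly so that the contraction against $\sigma^j$ reassembles into $\la\nabla H,(D_x\sigma^j)\sigma^j\ra$; the remaining manipulations are routine linear algebra.
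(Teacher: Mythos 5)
Your proposal is correct and follows essentially the same route as the paper: the paper's proof likewise differentiates the identically vanishing function $x \mapsto \la \nabla H(x), \sigma^j(x,u) \ra$ and substitutes the result into the column-wise expansion of the trace, only written in index notation rather than your coordinate-free form (your identity $D^2H\,\sigma^j = -(D_x\sigma^j)^\top \nabla H$ is exactly the paper's displayed relation $0 = \sum_{l}\big((D^2H)_{li}\sigma_{lj}+\frac{\partial H}{\partial x_l}\frac{\partial \sigma_{lj}}{\partial x_i}\big)$, using the symmetry of $D^2H$). The concluding substitution into \eqref{generator} is the same routine step the paper leaves implicit.
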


\begin{proof}
Note that \eqref{cond-diffusion} implies for all $j\in \{1,\ldots,k\}$, $i\in\{1,\ldots,d\}$ that
$$
0=\frac{\partial}{\partial x_i}\langle \nabla H,\sigma^j\rangle
= \sum_{l=1}^d\bigg((D^2H)_{li}\sigma_{lj}+\frac{\partial H}{\partial x_l}\frac{\partial \sigma_{lj}}{\partial x_i}\bigg).
$$
This implies that
\begin{equation*}
    \begin{split}
        \tr ( \sigma \sigma^{\top} D^2 H )
        &= \sum_{i=1}^d\sum_{l=1}^d\sum_{j=1}^k \sigma_{ij}\sigma_{lj}(D^2H)_{li}
        = - \sum_{i=1}^d\sum_{l=1}^d\sum_{j=1}^k \sigma_{ij}\frac{\partial H}{\partial x_l}\frac{\partial \sigma_{lj}}{\partial x_i}\\
        &=-\sum_{j=1}^k\sum_{l=1}^d \frac{\partial H}{\partial x_l} \sum_{i=1}^d(D_x\sigma^j)_{li}\sigma_{ij}=-\left\langle \nabla H, \sum_{j=1}^k D_x \sigma^j \cdot \sigma^j \right \rangle.
    \end{split}
\end{equation*}
This proves the claim.
\end{proof}

\begin{proposition}\label{prop-system-1b}
Suppose that Assumptions \ref{ass-solutions}, \ref{ass-int-cond-for-passivity}, \ref{ass-H-C2}, \ref{ass-b-sigma}, \ref{ass-f} and \ref{ass-control-cont-in-zero} are fulfilled, and that $\sigma(\cdot,u) \in C^1$ for all $u \in \bbr^n$. Then the following statements are equivalent:
\begin{enumerate}
\item[(i)] The system \eqref{system} is passive with respect to the storage function $H$.

\item[(ii)] We have \eqref{cond-diffusion} and
\begin{align*}
\bigg\la \nabla H(x), b(x,v) - \frac{1}{2} \sum_{j=1}^k (D_x \sigma^j)(x,v)\sigma^j(x,v) \bigg\ra - \la v,f(x,v) \ra \leq 0
\end{align*}
for all $x \in \bbr^d$ and all $v \in \bbr^n$.
\end{enumerate}
\end{proposition}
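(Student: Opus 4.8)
The plan is to obtain this proposition as an immediate corollary of Theorem \ref{thm-system-1} combined with Lemma \ref{lem:stratonovic}. All the substantive analytic work — relating the pathwise monotonicity of the process $Z$ to pointwise conditions on the generator $\call H$ and on $\Sigma$ — has already been carried out in Theorem \ref{thm-system-1}, and the algebraic reformulation of the drift term into Stratonovich form has been established in Lemma \ref{lem:stratonovic}. So the task here is purely to splice these two results together.

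First I would invoke Theorem \ref{thm-system-1}, whose hypotheses (Assumptions \ref{ass-solutions}, \ref{ass-int-cond-for-passivity}, \ref{ass-H-C2}, \ref{ass-b-sigma}, \ref{ass-f} and \ref{ass-control-cont-in-zero}) are exactly those assumed in the present proposition. This yields that statement (i), passivity of the system \eqref{system} with respect to $H$, is equivalent to the conjunction of the drift condition \eqref{cond-drift}, namely $\call H(x,v) \leq 0$ for all $x \in \bbr^d$ and $v \in \bbr^n$, and the diffusion condition \eqref{cond-diffusion}, namely $\Sigma(x,v) = 0$ for all $x \in \bbr^d$ and $v \in \bbr^n$. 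Next I would apply Lemma \ref{lem:stratonovic} to trade \eqref{cond-drift} for the Stratonovich-type inequality \eqref{eq:cond-drift-strato}. The lemma presupposes the regularity hypothesis $\sigma(\cdot,u) \in C^1$ for all $u \in \bbr^n$, which is assumed in the proposition, together with the validity of \eqref{cond-diffusion}; under these two premises it asserts that \eqref{cond-drift} holds if and only if \eqref{eq:cond-drift-strato} holds. Chaining the two equivalences then gives that (i) is equivalent to the conjunction of \eqref{cond-drift} and \eqref{cond-diffusion}, which in turn is equivalent to the conjunction of \eqref{eq:cond-drift-strato} and \eqref{cond-diffusion}, and this last conjunction is precisely statement (ii).

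The only point requiring attention — and the closest thing to an obstacle — is to keep the diffusion condition \eqref{cond-diffusion} in force throughout the second step. The identity in Lemma \ref{lem:stratonovic} that rewrites the It\^{o} trace term $\tr(\sigma \sigma^\top D^2 H)$ as a Stratonovich correction is valid only when $\Sigma = \nabla H^\top \sigma$ vanishes, so the equivalence of \eqref{cond-drift} and \eqref{eq:cond-drift-strato} must be read conditionally on \eqref{cond-diffusion}. Since \eqref{cond-diffusion} is retained as an explicit conjunct of statement (ii), this causes no difficulty, and no further computation beyond citing the two preceding results is needed.
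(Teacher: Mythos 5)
Your proposal is correct and follows exactly the paper's route: the paper proves this proposition in one line as "an immediate consequence of Theorem \ref{thm-system-1} and Lemma \ref{lem:stratonovic}", which is precisely the splicing you describe. Your added remark that the equivalence of \eqref{cond-drift} and \eqref{eq:cond-drift-strato} must be read conditionally on \eqref{cond-diffusion} is a correct and worthwhile clarification of why the chaining is legitimate.
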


\begin{proof}
This is an immediate consequence of Theorem \ref{thm-system-1} and Lemma \ref{lem:stratonovic}.
\end{proof}

\begin{remark}
    Note that the term $\frac{1}{2} \sum_{j=1}^k (D_x \sigma^j)\sigma^j$ in Lemma \ref{lem:stratonovic} represents the so-called Stratonovich correction term. Its presence is expected, since under the assumption that $\sigma(\cdot,u) \in C^1$ for all $u \in \bbr^n$, we could have transformed \eqref{system} into Stratonovich form. Then by applying the Stratonovich chain rule instead of It\^o's formula in the proof of Theorem \ref{thm-system-1}, we would have directly arrived at \eqref{eq:cond-drift-strato}.
\end{remark}

\begin{theorem}\label{thm-system-2}
Suppose that Assumptions \ref{ass-solutions}, \ref{ass-int-cond-for-passivity}, \ref{ass-H-C2}, \ref{ass-b-sigma}, \ref{ass-f} and \ref{ass-control-cont-in-zero} are fulfilled. Then the following statements are equivalent:
\begin{enumerate}
\item[(i)] The system \eqref{system} has the passive local supermartingale property with respect to the storage function $H$.

\item[(ii)] We have \eqref{cond-drift}.
\end{enumerate}
\end{theorem}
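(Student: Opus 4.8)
The plan is to reuse the It\^o decomposition of Lemma \ref{lemma-generator} together with the local Doob--Meyer characterization of Proposition \ref{prop-super-MT}, and to exploit the one structural difference relative to Theorem \ref{thm-system-1}: the local supermartingale property places \emph{no} constraint on the martingale part, so the diffusion condition \eqref{cond-diffusion} disappears and only the drift condition \eqref{cond-drift} survives. Concretely, for any $x_0$ and $u$, Lemma \ref{lemma-generator} gives $Z := Z^{(x_0,u)} = Z_0 + M + A$ with $M_t = \int_0^t \Sigma(X_s,u_s)\,dW_s$ and $A_t = \int_0^t \call H(X_s,u_s)\,ds$, and the whole argument reduces to reading off when $A$ is decreasing.

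For the direction (ii) $\Rightarrow$ (i) I would argue as follows. As in the proof of Theorem \ref{thm-system-1}, $M \in \calm_{\loc}^0$; here I would spell out that $\int_0^t \|\Sigma(X_s,u_s)\|^2\,ds < \infty$ $\bbp$-a.s.\ because $\|\Sigma\| \le \|\nabla H\|\,\|\sigma\|$, the process $X$ is continuous (hence $\nabla H(X_\cdot)$ is locally bounded), and $\int_0^t \|\sigma(X_s,u_s)\|^2\,ds < \infty$ is built into the definition of a strong solution. The process $A$ is predictable and of finite variation, and under \eqref{cond-drift} it is decreasing with $A_0 = 0$, i.e.\ $A \in \calv^-$. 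Proposition \ref{prop-super-MT} then yields directly that $Z = Z_0 + M + A$ is a local supermartingale, for every $x_0$ and $u$.

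For the converse (i) $\Rightarrow$ (ii), fix arbitrary $x_0 \in \bbr^d$ and $v \in \bbr^n$, and use Assumption \ref{ass-control-cont-in-zero} to pick a control $u \in \cala$ with $u_0 = v$ that is continuous at zero. The decomposition above exhibits $Z$ as a special semimartingale with canonical decomposition $Z = Z_0 + M + A$, unique by Proposition \ref{prop-can-decomp}. Since $Z$ is assumed to be a local supermartingale, Proposition \ref{prop-super-MT} forces the predictable finite-variation part to satisfy $A \in \calv^-$; hence $t \mapsto \int_0^t \call H(X_s,u_s)\,ds$ is decreasing. By Remark \ref{rem-gen-cont} together with the continuity of $X$ (with $X_0 = x_0$) and of $u$ at zero (with $u_0 = v$), the integrand $s \mapsto \call H(X_s,u_s)$ is continuous at zero with value $\call H(x_0,v)$, so Lemma \ref{lemma-integrals}(2) gives $\call H(x_0,v) \le 0$. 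As $x_0$ and $v$ were arbitrary, this is exactly \eqref{cond-drift}.

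Each step is short, and the argument runs closely parallel to that of Theorem \ref{thm-system-1}. The one place requiring genuine care is the twofold appeal to Proposition \ref{prop-super-MT}: it is precisely the local version of the Doob--Meyer decomposition that lets me both \emph{produce} a local supermartingale from a decreasing predictable drift (forward direction) and \emph{extract} the sign of that drift from the local supermartingale hypothesis (reverse direction). I do not anticipate a serious obstacle beyond confirming that $M$ is genuinely a local martingale and that the decomposition from Lemma \ref{lemma-generator} is indeed the canonical one; with those in hand, leaving the martingale part $M$ entirely unconstrained is exactly what removes condition \eqref{cond-diffusion} from the characterization.
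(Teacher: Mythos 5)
Your proposal is correct and follows essentially the same route as the paper's own proof: the It\^o decomposition of Lemma \ref{lemma-generator}, the local Doob--Meyer characterization of Proposition \ref{prop-super-MT} in both directions, uniqueness of the canonical decomposition via Proposition \ref{prop-can-decomp}, and Lemma \ref{lemma-integrals} combined with the continuity of $\call H$ along $s \mapsto \call H(X_s,u_s)$ at zero to extract \eqref{cond-drift}. Your added verification that $M$ is genuinely a local martingale (via $\|\Sigma\| \le \|\nabla H\|\,\|\sigma\|$, continuity of $X$, and the integrability built into the definition of a strong solution) is a detail the paper leaves implicit in Lemma \ref{lemma-generator}, and it is correct.
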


\begin{proof}
(ii) $\Rightarrow$ (i): Let $x_0 \in \bbr^d$ and $u \in \cala$ be arbitrary, and denote by $(X,Y) = (X^{(x_0,u)},Y^{(x_0,u)})$ the solution to the system \eqref{system}, and set $Z := Z^{(x_0,u)}$. By Lemma \ref{lemma-generator} we have
\begin{align*}
Z_t = H(x_0) + \int_0^t \call H(X_s,u_s) ds + \int_0^t \Sigma(X_s,u_s) dW_s, \quad t \geq 0.
\end{align*}
Therefore $Z$ is a special semimartingale with canonical decomposition $Z = Z_0 + M + A$, where $M \in \calm_{\loc}^0$ and the predictable process $A \in \calv$ are given by
\begin{align*}
A_t &= \int_0^t \call H(X_s,u_s) ds, \quad t \geq 0,
\\ M_t &= \int_0^t \Sigma(X_s,u_s) dW_s, \quad t \geq 0.
\end{align*}
By \eqref{cond-drift} we have $A \in \calv^-$. Therefore, by Proposition \ref{prop-super-MT} we infer that $Z$ is a local supermartingale.

\noindent(i) $\Rightarrow$ (ii): Let $x_0 \in \bbr^d$ and $v \in \bbr^n$ be arbitrary. By Assumption \ref{ass-control-cont-in-zero} there exists a control process $u \in \cala$ with $u_0 = v$ that is continuous at zero. We denote by $(X,Y) = (X^{(x_0,u)},Y^{(x_0,u)})$ the solution to the system \eqref{system}, and set $Z := Z^{(x_0,u)}$. By Lemma \ref{lemma-generator} we have
\begin{align*}
Z_t = H(x_0) + \int_0^t \call H(X_s,u_s) ds + \int_0^t \Sigma(X_s,u_s) dW_s, \quad t \geq 0.
\end{align*}
Hence $Z$ is a special semimartingale with canonical decomposition $Z = Z_0 + M + A$, where $M \in \calm_{\loc}^0$ and the predictable process $A \in \calv$ are given by
\begin{align*}
M_t &= \int_0^t \Sigma(X_s,u_s) dW_s, \quad t \geq 0,
\\ A_t &= \int_0^t \call H(X_s,u_s) ds, \quad t \geq 0.
\end{align*}
Since $Z$ is a local supermartingale, by Proposition \ref{prop-super-MT} and the uniqueness of the canonical decomposition of $Z$ (see Proposition \ref{prop-can-decomp}) we deduce that $A \in \calv^-$. By the continuity of $\call H$ (see Remark \ref{rem-gen-cont}), the sample paths of $s \mapsto \call H(X_s,u_s)$ are continuous in zero, and hence by Lemma \ref{lemma-integrals} we deduce that $\call H(x_0,v) \leq 0$. Since $x_0 \in \bbr^d$ and $v \in \bbr^n$ were arbitrary, we arrive at \eqref{cond-drift}.
\end{proof}

\begin{remark}
As a consequence of Theorems \ref{thm-system-1} and \ref{thm-system-2}, we see that passivity of the system \eqref{system} implies the local supermartingale property. This confirms the first statement made in Remark \ref{rem-concepts}.
\end{remark}

\begin{remark}
    Sometimes in the literature the condition~\eqref{cond-drift} 
    is used to define stochastic passivity notions. 
    For example, in \cite[Definition~4.1]{EhKr_florchinger1999passive} (see also \cite[Definition~3.1]{florchinger2016global}) a system is called passive if there exists a positive definite (i.e., $H(0)=0$ and $\forall x\in\bbr^d\backslash\{0\}\colon H(x)>0$) storage function $H$ (or Lyapunov function) of class $C^2$ satisfying~\eqref{cond-drift}. This definition is particularly suitable for examining stability properties of the system. In particular, in this situation the solution $0$ of the uncontrolled stochastic differential equation \eqref{SDE} is stable in probability (see, e.g., \cite[Theorem~5.3]{EhKr_khasminskii2011stochastic}, cf.\ also \cite[Remark~3.3]{florchinger2016global}).
    Moreover, under the sufficient conditions given in \cite[Theorem~3.4]{florchinger2016global} or \cite[Theorem~4.5]{florchinger2016global}, the system~\eqref{system} is globally asymptotically stabilizable in probability by an output feedback control or by a state feedback control, respectively. For more details on stability and stabilization of possibly nonlinear systems we refer to, for instance, \cite{EhKr_florchinger1999passive,florchinger2016global,EhKr_khasminskii2011stochastic}. The linear case and stabilization in mean-square are discussed in Remark~\ref{rem-SLTIPHS-stability} below.
\end{remark}

\section{Results about the passive supermartingale property}\label{sec-local-pass-implies-pass}

So far, we have derived necessary and sufficient conditions for passivity and for the passive local supermartingale property of the system \eqref{system}. Now, we are interested in the question when the passive local supermartingale property implies the passive supermartingale property. Recall from Remark \ref{rem-concepts} that the passive supermartingale property in particular gives us that the system \eqref{system} is stochastically passive.

The norms in conditions \eqref{int-cond-1}--\eqref{int-cond-4b} below are Euclidean and Frobenius norms; recall the conventions made in Remark \ref{rem-notation}.

\begin{proposition}\label{prop-passive-SMP}
Suppose that Assumptions \ref{ass-solutions} and \ref{ass-H-C2} are fulfilled.
Moreover, suppose that for all $x_0 \in \bbr^d$, all $u \in \cala$ and all $t \in \bbr_+$ we have
\begin{align}\label{int-cond-1}
&\int_0^t \bbe \big[ \| \nabla H(X_s) \| \cdot \| b(X_s,u_s) \| \big] ds < \infty,
\\ \label{int-cond-2} &\int_0^t \bbe \big[ \| \sigma(X_s,u_s) \|^2 \cdot \| D^2 H(X_s) \| \big] ds < \infty,
\\ \label{int-cond-3} &\int_0^t \bbe \big[ \| u_s \| \cdot \| f(X_s,u_s) \| \big] ds < \infty,
\\ \label{int-cond-4a} &\bbe\bigg[\sup_{s\in [0,t]}\|\nabla H(X_s)\|^2\bigg]<\infty,
\\ \label{int-cond-4b} &\bbe\bigg[\int_0^t\|\sigma (X_s,u_s)\|^2ds\bigg]<\infty,
\end{align}
where $X = X^{(x_0,u)}$ denotes the strong solution to the system \eqref{system} with $X_0 = x_0$ and control $u$.
We then have the following: 
\begin{enumerate}
    \item[(i)]
    For all $x_0 \in \bbr^d$, $u \in \cala$ and $t\ge 0$ it holds that 
    \begin{align*}
    \int_0^t \call H(X_s,u_s) ds
    \in L^1 .
    \end{align*}

    \item[(ii)]
    For all $x_0 \in \bbr^d$ and all $u \in \cala$ it holds that $M=(M_t)_{t\ge 0}$ defined by 
    \begin{align*}
    M_t &= \int_0^t \Sigma(X_s,u_s) dW_s, \quad t \geq 0,
    \end{align*}
    is a martingale.
    
    \item[(iii)]
    If the system \eqref{system} has the passive local supermartingale property with respect to the storage function $H$, then it has the passive supermartingale property with respect to the storage function $H$.
\end{enumerate}
\end{proposition}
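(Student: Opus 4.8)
The plan is to read off the semimartingale decomposition of $Z := Z^{(x_0,u)}$ supplied by Lemma \ref{lemma-generator}, namely $Z_t = H(x_0) + A_t + M_t$ with finite-variation part $A_t = \int_0^t \call H(X_s,u_s)\,ds$ and local-martingale part $M_t = \int_0^t \Sigma(X_s,u_s)\,dW_s$. Part (i) controls the integrability of $A$, part (ii) upgrades $M$ from a local to a true martingale, and part (iii) glues these two facts together with the local version of the Doob--Meyer decomposition.

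For part (i) I would bound the integrand $\call H$ termwise. Cauchy--Schwarz gives $|\la \nabla H(X_s), b(X_s,u_s)\ra| \le \|\nabla H(X_s)\|\,\|b(X_s,u_s)\|$ and $|\la u_s, f(X_s,u_s)\ra| \le \|u_s\|\,\|f(X_s,u_s)\|$, while submultiplicativity of the Frobenius norm and Cauchy--Schwarz on matrix entries yield $|\tr(\sigma\sigma^\top D^2H)| \le \|\sigma\sigma^\top\|\,\|D^2H\| \le \|\sigma\|^2\,\|D^2H\|$. Taking absolute values inside the time integral, applying Tonelli, and summing these three estimates bounds $\bbe[\int_0^t |\call H(X_s,u_s)|\,ds]$ by the sum of the left-hand sides of \eqref{int-cond-1}, \eqref{int-cond-2} and \eqref{int-cond-3}, all finite; hence $\int_0^t \call H(X_s,u_s)\,ds \in L^1$.

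Part (ii) is the step requiring the most care, and is the main obstacle. Since $M \in \calm_{\loc}^0$ has $\la M,M\ra_t = \int_0^t \|\Sigma(X_s,u_s)\|^2\,ds$ and $\|\Sigma(x,v)\|^2 \le \|\nabla H(x)\|^2\,\|\sigma(x,v)\|^2$, the naive route is the $L^2$-isometry; but \eqref{int-cond-4a} and \eqref{int-cond-4b} only control the first moments of $\sup_{s\le t}\|\nabla H(X_s)\|^2$ and of $\int_0^t\|\sigma(X_s,u_s)\|^2\,ds$ separately, not of their product, so this fails. The remedy is to pass through the Burkholder--Davis--Gundy inequality $\bbe[\sup_{s\le t}|M_s|] \le C\,\bbe[\la M,M\ra_t^{1/2}]$ and then factor the square root as $\la M,M\ra_t^{1/2} \le \sup_{s\le t}\|\nabla H(X_s)\| \cdot \big(\int_0^t\|\sigma(X_s,u_s)\|^2\,ds\big)^{1/2}$. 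A single Cauchy--Schwarz for expectations now separates the two factors, and \eqref{int-cond-4a} together with \eqref{int-cond-4b} gives $\bbe[\sup_{s\le t}|M_s|] < \infty$ for every $t$. Finally, a local martingale whose running maximum is integrable on each compact interval is a genuine martingale: localizing with $\tau_n \uparrow \infty$ and letting $n \to \infty$ in $M_{t\wedge\tau_n}$, dominated convergence (with dominating variable $\sup_{s\le t}|M_s|$) transfers the martingale identity to $M$.

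For part (iii), suppose $Z$ has the passive local supermartingale property. By Lemma \ref{lemma-generator}, $Z = Z_0 + M + A$ is the canonical decomposition into $M \in \calm_{\loc}^0$ and the predictable finite-variation process $A$, so Proposition \ref{prop-super-MT} combined with uniqueness of this decomposition (Proposition \ref{prop-can-decomp}) forces $A \in \calv^-$, i.e. $A$ is decreasing. By part (i) each $A_t \in L^1$, and by part (ii) $M$ is a true martingale, whence $Z_t = H(x_0) + A_t + M_t \in L^1$. For $s \le t$ the martingale property gives $\bbe[M_t \mid \calf_s] = M_s$, while $A_t \le A_s$ and the integrability of $A$ give $\bbe[A_t\mid\calf_s] \le A_s$; adding these yields $\bbe[Z_t\mid\calf_s] \le Z_s$. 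Thus $Z$ is a supermartingale, which is exactly the passive supermartingale property.
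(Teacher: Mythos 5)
Your proposal is correct and follows essentially the same route as the paper: termwise bounds on $\call H$ for (i), the Burkholder--Davis--Gundy inequality with the factorization $\langle M,M\rangle_t^{1/2} \leq \sup_{s\le t}\|\nabla H(X_s)\|\,(\int_0^t\|\sigma(X_s,u_s)\|^2\,ds)^{1/2}$ and one Cauchy--Schwarz for (ii), and the canonical-decomposition uniqueness argument for (iii). The only differences are cosmetic: you bound the trace term via Frobenius submultiplicativity rather than the nuclear/operator norm chain, and you prove inline (by localization and dominated convergence, resp.\ direct conditioning) what the paper delegates to Lemmas \ref{lemma-martingale-suff} and \ref{lemma-supermartingale-integrable}.
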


\begin{remark}
Note that Assumption \ref{ass-int-cond-for-passivity} is automatically fulfilled here, because condition \eqref{int-cond-3} implies \eqref{int-cond-for-passivity}.
\end{remark}

\begin{proof}[Proof of Proposition \ref{prop-passive-SMP}]
Let $x_0 \in \bbr^d$ and $u \in \cala$ be arbitrary. By Lemma \ref{lemma-generator} the process $Z = Z^{(x_0,u)}$ is given by $Z = Z_0 + M + A$, where
\begin{align*}
M_t &= \int_0^t \Sigma(X_s,u_s) dW_s, \quad t \geq 0,
\\ A_t &= \int_0^t \call H(X_s,u_s) ds, \quad t \geq 0.
\end{align*}
Let $x \in \bbr^d$ and $v \in \bbr^n$ be arbitrary. Then we have
\begin{align*}
&| \la \nabla H(x), b(x,v) \ra | \leq \| \nabla H(x) \| \cdot \| b(x,v) \|.
\end{align*}
Furthermore, denoting by $\| \cdot \|_{L(\bbr^d)}$ the operator norm and by $\| \cdot \|_{L_1(\bbr^d)}$ the nuclear norm, we obtain
\begin{align*}
| \tr \big( \sigma(x,v) \sigma(x,v)^{\top} D^2 H(x) \big) | &\leq \| \sigma(x,v) \sigma(x,v)^{\top} D^2 H(x) \|_{L_1(\bbr^d)}
\\ &\leq \| \sigma(x,v) \sigma(x,v)^{\top} \|_{L_1(\bbr^d)} \| D^2 H(x) \|_{L(\bbr^d)}
\\ &\leq \| \sigma(x,v) \| \, \| \sigma(x,v)^{\top} \| \, \| D^2 H(x) \|
\\ &= \| \sigma(x,v) \|^2 \| D^2 H(x) \|.
\end{align*}
Furthermore, we have the estimate
\begin{align*}
| \la v,f(x,v) \ra | \leq \| v \| \cdot \| f(x,v) \|.
\end{align*}
Therefore, recalling the definition \eqref{generator} of $\call H$, by \eqref{int-cond-1}--\eqref{int-cond-3} we deduce that $A_t \in L^1$ for all $t \geq 0$. This proves (i). 
Now, we show that $M$ is a martingale. According to Lemma \ref{lemma-martingale-suff}, a sufficient condition for this is that for all $t\in \bbr_+$ it holds
\begin{equation}\label{eq:suff_cond_mart}
    \bbe\bigg[\sup_{s\in [0,t]}|M_s|\bigg]<\infty.
\end{equation}
The Burkholder--Davis--Gundy inequality ensures that there exists $C>0$ such that for all $t\in \bbr_+$ we have
$$
\bbe\bigg[\sup_{s\in [0,t]}|M_s|\bigg]\le C \, \bbe\bigg[\sqrt{\langle M \rangle_t}\bigg].
$$
Moreover, recalling the definition \eqref{capital-sigma} of $\Sigma$, for all $x \in \bbr^d$ and $v \in \bbr^n$ we have
\begin{align*}
\| \Sigma(x,v) \| \leq \| \nabla H(x)^{\top} \| \cdot \| \sigma(x,v) \|_{L(\bbr^k,\bbr^d)} \leq \| \nabla H(x) \| \cdot \| \sigma(x,v) \|.
\end{align*}
This implies for all $t\in \bbr_+$ that
\begin{align*}
\bbe\bigg[\sup_{s\in [0,t]}|M_s|\bigg] &\le C \, \bbe\bigg[\sqrt{\int_0^t\|\Sigma(X_s,u_s)\|^2ds}\bigg]
\le C \, \bbe\bigg[\sqrt{\int_0^t\|\nabla H(X_s)\|^2\|\sigma (X_s,u_s)\|^2ds}\bigg]
\\ &\le C \, \bbe\bigg[\sup_{s\in [0,t]}\|\nabla H(X_s)\|\sqrt{\int_0^t\|\sigma (X_s,u_s)\|^2ds}\bigg].
\end{align*}
The Cauchy--Schwarz inequality and conditions \eqref{int-cond-4a}, \eqref{int-cond-4b} finally prove that
$$
\bigg(\bbe\bigg[\sup_{s\in [0,t]}|M_s|\bigg]\bigg)^2
\le C \, \bbe\bigg[\sup_{s\in [0,t]}\|\nabla H(X_s)\|^2\bigg]\, \bbe\bigg[\int_0^t\|\sigma (X_s,u_s)\|^2ds\bigg] < \infty
$$
for all $t \in \bbr_+$. 
This establishes (ii). 
The result (iii) is now 
a consequence of Lemma \ref{lemma-supermartingale-integrable}.
\end{proof}

\begin{proposition}\label{prop-system-3}
Suppose that Assumptions \ref{ass-solutions} and \ref{ass-H-C2} are fulfilled, and that the system \eqref{system} has the passive local supermartingale property with respect to the storage function $H$. Moreover, suppose that $H$ is bounded from below and that for all $x_0 \in \bbr^d$, all $u \in \cala$ and all $t \in \bbr_+$ it holds that
\begin{align}\label{int-part-u-Y}
\bbe\bigg[\int_0^t|\langle u_s, Y_s\rangle | ds\bigg]<\infty,
\end{align}
where $Y_s = f(X_s,u_s)$, $s \geq 0$ with $X = X^{(x_0,u)}$ denoting the strong solution to the system \eqref{system} with $X_0 = x_0$ and control $u$. Then the system \eqref{system} has the passive supermartingale property with respect to the storage function $H$.
\end{proposition}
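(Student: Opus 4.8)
The plan is to exploit the two new hypotheses to bound the local supermartingale $Z := Z^{(x_0,u)}$ from below, on each compact time interval, by a single integrable random variable, and then to upgrade it to a genuine supermartingale by a standard localization-plus-Fatou argument.

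First I would record the lower bound. Since $H$ is bounded from below, there is a constant $c \in \bbr$ with $H \geq c$, whence $H(X_t) \geq c$ for all $t$. Setting $V_t := \int_0^t |\langle u_s, Y_s\rangle|\,ds$, which is increasing, the definition \eqref{Z-definition} gives
\begin{align*}
Z_t = H(X_t) - \int_0^t \langle u_s, Y_s\rangle\,ds \geq c - V_t \geq c - V_T \qquad \text{for all } t \in [0,T].
\end{align*}
By \eqref{int-part-u-Y} the random variable $L_T := c - V_T$ belongs to $L^1$, and it is a lower bound for $Z_t$ uniformly in $t \in [0,T]$. This uniform integrable lower bound is precisely what the new assumptions buy us, replacing the stronger integrability conditions \eqref{int-cond-1}--\eqref{int-cond-4b} of Proposition~\ref{prop-passive-SMP}.

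Next I would run the localization argument. By the passive local supermartingale property there is a localizing sequence $\tau_n \uparrow \infty$ with each stopped process $Z^{\tau_n}$ a supermartingale, so that $\bbe[Z_{t\wedge\tau_n}\mid\calf_s] \leq Z_{s\wedge\tau_n}$ for $s \leq t$. Fix $s \leq t$ and note $Z_{t\wedge\tau_n} \geq L_t$ for every $n$, with $L_t \in L^1$; the conditional Fatou lemma (which only needs a fixed integrable lower bound, not an $\calf_s$-measurable one) then gives
\begin{align*}
\bbe[Z_t \mid \calf_s] = \bbe\big[\liminf_n Z_{t\wedge\tau_n}\,\big|\,\calf_s\big] \leq \liminf_n \bbe[Z_{t\wedge\tau_n}\mid\calf_s] \leq \liminf_n Z_{s\wedge\tau_n} = Z_s,
\end{align*}
using $Z_{t\wedge\tau_n}\to Z_t$ and $Z_{s\wedge\tau_n}\to Z_s$ almost surely. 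Taking $s=0$ and expectations yields $\bbe[Z_t]\leq H(x_0)$, which together with $Z_t \geq L_t \in L^1$ shows $Z_t \in L^1$; hence $Z$ is a supermartingale on $[0,t]$. As $t$ is arbitrary, $Z$ is a supermartingale on $\bbr_+$, i.e. the system has the passive supermartingale property.

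The main obstacle, and the only genuinely delicate point, is the limit passage. One must be sure the lower bound $L_t$ controls $Z_{r}$ for every $r \in [0,t]$, hence $Z_{t\wedge\tau_n}$ for all $n$, which is exactly what makes conditional Fatou applicable; here the boundedness below of $H$ and the integrability \eqref{int-part-u-Y} of the supply rate are both used in an essential way. A secondary care is that $L_t$ is $\calf_t$-measurable rather than $\calf_s$-measurable, so one applies conditional Fatou to the nonnegative variables $Z_{t\wedge\tau_n}-L_t$ and cancels the finite term $\bbe[L_t\mid\calf_s]$, while integrability of $Z_t$ follows a posteriori from the displayed inequality. If the appendix already provides a lemma asserting that a local supermartingale bounded below by an integrable random variable is a supermartingale, this last step collapses to merely verifying its hypothesis.
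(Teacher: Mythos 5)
Your proof is correct and takes essentially the same route as the paper: the paper's proof simply writes $Z = H(X) + A$ with $A_t = -\int_0^t \la u_s, Y_s \ra ds$ dominated by the integrable increasing process $B_t = \int_0^t |\la u_s,Y_s \ra| ds$ and invokes Lemma~\ref{lemma-supermartingale-below}, whose proof is precisely your localization-plus-Fatou argument (with dominated convergence handling the finite-variation part). Your variant merely merges the constant lower bound on $H(X)$ and the bound $-V_t$ on the integral term into the single integrable lower bound $L_t = c - V_t$ before applying conditional Fatou once --- the collapse you yourself anticipate in your closing remark.
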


\begin{remark}
Note that Assumption \ref{ass-int-cond-for-passivity} is automatically fulfilled here, because condition~\eqref{int-part-u-Y} implies~\eqref{int-cond-for-passivity}.
\end{remark}

\begin{proof}
Let $x_0 \in \bbr^d$ and $u \in \cala$ be arbitrary. By the definition \eqref{Z-definition} of the local martingale $Z = Z^{(x_0,u)}$ we have $Z = H(X) + A$, where $A \in \calv$ is given by
\begin{align*}
A_t := -\int_0^t \la u_s,Y_s \ra ds, \quad t \in \bbr_+.
\end{align*}
Note that $|A| \leq B$, where $B \in \calv^+$ is given by
\begin{align*}
B_t := \int_0^t |\la u_s,Y_s \ra| ds, \quad t \in \bbr_+.
\end{align*}
By \eqref{int-part-u-Y} we have $B_t \in L^1$ for all $t \in \bbr_+$. Now, Lemma \ref{lemma-supermartingale-below} applies and gives us that $Z$ is a supermartingale.
\end{proof}

\section{Further results about the passive supermartingale property}\label{sec-existence-sol}

In this section we present sufficient conditions for existence and uniqueness of strong solutions as well as further results concerning the passive supermartingale property and stochastic passivity of the system \eqref{system}.

\begin{assumption}\label{ass-existence-suff-Lip}
For each $R \in \bbr_+$ there is a measurable mapping $L^R : \bbr^n \to \bbr_+$ such that for all $\omega \in \Omega$, all $t \in \bbr_+$, and every control process $u \in \cala$ we have
\begin{align}\label{cond-LR-u}
\int_0^t | L^R(u_s(\omega)) |^2 ds < \infty,
\end{align}
and for all $v \in \bbr^n$, and all $x,y \in \bbr^d$ with $\| x \|, \| y \| \leq R$ we have
\begin{align}\label{cond-1}
\| b(x,v)-b(y,v) \| + \| \sigma(x,v) - \sigma(y,v) \| \leq L^R(v) \| x-y \|.
\end{align}
\end{assumption}

\begin{assumption}\label{ass-existence-suff-growth}
There is a measurable mapping $L : \bbr^n \to \bbr_+$ such that for all $\omega \in \Omega$, all $t \in \bbr_+$, and every control process $u \in \cala$ we have
\begin{align}\label{cond-L-u}
\int_0^t | L(u_s(\omega)) |^2 ds < \infty,
\end{align}
and for all $v \in \bbr^n$ and all $x \in \bbr^d$ we have
\begin{align}\label{cond-2}
\| b(x,v) \| + \| \sigma(x,v) \| \leq L(v) (1 + \| x \|).
\end{align}
\end{assumption}

\begin{proposition}\label{prop-system-existence}
Suppose that Assumptions \ref{ass-b-sigma}, \ref{ass-existence-suff-Lip} and \ref{ass-existence-suff-growth} are fulfilled. Then Assumption \ref{ass-solutions} is fulfilled; that is, existence and uniqueness of strong solutions for the system \eqref{system} holds true.
\end{proposition}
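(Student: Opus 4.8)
The plan is to fix $x_0\in\bbr^d$ and $u\in\cala$ and to read \eqref{system} as a single It\^{o} SDE with random, time-dependent coefficients $\hat b_s(x):=b(x,u_s)$ and $\hat\sigma_s(x):=\sigma(x,u_s)$. Since $u$ is progressively measurable and $b,\sigma$ are continuous by Assumption \ref{ass-b-sigma}, these are Carath\'eodory coefficients (jointly progressively measurable and continuous in $x$). Assumptions \ref{ass-existence-suff-Lip} and \ref{ass-existence-suff-growth} then read as a local Lipschitz bound with random rate $\ell^R_s:=L^R(u_s)$ and a linear growth bound with random rate $\ell_s:=L(u_s)$, where $\int_0^t(\ell^R_s)^2\,ds<\infty$ and $\int_0^t\ell_s^2\,ds<\infty$ for all $\omega$ and all $t$. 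As $x_0$ and $u$ are arbitrary, it suffices to establish existence and uniqueness of a strong solution for this fixed data.

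For uniqueness, let $X^1,X^2$ be two strong solutions with the same initial value and control, put $\Delta:=X^1-X^2$, and localize by $\tau_R:=\inf\{t\ge0:\|X^1_t\|\vee\|X^2_t\|\ge R\}$ and $\rho_n:=\inf\{t\ge0:\int_0^t(\ell^R_s)^2\,ds\ge n\}$, writing $\theta:=\tau_R\wedge\rho_n$. On $[0,\theta]$ the local Lipschitz bound applies. The difficulty is that the rate $\ell^R_s$ is random, so a deterministic Gronwall inequality does not apply directly; instead I would introduce the exponential weight $E_t:=\exp\!\big(-\int_0^{t\wedge\theta} c_s\,ds\big)$ with $c_s:=2\ell^R_s+(\ell^R_s)^2$ and apply It\^{o}'s product formula to $Y_t:=E_t\,\|\Delta_{t\wedge\theta}\|^2$. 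The drift of $\|\Delta_{t\wedge\theta}\|^2$ is bounded by $c_s\|\Delta_{s\wedge\theta}\|^2\,ds$, so the finite-variation part of $Y$ is nonincreasing and $Y$ is a nonnegative local supermartingale with $Y_0=0$; hence $Y\equiv0$, so $\Delta_{\cdot\wedge\theta}=0$. Letting $n,R\to\infty$ (both stopping times increase to $\infty$ $\bbp$-a.s.\ by path continuity and finiteness of the integrals) yields $X^1=X^2$ up to indistinguishability.

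For existence, I would first truncate: with $\pi_R$ the $1$-Lipschitz radial projection onto the closed ball of radius $R$, set $b^R(x,v):=b(\pi_R(x),v)$ and $\sigma^R(x,v):=\sigma(\pi_R(x),v)$, which are globally Lipschitz in $x$ with random rate $\ell^R_s$ and bounded by $\ell_s(1+R)$. For fixed $R$, I would solve the truncated SDE by Picard iteration $X^{(0)}\equiv x_0$, $X^{(m+1)}_t:=x_0+\int_0^t b^R(X^{(m)}_s,u_s)\,ds+\int_0^t\sigma^R(X^{(m)}_s,u_s)\,dW_s$, proving the iterates Cauchy in $X\mapsto\bbe\big[\sup_{s\le t\wedge\rho_n}\|X_s\|^2\big]^{1/2}$ by the same exponential-weight (stochastic Gronwall) estimate as above, and then removing $\rho_n$ by $n\to\infty$. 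The truncated solutions $X^R$ are consistent up to the exit time $\tau_R$ of the smaller ball, so they glue to a process $X$ solving \eqref{system} up to the explosion time $\tau_\infty:=\lim_{R\to\infty}\tau_R$. Finally, a moment estimate based on the linear growth of Assumption \ref{ass-existence-suff-growth} (again via the exponential weight to absorb the random rate $\ell_s$) bounds $\bbe\big[\sup_{s\le t\wedge\tau_R}\|X_s\|^2\big]$ uniformly in $R$, which forces $\tau_\infty=\infty$ $\bbp$-a.s.; the required integrability $\int_0^t(\|b(X_s,u_s)\|+\|\sigma(X_s,u_s)\|^2)\,ds<\infty$ then follows from linear growth and the continuity of $X$.

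The main obstacle throughout is that the Lipschitz and growth constants $L^R(u_s),L(u_s)$ are \emph{random} and only square-integrable over finite horizons rather than bounded, so the classical deterministic-coefficient estimates and the ordinary Gronwall lemma fail verbatim. I expect the decisive step to be the handling of this randomness via the exponential Gronwall weight $\exp(-\int_0^\cdot c_s\,ds)$, which converts the relevant squared quantities into nonnegative local supermartingales started at zero, combined with the localization $\rho_n$ that keeps the accumulated squared rate bounded; the localizations are then removed using that $\int_0^t(\ell^R_s)^2\,ds$ and $\int_0^t\ell_s^2\,ds$ are finite for every $t$, so that $\rho_n\uparrow\infty$.
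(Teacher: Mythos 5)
Your proposal is correct, but it takes a genuinely different route from the paper. You and the paper share the same first step: freeze $x_0$ and $u\in\cala$ and regard \eqref{system} as an SDE with random, time-dependent coefficients $\widetilde{b}(\omega,t,x)=b(x,u_t(\omega))$, $\widetilde{\sigma}(\omega,t,x)=\sigma(x,u_t(\omega))$, with progressive measurability coming from that of $u$ together with Assumption \ref{ass-b-sigma}, and with random rates $L^R(u_t)$, $L(u_t)$ that are only pathwise square-integrable on compacts. From there the paper simply verifies the hypotheses \eqref{cond-LR-SDE}--\eqref{cond-2-SDE} of its Appendix Theorem \ref{thm-SDE-2}, which in turn checks local weak monotonicity and coercivity --- via exactly the algebraic bound $2\la x-y,\widetilde b(t,x)-\widetilde b(t,y)\ra+\|\widetilde\sigma(t,x)-\widetilde\sigma(t,y)\|^2\le(2\widetilde L^R_t+|\widetilde L^R_t|^2)\|x-y\|^2$ that underlies your choice $c_s=2\ell^R_s+(\ell^R_s)^2$ --- and then cites the Liu--R\"ockner existence theorem. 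You instead rebuild the appendix-level result from scratch: uniqueness by localization plus a stochastic Gronwall argument (the exponential weight $\exp(-\int_0^{\cdot}c_s\,ds)$ turning $E_t\|\Delta_{t\wedge\theta}\|^2$ into a nonnegative local supermartingale started at zero, hence identically zero), and existence by truncation via the radial projection, Picard iteration under the stopping time $\rho_n$ that caps $\int_0^\cdot(\ell^R_s)^2\,ds$, gluing over $R$, and nonexplosion from a weighted moment bound. All of these steps are sound; the only places requiring care in a full write-up are the Picard contraction with random rates (where plain Gronwall fails, one uses the capped clock $A_t=\int_0^t(1+(\ell^R_s)^2)\,ds\le t+n$ and factorial decay, or your weighted norm) and the passage from the weighted bound $\bbe[G_{t\wedge\tau_R}(1+\|X_{t\wedge\tau_R}\|^2)]$ to $\tau_R\uparrow\infty$, which works because the random weight is strictly positive almost surely --- and you correctly identify the random square-integrable rates as the central obstacle and the exponential weight as the remedy. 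The trade-off: the paper's proof is short and rigorous by delegation to a citable theorem covering even more general (monotone, non-Lipschitz) coefficients, while yours is self-contained and more elementary, at the cost of several pages of standard but nontrivial verification.
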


\begin{proof}
Let $u \in \cala$ be arbitrary. We define the coefficients
\begin{align*}
&\widetilde{b} : \Omega \times \bbr_+ \times \bbr^d \to \bbr^d,
\\ &\widetilde{\sigma} : \Omega \times \bbr_+ \times \bbr^d \to \bbr^{d \times k}
\end{align*}
by setting
\begin{align*}
\widetilde{b}(\omega,t,x) &:= b(x,u_t(\omega)),
\\ \widetilde{\sigma}(\omega,t,x) &:= \sigma(x,u_t(\omega)).
\end{align*}
Then for each $x \in \bbr^d$ the processes
\begin{align*}
&\Omega \times \bbr_+ \to \bbr^d, \quad (\omega,t) \mapsto \widetilde{b}(\omega,t,x),
\\ &\Omega \times \bbr_+ \to \bbr^{d \times k}, \quad (\omega,t) \mapsto \widetilde{\sigma}(\omega,t,x)
\end{align*}
are progressively measurable, because $u$ is progressively measurable and $b$ and $\sigma$ are continuous. We define the nonnegative processes $\widetilde{L}^R$, $R \in \bbr_+$ and $\widetilde{L}$ as
\begin{align*}
\widetilde{L}_t^R(\omega) &:= L^R(u_t(\omega)), \quad (\omega,t) \in \Omega \times \bbr_+,
\\ \widetilde{L}_t(\omega) &:= L(u_t(\omega)), \quad (\omega,t) \in \Omega \times \bbr_+.
\end{align*}
Then $\widetilde{L}^R$ and $\widetilde{L}$ are adapted, because $u$ is adapted and $L^R$ and $L$ are measurable. Let $\omega \in \Omega$ and $t \in \bbr_+$ be arbitrary. By \eqref{cond-LR-u} for every $R \in \bbr_+$ we have
\begin{align*}
\int_0^t | \widetilde{L}_s^R(\omega) |^2 ds = \int_0^t | L^R(u_s(\omega)) |^2 ds < \infty,
\end{align*}
showing \eqref{cond-LR-SDE}, and similarly, by \eqref{cond-L-u} we have
\begin{align*}
\int_0^t | \widetilde{L}_s(\omega) |^2 ds = \int_0^t | L(u_s(\omega)) |^2 ds < \infty,
\end{align*}
showing \eqref{cond-L-SDE}. Furthermore, using \eqref{cond-1} for all $\omega \in \Omega$, all $t,R \in \bbr_+$ and all $x,y \in \bbr^d$ with $\| x \|, \| y \| \leq R$ we have
\begin{align*}
&\| \widetilde{b}(\omega,t,x)-\widetilde{b}(\omega,t,y) \| + \| \widetilde{\sigma}(\omega,t,x) - \widetilde{\sigma}(\omega,t,y) \|
\\ &= \| b(x,u_t(\omega))-b(y,u_t(\omega)) \| + \| \sigma(x,u_t(\omega)) - \sigma(y,u_t(\omega)) \|
\\ &\leq L^R(u_t(\omega)) \| x-y \| = \widetilde{L}_t^R(\omega) \| x-y \|,
\end{align*}
showing \eqref{cond-1-SDE}. Moreover, using \eqref{cond-2} for all $\omega \in \Omega$, all $t \in \bbr_+$ and all $x \in \bbr^d$ we have
\begin{align*}
&\| \widetilde{b}(\omega,t,x) \| + \| \widetilde{\sigma}(\omega,t,x) \| = \| b(x,u_t(\omega)) \| + \| \sigma(x,u_t(\omega)) \|
\\ &\leq L(u_t(\omega)) (1 + \| x \|) = \widetilde{L}_t(\omega) (1 + \| x \|),
\end{align*}
showing \eqref{cond-2-SDE}. Consequently, applying Theorem \ref{thm-SDE-2} proves that existence and uniqueness of strong solutions for the system \eqref{system} holds true.
\end{proof}

We proceed with the existence of moments of the system \eqref{system}. For this purpose, we require the following auxiliary result.

\begin{lemma}\label{lemma-BDG-multi}
For each $p \in [2,\infty)$ and each $T \in \bbr_+$ there is a constant $C_{T,p} > 0$ such that for every $\bbr^{d \times k}$-valued progressively measurable process $\Phi$, and every bounded stopping time $\tau \leq T$ such that
\begin{align*}
\int_0^{\tau} \| \Phi_s \|^2 ds < \infty \quad \text{$\bbp$-almost surely}
\end{align*}
we have
\begin{align*}
\bbe \Bigg[ \sup_{t \in [0,\tau]} \bigg\| \int_0^t \Phi_s dW_s \bigg\|^p \Bigg] &\leq C_{T,p} \cdot \bbe \bigg[ \int_0^{\tau} \| \Phi_s \|^p ds \bigg].
\end{align*}
\end{lemma}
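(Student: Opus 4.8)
The plan is to reduce the claim to the one-dimensional (scalar) Burkholder--Davis--Gundy inequality, which I regard as known, by decomposing the stochastic integral column by column and exploiting the equivalence of norms on finite-dimensional spaces. First I would record that, since $W$ is $\bbr^k$-valued, the stochastic integral is $\int_0^t \Phi_s \, dW_s = \sum_{j=1}^k \int_0^t \Phi_s^j \, dW_s^j$, where $\Phi^j := \Phi_{\bullet j}$ denotes the $j$-th column and each $W^j$ is a scalar Wiener process. Writing the $i$-th component of the integral as a scalar stochastic integral $N_t^{(i)} := \sum_{j=1}^k \int_0^t \Phi_{ij}(s)\, dW_s^j$, I would apply the scalar BDG inequality to each continuous local martingale $N^{(i)}$, yielding a constant $c_p>0$ (depending only on $p$) with
\begin{align*}
\bbe\Big[ \sup_{t\in[0,\tau]} |N_t^{(i)}|^p \Big] \leq c_p \, \bbe\big[ \langle N^{(i)} \rangle_\tau^{p/2} \big], \qquad i=1,\ldots,d.
\end{align*}

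Next I would assemble the componentwise estimates into the Frobenius-norm statement. Using the elementary inequality $\|x\|^p = (\sum_i |x_i|^2)^{p/2} \leq d^{(p/2-1)}\sum_i |x_i|^p$ for $p\ge 2$ (or simply $\sup_t\|N_t\|^p \le d^{p/2}\max_i \sup_t |N_t^{(i)}|^p$), the left-hand side is controlled by $\sum_{i=1}^d \bbe[\sup_{t}|N_t^{(i)}|^p]$ up to a dimensional constant. For the right-hand side I would compute the quadratic variation: since the $W^j$ are independent, $\langle N^{(i)}\rangle_\tau = \sum_{j=1}^k \int_0^\tau |\Phi_{ij}(s)|^2\, ds$, so that $\sum_{i=1}^d \langle N^{(i)}\rangle_\tau = \int_0^\tau \|\Phi_s\|^2\, ds$. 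Summing the scalar estimates and applying the power-mean (or Jensen/convexity) inequality $\big(\sum_{i}a_i\big)^{p/2}$ versus $\sum_i a_i^{p/2}$ with $p/2\ge 1$, I would bound $\sum_i \langle N^{(i)}\rangle_\tau^{p/2} \le \big(\int_0^\tau \|\Phi_s\|^2\, ds\big)^{p/2}$.

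Finally I would convert the quadratic-variation term $\big(\int_0^\tau \|\Phi_s\|^2\, ds\big)^{p/2}$ into the desired $\bbe\big[\int_0^\tau \|\Phi_s\|^p\, ds\big]$. This is where the hypothesis $\tau \le T$ enters: by Hölder's (or Jensen's) inequality applied to the finite measure $ds$ on $[0,\tau]\subseteq[0,T]$, one has
\begin{align*}
\bigg( \int_0^\tau \|\Phi_s\|^2\, ds \bigg)^{p/2} \leq \tau^{p/2-1} \int_0^\tau \|\Phi_s\|^p\, ds \leq T^{p/2-1} \int_0^\tau \|\Phi_s\|^p\, ds,
\end{align*}
using $p/2\ge 1$ so that the conjugate exponents are $p/2$ and $p/(p-2)$. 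Taking expectations and collecting all dimensional and $p$-dependent factors into a single constant $C_{T,p}$ completes the argument.

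I do not expect a serious obstacle here; the result is a routine vector-valued and time-integrated repackaging of the scalar BDG inequality. The only point requiring genuine care is the second moment-to-$L^p$ conversion, where the finiteness of the constant hinges essentially on the boundedness $\tau \le T$ (the factor $T^{p/2-1}$ would blow up without it), and on keeping track that the Hölder step is valid precisely because $p\ge 2$. A minor bookkeeping subtlety is ensuring the integrability condition $\int_0^\tau \|\Phi_s\|^2\, ds<\infty$ guarantees the scalar integrals $N^{(i)}$ are well-defined continuous local martingales so that the scalar BDG inequality applies; this follows since $|\Phi_{ij}|\le\|\Phi\|$.
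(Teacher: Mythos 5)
Your proposal is correct and follows essentially the same route as the paper's proof: decompose the $\bbr^d$-valued integral into scalar components, apply the one-dimensional Burkholder--Davis--Gundy inequality to each, and convert $\big(\int_0^\tau \|\Phi_s\|^2\,ds\big)^{p/2}$ into $T^{p/2-1}\int_0^\tau \|\Phi_s\|^p\,ds$ via H\"older in time, with equivalence of norms on finite-dimensional spaces absorbing the dimensional constants. The only (cosmetic) difference is bookkeeping: you use the subadditivity bound $\sum_i a_i^{p/2} \leq \big(\sum_i a_i\big)^{p/2}$ to pass directly to the Frobenius norm before the single H\"older step, whereas the paper applies H\"older componentwise over $(j,s)$ and invokes the $p$-norm/Euclidean-norm equivalence at the end.
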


\begin{proof}
For each dimension $e \in \bbn$ there are constants $m_e,M_e > 0$ such that
\begin{align}\label{norms-p-2}
m_e \| x \|_p^p \leq \| x \|^p \leq M_e \| x \|_p^p \quad \forall x \in \bbr^e.
\end{align}
In the following calculation, for a matrix $A \in \bbr^{d \times k}$ and $i=1,\ldots,d$ we agree to denote by $A^i := (A^{i1},\ldots,A^{ik}) \in \bbr^k$ the $i$-th row of $A$. Then by \eqref{norms-p-2}, the Burkholder--Davis--Gundy inequality and H\"{o}lder's inequality we obtain
\begin{align*}
\bbe \Bigg[ \sup_{t \in [0,\tau]} \bigg\| \int_0^t \Phi_s dW_s \bigg\|^p \Bigg] 
& \leq M_d \sum_{i=1}^d \bbe \Bigg[ \sup_{t \in [0,\tau]} \bigg| \int_0^t \Phi_s^i dW_s \bigg|^p \Bigg]
= M_d \sum_{i=1}^d \bbe \Bigg[ \sup_{t \in [0,\tau]} \bigg| \sum_{j=1}^k \int_0^t \Phi_s^{ij} dW_s^j \bigg|^p \Bigg]
\\ &\leq M_d c_p \sum_{i=1}^d \bbe \Bigg[ \bigg( \sum_{j=1}^k \int_0^{\tau} |\Phi_s^{ij}|^2 ds \bigg)^{p/2} \Bigg]
\leq M_d c_p T^{\frac{p}{2} - 1} \sum_{i=1}^d \bbe \bigg[ \sum_{j=1}^k \int_0^{\tau} |\Phi_s^{ij}|^p ds \bigg]
\\ &\leq M_d c_p T^{\frac{p}{2} - 1} m_{dk}^{-1} \bbe \bigg[ \int_0^{\tau} \| \Phi_s \|^p ds \bigg],
\end{align*}
where the constant $c_p > 0$ stems from the Burkholder--Davis--Gundy inequality.
\end{proof}

\begin{assumption}\label{ass-existence-stronger}
There are a measurable mapping $\kappa_1 : \bbr^n \to \bbr_+$ and a constant $\kappa_2 \in \bbr_+$ such that for all $\omega \in \Omega$, all $t \in \bbr_+$, and every control process $u \in \cala$ we have
\begin{align}\label{cond-kappa-u}
\int_0^t | \kappa_1(u_s(\omega)) |^2 ds < \infty,
\end{align}
and for all $v \in \bbr^n$ and all $x \in \bbr^d$ we have
\begin{align}\label{growth-moments}
\| b(x,v) \| + \| \sigma(x,v) \| \leq \kappa_1(v) + \kappa_2 \| x \|.
\end{align}
\end{assumption}

\begin{remark}\label{rem-ass-solutions}
Note that Assumption \ref{ass-existence-stronger} implies Assumption \ref{ass-existence-suff-growth}. To see this, we define the measurable mapping $L : \bbr^n \to \bbr_+$ as $L(v) := \max \{ \kappa_1(v), \kappa_2 \}$. Then \eqref{cond-kappa-u} implies \eqref{cond-L-u}, and \eqref{growth-moments} implies \eqref{cond-2}.
\end{remark}

\begin{proposition}\label{prop-moments}
Suppose that Assumptions \ref{ass-b-sigma}, \ref{ass-existence-suff-Lip} and \ref{ass-existence-stronger} are fulfilled. Then existence and uniqueness of strong solutions for the system \eqref{system} holds true. Moreover, let $p \in [2,\infty)$ be such that
\begin{align}\label{moment-cond}
\bbe \bigg[ \int_0^t |\kappa_1(u_s)|^p ds \bigg] < \infty \quad \forall u \in \cala \quad \forall t \in \bbr_+.
\end{align}
Then for each $T \in \bbr_+$ there is a constant $C_{T,p} \in \bbr_+$ such that for all $x_0 \in \bbr^d$ and all $u \in \cala$ we have
\begin{align}\label{E-sup}
\bbe \bigg[ \sup_{t \in [0,T]} \| X_t \|^p \bigg] \leq C_{T,p} \big( 1 + \| x_0 \|^p \big),
\end{align}
where $X = X^{(x_0,u)}$ denotes the solution to the system \eqref{system} with $X_0 = x_0$ and control $u$.
\end{proposition}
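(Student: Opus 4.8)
The plan is to dispatch existence and uniqueness immediately and then obtain the moment bound by the standard \emph{localize--split--Gronwall} scheme. For the first assertion, observe that by Remark~\ref{rem-ass-solutions} Assumption~\ref{ass-existence-stronger} implies Assumption~\ref{ass-existence-suff-growth}; together with Assumptions~\ref{ass-b-sigma} and~\ref{ass-existence-suff-Lip} the hypotheses of Proposition~\ref{prop-system-existence} are satisfied, so existence and uniqueness of strong solutions for~\eqref{system} holds true. No further work is needed here.

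For the moment estimate I would fix $T \in \bbr_+$, an exponent $p \in [2,\infty)$ obeying~\eqref{moment-cond}, a starting point $x_0 \in \bbr^d$ and a control $u \in \cala$, and write $X = X^{(x_0,u)}$. The difficulty is that a priori we do not know that $\bbe[\sup_{t \le T}\|X_t\|^p]$ is finite, so Gronwall's lemma cannot be applied to it directly. To circumvent this I would introduce the stopping times $\tau_R := \inf\{t \ge 0 : \|X_t\| \ge R\}$ for $R \in \bbn$. Since $X$ is continuous and non-exploding, $\tau_R \uparrow \infty$ almost surely, and $\sup_{r \le t}\|X_{r \wedge \tau_R}\| \le R$, so that $g_R(t) := \bbe[\sup_{r \in [0,t]}\|X_{r \wedge \tau_R}\|^p] \le R^p < \infty$. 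Working with the bounded, nondecreasing functions $g_R$ legitimizes all the subsequent manipulations.

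Next I would use the stopped integral equation $X_{t \wedge \tau_R} = x_0 + \int_0^{t \wedge \tau_R} b(X_s,u_s)\,ds + \int_0^{t \wedge \tau_R} \sigma(X_s,u_s)\,dW_s$ together with the elementary bound $(a+b+c)^p \le 3^{p-1}(a^p+b^p+c^p)$. The drift term is controlled by H\"older's inequality, giving $\big(\int_0^{t \wedge \tau_R}\|b(X_s,u_s)\|\,ds\big)^p \le T^{p-1}\int_0^{t \wedge \tau_R}\|b(X_s,u_s)\|^p\,ds$. The diffusion term is exactly where Lemma~\ref{lemma-BDG-multi} applies, with the bounded stopping time $\tau = t \wedge \tau_R \le T$ and $\Phi_s = \sigma(X_s,u_s)$ (whose hypothesis $\int_0^{\tau}\|\Phi_s\|^2\,ds < \infty$ holds since $\|X_s\| \le R$ and $\|\sigma(X_s,u_s)\| \le \kappa_1(u_s) + \kappa_2 R$ on $[0,\tau]$, using~\eqref{cond-kappa-u}); this yields $\bbe[\sup_{r \le t}\|\int_0^{r \wedge \tau_R}\sigma\,dW\|^p] \le C\,\bbe[\int_0^{t \wedge \tau_R}\|\sigma(X_s,u_s)\|^p\,ds]$ with $C$ the constant of that lemma. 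Inserting the growth bound~\eqref{growth-moments} and $(\kappa_1(v)+\kappa_2\|x\|)^p \le 2^{p-1}(|\kappa_1(v)|^p + \kappa_2^p\|x\|^p)$, then using $\bbe[\int_0^{t \wedge \tau_R}\|X_s\|^p\,ds] \le \int_0^t g_R(s)\,ds$ and the finiteness of $\bbe[\int_0^T |\kappa_1(u_s)|^p\,ds]$ guaranteed by~\eqref{moment-cond}, I arrive at an inequality of the form $g_R(t) \le a\,(1+\|x_0\|^p) + b\int_0^t g_R(s)\,ds$, with constants $a,b$ independent of $R$ (and depending on $u$ only through the finite quantity $\bbe[\int_0^T |\kappa_1(u_s)|^p\,ds]$, which the constant $C_{T,p}$ in~\eqref{E-sup} must absorb).

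Finally, since each $g_R$ is bounded and measurable, the integral form of Gronwall's inequality gives $g_R(T) \le a\,(1+\|x_0\|^p)\,e^{bT}$ uniformly in $R$. Letting $R \to \infty$ and invoking monotone convergence — the integrands $\sup_{r \le T}\|X_{r \wedge \tau_R}\|^p$ increase to $\sup_{r \le T}\|X_r\|^p$ as $\tau_R \uparrow \infty$ — then yields~\eqref{E-sup}. I expect the main obstacle to be the bookkeeping required to keep every constant independent of $R$ so that the passage $R \to \infty$ survives; the fact that Lemma~\ref{lemma-BDG-multi} supplies a single constant valid for \emph{all} bounded stopping times $\tau \le T$ is precisely what makes this uniformity possible.
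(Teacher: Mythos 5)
Your proposal is correct and follows essentially the same route as the paper's proof: localization by the hitting times $\tau_R$, the $3^{p-1}$-split of the stopped integral equation, H\"older's inequality for the drift, Lemma~\ref{lemma-BDG-multi} for the stochastic integral, the growth bound \eqref{growth-moments} with \eqref{moment-cond}, Gronwall's lemma, and a passage to the limit $R \to \infty$. The only cosmetic differences are that the paper derives the Gronwall-ready inequality for an arbitrary stopping time before specializing to $\tau_n$ with $n > \| x_0 \|$ and concludes via Fatou's lemma rather than monotone convergence, while you make the finiteness of $g_R$ (which justifies Gronwall) explicit from the outset.
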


\begin{proof}
Existence and uniqueness of strong solutions for the system \eqref{system} is a consequence of Remark \ref{rem-ass-solutions} and Proposition \ref{prop-system-existence}. Now, let $x_0 \in \bbr^d$ and $u \in \cala$ be arbitrary, and denote by $X$ a solution to the system \eqref{system} with $X_0 = x_0$ and control $u$. Furthermore, let $T \in \bbr_+$ be arbitrary, and let $\tau$ be an arbitrary stopping time. Then we have
\begin{align*}
&\sup_{s \in [0,t]} \| X_s^{\tau} \|^p = \sup_{s \in [0,t \wedge \tau]} \| X_s \|^p
\\ &\leq 3^{p-1} \bigg( \| x_0 \|^p + \sup_{s \in [0,t \wedge \tau]} \bigg\| \int_0^s b(X_r,u_r) dr \bigg\|^p + \sup_{s \in [0,t \wedge \tau]} \bigg\| \int_0^s \sigma(X_r,u_r) d W_r \bigg\|^p \bigg).
\end{align*}
By H\"{o}lder's inequality and \eqref{growth-moments} we have
\begin{align*}
&\bbe \Bigg[ \sup_{s \in [0,t \wedge \tau]} \bigg\| \int_0^s b(X_r,u_r) dr \bigg\|^p \Bigg] \leq \bbe \Bigg[ \sup_{s \in [0,t \wedge \tau]} s^{p-1} \int_0^s \| b(X_r,u_r) \|^p dr \Bigg]
\\ &\leq T^{p-1} \, \bbe \bigg[ \int_0^{t \wedge \tau} \big( \kappa_1(u_r) + \kappa_2 \| X_r \| \big)^p dr \bigg].
\end{align*}
By Lemma \ref{lemma-BDG-multi} and \eqref{growth-moments} there is a constant $C_{T,p} > 0$ such that
\begin{align*}
&\bbe \Bigg[ \sup_{s \in [0,t \wedge \tau]} \bigg\| \int_0^s \sigma(X_r,u_r) d W_r \bigg\|^p \Bigg] \leq C_{T,p} \cdot \bbe \bigg[ \int_0^{t \wedge \tau} \| \sigma(X_r,u_r) \|^p ds \bigg]
\\ &\leq C_{T,p} \cdot \bbe \bigg[ \int_0^{t \wedge \tau} \big( \kappa_1(u_r) + \kappa_2 \| X_r \| \big)^p dr \bigg].
\end{align*}
Moreover, we have
\begin{align*}
&\bbe \bigg[ \int_0^{t \wedge \tau} \big( \kappa_1(u_r) + \kappa_2 \| X_r \| \big)^p dr \bigg]
\leq 2^{p-1} \bigg( \bbe \bigg[ \int_0^t \kappa_1(u_s)^p ds \bigg] + \kappa_2^p \cdot \bbe \bigg[ \int_0^{t \wedge \tau} \| X_r \|^p dr \bigg] \bigg)
\end{align*}
as well as
\begin{align*}
&\bbe \bigg[ \int_0^{t \wedge \tau} \| X_s \|^p ds \bigg] \leq \bbe \bigg[ \int_0^{t \wedge \tau} \sup_{r \in [0,s]} \| X_r \|^p ds \bigg]
\leq \bbe \bigg[ \int_0^{t} \sup_{r \in [0,s]} \| X_r^{\tau} \|^p ds \bigg] = \int_0^t \bbe \bigg[ \sup_{r \in [0,s]} \| X_r^{\tau} \|^p \bigg] ds.
\end{align*}
Consequently, taking into account \eqref{moment-cond}, by the previous estimates there is a constant $K_{T,p} \in \bbr_+$ such that for every stopping time $\tau$ we have
\begin{align}\label{E-sup-in-proof}
\bbe \bigg[ \sup_{s \in [0,t]} \| X_s^{\tau} \|^p \bigg] \leq K_{T,p} \bigg( \| x_0 \|^p + 1 + \int_0^t \bbe \bigg[ \sup_{r \in [0,s]} \| X_r^{\tau} \|^p \bigg] ds \bigg), \quad t \in [0,T].
\end{align}
Now, we define the localizing sequence of stopping times $(\tau_n)_{n \in \bbn}$ as
\begin{align*}
\tau_n := \inf \{ t \in \bbr_+ : \| X_t \| \geq n \}.
\end{align*}
Let $n \in \bbn$ with $n > \| x_0 \|$ and $T \in \bbr_+$ be arbitrary. By \eqref{E-sup-in-proof} and Gronwall's lemma there is a constant $C_{T,p} \in \bbr_+$ such that
\begin{align*}
\bbe \bigg[ \sup_{t \in [0,T]} \| X_t^{\tau_n} \|^p \bigg] \leq C_{T,p} \big( 1 + \| x_0 \|^p \big).
\end{align*}
Using Fatou's lemma we obtain
\begin{align*}
&\bbe \bigg[ \sup_{t \in [0,T]} \| X_t \|^p \bigg] = \bbe \bigg[ \lim_{n \to \infty} \sup_{t \in [0,T]} \| X_t^{\tau_n} \|^p \bigg]
\leq \liminf_{n \to \infty} \, \bbe \bigg[ \sup_{t \in [0,T]} \| X_t^{\tau_n} \|^p \bigg] \leq C_T \big( 1 + \| x_0 \|^p \big),
\end{align*}
completing the proof.
\end{proof}

\begin{theorem}\label{thm-H-p}
Suppose that Assumptions \ref{ass-H-C2}, \ref{ass-b-sigma}, \ref{ass-f}, \ref{ass-existence-suff-Lip} and Assumption \ref{ass-existence-stronger} with $\kappa_1(v) = K \| v \|$, $v \in \bbr^n$, for some $K > 0$ are fulfilled. Moreover, suppose there is $p \in [2,\infty)$ such that we have
\begin{align}\label{u-moment-p}
\bbe \bigg[ \int_0^t \| u_s \|^p ds \bigg] < \infty \quad \forall u \in \cala \quad \forall t \in \bbr_+,
\end{align}
and for some constant $C > 0$ we have
\begin{align}\label{f-growth}
\| f(x,v) \| &\leq C \big( 1 + \| x \|^{p-1} + \| v \|^{p-1} \big) \quad \forall x \in \bbr^d \quad \forall v \in \bbr^n.
\\ \label{H-derivative-1} \| \nabla H(x) \| &\leq C ( 1 + \| x \|^{p/2} ), \quad \forall x \in \bbr^d,
\\ \label{H-derivative-2} \| D^2 H (x) \| &\leq C ( 1 + \| x \|^{p-2} ), \quad \forall x \in \bbr^d.
\end{align}
Then existence and uniqueness of strong solutions for the system \eqref{system} holds true, \eqref{int-cond-1}--\eqref{int-cond-4b} hold true, and for each $T \in \bbr_+$ there is a constant $C_{T,p} > 0$ such that for all $x_0 \in \bbr^d$ and all $u \in \cala$ we have the moment estimate \eqref{E-sup}. Moreover, if in addition Assumption \ref{ass-control-cont-in-zero} is fulfilled, then the following statements are equivalent:
\begin{enumerate}
\item[(i)] The system \eqref{system} has the passive local supermartingale property with respect to the storage function~$H$.

\item[(ii)] The system \eqref{system} has the passive supermartingale property with respect to the storage function~$H$.

\item[(iii)] We have \eqref{cond-drift}.
\end{enumerate}
If these equivalent conditions are fulfilled, then the system \eqref{system} is stochastically passive with respect to the storage function $H$.
\end{theorem}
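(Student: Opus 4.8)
The plan is to reduce everything to results already established in the excerpt, so that the only genuinely new work is the verification of the integrability conditions \eqref{int-cond-1}--\eqref{int-cond-4b}.

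First I would dispose of the structural claims. Since Assumptions \ref{ass-b-sigma} and \ref{ass-existence-suff-Lip} hold and Assumption \ref{ass-existence-stronger} holds with $\kappa_1(v) = K\|v\|$, Proposition \ref{prop-moments} applies once its moment hypothesis \eqref{moment-cond} is checked: indeed $\bbe[\int_0^t |\kappa_1(u_s)|^p\, ds] = K^p\, \bbe[\int_0^t \|u_s\|^p\, ds] < \infty$ by \eqref{u-moment-p}. This simultaneously yields existence and uniqueness of strong solutions (so Assumption \ref{ass-solutions} holds) and the moment estimate \eqref{E-sup}. The consequence I would carry forward is that $\bbe[\sup_{t \in [0,T]} \|X_t\|^p] < \infty$ for every $T \in \bbr_+$.

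The heart of the proof, and the step I expect to require the most care, is the verification of \eqref{int-cond-1}--\eqref{int-cond-4b}. Each of these I would bound pointwise using the growth bounds $\|b(x,v)\|, \|\sigma(x,v)\| \le K\|v\| + \kappa_2\|x\|$ together with \eqref{f-growth}--\eqref{H-derivative-2}, and then take expectations. The exponents in \eqref{f-growth}--\eqref{H-derivative-2} are calibrated precisely so that, after applying Young's inequality to the mixed terms, every resulting monomial in $\|X_s\|$ and $\|u_s\|$ carries total degree at most $p$. For instance, in \eqref{int-cond-4a} one has $\|\nabla H(X_s)\|^2 \le C'(1 + \|X_s\|^p)$, which is controlled by the moment estimate; in \eqref{int-cond-2} the product $\|\sigma(X_s,u_s)\|^2\,\|D^2 H(X_s)\|$ produces a cross term $\|u_s\|^2\,\|X_s\|^{p-2}$ that Young's inequality (with exponents $p/2$ and $p/(p-2)$) splits into $\|u_s\|^p$ and $\|X_s\|^p$ when $p>2$, and is directly integrable when $p=2$; and in \eqref{int-cond-3} the cross term $\|u_s\|\,\|X_s\|^{p-1}$ splits, via exponents $p$ and $p/(p-1)$, into the same two integrable quantities. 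Integrability of $\int_0^t \bbe[\|u_s\|^p]\, ds$ is \eqref{u-moment-p}, integrability of $\int_0^t \bbe[\|X_s\|^p]\, ds$ follows from the moment estimate, and lower powers are absorbed through $a^q \le 1 + a^p$ for $0 \le q \le p$. Conditions \eqref{int-cond-1} and \eqref{int-cond-4b} are handled by the same bookkeeping. Carrying this out for all five conditions is routine but demands the correct Young splittings.

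With \eqref{int-cond-1}--\eqref{int-cond-4b} in hand, the equivalences assemble from earlier results. The implication (i) $\Leftrightarrow$ (iii) is exactly Theorem \ref{thm-system-2}, whose hypotheses now all hold: Assumption \ref{ass-solutions} was just established, Assumption \ref{ass-int-cond-for-passivity} follows from \eqref{int-cond-3}, Assumptions \ref{ass-H-C2}, \ref{ass-b-sigma}, \ref{ass-f} are standing hypotheses, and Assumption \ref{ass-control-cont-in-zero} is the additional hypothesis invoked here. The implication (i) $\Rightarrow$ (ii) is Proposition \ref{prop-passive-SMP}(iii), applicable precisely because \eqref{int-cond-1}--\eqref{int-cond-4b} hold, while (ii) $\Rightarrow$ (i) is immediate from Remark \ref{rem-concepts}(3). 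This closes the cycle of equivalences. Finally, under the equivalent conditions statement (ii) holds, and the passive supermartingale property implies stochastic passivity again by Remark \ref{rem-concepts}(3), yielding the last assertion.
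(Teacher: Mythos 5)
Your proposal is correct and follows essentially the same route as the paper's proof: Proposition \ref{prop-moments} for existence, uniqueness and \eqref{E-sup}, a term-by-term verification of \eqref{int-cond-1}--\eqref{int-cond-4b} from the growth bounds, and then Theorem \ref{thm-system-2}, Proposition \ref{prop-passive-SMP} and Remark \ref{rem-concepts} for the equivalences and the final claim. The only (harmless) deviation is that you split the cross terms such as $\|u_s\|^2\|X_s\|^{p-2}$ and $\|u_s\|\|X_s\|^{p-1}$ pointwise via Young's inequality, whereas the paper applies H\"older's inequality in expectation together with the estimate $\xi^q \leq 1+\xi^r$; both yield exactly the same integrable majorants $\bbe[\|X_s\|^p]$ and $\bbe[\|u_s\|^p]$.
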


\begin{proof}
According to Proposition \ref{prop-moments} existence and uniqueness of strong solutions for the system \eqref{system} holds true, and for each $T \in \bbr_+$ there is a constant $C_{T,p} > 0$ such that for all $x_0 \in \bbr^d$ and all $u \in \cala$ we have \eqref{E-sup}.

Now, let $x_0 \in \bbr^d$ and $u \in \cala$ be arbitrary, and denote by $X = X^{(x_0,u)}$ the solution to the system \eqref{system} with $X_0 = x_0$ and control $u$. Moreover, let $t \in \bbr_+$ be arbitrary. Note that for all $0 < q \leq r < \infty$ we have
\begin{align}\label{est-qr-xi}
\xi^q \leq 1 + \xi^r \quad \forall \xi \in \bbr_+,
\end{align}
and hence
\begin{align}\label{est-qr-x}
1 + \| x \|^q \leq 2 ( 1 + \| x \|^r ) \quad \forall x \in \bbr^d.
\end{align}
Since $p \geq 2$, we have $\frac{p}{2} \leq p-1$. Therefore, by \eqref{H-derivative-1} and \eqref{est-qr-x} we have
\begin{align}\label{H-derivative-1b}
\| \nabla H(x) \| &\leq 2C ( 1 + \| x \|^{p-1} ), \quad \forall x \in \bbr^d.
\end{align}
Moreover, by \eqref{growth-moments} we may assume that
\begin{align}\label{b-sigma-growth}
\| b(x,v) \| + \| \sigma(x,v) \| \leq C ( \| x \| + \| v \| ) \quad \forall x \in \bbr^d \quad \forall v \in \bbr^n.
\end{align}
Let $q \in (1,\infty)$ be the unique number such that $p^{-1} + q^{-1} = 1$. By \eqref{H-derivative-1b}, \eqref{b-sigma-growth}, H\"{o}lder's inequality and \eqref{E-sup}, \eqref{u-moment-p} we obtain
\begin{align*}
&\int_0^t \bbe \big[ \| \nabla H(X_s) \| \cdot \| b(X_s,u_s) \| \big] ds \leq 2C^2 \int_0^t \bbe \big[ ( 1 + \| X_s \|^{p-1} ) \cdot ( \| X_s \| + \| u_s \| ) \big] ds
\\ &\leq 2C^2 \int_0^t \bbe \big[ \| X_s \| + \| u_s \| \big] ds
 + 2C^2 \int_0^t \bbe \big[ \| X_s \|^p \big] ds + 2C^2 \int_0^t \bbe \big[ \| X_s \|^p \big]^{1/q} \, \bbe \big[ \| u_s \|^p \big]^{1/p} ds < \infty,
\end{align*}
where we note that by \eqref{est-qr-xi} we have
\begin{equation}\label{est-1-p}
\begin{aligned}
&\int_0^t \bbe \big[ \| X_s \|^p \big]^{1/q} \, \bbe \big[ \| u_s \|^p \big]^{1/p} ds \leq \int_0^t \Big( 1 + \bbe \big[ \| X_s \|^p \big] \Big) \Big( 1 + \bbe \big[ \| u_s \|^p \big] \Big) ds
\\ &\leq t + \int_0^t \bbe \big[ \| X_s \|^p \big] ds + \int_0^t \bbe \big[ \| u_s \|^p \big] ds + \bbe \bigg[ \sup_{s \in [0,t]} \| X_s \|^p \bigg] \int_0^t \bbe \big[ \| u_s \|^p \big] ds < \infty.
\end{aligned}
\end{equation}
Let $r \in (1,\infty]$ be the unique number such that $(\frac{p}{2})^{-1} + r^{-1} = 1$. Then by \eqref{b-sigma-growth}, \eqref{H-derivative-2} and \eqref{E-sup}, \eqref{u-moment-p} we have
\begin{align*}
&\int_0^t \bbe \big[ \| \sigma(X_s,u_s) \|^2 \cdot \| D^2 H(X_s) \| \big] ds \leq 2C^3 \int_0^t \bbe \big[  ( \| X_s \|^2 + \| u_s \|^2 ) ( 1 + \| X_s \|^{p-2} ) \big]
\\ &\leq 2C^3 \int_0^t \bbe \big[ \| X_s \|^2 + \| u_s \|^2 \big] ds
+ 2C^3 \int_0^t \bbe \big[ \| X_s \|^p \big] ds + 2C^3 \int_0^t \bbe \big[ \| u_s \|^p \big]^{2/p} \bbe \big[ \| X_s \|^p \big]^{1/r} ds < \infty,
\end{align*}
where with the help of \eqref{est-qr-xi} the last integral is estimated as in \eqref{est-1-p}. By \eqref{H-derivative-1} and \eqref{E-sup} we have
\begin{align*}
&\bbe\bigg[\sup_{s\in [0,t]}\|\nabla H(X_s)\|^2\bigg] \leq C^2 \bigg( 1 + \bbe\bigg[\sup_{s\in [0,t]}\| X_s \|^p \bigg] \bigg) < \infty.
\end{align*}
Moreover, by \eqref{b-sigma-growth} and \eqref{E-sup}, \eqref{u-moment-p} we have
\begin{align*}
\bbe\bigg[\int_0^t\|\sigma (X_s,u_s)\|^2ds\bigg] \leq 2 C^2 \int_0^t \bbe \big[ \| X_s \|^2 + \| u_s \|^2 \big] < \infty.
\end{align*}
By \eqref{f-growth}, H\"{o}lder's inequality and \eqref{E-sup}, \eqref{u-moment-p} we obtain
\begin{align*}
\int_0^t \bbe \big[ \| u_s \| \cdot \| f(X_s,u_s) \| \big] ds
&\leq C \int_0^t \bbe \big[ \| u_s \| \cdot ( \| X_s \|^{p-1} + \| u_s \|^{p-1} ) \big] ds
\\ &\leq C \int_0^t \bbe \big[ \| X_s \|^p \big]^{1/q} \, \bbe \big[ \| u_s \|^p \big]^{1/p} ds + C \int_0^t \bbe \big[ \| u_s \|^{p} \big] ds < \infty,
\end{align*}
where with the help of \eqref{est-qr-xi} the first integral is estimated as in \eqref{est-1-p}. Thus, applying Theorem \ref{thm-system-2}, Proposition \ref{prop-passive-SMP} and Remark \ref{rem-concepts} concludes the proof.
\end{proof}

If the storage function $H$ is bounded from below, then the result simplifies as follows.

\begin{theorem}\label{thm-f-p}
Suppose that Assumptions \ref{ass-H-C2}, \ref{ass-b-sigma}, \ref{ass-f}, \ref{ass-control-cont-in-zero}, \ref{ass-existence-suff-Lip} and Assumption \ref{ass-existence-stronger} with $\kappa_1(v) = K \| v \|$, $v \in \bbr^n$ for some $K > 0$ are fulfilled, and that $H$ is bounded from below. Moreover, suppose there is $p \in [2,\infty)$ such that \eqref{u-moment-p} is satisfied, and for some constant $C > 0$ we have \eqref{f-growth}. Then existence and uniqueness of strong solutions for the system \eqref{system} holds true, and for each $T \in \bbr_+$ there is a constant $C_{T,p} > 0$ such that for all $x_0 \in \bbr^d$ and all $u \in \cala$ we have the moment estimate \eqref{E-sup}. Moreover, the following statements are equivalent:
\begin{enumerate}
\item[(i)] The system \eqref{system} has the passive local supermartingale property with respect to the storage function~$H$.

\item[(ii)] The system \eqref{system} has the passive supermartingale property with respect to the storage function~$H$.

\item[(iii)] We have \eqref{cond-drift}.
\end{enumerate}
If these equivalent conditions are fulfilled, then the system \eqref{system} is stochastically passive with respect to the storage function $H$.
\end{theorem}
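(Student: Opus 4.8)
The plan is to reduce the entire statement to results established earlier in the paper, the only genuine work being the verification of the single integrability hypothesis of Proposition~\ref{prop-system-3}.

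First I would dispatch the existence, uniqueness, and moment claims. Assumptions~\ref{ass-b-sigma}, \ref{ass-existence-suff-Lip} and~\ref{ass-existence-stronger} are in force, and with the special choice $\kappa_1(v) = K\|v\|$ the moment condition~\eqref{moment-cond} collapses to exactly the hypothesis~\eqref{u-moment-p}. Hence Proposition~\ref{prop-moments} applies and yields both existence and uniqueness of strong solutions (so that Assumption~\ref{ass-solutions} holds) and the moment estimate~\eqref{E-sup}.

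Next I would establish the integrability condition~\eqref{int-part-u-Y}; by the remark following Proposition~\ref{prop-system-3} this also secures Assumption~\ref{ass-int-cond-for-passivity}. Fixing $x_0$, $u$ and $t$, one bounds $|\langle u_s, Y_s\rangle| \le \|u_s\|\cdot\|f(X_s,u_s)\|$ and applies the growth bound~\eqref{f-growth} to dominate the integrand by a constant multiple of $\|u_s\|(1 + \|X_s\|^{p-1} + \|u_s\|^{p-1})$. Taking expectations, integrating, and invoking H\"older's inequality together with the moment estimate~\eqref{E-sup} and the control moment bound~\eqref{u-moment-p} gives finiteness. This is the same computation already performed for~\eqref{int-cond-3} in the proof of Theorem~\ref{thm-H-p}, and it is the only substantial estimate needed.

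With these preliminaries in place the three-way equivalence follows mechanically. The equivalence (i)~$\Leftrightarrow$~(iii) is precisely Theorem~\ref{thm-system-2}, all of whose hypotheses (Assumptions~\ref{ass-solutions}, \ref{ass-int-cond-for-passivity}, \ref{ass-H-C2}, \ref{ass-b-sigma}, \ref{ass-f}, \ref{ass-control-cont-in-zero}) are now available. For (i)~$\Rightarrow$~(ii) I would invoke Proposition~\ref{prop-system-3}: $H$ is bounded from below by hypothesis, the passive local supermartingale property is exactly~(i), and~\eqref{int-part-u-Y} has just been verified, so the passive supermartingale property~(ii) follows. The reverse implication (ii)~$\Rightarrow$~(i), as well as the concluding assertion that the equivalent conditions imply stochastic passivity, are immediate from Remark~\ref{rem-concepts}(3). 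The point to watch---and the reason this theorem is cleaner than Theorem~\ref{thm-H-p}---is that boundedness from below of $H$ lets the argument route through Proposition~\ref{prop-system-3} rather than Proposition~\ref{prop-passive-SMP}; the finite-variation part of $Z$ is then controlled by the integrable increasing process $\int_0^{\cdot}|\langle u_s,Y_s\rangle|\,ds$, which never involves $\nabla H$ or $D^2H$, so no growth control on the gradient or Hessian of $H$ (conditions~\eqref{H-derivative-1}, \eqref{H-derivative-2}) is required. The expected obstacle is thus not conceptual but one of bookkeeping: confirming that the single estimate~\eqref{int-part-u-Y} truly needs only~\eqref{f-growth}, \eqref{u-moment-p} and~\eqref{E-sup}, with no hidden appeal to bounds on $\nabla H$ or $D^2H$.
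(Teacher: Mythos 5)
Your proposal is correct and follows essentially the same route as the paper's proof: Proposition~\ref{prop-moments} (with \eqref{moment-cond} reducing to \eqref{u-moment-p} for $\kappa_1(v)=K\|v\|$) for existence, uniqueness and the moment estimate \eqref{E-sup}; the verification of \eqref{int-part-u-Y} via $|\la u_s,Y_s\ra|\le \|u_s\|\cdot\|f(X_s,u_s)\|$, \eqref{f-growth}, H\"older's inequality, \eqref{E-sup} and \eqref{u-moment-p}, carried out exactly as for \eqref{int-cond-3} in the proof of Theorem~\ref{thm-H-p}; and then Theorem~\ref{thm-system-2}, Proposition~\ref{prop-system-3} and Remark~\ref{rem-concepts} to conclude. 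Your closing observation---that boundedness of $H$ from below permits routing through Proposition~\ref{prop-system-3} instead of Proposition~\ref{prop-passive-SMP}, so that no growth bounds \eqref{H-derivative-1}, \eqref{H-derivative-2} on $\nabla H$ or $D^2H$ are needed---is precisely the structural point of the paper's argument.
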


\begin{proof}
According to Proposition \ref{prop-moments} existence and uniqueness of strong solutions for the system \eqref{system} holds true, and for each $T \in \bbr_+$ there is a constant $C_{T,p} > 0$ such that for all $x_0 \in \bbr^d$ and all $u \in \cala$ we have \eqref{E-sup}.

Now, let $x_0 \in \bbr^d$ and $u \in \cala$ be arbitrary, and denote by $X = X^{(x_0,u)}$ the solution to the system \eqref{system} with $X_0 = x_0$ and control $u$. Furthermore, we set $Y := f(X,u)$. Let $q \in (1,\infty)$ be the unique number such that $p^{-1} + q^{-1} = 1$. Proceeding as in the proof of Theorem \ref{thm-H-p}, for all $t \in \bbr_+$ we obtain
\begin{align*}
\bbe \bigg[ \int_0^t |\la u_s,Y_s \ra| ds \bigg] \leq \int_0^t \bbe \big[ \| u_s \| \cdot \| f(X_s,u_s) \| \big] ds < \infty.
\end{align*}
Thus, applying Theorem \ref{thm-system-2}, Proposition \ref{prop-system-3} and Remark \ref{rem-concepts} concludes the proof.
\end{proof}

\section{Uncontrolled stochastic differential equations}\label{sec-uncontrolled}

In this section we apply our findings to the particular situation of uncontrolled stochastic differential equations. More precisely, we consider an $\bbr^d$-valued SDE of the form
\begin{align}\label{SDE}
\left\{
\begin{array}{rcl}
dX_t & = & b(X_t)dt + \sigma(X_t) dW_t
\\ X_0 & = & x_0
\end{array}
\right.
\end{align}
driven by an $\bbr^k$-valued Wiener process $W$ with continuous coefficients $b : \bbr^d \to \bbr^d$ and $\sigma : \bbr^d \to \bbr^{d \times k}$. We also fix a mapping $H : \bbr^d \to \bbr$. The following assumption is in particular satisfied if the coefficients $b$ and $\sigma$ are Lipschitz continuous.

\begin{assumption}\label{ass-SDE-1}
We suppose that existence and uniqueness of strong solutions for the SDE \eqref{SDE} holds true.
\end{assumption}

In what follows, we will also assume that the storage function $H$ is of class $C^2$.

\begin{definition}
We define $\call H : \bbr^d \to \bbr$ as
\begin{align*}
\call H(x) := \la \nabla H(x), b(x) \ra + \frac{1}{2} \tr \big( \sigma(x) \sigma(x)^{\top} D^2 H(x) \big).
\end{align*}
Furthermore, we define $\Sigma : \bbr^d \to \bbr^k$ as
\begin{align*}
\Sigma(x) := \nabla H(x)^{\top} \cdot \sigma(x).
\end{align*}
\end{definition}

\begin{corollary}
Suppose that Assumption \ref{ass-SDE-1} is fulfilled, and that $H$ is of class $C^2$. Then the following statements are equivalent:
\begin{enumerate}
\item[(i)] For each $x_0 \in \bbr$ we have $H(X) \in \calv^-$, where $X = X^{(x_0)}$ denotes the strong solution to the SDE \eqref{SDE} with $X_0 = x_0$.

\item[(ii)] We have
\begin{align}\label{SDE-cond-1}
\call H(x) &\leq 0 \quad \forall x \in \bbr^d,
\\ \label{SDE-cond-2} \Sigma(x) &= 0 \quad \forall x \in \bbr^d.
\end{align}
\end{enumerate}
\end{corollary}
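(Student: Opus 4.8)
The plan is to recognize this corollary as the specialization of Theorem \ref{thm-system-1} to the uncontrolled equation \eqref{SDE}, which arises from the system \eqref{system} by dropping the input entirely: since there is no control, the term $\la u_s, Y_s \ra$ in \eqref{Z-definition} vanishes and the relevant process reduces to $Z_t = H(X_t)$, while $\call H$ and $\Sigma$ specialize exactly to the maps defined in this section (the output contribution $-\la v, f \ra$ drops out). Accordingly I would reprove the two implications by mirroring the proof of Theorem \ref{thm-system-1}, the only genuine difference being that the role played there by Assumption \ref{ass-control-cont-in-zero} is now taken over, for free, by the deterministic initial condition $X_0 = x_0$.

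For (ii) $\Rightarrow$ (i), fix $x_0 \in \bbr^d$ and let $X = X^{(x_0)}$. It\^o's formula (the uncontrolled analogue of Lemma \ref{lemma-generator}) gives
\begin{align*}
H(X_t) = H(x_0) + \int_0^t \call H(X_s)\,ds + \int_0^t \Sigma(X_s)\,dW_s, \quad t \geq 0.
\end{align*}
Condition \eqref{SDE-cond-2} annihilates the stochastic integral, and condition \eqref{SDE-cond-1} makes $t \mapsto \int_0^t \call H(X_s)\,ds$ non-increasing; hence $H(X) - H(x_0) \in \calv^-$, which is the content of (i).

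For (i) $\Rightarrow$ (ii), the same It\^o expansion exhibits $H(X)$ as a special semimartingale with canonical decomposition $H(X) = H(x_0) + M + A$, where $M_t = \int_0^t \Sigma(X_s)\,dW_s \in \calm_{\loc}^0$ and $A_t = \int_0^t \call H(X_s)\,ds$ is continuous, predictable and of finite variation. Since (i) asserts that $H(X) - H(x_0) \in \calv^-$ is itself continuous, adapted and decreasing, hence predictable of finite variation with null martingale part, the uniqueness of the canonical decomposition (Proposition \ref{prop-can-decomp}) forces $M = 0$ and $A = H(X) - H(x_0) \in \calv^-$. From $\la M, M \ra_t = \int_0^t \| \Sigma(X_s) \|^2\,ds \equiv 0$ and from $A \in \calv^-$ I would then extract the pointwise conclusions \eqref{SDE-cond-2} and \eqref{SDE-cond-1}.

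The one step deserving care, and the closest thing to an obstacle, is this extraction at the initial time. Because $b,\sigma$ are continuous and $H \in C^2$, the maps $\call H$ and $\Sigma$ are continuous, and since $X$ has continuous paths with deterministic value $X_0 = x_0$, the sample paths $s \mapsto \| \Sigma(X_s) \|^2$ and $s \mapsto \call H(X_s)$ are continuous at zero with values $\| \Sigma(x_0) \|^2$ and $\call H(x_0)$. Applying Lemma \ref{lemma-integrals}(1) pathwise to the first (its primitive vanishes identically) yields $\Sigma(x_0) = 0$, and Lemma \ref{lemma-integrals}(2) pathwise to the second (its primitive is decreasing) yields $\call H(x_0) \leq 0$. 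As $x_0 \in \bbr^d$ was arbitrary, this establishes \eqref{SDE-cond-1} and \eqref{SDE-cond-2}. Note that the deterministic starting point supplies the continuity at zero directly, so that Assumption \ref{ass-control-cont-in-zero}, needed in Theorem \ref{thm-system-1} to realize an arbitrary value $v$ at time zero, plays no role here.
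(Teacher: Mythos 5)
Your proof is correct and is essentially the paper's argument: the paper disposes of this corollary with the single line ``immediate consequence of Theorem~\ref{thm-system-1}'', and you have simply inlined the specialization of that theorem's proof (It\^o expansion, uniqueness of the canonical decomposition via Proposition~\ref{prop-can-decomp}, vanishing quadratic variation, and Lemma~\ref{lemma-integrals} at time zero) to the uncontrolled case. Your observation that the deterministic initial condition $X_0 = x_0$ renders Assumption~\ref{ass-control-cont-in-zero} vacuous here is exactly the right accounting of why the theorem applies without further hypotheses.
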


\begin{proof}
This is an immediate consequence of Theorem \ref{thm-system-1}.
\end{proof}

\begin{corollary}
Suppose that Assumption \ref{ass-SDE-1} is fulfilled, and that $H$ is of class $C^2$. Moreover, we assume that $\sigma$ is of class $C^1$. Then the following statements are equivalent:
\begin{enumerate}
\item[(i)] For each $x_0 \in \bbr$ we have $H(X) \in \calv^-$, where $X = X^{(x_0)}$ denotes the strong solution to the SDE \eqref{SDE} with $X_0 = x_0$.

\item[(ii)] We have \eqref{SDE-cond-2} and
\begin{align*}
\bigg\la \nabla H(x), b(x) - \frac{1}{2} \sum_{j=1}^m D \sigma^j(x) \sigma^j(x) \bigg\ra \leq 0 \quad \forall x \in \bbr^d.
\end{align*}
\end{enumerate}
\end{corollary}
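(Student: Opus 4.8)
The plan is to treat this as the control-free analogue of Proposition~\ref{prop-system-1b}. That proposition is obtained by feeding the output of Theorem~\ref{thm-system-1} into Lemma~\ref{lem:stratonovic}; here I would instead feed the output of the preceding corollary (which is itself a consequence of Theorem~\ref{thm-system-1}) into the uncontrolled version of Lemma~\ref{lem:stratonovic}. Concretely, the SDE~\eqref{SDE} is the special case of the system~\eqref{system} with $f \equiv 0$ and constant control $u \equiv 0$, so that $\la u_s, Y_s \ra \equiv 0$, $Z_t = H(X_t)$, and the generator and diffusion coefficient collapse to the quantities $\call H(x)$ and $\Sigma(x)$ defined in this section.

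First I would apply the preceding corollary, which already establishes that (i) is equivalent to the pair of conditions \eqref{SDE-cond-1} and \eqref{SDE-cond-2}. It then remains only to show that, once \eqref{SDE-cond-2} holds, the drift condition \eqref{SDE-cond-1} is equivalent to the claimed Stratonovich inequality. For this I would invoke Lemma~\ref{lem:stratonovic}: its proof uses the hypotheses $\sigma(\cdot,u) \in C^1$ and~\eqref{cond-diffusion} only through the pointwise relation $\partial_{x_i}\la \nabla H, \sigma^j \ra = 0$, which is exactly the componentwise vanishing of $\Sigma$. Dropping the control argument, the identical computation gives, under $\sigma \in C^1$ and \eqref{SDE-cond-2},
\begin{align*}
\tr\big(\sigma(x)\sigma(x)^\top D^2 H(x)\big) = -\Big\la \nabla H(x), \sum_{j=1}^k D\sigma^j(x)\,\sigma^j(x)\Big\ra.
\end{align*}
Substituting this into $\call H(x) = \la \nabla H(x), b(x) \ra + \frac{1}{2}\tr\big(\sigma(x)\sigma(x)^\top D^2 H(x)\big)$ turns \eqref{SDE-cond-1} into the stated inequality, and chaining the two equivalences finishes the proof.

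I expect no genuine obstacle, since the statement is a direct corollary of results already in hand. The only point deserving a line of verification is that the computation in Lemma~\ref{lem:stratonovic} carries over verbatim to the uncontrolled case, which it does, because that computation never exploited the $v$-dependence beyond holding it fixed. As a cosmetic matter, the inequality in (ii) is indexed by $j=1,\ldots,m$ although the driving Wiener process $W$ is $\bbr^k$-valued; this should read $j=1,\ldots,k$.
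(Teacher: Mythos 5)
Your proposal is correct and follows essentially the same route as the paper, which simply cites Proposition~\ref{prop-system-1b} (itself the combination of Theorem~\ref{thm-system-1} and Lemma~\ref{lem:stratonovic}); your chaining of the preceding corollary with the uncontrolled version of Lemma~\ref{lem:stratonovic} is just that proposition unpacked, and your observation that the computation in Lemma~\ref{lem:stratonovic} carries over verbatim when the control argument is held fixed (or frozen at $u\equiv 0$ with $f\equiv 0$) is exactly right. Your cosmetic remark is also correct: the index bound $m$ in (ii) should indeed read $k$, matching the $\bbr^k$-valued Wiener process.
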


\begin{proof}
This is an immediate consequence of Proposition \ref{prop-system-1b}.
\end{proof}

\begin{corollary}
Suppose that Assumption \ref{ass-SDE-1} is fulfilled, and that $H$ is of class $C^2$. Then the following statements are equivalent:
\begin{enumerate}
\item[(i)] For each $x_0 \in \bbr$ the process $H(X)$ is a local supermartingale, where $X = X^{(x_0)}$ denotes the strong solution to the SDE \eqref{SDE} with $X_0 = x_0$.

\item[(ii)] We have \eqref{SDE-cond-1}.
\end{enumerate}
\end{corollary}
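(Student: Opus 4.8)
The plan is to realize the uncontrolled SDE \eqref{SDE} as a special case of the controlled system \eqref{system} and then invoke Theorem \ref{thm-system-2}. Concretely, I would take the input/output dimension to be $n = 1$, let $\cala$ consist of the constant controls $u \equiv v$ for $v \in \bbr$, and define the coefficients of \eqref{system} by $b(x,v) := b(x)$, $\sigma(x,v) := \sigma(x)$, and $f(x,v) := 0$, all independent of the second argument. With these choices the output vanishes, so the process from \eqref{Z-definition} reduces to $Z^{(x_0,v)} = H(X^{(x_0)})$, and the generator from \eqref{generator} collapses to $\call H(x,v) = \la \nabla H(x), b(x) \ra + \frac{1}{2} \tr(\sigma(x)\sigma(x)^{\top} D^2 H(x))$, which is exactly the $\call H$ of the uncontrolled setting. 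Under this identification, statement (i) of the corollary --- that $H(X^{(x_0)})$ is a local supermartingale for every $x_0$ --- is precisely the passive local supermartingale property of the embedded system, and condition \eqref{cond-drift} becomes \eqref{SDE-cond-1}.

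The key step is then to verify that the hypotheses of Theorem \ref{thm-system-2} are met under this embedding. Assumption \ref{ass-solutions} follows from Assumption \ref{ass-SDE-1}, since the controls are constant and the coefficients do not depend on them, so existence and uniqueness for \eqref{SDE} transfers verbatim. Assumption \ref{ass-H-C2} is part of the hypothesis; Assumption \ref{ass-b-sigma} holds because $b$ and $\sigma$ are continuous; Assumption \ref{ass-f} is trivial as $f \equiv 0$; Assumption \ref{ass-int-cond-for-passivity} holds because $\la u_s, f(X_s,u_s) \ra = 0$; and Assumption \ref{ass-control-cont-in-zero} holds because for each $v$ the constant control $u \equiv v$ is continuous at zero with $u_0 = v$. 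Theorem \ref{thm-system-2} then yields the equivalence of the passive local supermartingale property with $\call H(x,v) \le 0$ for all $x,v$, which is exactly \eqref{SDE-cond-1}.

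There is no substantial obstacle here: all the analytic work has already been carried out in Theorem \ref{thm-system-2} (ultimately resting on It\^o's formula via Lemma \ref{lemma-generator} and the local Doob--Meyer characterization in Proposition \ref{prop-super-MT}). The only point requiring a moment's care is confirming that the vanishing output term makes $Z$ coincide with $H(X)$ and that the reduced generator matches the uncontrolled $\call H$; once this bookkeeping is settled, the result is immediate, exactly as in the two preceding corollaries of this section.
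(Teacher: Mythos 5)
Your proposal is correct and follows the same route as the paper, which proves this corollary simply as an immediate consequence of Theorem \ref{thm-system-2}; your explicit embedding of the uncontrolled SDE \eqref{SDE} into the controlled system \eqref{system} (constant controls, $f \equiv 0$, coefficients independent of the control) is precisely the intended reduction, and your verification of Assumptions \ref{ass-solutions}, \ref{ass-int-cond-for-passivity}, \ref{ass-H-C2}, \ref{ass-b-sigma}, \ref{ass-f} and \ref{ass-control-cont-in-zero} is accurate. You merely spell out the bookkeeping that the paper leaves implicit.
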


\begin{proof}
This is an immediate consequence of Theorem \ref{thm-system-2}.
\end{proof}

\begin{corollary}
Suppose that $b$ and $\sigma$ are Lipschitz continuous, and that $H$ is of class $C^2$. Moreover, we assume that $H$ is bounded from below or that for some $p \in [2,\infty)$ we have \eqref{H-derivative-1} and \eqref{H-derivative-2}. Then the following statements are equivalent:
\begin{enumerate}
\item[(i)] For each $x_0 \in \bbr$ the process $H(X)$ is a local supermartingale, where $X = X^{(x_0)}$ denotes the strong solution to the SDE \eqref{SDE} with $X_0 = x_0$.

\item[(ii)] For each $x_0 \in \bbr$ the process $H(X)$ is a supermartingale, where $X = X^{(x_0)}$ denotes the strong solution to the SDE \eqref{SDE} with $X_0 = x_0$.

\item[(iii)] We have \eqref{SDE-cond-1}.
\end{enumerate}
If these equivalent conditions are fulfilled, then for each $x_0 \in \bbr$ we have $H(X_t) \in L^1$ for each $t \in \bbr_+$ and the mapping $\bbr_+ \to \bbr$, $t \mapsto \bbe[H(X_t)]$ is decreasing, where $X = X^{(x_0)}$ denotes the strong solution to the SDE \eqref{SDE} with $X_0 = x_0$.
\end{corollary}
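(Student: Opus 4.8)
The plan is to read this corollary as the uncontrolled specialization of the passive-supermartingale results of Section~\ref{sec-local-pass-implies-pass}, obtained by viewing the SDE~\eqref{SDE} as an instance of the system~\eqref{system} in which the input is present but inactive. Concretely, I would take the input dimension $n:=1$, set $f\equiv 0$, and let $\cala$ consist of the constant controls $u_t\equiv v$, $v\in\bbr$. With this choice the output vanishes, so $\la u_s,Y_s\ra=0$ and hence $Z^{(x_0,u)}=H(X)$; the constant controls are continuous, so Assumption~\ref{ass-control-cont-in-zero} holds, and the remaining standing assumptions~\ref{ass-int-cond-for-passivity},~\ref{ass-b-sigma},~\ref{ass-f} are immediate. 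The equivalence of (i) and (iii) is then exactly the preceding corollary (the local-supermartingale characterization coming from Theorem~\ref{thm-system-2}), and (ii)~$\Rightarrow$~(i) is trivial since every supermartingale is a local supermartingale. Thus the whole substance of the proof is the implication (i)~$\Rightarrow$~(ii), which I would establish separately under the two alternative hypotheses.

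For the case where $H$ is bounded from below I would invoke Proposition~\ref{prop-system-3}. Its standing hypotheses~\ref{ass-solutions} and~\ref{ass-H-C2} hold, the passive local supermartingale property is precisely~(i), and condition~\eqref{int-part-u-Y} holds trivially because $Y\equiv 0$. The proposition then yields directly that $H(X)=Z^{(x_0,u)}$ is a supermartingale, which is~(ii).

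For the case of the polynomial growth bounds~\eqref{H-derivative-1}--\eqref{H-derivative-2} I would instead route through Propositions~\ref{prop-moments} and~\ref{prop-passive-SMP}. Since $b$ and $\sigma$ are Lipschitz, they satisfy the local Lipschitz Assumption~\ref{ass-existence-suff-Lip} and a linear growth bound $\|b(x)\|+\|\sigma(x)\|\le \kappa_1+\kappa_2\|x\|$ with a \emph{constant} $\kappa_1$; this is exactly Assumption~\ref{ass-existence-stronger}, whose $\kappa_1$ is permitted to be any measurable function of the input, here a constant, and the moment condition~\eqref{moment-cond} then holds for every $p\in[2,\infty)$. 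Proposition~\ref{prop-moments} consequently delivers the moment estimate~\eqref{E-sup} for the chosen $p$. With this estimate in hand, verifying the integrability conditions~\eqref{int-cond-1}--\eqref{int-cond-4b} is routine: condition~\eqref{int-cond-3} is vacuous because $f\equiv 0$, while~\eqref{int-cond-1},~\eqref{int-cond-2},~\eqref{int-cond-4a},~\eqref{int-cond-4b} follow by inserting the polynomial bounds~\eqref{H-derivative-1},~\eqref{H-derivative-2} and the linear growth of $b,\sigma$ and applying H\"older's inequality together with~\eqref{E-sup}, exactly the bookkeeping carried out in the proof of Theorem~\ref{thm-H-p} but simpler. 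Proposition~\ref{prop-passive-SMP}(iii) then upgrades~(i) to~(ii).

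Finally, once~(ii) is established in either case, the concluding assertion that $H(X_t)\in L^1$ for each $t$ and that $t\mapsto\bbe[H(X_t)]$ is decreasing is precisely stochastic passivity and follows from the supermartingale property via Remark~\ref{rem-concepts}(3). I expect the only genuine obstacle to be a point of care in the growth-condition case: one must resist citing Theorems~\ref{thm-H-p} or~\ref{thm-f-p} directly, since those are phrased with $\kappa_1(v)=K\|v\|$, a form that a general Lipschitz coefficient with $b(0),\sigma(0)\ne 0$ does not meet; working through the underlying Propositions~\ref{prop-moments} and~\ref{prop-passive-SMP} with a constant $\kappa_1$ sidesteps this mismatch cleanly, and the remainder is a direct application of already-proved results.
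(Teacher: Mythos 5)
Your proof is correct, and for the key implication it is actually \emph{more} careful than the paper's own argument. The paper disposes of this corollary in two lines: Lipschitz continuity of $b$ and $\sigma$ yields the linear growth bound $\| b(x) \| + \| \sigma(x) \| \leq K(1+\| x \|)$, and the result is then declared an immediate consequence of Theorems~\ref{thm-H-p} and~\ref{thm-f-p}. You instead bypass those two theorems for the implication (i)~$\Rightarrow$~(ii), invoking Proposition~\ref{prop-system-3} in the bounded-below case and Propositions~\ref{prop-moments} and~\ref{prop-passive-SMP} with a \emph{constant} $\kappa_1$ in the polynomial-growth case. This divergence is substantive and the concern you raise at the end is legitimate: Theorems~\ref{thm-H-p} and~\ref{thm-f-p} assume Assumption~\ref{ass-existence-stronger} with $\kappa_1(v) = K\|v\|$, and under the natural embedding (coefficients independent of the control, $f \equiv 0$, $\cala$ the constant controls so that $Z^{(x_0,u)} = H(X^{(x_0)})$ for every $u$), taking $v=0$ in \eqref{growth-moments} would force $\|b(x)\|+\|\sigma(x)\| \leq \kappa_2\|x\|$, i.e.\ $b(0)=\sigma(0)=0$, which a general Lipschitz coefficient need not satisfy. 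So the paper's citation is literally loose and tacitly amounts to re-running the proofs of Theorems~\ref{thm-H-p} and~\ref{thm-f-p} with constant $\kappa_1$ (harmless there, since all $f$- and $u$-dependent terms vanish); your route through the underlying propositions makes this repair explicit. All your verifications check out: Assumptions~\ref{ass-solutions}, \ref{ass-int-cond-for-passivity}, \ref{ass-b-sigma}, \ref{ass-f}, \ref{ass-control-cont-in-zero}, \ref{ass-existence-suff-Lip} hold under the embedding; $\call H(x,v)=\call H(x)$ since $f\equiv 0$, so (i)~$\Leftrightarrow$~(iii) is indeed the preceding corollary (via Theorem~\ref{thm-system-2}); \eqref{moment-cond} is trivial for constant $\kappa_1$, so Proposition~\ref{prop-moments} gives \eqref{E-sup}; the checks of \eqref{int-cond-1}, \eqref{int-cond-2}, \eqref{int-cond-4a}, \eqref{int-cond-4b} via \eqref{H-derivative-1}--\eqref{H-derivative-2} and $p\geq 2$ are routine, with \eqref{int-cond-3} vacuous; and the final stochastic-passivity assertion follows from the supermartingale property exactly as in Remark~\ref{rem-concepts}. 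A side benefit of your route in the bounded-below case is that Proposition~\ref{prop-system-3} avoids the moment machinery entirely (condition~\eqref{int-part-u-Y} holds trivially since $Y\equiv 0$), whereas the paper's appeal to Theorem~\ref{thm-f-p} drags in hypotheses such as \eqref{u-moment-p} and \eqref{f-growth} that are unnecessary here.
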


\begin{proof}
Since $b$ and $\sigma$ are Lipschitz continuous, they also satisfy the linear growth condition
\begin{align*}
\| b(x) \| + \| \sigma(x) \| \leq K ( 1 + \| x \| ) \quad  \forall x \in \bbr^d
\end{align*}
for some constant $K > 0$. Therefore, the result is an immediate consequence of Theorems \ref{thm-H-p} and \ref{thm-f-p}.
\end{proof}

\section{Stochastic linear time-invariant input-state-output systems}\label{sec-SLTIS}

In this section we consider the case where the system \eqref{system} is a stochastic, linear, time-invariant input-state-output system (SLTIS).

\begin{assumption}\label{ass-linear-system}
    We suppose that the following conditions are fulfilled:
    \begin{enumerate}
        \item There exist $A \in \bbr^{d \times d}$, $B \in \bbr^{d \times n}$ such that for all $x \in \bbr^d$, $v\in\bbr^n$ it holds that $b(x,v) = Ax + Bv$.

        \item For each $j\in\{1,\dots,k\}$ there exist $\mathfrak{A}(j) \in \bbr^{d \times d}$, $\mathfrak{B}(j) \in \bbr^{d \times n}$ such that for all $x \in \bbr^d$, $v\in\bbr^n$ it holds that
        $\sigma^j(x,v) = \mathfrak{A}(j) x + \mathfrak{B}(j) v$.

        \item There exist $C \in \bbr^{n \times d}$, $D \in \bbr^{n \times n}$ such that for all $x \in \bbr^d$, $v\in\bbr^n$ it holds that $f(x,v) = Cx + Dv$.
    \end{enumerate}
\end{assumption}

Note that under Assumption \ref{ass-linear-system}, the system \eqref{system} takes the form
\begin{align}\label{eq:SLTIS}
\left\{
\begin{array}{rcl}
dX_t & = & (A X_t + B u_t) dt + \sum_{j=1}^k ( \mathfrak{A}(j) X_t + \mathfrak{B}(j) u_t ) dW_t^j
\\ Y_t & = & C X_t + D u_t
\\ X_0 & = & x_0 \in \bbr^d
\\ u & \in & \cala.
\end{array}
\right.
\end{align}
We also call $A,B,(\fA(j))_{j=1,\ldots,k},(\fB(j))_{j=1,\ldots,k},C,D$ the system matrices.

\begin{remark}\label{rem-linear-assumptions}
Suppose that Assumption \ref{ass-linear-system} is fulfilled.
\begin{enumerate}
\item In the sequel, we will often consider the storage function $H(x) = \frac{1}{2} \la Qx,x \ra$, $x\in\bbr^d$ for some symmetric matrix $Q \in \bbr^{d \times d}$. Then Assumption \ref{ass-H-C2} is satisfied, and for all $x \in \bbr^d$ we have
\begin{align*}
D H(x) v &= \la Qx,v \ra \quad \forall v \in \bbr^d,
\\ D^2 H(x)(v,w) &= \la Qv,w \ra \quad \forall v,w \in \bbr^d.
\end{align*}
Moreover, \eqref{H-derivative-1} and \eqref{H-derivative-2} hold true. 

\item Assumptions \ref{ass-b-sigma} and \ref{ass-f} are fulfilled, and we have
\begin{align*}
\| f(x,v) \| \leq K ( \| x \| + \| v \| ) \quad \forall x \in \bbr^d \quad \forall v \in \bbr^n,
\end{align*}
where $K := \| C \| + \| D \|$.
In particular, \eqref{f-growth} holds true. 

\item We can write $\sigma : \bbr^d \times \bbr^n \to \bbr^{d \times k}$ as
\begin{align*}
\sigma(x,v) = \mathfrak{A} x + \mathfrak{B} v \quad \forall x \in \bbr^d \quad \forall v \in \bbr^n
\end{align*}
with continuous linear operators $\mathfrak{A} \in L(\bbr^d,\bbr^{d \times k})$ and $\mathfrak{B} \in L(\bbr^n,\bbr^{d \times k})$. Therefore, Assumption \ref{ass-existence-suff-Lip} is satisfied with
\begin{align*}
L^R(v) := \| A \| + \| \mathfrak{A} \|,
\end{align*}
independent of $R \in \bbr_+$ and $v \in \bbr^n$. Furthermore, if
\begin{align}\label{int-cond-u-linear}
\bbe \bigg[ \int_0^t \| u_s \|^2 ds \bigg] < \infty \quad \forall u \in \cala \quad \forall t \in \bbr_+,
\end{align}
then Assumption \ref{ass-existence-stronger} is fulfilled with
\begin{align*}
\kappa_1(v) &:= \big( \| B \| + \| \mathfrak{B} \| \big) \| v \| \quad \forall v \in \bbr^n,
\\ \kappa_2 &:= \| A \| + \| \mathfrak{A} \|.
\end{align*}
Thus, using Proposition \ref{prop-moments}, under the additional condition \eqref{int-cond-u-linear} existence and uniqueness of strong solutions for the SLTIS \eqref{system} holds true, and for each $T \in \bbr_+$ there is a constant $C_{T} > 0$ such that for all $x_0 \in \bbr^d$ and all $u \in \cala$ we have the moment estimate
\begin{align}\label{moment-estimate-linear-system}
\bbe \bigg[ \sup_{t \in [0,T]} \| X_t \|^2 \bigg] \leq C_{T} \big( 1 + \| x_0 \|^2 \big),
\end{align}
where $X = X^{(x_0,u)}$ denotes the strong solution to the SLTIS~\eqref{system} with $X_0 = x_0$ and control $u$.

\item 
Note that under the additional condition \eqref{int-cond-u-linear} we have that item (2), the Cauchy--Schwarz inequality, and \eqref{moment-estimate-linear-system} imply for all $x_0 \in \bbr^d$ and all $u \in \cala$ that 
\begin{equation*}
    \bbe\bigg[ \int_0^t \lvert \langle u_s , f(X_s^{(x_0,u)},u_s) \rangle \rvert ds \bigg] < \infty \quad \forall t \in \bbr_+,
\end{equation*}
where $X^{(x_0,u)}$ denotes the strong solution to the SLTIS~\eqref{system} with $X^{(x_0,u)}_0=x_0$ and control $u$. 
In particular, Assumption \ref{ass-int-cond-for-passivity} is satisfied under the additional condition \eqref{int-cond-u-linear}.

\item Consider the storage function $H(x) = \frac{1}{2} \la Qx,x \ra$, $x\in\bbr^d$, for some symmetric matrix $Q \in \bbr^{d \times d}$ and suppose that condition \eqref{int-cond-u-linear} is fulfilled. We then have from \eqref{moment-estimate-linear-system} and item (4) that 
for all $x_0 \in \bbr^d$, $u \in \cala$, and $t \in \bbr_+$ it holds that $Z_t^{(x_0,u)} \in L^1$. 
\end{enumerate}
\end{remark}

In the setting of Assumption~\ref{ass-linear-system}
with a quadratic storage function as in item~(1) in Remark~\ref{rem-linear-assumptions}, 
we have the following representations for 
\eqref{generator} and \eqref{capital-sigma}.

\begin{lemma}\label{lemma-generator-linear-case}
    Suppose that Assumption \ref{ass-linear-system} is fulfilled and assume that there exists a symmetric matrix $Q \in \bbr^{d \times d}$ such that $H(x) = \frac{1}{2} \la Qx,x \ra$, $x\in\bbr^d$.
    Then we have for all $x \in \bbr^d$ and all $v \in \bbr^n$ that 
    \begin{align*}
        (\Sigma(x,v))_j & = \langle x, Q \fA(j) x + Q \fB(j) v \rangle 
        \quad \forall j=1,\ldots,k,\\
        \call H(x,v) & = 
            \big\langle x, \big( QA + \tfrac12 \textstyle{\sum_{j=1}^k} \fA^\top(j) Q \fA(j) \big) x \big\rangle 
            + \big\langle x, \big( QB - C^\top + \textstyle{\sum_{j=1}^k} \fA^\top(j) Q \fB(j) \big) v \big\rangle \\
            & \quad + \big\langle v, \big(\tfrac12 \textstyle{\sum_{j=1}^k} \fB^\top(j) Q \fB(j) - D \big) v \big\rangle .
    \end{align*}
\end{lemma}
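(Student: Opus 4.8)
The plan is to carry out a direct computation, substituting the explicit linear forms of $b$, $\sigma$ and $f$ together with the quadratic $H$ into the definitions \eqref{generator} and \eqref{capital-sigma}. The key preliminary observation, already recorded in Remark \ref{rem-linear-assumptions}(1), is that the symmetry of $Q$ yields $\nabla H(x) = Qx$ and $D^2 H(x) = Q$ for all $x \in \bbr^d$. Everything else reduces to rearranging transposes by means of the identity $\la Mx, w \ra = \la x, M^\top w \ra$ together with the symmetry $Q^\top = Q$.

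For the formula for $\Sigma$, I would compute the $j$-th component directly. By \eqref{capital-sigma} we have $(\Sigma(x,v))_j = (Qx)^\top \sigma^j(x,v)$, and since $\sigma^j(x,v) = \fA(j)x + \fB(j)v$, the symmetry of $Q$ gives $(\Sigma(x,v))_j = \la Qx, \fA(j)x + \fB(j)v \ra = \la x, Q\fA(j)x + Q\fB(j)v \ra$, which is the asserted identity.

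For $\call H$, I would treat the three summands of \eqref{generator} separately. The drift term expands as $\la Qx, Ax+Bv \ra = \la x, QAx \ra + \la x, QBv \ra$, and the output term as $-\la v, Cx+Dv \ra = -\la x, C^\top v \ra - \la v, Dv \ra$, using $\la v, Cx \ra = \la x, C^\top v \ra$. The only slightly more involved piece is the diffusion term. Writing $\sigma(x,v)\sigma(x,v)^\top = \sum_{j=1}^k \sigma^j(x,v) (\sigma^j(x,v))^\top$ as a sum of rank-one matrices, the trace becomes $\tr(\sigma \sigma^\top Q) = \sum_{j=1}^k (\sigma^j)^\top Q \sigma^j$; substituting $\sigma^j = \fA(j)x + \fB(j)v$ and expanding the quadratic form produces four terms per summand. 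The two mixed terms, $x^\top \fA(j)^\top Q \fB(j) v$ and $v^\top \fB(j)^\top Q \fA(j) x$, are transposes of one another and hence equal by the symmetry of $Q$, so they combine (after multiplication by the prefactor $\tfrac12$) into the single cross contribution $\sum_{j=1}^k x^\top \fA(j)^\top Q \fB(j) v$.

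Collecting the coefficients of the quadratic-in-$x$, the bilinear-in-$(x,v)$, and the quadratic-in-$v$ parts then yields exactly the claimed expression for $\call H$. There is no genuine obstacle here, as the argument is purely computational; the only point demanding care is the transpose bookkeeping, in particular the symmetrization of the two mixed terms in the diffusion contribution, which is precisely where the full coefficient $\sum_{j=1}^k \fA^\top(j) Q \fB(j)$ (rather than half of it) in the $(x,v)$-bilinear part arises.
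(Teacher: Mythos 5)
Your proposal is correct and follows essentially the same route as the paper's proof: direct substitution of $\nabla H(x)=Qx$ and $D^2H(x)=Q$ into \eqref{generator} and \eqref{capital-sigma}, expansion of the trace term columnwise as $\tr\big(\sigma\sigma^\top Q\big)=\sum_{j=1}^k \la Q\sigma^j,\sigma^j\ra$, and symmetrization of the two mixed terms via $Q^\top=Q$ to obtain the full coefficient $\sum_{j=1}^k \fA^\top(j)Q\fB(j)$ in the bilinear part. The only cosmetic difference is that the paper phrases the diffusion contribution through the bilinear form $D^2H(x)(v,w)=\la Qv,w\ra$ from Remark \ref{rem-linear-assumptions} rather than through the rank-one decomposition of $\sigma\sigma^\top$, which amounts to the same computation.
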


\begin{proof}
    The representation for $\Sigma$ follows from \eqref{capital-sigma}, the fact that $\forall x \in \bbr^d\colon \nabla H(x) = Qx$, the symmetry of $Q$, and item~(2) in Assumption~\ref{ass-linear-system}. 
    To obtain the representation for $\call H$, note that \eqref{generator}, item~(1) in Remark~\ref{rem-linear-assumptions}, Assumption~\ref{ass-linear-system}, and the fact that $Q$ is symmetric imply for all $x \in \bbr^d$ and all $v \in \bbr^n$ that 
    \begin{align*}
    \call H(x,v) &= \la \nabla H(x), b(x,v) \ra + \frac{1}{2} \sum_{j=1}^k D^2 H(x) ( \sigma^j(x,v), \sigma^j(x,v) ) - \la v,f(x,v) \ra
    \\ &= \la Qx, b(x,v) \ra 
    - \la v,f(x,v) \ra 
    + \frac{1}{2} \sum_{j=1}^k \la Q \sigma^j(x,v), \sigma^j(x,v) \ra 
    \\ &= \la Qx, Ax + Bv \ra
    - \la v,Cx + Dv \ra
    \\ &\quad + \frac{1}{2} \sum_{j=1}^k \Big( \la Q \mathfrak{A}(j) x, \mathfrak{A}(j) x \ra + \la Q \mathfrak{A}(j) x, \mathfrak{B}(j) v \ra + \la Q \mathfrak{B}(j) v, \mathfrak{A}(j) x \ra 
    + \la Q \mathfrak{B}(j) v, \mathfrak{B}(j) v \ra \Big)
    \\ &= \la x, QA x \ra + \la x, QBv \ra
    - \la x,C^{\top} v \ra - \la v,Dv \ra
    \\ &\quad + \frac{1}{2} \sum_{j=1}^k \Big( \la x, \mathfrak{A}^{\top}(j) Q \mathfrak{A}(j) x \ra + 2 \la x, \mathfrak{A}^{\top}(j) Q \mathfrak{B}(j) v \ra + \la v, \mathfrak{B}^{\top}(j) Q \mathfrak{B}(j) v \ra \Big)
    \\
    & = 
    \Big\langle x, \Big( QA + \frac12 \sum_{j=1}^k \fA^\top(j) Q \fA(j) \Big) x \Big\rangle 
    + \Big\langle v, \Big(\frac12 \sum_{j=1}^k \fB^\top(j) Q \fB(j) - D \Big) v \Big\rangle
    \\
    & \quad + \Big\langle x, \Big( QB - C^\top + \sum_{j=1}^k \fA^\top(j) Q \fB(j) \Big) v \Big\rangle .
    \end{align*}
    This completes the proof.
\end{proof}

Under Assumption \ref{ass-linear-system} we introduce for each symmetric matrix $Q\in \bbr^{d\times d}$ the $\bbr^{(d+n)\times (d+n)}$-matrix
	\begin{equation}\label{drift-cond-matrix}
		\mathfrak{M}_Q :=
		\begin{pmatrix}
			QA+A^\top Q+\sum_{j=1}^k\fA^\top(j) Q \fA(j) & QB-C^\top +\sum_{j=1}^k\fA^\top(j) Q \fB(j)\\
			B^\top Q-C +\sum_{j=1}^k\fB^\top(j) Q \fA(j) & - (D+D^\top) +\sum_{j=1}^k\fB^\top(j) Q \fB(j)
		\end{pmatrix}
        .
	\end{equation}

\begin{remark}
    \begin{enumerate}
        \item It is well known that passivity of deterministic linear systems can be characterized by a linear matrix inequality (LMI) (see, e.g., \cite[Section 5]{willems1972dissipative}). The condition $\mathfrak{M}_Q\le 0$ with $\mathfrak{M}_Q$ given by \eqref{drift-cond-matrix} extends this deterministic LMI to the stochastic case (in the sense that if $\fA(j)=0$ and $\fB(j)=0$ for all $j\in \{1,\ldots,k\}$, the inequality $\mathfrak{M}_Q\le 0$ reduces to the inequality from the deterministic theory). Below we will show that, similar to the deterministic case, under suitable conditions the negative semidefiniteness of $\mathfrak{M}_Q$ characterizes stochastic passivity properties of the SLTIS~\eqref{system}.

        \item 
        Linear matrix inequalities $\mathfrak{M}_Q\le 0$ with $\mathfrak{M}_Q$ of the form in \eqref{drift-cond-matrix} arise in infinite-horizon optimal stochastic linear-quadratic control problems (see, e.g., \cite{rami2000linear}). This connection is also reflected by our proof of Proposition~\ref{propo-linear-quadraticstorage-1} (see also Remark~\ref{rem:available_storage}), where we solve the optimal stochastic linear-quadratic control problem of computing the available storage to obtain a quadratic storage function for a passive stochastic system.
    \end{enumerate}
\end{remark}

\begin{lemma}\label{rem-linear-generator}
    Suppose that Assumption \ref{ass-linear-system} is fulfilled. Assume that there exists a symmetric matrix $Q \in \bbr^{d \times d}$ such that $H(x) = \frac{1}{2} \la Qx,x \ra$, $x\in\bbr^d$.
    Then 
    it holds for all $x\in \bbr^d$ and all $v\in\bbr^n$ that 
    \begin{equation*}
        \call H(x,v)
        = \frac12 
        \begin{pmatrix}
            x^\top & v^\top
        \end{pmatrix}
        \mathfrak{M}_Q 
        \begin{pmatrix}
            x\\
            v
        \end{pmatrix}
        .
    \end{equation*}
\end{lemma}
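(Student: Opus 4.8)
The plan is to expand the quadratic form on the right-hand side blockwise and to match it, term by term, with the expression for $\call H(x,v)$ already established in Lemma \ref{lemma-generator-linear-case}. Writing the four blocks of $\mathfrak{M}_Q$ from \eqref{drift-cond-matrix} as $M_{11}, M_{12}, M_{21}, M_{22}$, so that $M_{11} = QA + A^\top Q + \sum_{j=1}^k \fA^\top(j) Q \fA(j)$, $M_{12} = QB - C^\top + \sum_{j=1}^k \fA^\top(j) Q \fB(j)$, $M_{21} = B^\top Q - C + \sum_{j=1}^k \fB^\top(j) Q \fA(j)$, and $M_{22} = -(D+D^\top) + \sum_{j=1}^k \fB^\top(j) Q \fB(j)$, the right-hand side becomes
\[
\frac12 \begin{pmatrix} x^\top & v^\top \end{pmatrix} \mathfrak{M}_Q \begin{pmatrix} x \\ v \end{pmatrix} = \frac12 \Big( \langle x, M_{11} x \rangle + \langle x, M_{12} v \rangle + \langle v, M_{21} x \rangle + \langle v, M_{22} v \rangle \Big).
\]
It then remains to compare the $xx$-, cross-, and $vv$-contributions with the three summands in Lemma \ref{lemma-generator-linear-case}.

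For the pure $x$- and $v$-terms the key observation is that a scalar equals its own transpose, so that $\langle x, QA x\rangle = \langle x, A^\top Q x\rangle$ and $\langle v, D v\rangle = \langle v, D^\top v\rangle$. Consequently $\langle x, M_{11} x\rangle = 2\langle x, QAx\rangle + \langle x, \sum_{j=1}^k \fA^\top(j) Q \fA(j) x\rangle$, and after multiplication by $\tfrac12$ this reproduces exactly the coefficient $QA + \tfrac12 \sum_{j=1}^k \fA^\top(j) Q \fA(j)$ from Lemma \ref{lemma-generator-linear-case}; analogously $\tfrac12 \langle v, M_{22} v\rangle$ yields the coefficient $\tfrac12 \sum_{j=1}^k \fB^\top(j) Q \fB(j) - D$. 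In other words, the symmetrizations $QA + A^\top Q$ and $-(D+D^\top)$ built into $\mathfrak{M}_Q$, together with the prefactor $\tfrac12$, are precisely what recovers the asymmetric-looking coefficients appearing in the generator.

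For the cross-terms I would use that $Q$ is symmetric, which makes $\mathfrak{M}_Q$ symmetric; in particular $M_{21} = M_{12}^\top$. Hence $\langle v, M_{21} x\rangle = \langle M_{12} v, x\rangle = \langle x, M_{12} v\rangle$, so the two cross-contributions coincide, the factor $\tfrac12$ cancels, and one is left with $\langle x, (QB - C^\top + \sum_{j=1}^k \fA^\top(j) Q \fB(j)) v\rangle$, matching the middle summand of Lemma \ref{lemma-generator-linear-case}. Collecting the three matched pieces then yields the claimed identity. I do not expect any genuine obstacle here: the argument is pure bookkeeping, and the only point requiring care is keeping track of the symmetrizations and the factor $\tfrac12$, that is, remembering that the quadratic form depends only on the symmetric part of the matrix $\mathfrak{M}_Q$.
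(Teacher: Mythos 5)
Your proposal is correct and follows the same route as the paper, whose proof of this lemma is simply the observation that the identity follows from Lemma \ref{lemma-generator-linear-case}; you merely carry out explicitly the blockwise expansion and symmetrization bookkeeping (using $(QA)^\top = A^\top Q$, $M_{21} = M_{12}^\top$, and the factor $\tfrac12$) that the paper leaves implicit. All the matchings you perform check out, so there is no gap.
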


\begin{proof}
    The claim follows from Lemma~\ref{lemma-generator-linear-case}.
\end{proof}

\begin{corollary}\label{lemma-linear-driftcond-lmi}
    Suppose that Assumption \ref{ass-linear-system} is fulfilled. Assume that there exists a symmetric matrix $Q \in \bbr^{d \times d}$ such that $H(x) = \frac{1}{2} \la Qx,x \ra$, $x\in\bbr^d$.
    Then
    condition~\eqref{cond-drift} is satisfied if and only if
    $\mathfrak{M}_Q\le 0$.
\end{corollary}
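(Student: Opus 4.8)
The plan is to reduce the statement entirely to the quadratic-form representation already established in Lemma~\ref{rem-linear-generator}. That lemma gives, for all $x \in \bbr^d$ and $v \in \bbr^n$, the identity $\call H(x,v) = \frac{1}{2}\begin{pmatrix} x^\top & v^\top \end{pmatrix} \mathfrak{M}_Q \begin{pmatrix} x \\ v \end{pmatrix}$, so once this is in hand the corollary is just a matter of unwinding the definition of negative semidefiniteness.

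First I would recall that condition~\eqref{cond-drift} asks precisely that $\call H(x,v) \le 0$ for all $x \in \bbr^d$ and all $v \in \bbr^n$. Inserting the representation from Lemma~\ref{rem-linear-generator} and writing $w := (x^\top, v^\top)^\top \in \bbr^{d+n}$, this is equivalent to requiring $\tfrac{1}{2}\, w^\top \mathfrak{M}_Q\, w \le 0$ for every such $w$; the positive factor $\tfrac12$ plays no role and may be dropped.

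The only remaining point---and it is immediate---is that as $x$ ranges over $\bbr^d$ and $v$ ranges over $\bbr^n$ independently, the stacked vector $w$ ranges over all of $\bbr^{d+n}$. Consequently, \eqref{cond-drift} holds if and only if $w^\top \mathfrak{M}_Q\, w \le 0$ for all $w \in \bbr^{d+n}$, which is by definition the statement $\mathfrak{M}_Q \le 0$. I do not expect any genuine obstacle here: all the substantive work was carried out in Lemmas~\ref{lemma-generator-linear-case} and~\ref{rem-linear-generator}, and what remains is purely the translation between \emph{nonpositivity of the quadratic form on all of $\bbr^{d+n}$} and \emph{negative semidefiniteness of the matrix $\mathfrak{M}_Q$}.
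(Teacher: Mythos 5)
Your proposal is correct and follows exactly the paper's route: the paper likewise deduces the corollary as an immediate consequence of the quadratic-form representation $\call H(x,v)=\tfrac12\begin{pmatrix}x^\top & v^\top\end{pmatrix}\mathfrak{M}_Q\begin{pmatrix}x\\ v\end{pmatrix}$ from Lemma~\ref{rem-linear-generator}, with the same observation that $(x,v)$ ranges over all of $\bbr^{d+n}$. (Since $\mathfrak{M}_Q$ is symmetric by its definition in~\eqref{drift-cond-matrix}, the translation from nonpositivity of the quadratic form to $\mathfrak{M}_Q\le 0$ is indeed immediate.)
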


\begin{proof}
    This is an immediate consequence of 
    Lemma~\ref{rem-linear-generator}.
\end{proof}

By combining Corollary~\ref{lemma-linear-driftcond-lmi}
and Theorem~\ref{thm-H-p}, we obtain a characterization of the passive supermartingale property of an SLTIS with a quadratic storage function in terms of a linear matrix inequality.

\begin{theorem}\label{thm-linear-1}
Suppose that Assumptions \ref{ass-control-cont-in-zero}, \ref{ass-linear-system} and condition \eqref{int-cond-u-linear} are fulfilled, and that there exists a symmetric matrix $Q \in \bbr^{d \times d}$ such that $H(x) = \frac{1}{2} \la Qx,x \ra$, $x\in\bbr^d$. Then the following statements are equivalent:
\begin{enumerate}
\item[(i)] The SLTIS \eqref{system} has the passive local supermartingale property with respect to the storage function~$H$.

\item[(ii)] The SLTIS \eqref{system} has the passive supermartingale property with respect to the storage function~$H$.

\item[(iii)] We have $\mathfrak{M}_Q \le 0$.
\end{enumerate}
If these equivalent conditions are fulfilled, then the SLTIS \eqref{system} is stochastically passive with respect to the storage function~$H$.
\end{theorem}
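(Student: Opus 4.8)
The plan is to derive Theorem~\ref{thm-linear-1} by specializing Theorem~\ref{thm-H-p} to the linear setting with the choice $p = 2$, and then to rephrase the analytic condition \eqref{cond-drift} as the matrix inequality $\mathfrak{M}_Q \le 0$ via Corollary~\ref{lemma-linear-driftcond-lmi}. Since Theorem~\ref{thm-H-p} already packages the equivalence of the passive local supermartingale property, the passive supermartingale property, and \eqref{cond-drift}, together with the conclusion of stochastic passivity, the only genuine task is to check that each of its hypotheses is licensed by Assumption~\ref{ass-linear-system}, the quadratic form of $H$, and condition \eqref{int-cond-u-linear}. All of these verifications have in fact already been assembled in Remark~\ref{rem-linear-assumptions}.

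First I would confirm the structural assumptions. Assumption~\ref{ass-H-C2} holds because $H(x) = \frac{1}{2}\langle Qx, x\rangle$ is a quadratic form (Remark~\ref{rem-linear-assumptions}(1)), while Assumptions~\ref{ass-b-sigma} and~\ref{ass-f} hold because $b$, $\sigma$, and $f$ are affine, hence continuous (Remark~\ref{rem-linear-assumptions}(2)). The local Lipschitz Assumption~\ref{ass-existence-suff-Lip} is satisfied with $L^R(v) = \|A\| + \|\mathfrak{A}\|$, and, under condition \eqref{int-cond-u-linear}, the growth Assumption~\ref{ass-existence-stronger} holds with $\kappa_1(v) = (\|B\| + \|\mathfrak{B}\|)\|v\|$ and $\kappa_2 = \|A\| + \|\mathfrak{A}\|$ (Remark~\ref{rem-linear-assumptions}(3)); in particular $\kappa_1$ has the required form $K\|v\|$ with $K = \|B\| + \|\mathfrak{B}\|$, as demanded by Theorem~\ref{thm-H-p}.

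Next I would verify the moment and growth conditions of Theorem~\ref{thm-H-p} for the exponent $p = 2$. The moment condition \eqref{u-moment-p} coincides with \eqref{int-cond-u-linear} when $p = 2$. The output bound \eqref{f-growth} reduces to $\|f(x,v)\| \le C(1 + \|x\| + \|v\|)$, which follows from the affine estimate $\|f(x,v)\| \le (\|C\| + \|D\|)(\|x\| + \|v\|)$ recorded in Remark~\ref{rem-linear-assumptions}(2). The Hessian and gradient bounds \eqref{H-derivative-1} and \eqref{H-derivative-2} hold because $\nabla H(x) = Qx$ gives $\|\nabla H(x)\| \le \|Q\|(1 + \|x\|)$, matching \eqref{H-derivative-1} for $p = 2$, and $D^2 H \equiv Q$ is constant, hence bounded, matching \eqref{H-derivative-2} for $p = 2$ (cf.\ Remark~\ref{rem-linear-assumptions}(1)). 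As Assumption~\ref{ass-control-cont-in-zero} is part of the present hypotheses, every assumption of Theorem~\ref{thm-H-p} is in force.

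Applying Theorem~\ref{thm-H-p} then yields the equivalence of (i), (ii), and condition \eqref{cond-drift}, along with the final assertion that the system is stochastically passive. Finally, Corollary~\ref{lemma-linear-driftcond-lmi} identifies \eqref{cond-drift} with $\mathfrak{M}_Q \le 0$, so statement~(iii) enters the chain of equivalences and the proof is complete. I expect no real analytic obstacle here; the only point requiring care is the bookkeeping that every hypothesis of Theorem~\ref{thm-H-p} is supplied by the linear structure under the specific choice $p = 2$, so that condition \eqref{int-cond-u-linear} (the case $p=2$) suffices and no higher integrability of the controls is needed.
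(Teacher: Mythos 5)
Your proposal is correct and is essentially the paper's own proof: the paper likewise deduces Theorem~\ref{thm-linear-1} by combining Remark~\ref{rem-linear-assumptions} (which supplies all hypotheses of Theorem~\ref{thm-H-p} in the linear setting, implicitly with $p=2$) with Corollary~\ref{lemma-linear-driftcond-lmi} to rewrite \eqref{cond-drift} as $\mathfrak{M}_Q \le 0$. You have merely made explicit the bookkeeping that the paper compresses into a one-line citation, and your verifications (choice $p=2$, $\kappa_1(v)=(\|B\|+\|\mathfrak{B}\|)\|v\|$, the gradient/Hessian bounds for quadratic $H$) are all accurate.
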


\begin{proof}
Taking into account Remark \ref{rem-linear-assumptions} and Corollary \ref{lemma-linear-driftcond-lmi}, this is a consequence of Theorem \ref{thm-H-p}.
\end{proof}

\begin{remark}
Under the (typical) additional condition $Q \geq 0$ the statement of Theorem \ref{thm-linear-1} is also a consequence of Theorem \ref{thm-f-p}.
\end{remark}

We next characterize passivity of SLTIS in Theorem~\ref{thm-linear-passive}. To this end, we in Lemma~\ref{lemma-linear-cond-sigma} first obtain a condition that is equivalent to condition~\eqref{cond-diffusion}.

\begin{lemma}\label{lemma-linear-cond-sigma}
Suppose that Assumption \ref{ass-linear-system} is fulfilled. Assume that there exists a symmetric matrix $Q \in \bbr^{d \times d}$ such that $H(x) = \frac{1}{2} \la Qx,x \ra$, $x\in\bbr^d$.
Then
condition~\eqref{cond-diffusion} is satisfied if and only if $Q \mathfrak{A}(j)$ is skew-symmetric and $Q \mathfrak{B}(j) = 0$ for all $j=1,\ldots,k$.
\end{lemma}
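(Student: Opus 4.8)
The plan is to start from the explicit formula for $\Sigma$ already recorded in Lemma~\ref{lemma-generator-linear-case}, namely
\begin{align*}
(\Sigma(x,v))_j = \la x, Q\fA(j) x + Q\fB(j) v \ra, \quad j = 1,\ldots,k,
\end{align*}
so that condition~\eqref{cond-diffusion} is precisely the requirement that, for every $j \in \{1,\ldots,k\}$, the scalar $\la x, Q\fA(j)x\ra + \la x, Q\fB(j)v\ra$ vanishes for all $x \in \bbr^d$ and all $v \in \bbr^n$. Since the $k$ components decouple, I would fix $j$ throughout and prove the equivalence for a single index.

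For the implication from \eqref{cond-diffusion} to the matrix conditions, the idea is to separate the purely quadratic part (in $x$) from the bilinear part (in $x$ and $v$) by specializing the control. Setting $v = 0$ leaves $\la x, Q\fA(j)x\ra = 0$ for all $x \in \bbr^d$. Here I would invoke the standard linear-algebra fact that a quadratic form $x \mapsto \la x, Mx\ra$ vanishes identically if and only if the symmetric part $\tfrac12(M + M^\top)$ is zero, i.e.\ if and only if $M$ is skew-symmetric; applied to $M = Q\fA(j)$ this yields that $Q\fA(j)$ is skew-symmetric. Feeding this back into the full identity removes the quadratic term and leaves $\la x, Q\fB(j)v\ra = 0$ for all $x$ and $v$. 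Because the Euclidean inner product is nondegenerate, fixing $v$ and letting $x$ range over $\bbr^d$ forces $Q\fB(j)v = 0$, and letting $v$ range over $\bbr^n$ then gives $Q\fB(j) = 0$.

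The converse is immediate and requires no real work: if $Q\fA(j)$ is skew-symmetric then $\la x, Q\fA(j)x\ra = 0$ for every $x$, and if $Q\fB(j) = 0$ then the bilinear term vanishes as well, so $(\Sigma(x,v))_j = 0$ for all $x, v$. Repeating the argument for each $j$ completes the proof. There is no genuine obstacle here; the only point deserving care is the characterization of identically-vanishing quadratic forms via the symmetric part, which is what turns the single scalar identity into the pair of matrix conditions (skew-symmetry of $Q\fA(j)$ rather than $Q\fA(j) = 0$).
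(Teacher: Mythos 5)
Your proposal is correct and follows essentially the same route as the paper: both reduce via the representation $(\Sigma(x,v))_j = \la x, Q\fA(j)x + Q\fB(j)v\ra$ from Lemma~\ref{lemma-generator-linear-case} to the algebraic fact that $\la x, \widetilde{A}x + \widetilde{B}v\ra = 0$ for all $x,v$ holds if and only if $\widetilde{A}$ is skew-symmetric and $\widetilde{B} = 0$. The only difference is that the paper states this fact without proof, whereas you spell out its verification (setting $v=0$, using the symmetric part, then nondegeneracy of the inner product), which is a valid and complete justification.
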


\begin{proof}
    Note that two matrices $\widetilde A \in \bbr^{d \times d}$ and $\widetilde B \in \bbr^{d \times n}$ satisfy 
    \begin{align*}
    \la x, \widetilde Ax + \widetilde Bv \ra = 0 \quad \forall x \in \bbr^d \quad \forall v \in \bbr^n
    \end{align*}
    if and only if $\widetilde A$ is skew-symmetric and $\widetilde B = 0$.
    Therefore, the claim follows from the representation in  Lemma~\ref{lemma-generator-linear-case}.
\end{proof}

\begin{theorem}\label{thm-linear-passive}
Suppose that Assumptions \ref{ass-control-cont-in-zero}, \ref{ass-linear-system} and condition \eqref{int-cond-u-linear} are fulfilled, and that there exists a symmetric matrix $Q \in \bbr^{d \times d}$ such that $H(x) = \frac{1}{2} \la Qx,x \ra$, $x\in\bbr^d$. Then the following statements are equivalent:
\begin{enumerate}
\item[(i)] The SLTIS \eqref{system} is passive with respect to the storage function $H$.

\item[(ii)] $Q \mathfrak{A}(j)$ is skew-symmetric and $Q \mathfrak{B}(j) = 0$ for all $j=1,\ldots,k$, and we have
\begin{align*}
\left(
\begin{array}{cc}
QA + A^{\top} Q - Q \sum_{j=1}^k \mathfrak{A}(j)^2 & QB - C^{\top}
\\ B^{\top} Q - C & -D - D^{\top}
\end{array}
\right) \leq 0.
\end{align*}
\end{enumerate}
\end{theorem}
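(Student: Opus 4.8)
The plan is to combine the characterization of passivity from Theorem~\ref{thm-system-1} with the linear-algebraic translations already established in Corollary~\ref{lemma-linear-driftcond-lmi} and Lemma~\ref{lemma-linear-cond-sigma}. Theorem~\ref{thm-system-1} tells us that, under the standing assumptions, passivity of the system is equivalent to the conjunction of the drift condition~\eqref{cond-drift}, namely $\call H(x,v)\le 0$ for all $x,v$, and the diffusion condition~\eqref{cond-diffusion}, namely $\Sigma(x,v)=0$ for all $x,v$. The goal is thus to rewrite both conditions in the quadratic setting.

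First I would verify that the hypotheses of Theorem~\ref{thm-system-1} are met in the present linear setting. Assumptions \ref{ass-H-C2}, \ref{ass-b-sigma}, and \ref{ass-f} hold by items (1) and (2) of Remark~\ref{rem-linear-assumptions}, Assumption~\ref{ass-solutions} holds by item (3) of that remark (using condition~\eqref{int-cond-u-linear}), and Assumption~\ref{ass-int-cond-for-passivity} holds by item (4). Assumption~\ref{ass-control-cont-in-zero} is assumed outright. Hence Theorem~\ref{thm-system-1} applies and passivity is equivalent to \eqref{cond-drift} together with \eqref{cond-diffusion}.

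Next I would translate each condition. By Lemma~\ref{lemma-linear-cond-sigma}, condition~\eqref{cond-diffusion} holds if and only if $Q\fA(j)$ is skew-symmetric and $Q\fB(j)=0$ for all $j=1,\ldots,k$, which is exactly the first requirement in statement~(ii). For the drift condition, rather than invoking $\mathfrak{M}_Q\le 0$ directly, I would exploit the fact that we are already imposing the diffusion condition. Under $Q\fB(j)=0$, the entries of $\mathfrak{M}_Q$ in~\eqref{drift-cond-matrix} simplify: every term $\fA^\top(j)Q\fB(j)$ and $\fB^\top(j)Q\fB(j)$ vanishes, so the off-diagonal block becomes $QB-C^\top$ and the lower-right block becomes $-(D+D^\top)$. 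For the upper-left block, skew-symmetry of $Q\fA(j)$ gives $\fA^\top(j)Q\fA(j)=(Q\fA(j))^\top\fA(j)=-Q\fA(j)\fA(j)=-Q\fA(j)^2$; summing over $j$ converts $\sum_j\fA^\top(j)Q\fA(j)$ into $-Q\sum_j\fA(j)^2$. Thus, under the diffusion condition, $\mathfrak{M}_Q$ equals precisely the matrix displayed in statement~(ii), and by Corollary~\ref{lemma-linear-driftcond-lmi} the drift condition~\eqref{cond-drift} is equivalent to the negative semidefiniteness of this matrix.

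The main subtlety—and the step I would be most careful about—is the logical structure of the equivalence. Corollary~\ref{lemma-linear-driftcond-lmi} states that \eqref{cond-drift} is equivalent to $\mathfrak{M}_Q\le 0$ for the \emph{full} matrix $\mathfrak{M}_Q$; but statement~(ii) asserts negative semidefiniteness of the \emph{simplified} matrix. These two matrices coincide only once the diffusion condition is in force. The clean way to handle this is to prove both directions under the assumption that the diffusion condition holds: assuming $Q\fA(j)$ skew-symmetric and $Q\fB(j)=0$, the identity $\mathfrak{M}_Q=\left(\begin{smallmatrix}QA+A^\top Q-Q\sum_j\fA(j)^2 & QB-C^\top\\ B^\top Q-C & -D-D^\top\end{smallmatrix}\right)$ holds, so \eqref{cond-drift} $\Leftrightarrow$ $\mathfrak{M}_Q\le 0$ $\Leftrightarrow$ negative semidefiniteness of the simplified matrix. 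Combining this with the equivalence of \eqref{cond-diffusion} and the skew-symmetry/vanishing conditions via Lemma~\ref{lemma-linear-cond-sigma} then yields that (i) is equivalent to (ii), completing the proof.
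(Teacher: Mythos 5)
Your proof is correct and takes essentially the same route as the paper's: both reduce passivity to conditions \eqref{cond-drift} and \eqref{cond-diffusion} via Theorem~\ref{thm-system-1}, translate \eqref{cond-diffusion} through Lemma~\ref{lemma-linear-cond-sigma}, and use the identity $\fA^\top(j)Q\fA(j)=(Q\fA(j))^\top\fA(j)=-Q\fA(j)^2$ (valid under skew-symmetry of $Q\fA(j)$ and symmetry of $Q$, together with $Q\fB(j)=0$) to show that $\mathfrak{M}_Q$ coincides with the displayed matrix, so that Corollary~\ref{lemma-linear-driftcond-lmi} applies in both directions. Your explicit verification of the hypotheses of Theorem~\ref{thm-system-1} via Remark~\ref{rem-linear-assumptions} is a bookkeeping step the paper leaves implicit, but otherwise the arguments are identical.
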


\begin{proof}
    Note first that if $Q \mathfrak{A}(j)$ is skew-symmetric and $Q \mathfrak{B}(j) = 0$ for all $j=1,\ldots,k$, then we have that
    \begin{equation*}
    \begin{split}
		\mathfrak{M}_Q
        & =
		\begin{pmatrix}
			QA+A^\top Q+\sum_{j=1}^k (Q\fA(j))^\top \fA(j) & QB-C^\top +\sum_{j=1}^k\fA^\top(j) Q \fB(j)\\
			B^\top Q-C +\sum_{j=1}^k (\fA^\top(j) Q \fB(j))^\top  & - D-D^\top +\sum_{j=1}^k\fB^\top(j) Q \fB(j)
		\end{pmatrix}
        \\
        & =
        \begin{pmatrix}
            QA + A^{\top} Q - Q \sum_{j=1}^k \mathfrak{A}(j)^2 & QB - C^{\top}\\
            B^{\top} Q - C & -D - D^{\top}
        \end{pmatrix}
        .
    \end{split}
	\end{equation*}

    (i) $\Rightarrow$ (ii):
    Since the SLTIS \eqref{system} is passive, Theorem~\ref{thm-system-1} implies that the conditions \eqref{cond-drift} and \eqref{cond-diffusion} are satisfied.
    From \eqref{cond-drift} and Corollary~\ref{lemma-linear-driftcond-lmi} we obtain that $\mathfrak{M}_Q \le 0$.
    Combining this, \eqref{cond-diffusion}, and Lemma~\ref{lemma-linear-cond-sigma} yields (ii).

    (ii) $\Rightarrow$ (i):
    Since $\mathfrak{M}_Q \le 0$, we obtain from Corollary~\ref{lemma-linear-driftcond-lmi} that \eqref{cond-drift} is satisfied.
    Lemma~\ref{lemma-linear-cond-sigma} and the fact that $Q \mathfrak{A}(j)$ is skew-symmetric and $Q \mathfrak{B}(j) = 0$ for all $j=1,\ldots,k$ show that \eqref{cond-diffusion} holds.
    Hence, Theorem~\ref{thm-system-1} ensures (i).
\end{proof}

\section{Stochastic passivity results for linear time-invariant systems}\label{sec-SLTIS-2}

In this section we consider an SLTIS as in Section~\ref{sec-SLTIS} and examine stochastic passivity.
We first introduce the following assumption, which strengthens Assumption \ref{ass-control-cont-in-zero} by additionally requiring an integrability property of the control for small time horizons.

\begin{assumption}\label{ass-control-cont-in-zero-and-integrable}
For each $v \in \bbr^n$ there exist a control process $u \in \cala$ and $\delta\in(0,\infty)$ such  that $u$ is continuous at zero with $u_0 = v$ 
and 
$\bbe[\sup_{t\in[0,\delta]} \lVert u_t \rVert^2]<\infty$. 
\end{assumption}

Under this assumption we derive that stochastic passivity of an SLTIS with a quadratic storage function as in item (1) in Remark~\ref{rem-linear-assumptions} implies that the associated matrix defined in \eqref{drift-cond-matrix} is negative semidefinite.

\begin{lemma}\label{lemma-linear-stoch-passive-lmi}
    Suppose that Assumptions \ref{ass-linear-system}, \ref{ass-control-cont-in-zero-and-integrable}  and condition \eqref{int-cond-u-linear} are fulfilled, that there exists a symmetric matrix $Q \in \bbr^{d \times d}$ such that $H(x) = \frac{1}{2} \la Qx,x \ra$, $x\in\bbr^d$, 
    and that for all $x_0\in\bbr^d$ and all $u\in\cala$ there exists $\varepsilon \in (0,\infty)$ such that 
    \begin{align}\label{stoch-passivity-close-to-zero}
        \sup_{t\in[0,\varepsilon]} \bbe[Z_{t}^{(x_0,u)}]\le H(x_0) .
    \end{align}
	Then it holds that $\mathfrak{M}_Q \le 0$.
\end{lemma}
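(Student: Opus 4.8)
The plan is to reduce the matrix inequality $\mathfrak{M}_Q\le 0$ to the pointwise drift condition \eqref{cond-drift}, and then to extract this pointwise condition from the near-zero stochastic passivity \eqref{stoch-passivity-close-to-zero} by a right-derivative-at-time-zero argument. By Lemma~\ref{rem-linear-generator} and Corollary~\ref{lemma-linear-driftcond-lmi}, $\mathfrak{M}_Q\le 0$ holds if and only if $\call H(x,v)\le 0$ for all $x\in\bbr^d$ and $v\in\bbr^n$. So I fix arbitrary $x_0\in\bbr^d$ and $v\in\bbr^n$ and aim to show $\call H(x_0,v)\le 0$. Using Assumption~\ref{ass-control-cont-in-zero-and-integrable}, I choose a control $u\in\cala$ and $\delta\in(0,\infty)$ with $u_0=v$, with $u$ continuous at zero, and with $\bbe[\sup_{s\in[0,\delta]}\|u_s\|^2]<\infty$, and denote by $X=X^{(x_0,u)}$ the corresponding strong solution, which exists and satisfies the moment estimate \eqref{moment-estimate-linear-system} by Remark~\ref{rem-linear-assumptions}.

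Next I would pass to expectations in the representation of Lemma~\ref{lemma-generator}, namely $Z_t^{(x_0,u)}=H(x_0)+\int_0^t\call H(X_s,u_s)\,ds+M_t$ with $M_t=\int_0^t\Sigma(X_s,u_s)\,dW_s$. In the present linear--quadratic setting with $p=2$, conditions \eqref{int-cond-1}--\eqref{int-cond-4b} are satisfied (this is exactly what is verified in the proof of Theorem~\ref{thm-linear-1} via Theorem~\ref{thm-H-p}), so Proposition~\ref{prop-passive-SMP} guarantees that $M$ is a true martingale, whence $\bbe[M_t]=0$, and that $\int_0^t\call H(X_s,u_s)\,ds\in L^1$. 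Together with the integrability furnished by \eqref{int-cond-1}--\eqref{int-cond-3}, Fubini's theorem then gives $\bbe[Z_t^{(x_0,u)}]=H(x_0)+\int_0^t\psi(s)\,ds$, where $\psi(s):=\bbe[\call H(X_s,u_s)]$. Finally, \eqref{stoch-passivity-close-to-zero} supplies $\varepsilon\in(0,\infty)$ with $\bbe[Z_t^{(x_0,u)}]\le H(x_0)$ for all $t\in[0,\varepsilon]$, i.e. $\int_0^t\psi(s)\,ds\le 0$ for all $t\in[0,\varepsilon]$.

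The main obstacle is to show that $\psi$ is continuous at zero with $\psi(0)=\call H(x_0,v)$, so that the sign of the integral near zero transfers to the integrand at zero. I would argue by dominated convergence: as $t\downarrow 0$ one has $X_t\to x_0$ almost surely by sample-path continuity of $X$ and $u_t\to v$ almost surely by continuity of $u$ at zero, hence $\call H(X_t,u_t)\to\call H(x_0,v)$ almost surely by continuity of $\call H$ (Remark~\ref{rem-gen-cont}). Since $\call H$ is the quadratic form of Lemma~\ref{rem-linear-generator}, it satisfies $|\call H(x,v)|\le \tfrac12\|\mathfrak{M}_Q\|(\|x\|^2+\|v\|^2)$, so for $t\in[0,\delta]$ the integrand is dominated by the single random variable $\tfrac12\|\mathfrak{M}_Q\|\big(\sup_{s\in[0,\delta]}\|X_s\|^2+\sup_{s\in[0,\delta]}\|u_s\|^2\big)$, whose expectation is finite precisely because of \eqref{moment-estimate-linear-system} and the extra integrability $\bbe[\sup_{s\in[0,\delta]}\|u_s\|^2]<\infty$ built into Assumption~\ref{ass-control-cont-in-zero-and-integrable}. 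Dominated convergence then yields $\psi(t)\to\call H(x_0,v)=\psi(0)$, so $\psi$ is continuous at zero.

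I would then conclude exactly as in the proof of Lemma~\ref{lemma-integrals}(2): if $\call H(x_0,v)=\psi(0)>0$, continuity of $\psi$ at zero furnishes $\delta'\in(0,\varepsilon]$ with $\psi(s)\ge\tfrac12\psi(0)>0$ for $s\in[0,\delta']$, giving $\int_0^{\delta'}\psi(s)\,ds\ge\tfrac12\psi(0)\,\delta'>0$ and contradicting $\int_0^{\delta'}\psi(s)\,ds\le 0$. Hence $\call H(x_0,v)\le 0$, and since $x_0\in\bbr^d$ and $v\in\bbr^n$ were arbitrary, condition \eqref{cond-drift} holds. By Corollary~\ref{lemma-linear-driftcond-lmi} this is equivalent to $\mathfrak{M}_Q\le 0$, which completes the argument.
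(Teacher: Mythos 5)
Your proposal is correct and follows essentially the same route as the paper's proof: both pass to expectations in the It\^{o} decomposition of $Z^{(x_0,u)}$ (killing the martingale part via Proposition \ref{prop-passive-SMP}), and both extract the pointwise inequality at $(x_0,v)$ by a right-derivative-at-zero argument using dominated convergence, with exactly the same dominating variable built from the moment estimate \eqref{moment-estimate-linear-system} and the integrability $\bbe[\sup_{s\in[0,\delta]}\|u_s\|^2]<\infty$ of Assumption \ref{ass-control-cont-in-zero-and-integrable}. The only (cosmetic) difference is that you interchange expectation and time integral via Fubini and then apply the Lemma \ref{lemma-integrals}-style argument to $\psi(s)=\bbe[\call H(X_s,u_s)]$, whereas the paper applies dominated convergence directly to the Ces\`{a}ro averages $\frac{1}{h}\int_0^h(\cdot)\,ds$ of the pathwise integral of the quadratic form.
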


\begin{proof}
    Let $x_0 \in \bbr^d$ and $v\in \bbr^n$ be arbitrary. 
    Due to Assumption~\ref{ass-control-cont-in-zero-and-integrable}, 
    there exist $u\in\cala$ with $u_0=v$ and $\delta\in(0,\infty)$ such that 
    $u$ is continuous at zero
    and satisfies $\bbe[\sup_{t\in[0,\delta]} \lVert u_t \rVert^2]<\infty$. 
    Note that Lemma~\ref{lemma-generator}
    implies for all $t\in [0,\infty)$ that 
    \begin{equation*}
        \begin{split}
            Z_{t}^{(x_0,u)} - H(x_0) 
            & = \int_0^t \call H(X_{s}^{(x_0,u)},u_s) ds 
            + \int_0^t \Sigma(X_{s}^{(x_0,u)},u_s) dW_s .
        \end{split}
    \end{equation*}
    Moreover, observe that 
    Remark~\ref{rem-linear-assumptions}, Theorem~\ref{thm-H-p}, and Proposition~\ref{prop-passive-SMP} show that 
    the process $M$ with $M_t = \int_0^t \Sigma(X_{s}^{(x_0,u)},u_s) dW_s$, $t\in[0,\infty)$, is a martingale and that for all $t\in[0,\infty)$ we have that $\int_0^t \call H(X_{s}^{(x_0,u)},u_s) ds \in L^1$. 
    From Lemma~\ref{rem-linear-generator} we thus obtain for all $t\in[0,\infty)$ that 
    \begin{equation}\label{eq:Ito_quadratic}
        \begin{split}
            \bbe[Z_{t}^{(x_0,u)}] - H(x_0) 
            & = \frac{1}{2}\bbe\bigg[\int_{0}^{t}
			\begin{pmatrix}
				(X_s^{(x_0,u)})^\top & u_s^\top
			\end{pmatrix}
			\mathfrak{M}_Q
			\begin{pmatrix}
				X_s^{(x_0,u)}\\
				u_s
			\end{pmatrix}
			ds\bigg].
        \end{split}
    \end{equation}
    It follows from \eqref{stoch-passivity-close-to-zero} for all $h\in [0,\varepsilon]$ that 
        \begin{equation}\label{proof-stoch-pass-lmi-nonpositive}
        \begin{split}
            0\ge \frac{2}{h} \big( \bbe[Z_{h}^{(x_0,u)}] - H(x_0) \big)
            & = \frac{1}{h}\bbe\bigg[\int_{0}^{h}
			\begin{pmatrix}
				(X_s^{(x_0,u)})^\top & u_s^\top
			\end{pmatrix}
			\mathfrak{M}_Q
			\begin{pmatrix}
				X_s^{(x_0,u)}\\
				u_s
			\end{pmatrix}
			ds\bigg].
        \end{split}
    \end{equation} 
	Since $X^{(x_0,u)}$ is a continuous process with initial value $X^{(x_0,u)}_0=x_0$ and $u$ is continuous at zero with $u_0=v$, we can show that $\IP$-a.s.\ it holds that
	\begin{equation*}
		\lim_{h\downarrow 0}
		\frac{1}{h} \int_{0}^{h}
		\begin{pmatrix}
			(X_s^{(x_0,u)})^\top & u_s^\top
		\end{pmatrix}
		\mathfrak{M}_Q
		\begin{pmatrix}
			X_s^{(x_0,u)}\\
			u_s
		\end{pmatrix}
		ds
		= \begin{pmatrix}
			x_0^\top & v^\top
		\end{pmatrix}
		\mathfrak{M}_Q
		\begin{pmatrix}
			x_0\\
			v
		\end{pmatrix} .
	\end{equation*}
	Moreover, \eqref{moment-estimate-linear-system} and the fact that $\bbe[\sup_{t\in[0,\delta]} \lVert u_t \rVert^2]<\infty$ ensure that
	\begin{equation*}
    \sup_{s\in[0,\delta]}
    \bigg\lvert
		\begin{pmatrix}
			(X_s^{(x_0,u)})^\top & u_s^\top
		\end{pmatrix}
		\mathfrak{M}_Q
		\begin{pmatrix}
			X_s^{(x_0,u)}\\
			u_s
		\end{pmatrix}
		\bigg\rvert
        \in L^1.
	\end{equation*} 
	Therefore, the dominated convergence theorem implies that
	\begin{equation*}
		\begin{split}
			& \lim_{h\downarrow 0}
			\frac{1}{h} \E\bigg[\int_{0}^{h}
			\begin{pmatrix}
				(X_s^{(x_0,u)})^\top & u_s^\top
			\end{pmatrix}
			\mathfrak{M}_Q
			\begin{pmatrix}
				X_s^{(x_0,u)}\\
				u_s
			\end{pmatrix}
			ds\bigg]
			=
			\begin{pmatrix}
				x_0^\top & v^\top
			\end{pmatrix}
			\mathfrak{M}_Q
			\begin{pmatrix}
				x_0\\
				v
			\end{pmatrix}
			.
		\end{split}
	\end{equation*}
	It thus follows from \eqref{proof-stoch-pass-lmi-nonpositive} that
	\begin{equation*}
		0 \ge
		\begin{pmatrix}
			x_0^\top & v^\top
		\end{pmatrix}
		\mathfrak{M}_Q
		\begin{pmatrix}
			x_0\\
			v
		\end{pmatrix}
		.
	\end{equation*}
    This proves that $\mathfrak{M}_Q\le 0$.
\end{proof}

Under Assumption~\ref{ass-control-cont-in-zero-and-integrable} we can now add stochastic passivity as a further equivalent statement in Theorem~\ref{thm-linear-1}.

\begin{theorem}\label{thm-linear-3}
    Suppose that Assumptions \ref{ass-linear-system}, \ref{ass-control-cont-in-zero-and-integrable}  and condition \eqref{int-cond-u-linear} are fulfilled,
    and that there exists a symmetric matrix $Q \in \bbr^{d \times d}$ such that $H(x) = \frac{1}{2} \la Qx,x \ra$, $x\in\bbr^d$.
    Then the following statements are equivalent:
    \begin{enumerate}
        \item[(i)] The SLTIS \eqref{system} is stochastically passive with respect to the storage function~$H$.

        \item[(ii)] The SLTIS \eqref{system} has the passive locale supermartingale property with respect to the storage function~$H$.

        \item[(iii)] The SLTIS \eqref{system} has the passive supermartingale property with respect to the storage function~$H$.
            
        \item[(iv)] We have
        $\mathfrak{M}_Q \le 0$.
    \end{enumerate}
\end{theorem}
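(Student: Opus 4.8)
The plan is to build on the equivalence (ii) $\Leftrightarrow$ (iii) $\Leftrightarrow$ (iv) already established in Theorem~\ref{thm-linear-1} and to bring stochastic passivity (i) into the cycle. The first observation is that Assumption~\ref{ass-control-cont-in-zero-and-integrable} is a strengthening of Assumption~\ref{ass-control-cont-in-zero}; hence all hypotheses of Theorem~\ref{thm-linear-1} are in force, and that theorem immediately yields the equivalence of the passive local supermartingale property, the passive supermartingale property, and the matrix inequality $\mathfrak{M}_Q \le 0$. It therefore suffices to prove the two implications (iii) $\Rightarrow$ (i) and (i) $\Rightarrow$ (iv) in order to close the loop.

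For (iii) $\Rightarrow$ (i) I would simply invoke Remark~\ref{rem-concepts}(3), according to which the passive supermartingale property implies stochastic passivity; no further argument is needed here.

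The substantive step is (i) $\Rightarrow$ (iv), and the plan is to reduce it to Lemma~\ref{lemma-linear-stoch-passive-lmi}. I would first note that $Z_0^{(x_0,u)} = H(x_0)$ is deterministic, so $\bbe[Z_0^{(x_0,u)}] = H(x_0)$. If the system is stochastically passive, then by definition $Z_t^{(x_0,u)} \in L^1$ for all $t \ge 0$ and the map $t \mapsto \bbe[Z_t^{(x_0,u)}]$ is decreasing, whence $\bbe[Z_t^{(x_0,u)}] \le \bbe[Z_0^{(x_0,u)}] = H(x_0)$ for every $t \ge 0$. In particular, condition~\eqref{stoch-passivity-close-to-zero} holds for every $x_0 \in \bbr^d$ and $u \in \cala$ (with any choice of $\varepsilon \in (0,\infty)$). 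Lemma~\ref{lemma-linear-stoch-passive-lmi} then delivers $\mathfrak{M}_Q \le 0$, that is (iv). Chaining (iv) $\Rightarrow$ (iii) $\Rightarrow$ (i) $\Rightarrow$ (iv) together with (ii) $\Leftrightarrow$ (iv) completes the equivalence.

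I do not expect a genuine obstacle here, since the analytic heart of the argument has already been carried out in the proof of Lemma~\ref{lemma-linear-stoch-passive-lmi}: the differentiation of $t \mapsto \bbe[Z_t^{(x_0,u)}]$ at $t = 0$ via the dominated convergence theorem, the martingale property of the stochastic integral $\int_0^{\cdot} \Sigma(X_s,u_s)\,dW_s$, and the quadratic representation of $\call H$ from Lemma~\ref{rem-linear-generator}. The only point requiring care is to verify that stochastic passivity supplies precisely the hypothesis~\eqref{stoch-passivity-close-to-zero} of that lemma, which it does because the decreasing expectation is pinned at the deterministic value $H(x_0)$ at time zero.
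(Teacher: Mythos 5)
Your proposal is correct and follows essentially the same route as the paper: Theorem~\ref{thm-linear-1} gives (ii) $\Leftrightarrow$ (iii) $\Leftrightarrow$ (iv) (Assumption~\ref{ass-control-cont-in-zero-and-integrable} indeed implies Assumption~\ref{ass-control-cont-in-zero}), Remark~\ref{rem-concepts}(3) gives (iii) $\Rightarrow$ (i), and (i) $\Rightarrow$ (iv) is obtained exactly as in the paper by observing that stochastic passivity pins $\bbe[Z_t^{(x_0,u)}] \le \bbe[Z_0^{(x_0,u)}] = H(x_0)$, which is the hypothesis~\eqref{stoch-passivity-close-to-zero} of Lemma~\ref{lemma-linear-stoch-passive-lmi}.
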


\begin{proof}
    Note that Theorem~\ref{thm-linear-1} shows that 
    (ii), (iii), and (iv) are equivalent.
    Moreover, it holds that (iii) implies (i) (see also the third statement in Remark~\ref{rem-concepts}). 
    It hence remains to show that (i) implies (iv).   
    To this end, 
    let the SLTIS~\eqref{system} be stochastically passive with respect to the storage function~$H$.
    Then for all $x_0\in\bbr^d$, all $u\in\cala$  and all $t\in\bbr_+$ it holds that 
    \begin{equation*}
        H(x_0)=\bbe[Z_0^{(x_0,u)}]
        \ge \bbe[Z_t^{(x_0,u)}] .
    \end{equation*}
    Hence, Lemma~\ref{lemma-linear-stoch-passive-lmi} yields that $\mathfrak{M}_Q\le 0$.
\end{proof}

In the next result we provide a sufficient condition under which a stochastically passive SLTIS 
(with respect to some storage function that is bounded from below)
is stochastically passive with respect to a quadratic storage function.
For this result we use techniques similar to those in \cite[Section 3]{ait2000well} established under the assumption that the system is stabilizable (see also Remark \ref{rem:available_storage} below).

\begin{proposition}\label{propo-linear-quadraticstorage-1}
    Assume that $(\calf_t)_{t\ge 0}$ is the natural filtration of $W$ augmented by all the $\IP$-null sets in~$\calf$. 
    For all $T\ge 0$ let $\cala_T$ denote the set of all $(\cF_t)_{t\in[0,T]}$-progressively measurable processes $u\colon \Omega \times [0,T] \to \bbr^n$ that satisfy 
    \begin{equation*}
        \bbe\bigg[ \int_0^T \lVert u_s \rVert^2 ds \bigg] < \infty.
    \end{equation*}
    Suppose that Assumption \ref{ass-linear-system} 
    and condition \eqref{int-cond-u-linear}
    are fulfilled and that for all $T >0$ 
    and all $u \in \cala_T$ 
    there exists $\tilde u\in\cala$ such that $\tilde u(\omega,t)=u(\omega,t)$ for all $\omega \in \Omega$ and all $t\in[0,T]$. 
    Assume that the SLTIS~\eqref{system} is stochastically passive with respect to 
    a storage function $H$ that is bounded from below. 
    Then
    there exists a positive semidefinite matrix $Q \in \R^{d\times d}$ such that the SLTIS~ \eqref{system} is stochastically passive with respect to $\cH\colon \R^d \to [0,\infty)$, $\cH(x)=\frac12 \langle Q x, x \rangle$. Moreover, $Q$ is the minimal solution of $\mathfrak M_Q\le 0$ in the set of positive semidefinite matrices.
\end{proposition}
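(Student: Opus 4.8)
The plan is to construct $Q$ as the Gram matrix of the \emph{available storage} of the system, adapting the classical Willems construction to the present stochastic setting. For $x_0 \in \bbr^d$ I would set
\[
S_a(x_0) := \sup_{T \ge 0}\ \sup_{u \in \cala_T} \bbe\bigg[ -\int_0^T \langle u_s, Y_s^{(x_0,u)} \rangle\, ds \bigg],
\]
where $Y^{(x_0,u)} = f(X^{(x_0,u)},u)$ and $X^{(x_0,u)}$ is driven by an extension $\tilde u \in \cala$ of $u$; this depends only on $u|_{[0,T]}$, and it is well defined by the extension hypothesis together with strong uniqueness. First I would record that $S_a(x_0) \ge 0$ (take $u=0$) and that $S_a(x_0) < \infty$: writing $H \ge -m$ for some $m \in \bbr$, stochastic passivity with respect to $H$ gives $\bbe[Z_t^{(x_0,u)}] \le H(x_0)$, hence $-\bbe\int_0^t \langle u_s,Y_s\rangle\,ds \le H(x_0) - \bbe[H(X_t)] \le H(x_0)+m$, the expectations being legitimate by Remark~\ref{rem-linear-assumptions}. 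Thus $0 \le S_a(x_0) \le H(x_0)+m$.

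Next I would show that $S_a$ is a quadratic form $S_a(x) = \tfrac12\langle Qx,x\rangle$ with $Q \ge 0$. For fixed $T$, minimising $\bbe\int_0^T \langle u_s, CX_s + Du_s\rangle\,ds$ over $u \in \cala_T$ is an (indefinite) stochastic linear--quadratic control problem with purely quadratic running cost and no terminal cost; by the bound above its value $S_T(x) := \sup_{u\in\cala_T}(-\bbe\int_0^T\langle u_s,Y_s\rangle\,ds)$ is finite, and by stochastic LQ theory (as in \cite{rami2000linear,ait2000well}) it is a quadratic form $S_T(x) = \tfrac12\langle Q_T x,x\rangle$, with $Q_T \ge 0$ since $S_T \ge 0$. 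The map $T \mapsto S_T(x)$ is nondecreasing, because extending a control on $[0,T]$ by $0$ on $(T,T']$ leaves the integral unchanged, and it is bounded above by $H(x)+m$. Hence $Q := \lim_{T\to\infty} Q_T$ exists, is positive semidefinite, and $S_a(x) = \tfrac12\langle Qx,x\rangle$.

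I would then verify that $S_a$ is itself a storage function, i.e.\ that the system is stochastically passive with respect to $\cH(x)=\tfrac12\langle Qx,x\rangle$. This reduces to the dynamic programming inequality $S_a(x_0) \ge \bbe[-\int_0^t\langle u_s,Y_s\rangle\,ds + S_a(X_t^{(x_0,u)})]$ for all $u\in\cala$ and $t\ge 0$, obtained by concatenating $u|_{[0,t]}$ with an $\calf_t$-conditionally near-optimal continuation of the supremum defining $S_a(X_t)$ and using the flow property of \eqref{eq:SLTIS} with the tower property; the natural-filtration and control-extension hypotheses are exactly what make this gluing admissible. Rearranging gives $\bbe[\cH(X_t)] - \cH(x_0) \le \bbe\int_0^t\langle u_s,Y_s\rangle\,ds$, i.e.\ $\bbe[Z_t^{(x_0,u)}] \le \cH(x_0)$, which is stochastic passivity with respect to $\cH$; by Theorem~\ref{thm-linear-3} (equivalently Corollary~\ref{lemma-linear-driftcond-lmi}) this is equivalent to $\mathfrak{M}_Q \le 0$. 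Finally, for minimality, if $\tilde Q \ge 0$ satisfies $\mathfrak{M}_{\tilde Q}\le 0$, then by Theorem~\ref{thm-linear-3} the system is stochastically passive with respect to $\tilde\cH(x)=\tfrac12\langle\tilde Qx,x\rangle \ge 0$, so $-\bbe\int_0^t\langle u_s,Y_s\rangle\,ds \le \tilde\cH(x_0)-\bbe[\tilde\cH(X_t)] \le \tilde\cH(x_0)$; taking the supremum over $t$ and $u$ yields $S_a(x_0) \le \tilde\cH(x_0)$, that is $Q \le \tilde Q$. Since $Q$ solves $\mathfrak{M}_Q \le 0$, it is the minimal positive semidefinite solution.

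The main obstacle I anticipate is the second step: showing that the finite-horizon value $S_T$ is genuinely a quadratic form, not merely $2$-homogeneous, in the \emph{indefinite} regime where $D+D^\top$ need not be positive semidefinite, so that well-posedness and the quadratic structure must be extracted from indefinite stochastic LQ machinery rather than from convex optimisation. The measurable-selection argument underlying the dynamic programming inequality in the third step is a secondary technical point.
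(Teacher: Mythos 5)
Your proposal matches the paper's proof in all essentials: both construct $Q$ from the available storage by solving the finite-horizon stochastic linear--quadratic problem, obtain the quadratic representation of the value from indefinite LQ theory (the paper cites Proposition~5.1 of \cite{Sun2016open}, which resolves exactly the obstacle you flag about $D+D^\top$ not being positive semidefinite), pass to the limit $T\to\infty$ by monotonicity and boundedness, and prove minimality by comparing with any positive semidefinite $\tilde Q$ satisfying $\mathfrak{M}_{\tilde Q}\le 0$. The only divergence is your dynamic-programming step: rather than gluing conditionally near-optimal continuations at infinite horizon (which requires the measurable selection you flag), the paper invokes the finite-horizon dynamic programming principle of \cite{tang2015dpp} and takes $T\to\infty$ with Fatou's lemma, and it then upgrades the resulting one-sided bound $\bbe[Z_t^{(x_0,u)}]\le\cH(x_0)$ to genuine stochastic passivity via Lemma~\ref{lemma-linear-stoch-passive-lmi} and Theorem~\ref{thm-linear-3} --- an upgrade your argument also needs, since that bound alone is weaker than monotonicity of $t\mapsto\bbe[Z_t^{(x_0,u)}]$.
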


\begin{proof}
    For $T\in [0,\infty)$ we consider the finite-horizon stochastic linear-quadratic optimal control problem
\begin{equation}\label{eq:lq_control_prob}
    V(T,x)=\inf_{u\in \cala_T}\E\left[\int_0^T\langle u_s,Y^{(x,u)}_s\rangle ds \right], \quad x\in \R^d.
\end{equation}
    The fact that the system is stochastically passive with respect to $H$, and boundedness of $H$ from below, 
    show 
    for 
    all $T\in [0,\infty)$ and all $x\in \R^d$ that
\begin{equation}\label{eq:val_fct_bdd_below_c}
\begin{split}
    V(T,x) & = \inf_{u\in \cA_T}\E\bigg[\int_0^T\langle u_s,Y^{(x,u)}_s\rangle ds \bigg] 
    \ge \inf_{u\in \cA_T} \Big( \E\big[H\big(X^{(x,u)}_T\big)\big]-H(x) \Big) 
    \ge \inf_{y\in\mathbb{R}^d} H(y) - H(x)
    >-\infty.
\end{split}
\end{equation}
In particular, for any $T\in[0,\infty)$, the problem \eqref{eq:lq_control_prob} is finite at zero (see Definition~2.1 in \cite{Sun2016open}).
Therefore, Proposition~5.1 in \cite{Sun2016open} implies that for all $T\in[0,\infty)$ there exists a symmetric matrix $K(T)\in \R^{d\times d}$ such that for all $x\in \R^d$ it holds
\begin{equation}\label{eq:VeqKtT}
    V(T,x)=\langle x, K(T) x\rangle.
\end{equation}
Note that $V$ is nonpositive since one can always choose the zero control. Hence, we obtain from \eqref{eq:VeqKtT} for all $T\in [0,\infty)$ that $K(T)$ is negative semidefinite.

Furthermore, observe that for all $x\in \R^d$ the function $(0,\infty)\ni T\mapsto V(T,x)\in \R$ is nonincreasing. 
Moreover, we have by \eqref{eq:val_fct_bdd_below_c} that for all $x\in \R^d$ the function $[0,\infty)\ni T\mapsto V(T,x)\in \R$ is bounded from below.
This implies that there exists a symmetric matrix $\overline K$ such that $\lim_{T\to \infty}K(T)=\overline{K}$.
Let $Q=-2\overline{K}$ and $\cH(x)=\frac12 \langle x, Qx \rangle$, $x\in\R^d$.
Note that for all $x \in \bbr^d$ it holds that $\langle x, Qx \rangle  = -2 \lim_{T\to\infty} \langle x, K(T) x \rangle \ge 0$.

Due to item~(4) in Remark~\ref{rem-linear-assumptions}, the fact that $0\in\cA_T$ with $\bbe[\int_0^T \langle 0, Y_s^{(x,0)} \rangle ds ] =0$ for all $x\in\bbr^d$ and all $T\in[0,\infty)$, the time-invariance of the system, and \eqref{eq:VeqKtT} we can argue similar to the proof of the dynamic programming principle in~\cite{tang2015dpp} (see, in particular, item (i) in Theorem~4.1 and items (i) and (ii) in Theorem~2.4 in~\cite{tang2015dpp}) 
to obtain for all $T\in [0,\infty)$, $h\in [0,T]$, $x\in \R^d$ that
\begin{equation}\label{eq:dpp}
\begin{split}
    \langle x, K(T) x\rangle& = V(T,x) 
    =\inf_{u\in \cA_T}\bbe\bigg[\int_0^{h}\langle u_s,Y^{(x,u)}_s\rangle ds + \langle X^{(x,u)}_{h}, K(T-h) X^{(x,u)}_{h}\rangle \bigg].
\end{split}
\end{equation}
We take the limit $T\to \infty$ in \eqref{eq:dpp} and apply Fatou's lemma (recall that $K$ is negative semidefinite) to obtain for all $h\in [0,\infty)$, $x\in \R^d$, and $u\in \cA$ that
\begin{equation*}
\begin{split}
    \langle x, \overline K x\rangle
    &= \lim_{T\to \infty}\langle x, K(T) x\rangle 
    \le \lim_{T\to \infty}\E\bigg[\int_0^{h}\langle u_s,Y^{(x,u)}_s\rangle ds + \langle X^{(x,u)}_{h}, K(T-h) X^{(x,u)}_{h}\rangle \bigg]\\
    &\le \E\bigg[\int_0^{h}\langle u_s,Y^{(x,u)}_s\rangle ds + \langle X^{(x,u)}_{h}, \overline{K} X^{(x,u)}_{h}\rangle \bigg].
\end{split}
\end{equation*}
This shows for all $h\in [0,\infty)$, $x\in \R^d$, and $u\in \cA$ that
\begin{equation*}
    \begin{split}
        \mathcal{H}(x) 
        & = - \langle x, \overline{K} x \rangle
        \ge 
        - \E\bigg[\int_0^{h}\langle u_s,Y^{(x,u)}_s\rangle ds 
        - \mathcal{H}(X^{(x,u)}_{h}) \bigg] .
    \end{split}
\end{equation*}
Therefore, Lemma~\ref{lemma-linear-stoch-passive-lmi} and Theorem~\ref{thm-linear-3}
imply that the SLTIS~\eqref{system} is stochastically passive with respect to~$\cH$ and that $\mathfrak M_Q\le 0$. It remains to show minimality of $Q$. To this end let $\tilde Q \in \R^{d\times d}$ be positive semidefinite and satisfy $\mathfrak M_{\tilde Q}\le 0$. It follows from \eqref{eq:val_fct_bdd_below_c} and an application of It\^o's formula (similarly to the derivation of \eqref{eq:Ito_quadratic}) for all $x\in \R^d$ that
\begin{equation*}
\begin{split}
&\frac{1}{2}\langle x,Qx\rangle =\mathcal H(x)=\lim_{T\to \infty}\sup_{u\in\cala_T} \bbe\bigg[-\int_0^T \langle u_s, Y_s^{(x,u)} \rangle ds \bigg]\\
&=   
\lim_{T\to \infty}\sup_{u\in\cala_T}
\frac{1}{2}
\bbe\bigg[\langle x,\tilde Qx\rangle - \langle X_T^{(x,u)},\tilde Q X_T^{(x,u)}\rangle 
+\int_0^T \Big\langle \begin{pmatrix}
 X_s^{(x,u)}\\
u_s 
\end{pmatrix}, \mathfrak{M}_{\tilde Q}\begin{pmatrix}
 X_s^{(x,u)}\\
u_s 
\end{pmatrix} \Big\rangle ds
\bigg] 
\le \frac{1}{2}\langle x,\tilde Qx\rangle.
\end{split}
\end{equation*}
This completes the proof.
\end{proof}

\begin{remark}\label{rem:available_storage}
    \begin{enumerate}
        \item 
     Note that the storage function $\cH\colon \R^d \to [0,\infty)$, $\cH(x)=\frac12 \langle Q x, x \rangle$ that we construct in Proposition~\ref{propo-linear-quadraticstorage-1} equals the so-called available storage, which is defined as 
     $H_a\colon \bbr^d \to [0,\infty]$, 
     $$H_a(x)=\sup_{u\in\cala, T>0} \bbe\bigg[-\int_0^T \langle u_s, Y_s^{(x,u)} \rangle ds \bigg].$$
    Moreover, \eqref{eq:val_fct_bdd_below_c} shows that any storage function $H$ that is bounded from below and with respect to which the system is stochastically passive satisfies for all $x\in\mathbb{R}^d$ that 
    $H(x)-\inf_{y\in\bbr^d} H(y) \ge H_a(x) \ge 0$.
    We further remark that the assumption in Proposition~\ref{propo-linear-quadraticstorage-1} that the SLTIS~\eqref{system} is stochastically passive with respect to a storage function that is bounded from below can be replaced by the assumption that the available storage is finite, i.e., $\forall x \in\mathbb{R}^d\colon H_a(x)<\infty$; the reason is that stochastic passivity is only used to obtain the finiteness of the value function in  \eqref{eq:val_fct_bdd_below_c}.
    In particular, we conclude that under the assumptions of Proposition~\ref{propo-linear-quadraticstorage-1} (with the stochastic passivity assumption removed), we have that $\forall x \in\mathbb{R}^d\colon H_a(x)<\infty$ if and only if 
    the SLTIS~\eqref{system} is stochastically passive with respect to the available storage~$H_a$.

    \item
    As an additional tool for analyzing the stochastic passivity of the SLTIS \eqref{system}, one can also consider the associated (generalized) Riccati equation
\begin{equation}\label{eq:riccati}
\begin{split}
    A^\top Q+ QA+\sum_{j=1}^k\cA^\top(j) Q \cA(j)
    -\mathcal{S}(Q)(\mathcal{N}(Q))^{\dagger}(\mathcal{S}(Q))^\top&=0,\\
    \mathcal{N}(Q)&\le 0,\\
    \mathcal{S}(Q)(\Id-\mathcal{N}(Q)(\mathcal{N}(Q))^\dagger)&=0,
\end{split}
\end{equation}
where
\begin{equation}
    \begin{split}
\mathcal{S}(Q)&=QB+\sum_{j=1}^k\cA^\top(j) Q \cB(j) -C^\top,\quad
\mathcal{N}(Q)=\sum_{j=1}^k\cB^\top(j) Q \cB(j) -(D+D^\top),
    \end{split}
\end{equation}
and $(\mathcal{N}(Q))^\dagger$ is the Moore--Penrose inverse of $\mathcal{N}(Q)$.
This Riccati equation can informally be derived from the above discussed optimal stochastic linear-quadratic control problem 
$$\sup_{u\in\cala, T>0} \bbe\bigg[-\int_0^T \langle u_s, Y_s^{(x,u)} \rangle ds \bigg]$$
(see, e.g., \cite{ait2000well}, \cite{sun2018stochastic} or \cite{sun2020stochastic} and the references therein; see also the book \cite{damm2004book} for further references and the numerical solution in the regular case). 
It follows directly from the extended Schur lemma (see, e.g., \cite[Theorem 1]{albert1969conditions} or \cite[Lemma 3]{rami2000linear}) that a solution $Q$ to the Riccati equation \eqref{eq:riccati} satisfies the LMI $\mathfrak M_Q\le 0$.
\end{enumerate}
\end{remark}

The next goal is to ensure that the matrix $Q$ constructed in Proposition~\ref{propo-linear-quadraticstorage-1} is not only positive semidefinite but positive definite. 
Note that similar results in the literature, 
such as \cite[Lemma 1]{willems1972dissipative}, \cite[Example~6]{beattie2025port} and \cite[Proposition~3.2.12]{EhKr_vandSchaft2016l2} for the deterministic case 
and \cite[Theorem~3.2]{EhKr_Haddad2023DissStochDynSys} for the stochastic case,  
contain an additional observability assumption. 
For this reason we present the following definition of stochastic observability. This concept allows us to prove in Corollary \ref{cor-linear-quadraticstorage-3} below that stochastic passivity and observability imply that $Q$ is positive definite.

\begin{definition}
    Suppose that Assumption~\ref{ass-linear-system} is fulfilled and that $0\in\cala$. 
    Let $x_0\in\bbr^d$. 
    We call $x_0$ \emph{unobservable} if 
    $Y^{(x_0,0)}_t=0$ $\IP$-a.s.\ for all $t\in[0,\infty)$.
\end{definition}

\begin{definition}\label{def-observable}
    Suppose that Assumption~\ref{ass-linear-system} is fulfilled and that $0\in\cala$.
    We say that the SLTIS \eqref{system} is \emph{observable} if $\{x_0\in\bbr^d \, \vert \, x_0 \text{ unobservable } \}=\{0\}$.
\end{definition}

\begin{remark}
    Suppose that Assumption~\ref{ass-linear-system} is fulfilled and that $0\in\cala$.
    \begin{enumerate}
        \item[(1)]
        Note that $x_0=0 \in \bbr^d$ is unobservable.

        \item[(2)]
    Note that the SLTIS \eqref{system} is observable if and only if for all $x_0\in\bbr^d$ we have that $Y^{(x_0,0)}_t=0$ $\IP$-a.s.\ for all $t\in[0,\infty)$ implies $x_0=0$.
    We moreover mention that in the literature such a system is sometimes called exactly observable (see, e.g., \cite[Definition~5]{zhang2004stabilizability}) to distinguish between different observability notions.
        \end{enumerate}
\end{remark}

Observability of the SLTIS~\eqref{system} can be 
verified 
by a rank criterion that only involves (products of) the system matrices $A,\fA(1),\ldots,\fA(k),C$. 
We present this result in Lemma~\ref{lemma-linear-observable-rank-cond} below. For completeness, we also provide its proof. For further reference, see \cite[Proof of Lemma~3]{chen2004stochastic} and the 
dissertation \cite{liu1999diss}. 
For further discussion of conditions for observability in stochastic linear systems we refer to, for example, \cite{chen2004stochastic,hou2012some,li2010unified,zhang2004stabilizability}. 
In the proof of Lemma~\ref{lemma-linear-observable-rank-cond}, we use the following result.

\begin{lemma}\label{lemma-linear-prep-observable-rank-cond}
    Suppose that Assumption~\ref{ass-linear-system} and condition \eqref{int-cond-u-linear} are fulfilled and that $0\in\cala$. 
    Let $T \in (0,\infty)$, $x_0 \in \bbr^d$, and $\widetilde{C} \in \bbr^{n\times d}$. 
    Assume that for all $s\in[0,T)$ it holds $\IP$-a.s.\ that $\widetilde{C} X_s^{(x_0,0)} = 0$.
    Then for all $s\in[0,T)$ and all $j\in\{1,\ldots,k\}$ it holds $\IP$-a.s.\ that $\widetilde{C} A X_s^{(x_0,0)} = 0$ and $\widetilde{C} \fA(j) X_s^{(x_0,0)} = 0$.
\end{lemma}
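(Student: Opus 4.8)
The plan is to exploit the linear SDE satisfied by $X := X^{(x_0,0)}$ together with the uniqueness of the semimartingale decomposition. Since $0 \in \cala$, the state $X$ solves $dX_t = A X_t\,dt + \sum_{j=1}^k \mathfrak{A}(j) X_t\,dW_t^j$ with $X_0 = x_0$, so the $\bbr^n$-valued process $N_t := \widetilde{C} X_t$ admits the representation
\begin{align*}
N_t = \widetilde{C} x_0 + \int_0^t \widetilde{C} A X_s\,ds + \sum_{j=1}^k \int_0^t \widetilde{C} \mathfrak{A}(j) X_s\,dW_s^j, \qquad t \in [0,T).
\end{align*}
By hypothesis $N_t = 0$ $\IP$-a.s.\ for each fixed $t \in [0,T)$; since $X$ has continuous paths, intersecting the null sets over a countable dense set of times and using continuity shows that $N$ is indistinguishable from the zero process on $[0,T)$.

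Next I would observe that $N$ is a continuous semimartingale whose canonical decomposition has finite-variation part $t \mapsto \int_0^t \widetilde{C} A X_s\,ds$ and continuous local-martingale part $M_t := \sum_{j=1}^k \int_0^t \widetilde{C} \mathfrak{A}(j) X_s\,dW_s^j$; the stochastic integrals are well defined because path continuity of $X$ guarantees $\int_0^t \|\widetilde{C}\mathfrak{A}(j) X_s\|^2\,ds < \infty$ $\IP$-a.s. Because $N \equiv 0 = 0 + 0$ on $[0,T)$, the uniqueness of the canonical decomposition (Proposition~\ref{prop-can-decomp}) forces both parts to vanish, i.e.\ $\int_0^t \widetilde{C} A X_s\,ds = 0$ and $M_t = 0$ for all $t \in [0,T)$ $\IP$-a.s.

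From the vanishing of the finite-variation part I would deduce the first assertion: the integrand $s \mapsto \widetilde{C} A X_s$ is $\IP$-a.s.\ continuous and has identically vanishing indefinite integral on $[0,T)$, hence vanishes identically there (apply Lemma~\ref{lemma-integrals}(1) after translating the time origin, or simply differentiate), giving $\widetilde{C} A X_s = 0$ for all $s \in [0,T)$ $\IP$-a.s. For the second assertion I would pass to the quadratic variation of the vanishing local martingale $M$. Writing $\widetilde{C}_i$ for the $i$-th row of $\widetilde{C}$ and using that the components $W^1,\dots,W^k$ of $W$ are independent, the $i$-th component has quadratic variation $\langle M^i\rangle_t = \int_0^t \sum_{j=1}^k (\widetilde{C}_i \mathfrak{A}(j) X_s)^2\,ds$. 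Since $M^i \equiv 0$ implies $\langle M^i\rangle \equiv 0$, the nonnegative continuous integrand $\sum_{j=1}^k (\widetilde{C}_i \mathfrak{A}(j) X_s)^2$ vanishes identically in $s$, so each scalar $\widetilde{C}_i \mathfrak{A}(j) X_s$ is zero for all $s \in [0,T)$ and all $j$. Ranging over the finitely many rows $i$ and intersecting the corresponding null sets yields $\widetilde{C}\mathfrak{A}(j) X_s = 0$ for all $s \in [0,T)$ and all $j \in \{1,\dots,k\}$ $\IP$-a.s.

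The only genuinely delicate point is disentangling the contributions of the individual noise coefficients $\mathfrak{A}(j)$: this is precisely where the orthogonality of the driving Brownian motions is essential, since it makes the quadratic-variation integrand a sum of squares and thereby forces each coefficient $\widetilde{C}_i \mathfrak{A}(j) X_s$ to vanish separately rather than merely in some linear combination. The remaining bookkeeping—upgrading the per-time a.s.\ hypothesis to an indistinguishability statement via path continuity, and separating the drift from the diffusion via uniqueness of the decomposition—is routine.
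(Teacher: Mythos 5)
Your proof is correct, and its skeleton coincides with the paper's: write $\widetilde{C}X^{(x_0,0)}$ via the linear SDE, use uniqueness of the canonical decomposition (Proposition~\ref{prop-can-decomp}) to force the drift and martingale parts to vanish separately, and then kill each coefficient $\widetilde{C}\fA(j)$ through the quadratic variation, whose sum-of-squares form comes from the orthogonality of $W^1,\dots,W^k$. The execution differs in one genuine respect. The paper works time-locally: it fixes $r\in[0,T)$, shows the quadratic variation over $[r,t]$ vanishes, and then identifies the value at $r$ \emph{in mean}, by taking expectations, normalizing by $t-r$, and applying the dominated convergence theorem as $t\downarrow r$ --- which is exactly where the moment estimate \eqref{moment-estimate-linear-system} (and hence condition \eqref{int-cond-u-linear}) enters; the drift term $\widetilde{C}AX_r$ is recovered the same way using path continuity. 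You instead first upgrade the per-time a.s.\ hypothesis to indistinguishability of $\widetilde{C}X$ from zero via a countable dense set of times and continuity, and then argue entirely pathwise: the vanishing finite-variation part yields $\widetilde{C}AX_s\equiv 0$ by the fundamental theorem of calculus (your translated Lemma~\ref{lemma-integrals}(1)), and the vanishing quadratic variation yields $\widetilde{C}\fA(j)X_s\equiv 0$ since the integrand is continuous and nonnegative. Your route buys a small simplification --- no expectations, no dominated convergence, and no moment bounds beyond what is needed for well-posedness --- at the cost of making the (routine) indistinguishability step explicit; note the paper's per-time statements implicitly rest on the same continuity argument when it invokes Proposition~\ref{prop-can-decomp} for the process on $[r,T)$. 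One cosmetic point: the constant $\widetilde{C}x_0$ in your representation of $N$ is zero by the hypothesis at $s=0$ (as $x_0$ is deterministic), which you use tacitly when asserting $N\equiv 0$; it is worth saying explicitly.
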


\begin{proof}
    Let $r\in[0,T)$. 
    Note that the assumption that for all $s\in[0,T)$ it holds $\IP$-a.s.\ that $\widetilde{C} X_s^{(x_0,0)} = 0$
    implies for all $t\in(r,T)$ that $\IP$-a.s.
    \begin{equation}\label{eq:proof-obs-diff-zero}
        \begin{split}
            0 & = \widetilde{C} X_t^{(x_0,0)} - \widetilde{C} X_r^{(x_0,0)}
            = \int_r^t \widetilde{C} A X_s^{(x_0,0)} ds
            + \sum_{j=1}^k \int_r^t \widetilde{C} \fA(j) X_s^{(x_0,0)} d W_s^j .
        \end{split}
    \end{equation}
    This and Proposition~\ref{prop-can-decomp} show that for all $t\in(r,T)$ and  all $l\in\{1,\ldots,n\}$ it holds $\IP$-a.s.\ that 
     \begin{equation*}
         \sum_{j=1}^k \int_r^t \big(\widetilde{C} \fA(j) X_s^{(x_0,0)}\big)_l d W_s^j = 0 .
     \end{equation*}
     In particular, also the quadratic variation of this stochastic integral is zero, which yields for all $t\in(r,T)$ and all $l\in\{1,\ldots,n\}$ that $\IP$-a.s. 
     \begin{equation}\label{eq:proof-obs-quadratic-var-zero}
        0 
        = \sum_{j=1}^k \int_r^t \big\lvert \big( \widetilde{C} \fA(j) X_s^{(x_0,0)} \big)_l \big\rvert^2 ds .
    \end{equation}
    Observe that for all $t\in(r,T)$, $l\in\{1,\ldots,n\}$ and $j\in\{1,\ldots,k\}$
    it holds $\IP$-a.s.\ that 
    \begin{equation*}
        \frac{1}{t-r} \int_r^t \big\lvert \big( \widetilde{C} \fA(j) X_s^{(x_0,0)} \big)_l \big\rvert^2 ds
        \le \lVert \widetilde{C} \fA(j) \rVert^2 \sup_{s\in[r,T]} \lVert X_s^{(x_0,0)} \rVert^2 .
    \end{equation*}
    Due to the moment estimate~\eqref{moment-estimate-linear-system}, the continuity of the state, and \eqref{eq:proof-obs-quadratic-var-zero} 
    we can hence apply the dominated convergence theorem to obtain for all $l\in\{1,\ldots,n\}$ and all $j\in\{1,\ldots,k\}$ that
    \begin{equation*}
        0 = \lim_{t\downarrow r} \bbe\bigg[ \frac{1}{t-r} \int_r^t  \big\lvert \big( \widetilde{C} \fA(j) X_s^{(x_0,0)} \big)_l \big\rvert^2 ds \bigg]
        = \bbe\big[ \big\lvert \big( \widetilde{C} \fA(j) X_r^{(x_0,0)} \big)_l \big\rvert^2 \big] .
    \end{equation*}
    This shows that $\widetilde{C} \fA(j) X_r^{(x_0,0)} = 0$ $\IP$-a.s.\ for all $j\in\{1,\ldots,k\}$.
    Moreover, note that this, continuity of the state, and \eqref{eq:proof-obs-diff-zero} imply that $\IP$-a.s.
    \begin{equation*}
        0 = \lim_{t\downarrow r} \int_r^t \widetilde{C} A X_s^{(x_0,0)} ds 
        = \widetilde{C} A X_r^{(x_0,0)} .
    \end{equation*}
    This completes the proof. 
\end{proof}

\begin{lemma}\label{lemma-linear-observable-rank-cond}
    Suppose that Assumption~\ref{ass-linear-system} and condition \eqref{int-cond-u-linear} are fulfilled and that $0\in\cala$. Let 
    \begin{equation*}
        S = \bigcup_{i\in\mathbb{N}} \bigcup_{q\in \{0,1\}^i} \bigcup_{p\in\{0,1\}^{i\times k}} \bigg\{ \prod_{l=1}^i \biggl(A^{q_l} \prod_{j=1}^k \fA^{p_{l,j}}(j)\biggr)\bigg\} .
    \end{equation*}
    Let $\{\widetilde{A}_1,\ldots,\widetilde{A}_N\}$ be any finite subset of $S$
    and define 
    \begin{equation*}
        M = (C^\top , \widetilde{A}_1^\top C^\top , \ldots, \widetilde{A}_N^\top C^\top).
    \end{equation*}
    \begin{enumerate}
        \item[(i)]
        Let $x_0\in\bbr^d$. It then holds that $x_0$ is unobservable if and only if 
        for all 
        $\widetilde{A} \in S$
        it holds that $C\widetilde{A}x_0=0$.
        
        \item[(ii)]
        If $\rank(M)=d$, then the SLTIS~\eqref{system} is observable. 
    \end{enumerate}
\end{lemma}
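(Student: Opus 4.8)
The plan is to establish the equivalence in part~(i) first, after which part~(ii) follows by a short linear-algebra argument. Throughout, recall that for the zero control the state $X^{(x_0,0)}$ solves the homogeneous linear SDE $dX_t = A X_t\,dt + \sum_{j=1}^k \fA(j) X_t\,dW_t^j$ with $X_0 = x_0$, whose existence and uniqueness is guaranteed by Remark~\ref{rem-linear-assumptions} together with $0\in\cala$ and \eqref{int-cond-u-linear}, and that $Y^{(x_0,0)}_t = C X^{(x_0,0)}_t$. The key structural observation is that $S$ is precisely the multiplicative monoid generated by $\{A,\fA(1),\ldots,\fA(k)\}$: it contains the identity (take $i=1$ and all exponents zero) and is closed under right multiplication by each generator, so every finite word in these matrices lies in $S$. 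In particular $S$ is countable, which lets me intersect countably many almost-sure events freely.

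For the direction ``$x_0$ unobservable $\Rightarrow$ $C\widetilde A x_0 = 0$ for all $\widetilde A\in S$'', I would start from $C X^{(x_0,0)}_t = 0$ $\IP$-a.s.\ for each $t$; by path-continuity of $X^{(x_0,0)}$, testing on a countable dense set upgrades this to a single null set off which $C X_s^{(x_0,0)} = 0$ for all $s\in[0,T)$. Then I argue by induction on word length: the base case $\widetilde A = \Id$ is exactly this, and the inductive step applies Lemma~\ref{lemma-linear-prep-observable-rank-cond} with $\widetilde C = C\widetilde A$ to pass from $C\widetilde A X_s^{(x_0,0)} = 0$ to $C\widetilde A A X_s^{(x_0,0)} = 0$ and $C\widetilde A\fA(j) X_s^{(x_0,0)} = 0$ for all $s\in[0,T)$, $\IP$-a.s. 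Since every element of $S$ is reached from $\Id$ by finitely many right multiplications by generators, this yields $C\widetilde A X_s^{(x_0,0)} = 0$ for all $\widetilde A\in S$ and a.a.\ $s$, simultaneously $\IP$-a.s. Letting $s\downarrow 0$ and using $X_0^{(x_0,0)} = x_0$ and continuity gives the deterministic conclusion $C\widetilde A x_0 = 0$ for every $\widetilde A\in S$.

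For the converse, set $\mathcal{V} := \bigcap_{\widetilde A\in S}\ker(C\widetilde A)$, which is contained in $\ker C$ (since $\Id\in S$) and invariant under $A$ and each $\fA(j)$ (since $S$ is closed under right multiplication by the generators). The hypothesis $C\widetilde A x_0 = 0$ for all $\widetilde A\in S$ says exactly $x_0\in\mathcal{V}$. Choosing a complement $\bbr^d = \mathcal{V}\oplus\mathcal{W}$ with projection $\Pi$ onto $\mathcal{W}$ along $\mathcal{V}$, invariance of $\mathcal{V} = \ker\Pi$ gives the identities $\Pi A = \Pi A\Pi$ and $\Pi\fA(j) = \Pi\fA(j)\Pi$, so $\xi_t := \Pi X_t^{(x_0,0)}$ satisfies the closed linear SDE $\xi_t = \int_0^t \Pi A\,\xi_s\,ds + \sum_{j=1}^k\int_0^t \Pi\fA(j)\,\xi_s\,dW_s^j$ with $\xi_0 = \Pi x_0 = 0$. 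By strong uniqueness for linear SDEs, $\xi\equiv 0$, hence $X_t^{(x_0,0)}\in\mathcal{V}\subseteq\ker C$ for all $t$ $\IP$-a.s., so $Y^{(x_0,0)}_t = 0$ and $x_0$ is unobservable.

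Finally, for part~(ii), note that $\rank(M) = d$ is equivalent to $\ker C\cap\bigcap_{i=1}^N\ker(C\widetilde A_i) = \{0\}$, because $M$ is the horizontal concatenation of the blocks $C^\top,\widetilde A_1^\top C^\top,\ldots,\widetilde A_N^\top C^\top$, so full row rank means the transpose is injective. If $x_0$ is unobservable, the forward direction of~(i) gives $C x_0 = 0$ and $C\widetilde A_i x_0 = 0$ for each $i$ (as $\widetilde A_i\in S$), so $x_0$ lies in this intersection and therefore $x_0 = 0$; thus the only unobservable state is $0$ and the system is observable. I expect the main obstacle to be the converse direction of~(i): confirming that the solution started in the invariant subspace $\mathcal{V}$ stays in $\mathcal{V}$, which rests on the identities $\Pi A = \Pi A\Pi$ and $\Pi\fA(j) = \Pi\fA(j)\Pi$ and on uniqueness for the reduced linear SDE. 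The remaining subtlety is the bookkeeping of null sets and the quantifier order ``for all $s$, $\IP$-a.s.''\ versus ``$\IP$-a.s., for all $s$'', which is resolved by path-continuity together with the countability of $S$.
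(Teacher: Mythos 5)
Your proof is correct. The forward direction of (i) and part (ii) follow the paper's argument essentially verbatim: induction over word length applying Lemma~\ref{lemma-linear-prep-observable-rank-cond} with $\widetilde{C} = C\widetilde{A}$ (noting, as you do, that $S$ is exactly the monoid generated by $A,\fA(1),\ldots,\fA(k)$, so $\Id\in S$ and $S$ is closed under right multiplication by the generators), and then the observation that $\rank(M)=d$ forces $\ker(M^\top)=\{0\}$, so (i) leaves $0$ as the only unobservable state. Where you genuinely diverge is the converse of (i). The paper proves it via the Picard iterates $X^{(x_0,0,m)}$, showing by induction that $C\widetilde{A}X_t^{(x_0,0,m)}=0$ for all $\widetilde{A}\in S$ and all $m$, and then passing to the $\IP$-a.s.\ limit $X_t^{(x_0,0)}$. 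You instead introduce $\mathcal{V}=\bigcap_{\widetilde{A}\in S}\ker(C\widetilde{A})$, note that it is contained in $\ker C$ (as $\Id\in S$) and invariant under $A$ and each $\fA(j)$ (closure of $S$ under right multiplication), and show the solution started in $\mathcal{V}$ stays in $\mathcal{V}$: the identities $\Pi A=\Pi A\Pi$ and $\Pi\fA(j)=\Pi\fA(j)\Pi$ for the projection $\Pi$ along $\mathcal{V}$ make $\xi=\Pi X^{(x_0,0)}$ solve a closed homogeneous linear SDE with $\xi_0=0$, whence $\xi\equiv 0$ by strong uniqueness, which is available here since the reduced coefficients are again linear (cf.\ Remark~\ref{rem-linear-assumptions}). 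This invariant-subspace route is a clean structural alternative: it replaces the convergence argument for Picard iterates by a one-line uniqueness argument and makes the finite-dimensional mechanism behind unobservability explicit; the paper's iteration, conversely, requires no choice of complement and proceeds directly on the solution. Your bookkeeping points are also sound: countability of $S$ permits intersecting the null sets, and the deterministic identity $C\widetilde{A}x_0=0$ already follows from the conclusion of Lemma~\ref{lemma-linear-prep-observable-rank-cond} at $s=0$ (your limit $s\downarrow 0$ is an equivalent way to extract it).
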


\begin{proof}
    To prove (i), let $x_0 \in \bbr^d$. 
    Assume first that $x_0$ is unobservable. 
    Then it holds by definition that $C X_t^{(x_0,0)}=0$ $\IP$-a.s.\ for all $t\in[0,\infty)$.
    We can thus use an induction argument relying on  
    Lemma~\ref{lemma-linear-prep-observable-rank-cond} to demonstrate for all $\widetilde{A} \in S$ that $C\widetilde{A}x_0=0$. 
    
    Assume now that for all $\widetilde{A} \in S$ it holds that $C\widetilde{A}x_0=0$.
    Consider the Picard iterations defined by 
    \begin{equation*}
    \begin{split}
        X_t^{(x_0,0,0)}&=x_0, \quad t\in[0,\infty),\\
        X_t^{(x_0,0,m)}&=
        x_0
        +\int_0^t A X_s^{(x_0,0,m-1)} ds 
        + \sum_{j=1}^k \int_0^t \fA(j) X_s^{(x_0,0,m-1)} dW_s^j , 
        \quad t\in[0,\infty),\, m\in\IN.
    \end{split}
    \end{equation*}
    We can show by induction that for all $\widetilde{A} \in S$, $m\in\IN_0$, and $t\in [0,\infty)$ it holds $\IP$-a.s.\ that $C\widetilde{A}X_t^{(x_0,0,m)}=0$. 
    Since we have for all $t\in [0,\infty)$ that $X_t^{(x_0,0)}$ is the $\IP$-a.s.\ limit of $(X_t^{(x_0,0,m)})_{m\in\IN_0}$ (see, e.g., 
    \cite[Section~5.2]{karatzas1991book}), 
    we conclude that for all $t\in [0,\infty)$ it holds $\IP$-a.s.\ that $C X_t^{(x_0,0)} = 0$.
    This shows (i).

    To prove (ii), note first that (i) ensures that any unobservable $x_0\in\bbr^d$ satisfies $M^\top x_0=0$.
    It thus holds that $\{x_0\in\bbr^d \, \vert \, x_0 \text{ unobservable } \} \subseteq \ker(M^\top)$.
    Moreover, note that 
    $$\rank(M)=\rank(M^\top)=d- \dim\ker(M^\top).$$ 
    If $\rank(M)=d$, this shows that $\ker(M^\top)=\{0\}$, and hence 
    $$\{x_0\in\bbr^d \, \vert \, x_0 \text{ unobservable} \} = \{0\}.$$
    This completes the proof.
\end{proof}

\begin{remark}
    Suppose that Assumption~\ref{ass-linear-system} and condition \eqref{int-cond-u-linear} are fulfilled and that $0\in\cala$. 
    \begin{enumerate}
    \item[(1)]
    Note that by the Cayley--Hamilton theorem, if $\overline{A} \in \{A,\fA(1),\ldots,\fA(k)\}$ and $m\in\IN$ such that $m\ge d$, then $\overline{A}^m$ can be expressed as a linear combination of the matrices $\overline{A}^{0},\overline{A}^{1},\ldots,\overline{A}^{d-1}$. 
    In Lemma~\ref{lemma-linear-observable-rank-cond}
    we therefore, for each of the matrices $A$, $\fA(1),\ldots,\fA(k)$, only need to consider exponents up to $d-1$.

    \item[(2)]
    Observe that in view 
    of item (ii) in Lemma~\ref{lemma-linear-observable-rank-cond}, 
    a sufficient condition for the SLTIS~\eqref{system} to be observable is that 
    $\rank(C^\top, A^\top C^\top,\ldots, (A^{d-1})^\top C^\top )=d$, which is equivalent to the respective deterministic linear time-invariant system (i.e., the SLTIS~\eqref{system} with $\fA(j)=0$ for all $j=1,\ldots,k$ and deterministic controls $u$) being observable (see, e.g., Theorem~24 in Section 6.2 of~\cite{sontag1998book}).
    \end{enumerate}
\end{remark}

The next result, Lemma~\ref{lemma-linear-quadraticstorage-2}, shows that if an observable SLTIS is stochastically passive with respect to a nonnegative
storage function, then this storage function is positive definite. We subsequently combine this result with  Proposition~\ref{propo-linear-quadraticstorage-1} to show that stochastic passivity with respect to any storage function bounded from below and observability ensure that the SLTIS is stochastically passive with respect to a quadratic storage function represented by a positive definite matrix (see Corollary \ref{cor-linear-quadraticstorage-3}).

\begin{assumption}\label{ass-linear-system-controls-locsquareint}
    The set of admissible controls $\cala$ is the set of all $\bbr^n$-valued progressively measurable processes $u$ that satisfy 
    \begin{equation*}
        \bbe\bigg[ \int_0^t \lVert u_s \rVert^2 ds \bigg] < \infty \quad \forall t \in \bbr_+ .
    \end{equation*}
\end{assumption}

\begin{lemma}\label{lemma-linear-quadraticstorage-2}
    Suppose that Assumptions \ref{ass-linear-system} 
    and \ref{ass-linear-system-controls-locsquareint}
    are fulfilled. 
    Assume that the SLTIS~\eqref{system} is stochastically passive with respect to a nonnegative storage function $H\colon \R^d \to [0,\infty)$.
    If the SLTIS~\eqref{system} is observable, then it holds for all $x\in\bbr^d\backslash\{0\}$ that $H(x)>0$. 
\end{lemma}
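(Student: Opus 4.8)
The plan is to argue by contradiction. Suppose there were some $x_0\in\bbr^d\backslash\{0\}$ with $H(x_0)=0$; since $H\ge 0$, this is the only way positive definiteness can fail. I would then show that $x_0$ is unobservable, which contradicts observability and forces $x_0=0$. The entire argument rests on extracting information from the stochastic passivity inequality
\[
\bbe\big[H(X_t^{(x_0,u)})\big]-\bbe\Big[\int_0^t\la u_s,Y_s^{(x_0,u)}\ra\,ds\Big]=\bbe\big[Z_t^{(x_0,u)}\big]\le \bbe\big[Z_0^{(x_0,u)}\big]=H(x_0)=0,
\]
valid for every $u\in\cala$ and every $t\ge 0$. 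Here the expectation may be split because Assumption \ref{ass-linear-system-controls-locsquareint} is precisely condition \eqref{int-cond-u-linear}, so by item~(4) in Remark \ref{rem-linear-assumptions} the output integral is integrable and the moment estimate \eqref{moment-estimate-linear-system} is available.

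The key step is to test this inequality along scaled controls. Fix $u\in\cala$ and $t\ge 0$, write $X^0:=X^{(x_0,0)}$, and exploit the affine structure of \eqref{eq:SLTIS}: by superposition and strong uniqueness one has $X^{(x_0,\varepsilon u)}=X^0+\varepsilon V$ for every $\varepsilon\in\bbr$, where $V:=V^{(0,u)}$ is the solution started at $0$ with control $u$ and is independent of $\varepsilon$. Consequently $Y^{(x_0,\varepsilon u)}=CX^0+\varepsilon(CV+Du)$, so the cross term becomes an exact quadratic in $\varepsilon$, and the passivity inequality together with $H\ge 0$ and $H(x_0)=0$ gives
\[
\varepsilon\,a+\varepsilon^2 c\ge \bbe\big[H(X^{(x_0,\varepsilon u)}_t)\big]\ge 0,\qquad a:=\bbe\Big[\int_0^t\la u_s,CX^0_s\ra\,ds\Big],
\]
where $c$ collects the $\varepsilon^2$-terms and is finite by \eqref{moment-estimate-linear-system}, the estimate for $V$, and the Cauchy--Schwarz inequality. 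Since $\varepsilon u\in\cala$ for every real $\varepsilon$ (here $\cala$ is a linear space), the map $\varepsilon\mapsto \varepsilon a+\varepsilon^2 c$ is nonnegative on all of $\bbr$ and vanishes at $\varepsilon=0$; hence $0$ is a global minimum, which forces the linear coefficient to vanish, i.e.\ $a=0$.

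Thus $\bbe[\int_0^t\la u_s,CX^0_s\ra\,ds]=0$ for every $u\in\cala$ and every $t\ge 0$. I would now insert the admissible control $u_s:=CX^0_s$, which lies in $\cala$ because $\bbe[\sup_{s\le t}\|X^0_s\|^2]<\infty$ by \eqref{moment-estimate-linear-system}, turning the bilinear form into a square: $\bbe[\int_0^t\|CX^0_s\|^2\,ds]=0$. Hence $CX^0_s=0$ for Lebesgue-almost every $s$, $\bbp$-almost surely, and by continuity of $s\mapsto X^0_s$ in fact $Y^{(x_0,0)}_s=CX^0_s=0$ for all $s\ge 0$ almost surely. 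This says exactly that $x_0$ is unobservable, so observability yields $x_0=0$, contradicting $x_0\neq 0$. The main obstacle is the middle step: passing from control over $H$ along the free trajectory to control over the output $CX^0$. This is achieved by the first-order-in-$\varepsilon$ perturbation combined with the quadratic test choice $u=CX^0$; the remaining work is the routine verification of the integrability conditions needed to split expectations, to justify the exact affine decomposition of solutions, and to pass from almost-everywhere to everywhere via path continuity.
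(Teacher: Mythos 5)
Your proposal is correct, and all the delicate points are accounted for: superposition $X^{(x_0,\varepsilon u)}=X^{(x_0,0)}+\varepsilon X^{(0,u)}$ is legitimate by linearity and strong uniqueness (available via Remark~\ref{rem-linear-assumptions}, since Assumption~\ref{ass-linear-system-controls-locsquareint} implies \eqref{int-cond-u-linear}); the splitting of $\bbe[Z_t]$ and the finiteness of the coefficients $a$ and $c$ follow from the moment estimate \eqref{moment-estimate-linear-system} and Cauchy--Schwarz; $\varepsilon u\in\cala$ for all $\varepsilon\in\bbr$ and $CX^{(x_0,0)}\in\cala$ both hold because Assumption~\ref{ass-linear-system-controls-locsquareint} makes $\cala$ the full linear space of square-integrable progressive processes; and the quadratic nonnegativity $\varepsilon a+\varepsilon^2 c\ge 0$ on all of $\bbr$ indeed forces $a=0$.

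However, your route is genuinely different from the paper's. The paper argues via a static output feedback: with $a=(1+\|D\|)^{-1}$ the matrix $\Id+aD$ is invertible, and the closed-loop control $\overline{u}=-a(\Id+aD)^{-1}C\overline{X}$ produces a solution whose output satisfies $\overline{Y}=-a^{-1}\overline{u}$, so the supply term in the passivity inequality becomes $-a\,\bbe[\int_0^T\|\overline{Y}_s\|^2\,ds]$; squeezing between $H(x)=0$ and $H\ge 0$ then gives $\overline{Y}\equiv 0$, hence $\overline{u}\equiv 0$, hence the closed-loop trajectory coincides with the free one and $x$ is unobservable. Your argument is instead a first-variation (open-loop) argument: you scale an arbitrary control by $\varepsilon$, extract the vanishing of the linear term $\bbe[\int_0^t\la u_s,CX^{(x_0,0)}_s\ra\,ds]$ for every $u\in\cala$, and then choose $u=CX^{(x_0,0)}$ to turn the bilinear pairing into a square. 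What the paper's feedback construction buys is brevity and the absence of any second-order bookkeeping --- the matrix $D$ is absorbed into the feedback gain once and for all --- at the cost of having to set up and solve a closed-loop SDE and verify its admissibility. What your construction buys is that it never needs to invert $\Id+aD$ or discuss closed-loop well-posedness, relying only on superposition of open-loop solutions; it also exposes an orthogonality statement (the free output is orthogonal to every admissible control under the supply pairing) that is close in spirit to the ``zero control is optimal'' reasoning behind the available-storage discussion in Remark~\ref{rem:available_storage}. Both proofs use the richness of $\cala$ in an essential way, just at different points: the paper needs the feedback law to be admissible, while you need closure under scaling and membership of $CX^{(x_0,0)}$.
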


\begin{proof}
Let $x \in \R^d$ such that 
$H(x)=0$. 
Let $a=(1+\lVert D \rVert)^{-1}$ (recall Remark~\ref{rem-notation}).
It then holds that 
$\lVert -a D \rVert = (1+\lVert D \rVert)^{-1} \lVert D \rVert < 1$.
Hence, $\Id+aD=\Id-(-a)D$ is invertible. 
We can thus consider the $\R^{d}$-valued continuous process $\overline{X}=(\overline{X}_s)_{s\in [0,\infty)}$ given by 
\begin{equation}\label{eq:1648}
    \begin{split}
        d\overline{X}_t &=\big(A - a B(\Id+aD)^{-1} C \big)\overline{X}_tdt+\sum_{j=1}^k \big(\fA(j) - a \fB(j) (\Id+aD)^{-1} C \big) \overline{X}_t dW_t^j, 
        \quad t \in [0,\infty) , \\
        \overline{X}_0&=x .
    \end{split}
\end{equation}
Note that, indeed, there exists a unique strong solution of \eqref{eq:1648} and it moreover holds for all $t\in[0,\infty)$ that $\mathbb{E}[\sup_{s\in[0,t]} \lVert \overline{X}_s \rVert^2 ]< \infty$ (cf., e.g., Remark~\ref{rem-linear-assumptions}). 
Next, define $\overline{u}=-a(\Id+aD)^{-1} C \overline{X}$ and note that $\overline{u}\in\cA$. 
Furthermore, let $\overline{Y}_t=C\overline{X}_t+D\overline{u}_t$, $t\in[0,\infty)$. 
Observe that $\overline{X}$ satisfies for all $t\in [0,\infty)$ that 
\begin{equation*}
    \begin{split}
    d\overline{X}_t
    & = \big(A \overline{X}_t - Ba (\Id+aD)^{-1} C \overline{X}_t \big) dt 
    + \sum_{j=1}^k \big(\fA(j) \overline{X}_t 
    - \fB(j) a (\Id+aD)^{-1} C \overline{X}_t \big) dW_t^j \\
    & = \big(A \overline{X}_t + B\overline{u}_t \big) dt 
    + \sum_{j=1}^k \big(\fA(j) \overline{X}_t 
    + \fB(j) \overline{u}_t \big) dW_t^j .
    \end{split}
\end{equation*}
Therefore, $(\overline{X},\overline{Y})$ is the unique strong solution (cf.\ Remark~\ref{rem-linear-assumptions}) to the SLTIS~\eqref{system} in the form of Assumption~\ref{ass-linear-system} with the initial value $x$ and the control $\overline{u}$. 
Moreover, note that for all $t\in [0,\infty)$ it holds that 
\begin{equation*}
    \begin{split}
        \overline{Y}_t & = C \overline{X}_t  - a D (\Id + aD)^{-1} C \overline{X}_t 
        = \big( \Id - a D (\Id + aD)^{-1} \big) C \overline{X}_t \\
        & = \big( \Id - (\Id + a D) (\Id + aD)^{-1} + (\Id + aD)^{-1} \big) C \overline{X}_t 
        = (\Id + aD)^{-1} C \overline{X}_t \\
        & = - a^{-1} \overline{u}_t .
    \end{split}
\end{equation*}
Stochastic passivity 
of the SLTIS~\eqref{system} with respect to $H$ 
and the fact that $H$ is nonnegative hence imply 
for all $T \in [0,\infty)$ that 
\begin{equation*}
    \begin{split}
        0 
        = H(x)
        & \ge \bbe\big[H(\overline{X}_T)\big]
        - \bbe\bigg[\int_0^T \langle \overline{u}_s, \overline{Y}_s \rangle ds \bigg] 
        \ge 
        - \bbe\bigg[\int_0^T \langle \overline{u}_s, \overline{Y}_s \rangle ds \bigg] 
        = a \bbe\bigg[\int_0^T \langle \overline{Y}_s, \overline{Y}_s \rangle ds \bigg] 
        \ge 0.
    \end{split}
\end{equation*}
It follows for all $T\in(0,\infty)$ that $\bbe[\int_0^T \lVert \overline{Y}_s \rVert^2 ds]  = 0$.
Since $\overline{Y}$ is continuous, we can show that this implies $\overline{Y}_s=0$ $\IP$-a.s.\ for all $s\in[0,\infty)$. 
It then also holds that $\overline{u}_s=0$ $\IP$-a.s.\ for all $s\in[0,\infty)$. 
Therefore, $x$ is unobservable. 
Now, the assumption that the SLTIS~\eqref{system} is observable implies that $x=0$.
\end{proof}

\begin{corollary}\label{cor-linear-quadraticstorage-3}
    Suppose that Assumptions \ref{ass-linear-system} 
    and \ref{ass-linear-system-controls-locsquareint} 
    are fulfilled and that $(\calf_t)_{t\ge 0}$ is the natural filtration of $W$ augmented by all the $\IP$-null sets in $\calf$. 
    Assume that the SLTIS~\eqref{system} is observable and stochastically passive with respect to 
    a storage function $H$ that is bounded from below.
    Then
    there exists a positive definite matrix $Q \in \R^{d\times d}$ such that the SLTIS~ \eqref{system} is stochastically passive with respect to $\cH\colon \R^d \to [0,\infty)$, $\cH(x)=\frac12 \langle Q x, x \rangle$.
\end{corollary}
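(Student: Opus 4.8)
The plan is to chain together the two preceding results. First I would invoke Proposition~\ref{propo-linear-quadraticstorage-1} to produce a positive semidefinite matrix $Q$ such that the SLTIS~\eqref{system} is stochastically passive with respect to the nonnegative quadratic storage function $\cH(x)=\frac12\langle Qx,x\rangle$, and then I would use observability together with Lemma~\ref{lemma-linear-quadraticstorage-2} to upgrade positive semidefiniteness of $Q$ to positive definiteness.

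First I would verify that the hypotheses of Proposition~\ref{propo-linear-quadraticstorage-1} are met. Assumption~\ref{ass-linear-system} holds by assumption, and condition~\eqref{int-cond-u-linear} is precisely the local square-integrability built into Assumption~\ref{ass-linear-system-controls-locsquareint}; the filtration is the augmented natural filtration of $W$ as required. The only point needing checking is the extension property: for every $T>0$ and every $u\in\cala_T$ there must exist $\tilde u\in\cala$ with $\tilde u(\omega,t)=u(\omega,t)$ for all $\omega\in\Omega$ and all $t\in[0,T]$. Under Assumption~\ref{ass-linear-system-controls-locsquareint} this is immediate: one defines $\tilde u_t:=u_t$ for $t\in[0,T]$ and $\tilde u_t:=0$ for $t>T$. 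Then $\tilde u$ is progressively measurable and satisfies $\bbe[\int_0^t\|\tilde u_s\|^2\,ds]<\infty$ for all $t\in\bbr_+$, hence $\tilde u\in\cala$. Since the SLTIS~\eqref{system} is by hypothesis stochastically passive with respect to a storage function bounded from below, Proposition~\ref{propo-linear-quadraticstorage-1} applies and yields a positive semidefinite matrix $Q\in\bbr^{d\times d}$ together with the stated stochastic passivity of the SLTIS~\eqref{system} with respect to $\cH$.

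Next I would apply Lemma~\ref{lemma-linear-quadraticstorage-2} with the nonnegative storage function $H:=\cH$. By the previous step the SLTIS~\eqref{system} is stochastically passive with respect to $\cH\ge 0$, and it is observable by hypothesis, so the lemma yields $\cH(x)>0$ for every $x\in\bbr^d\setminus\{0\}$. Since $\cH(x)=\frac12\langle Qx,x\rangle$, this says exactly that $\langle Qx,x\rangle>0$ for all $x\neq 0$, i.e.\ that $Q$ is positive definite. This is precisely the assertion of the corollary.

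Because both Proposition~\ref{propo-linear-quadraticstorage-1} and Lemma~\ref{lemma-linear-quadraticstorage-2} carry out the substantive work, the corollary is essentially a direct composition of the two. The only genuine (and rather minor) issue is the compatibility of the control classes $\cala_T$ and $\cala$ handled by the extension argument above; no further estimates or constructions are required.
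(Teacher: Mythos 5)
Your proposal is correct and follows the paper's own proof exactly: the paper also obtains the result by combining Proposition~\ref{propo-linear-quadraticstorage-1} (yielding a positive semidefinite $Q$ and stochastic passivity with respect to $\cH$) with Lemma~\ref{lemma-linear-quadraticstorage-2} (using observability to upgrade $Q$ to positive definite). Your explicit verification of the extension property of controls via extension by zero is a minor detail the paper leaves implicit, and it is handled correctly.
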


\begin{proof}
    This follows from combining Proposition~\ref{propo-linear-quadraticstorage-1} and Lemma~\ref{lemma-linear-quadraticstorage-2}.
\end{proof}

\section{Linear stochastic port-Hamiltonian systems}\label{sec-linear-phs}

In this section we focus on a specific subclass of SLTIS \eqref{system} which we call stochastic, linear, time-invariant port-Hamiltonian systems. In Corollary \ref{cor:pass_and_obs_imply_phs}, we show that an observable SLTIS is stochastically passive if and only if it belongs to this subclass. We thereby extend known results from the deterministic setting to the stochastic case (see, e.g., \cite[Corollary~4]{beattie2025port} and \cite[Theorem~2.3]{vanderschaft2009port}). 

\begin{definition}\label{def-SLTIPHS}
    We say that the system~\eqref{system} is a \emph{stochastic, linear, time-invariant port-Hamiltonian system (SLTI-PHS)}, if there exist $J,R,Q,\overline{\fA}(j)\in \R^{d\times d}$, $j\in \{1,\ldots,k\}$, $F,P,\fB(j)\in \R^{d\times n}$, $j\in \{1,\ldots,k\}$, $S,N\in \R^{n\times n}$ such that $J=-J^\top$, $R=R^\top$, $Q=Q^\top\ge 0$, $S=S^\top$, $N=-N^\top$,
    \begin{equation}\label{eq:matrix_ineq_pH}
       \begin{pmatrix}
            R & P\\
           P^\top & S
        \end{pmatrix}
        \ge 0,
    \end{equation}
    and for all $x\in\bbr^d$, $v\in\bbr^n$, $i\in\{1,\dots,k\}$ it holds that
    \begin{equation}\label{eq:sltipH}
    \begin{split}
        & b(x,v) = \bigg(J-R-\frac{1}{2}\sum_{j=1}^k\overline{\fA}^\top(j) Q \overline{\fA}(j)\bigg)Q x +(F-P) v ,\\
        & \sigma^i(x,v) = \overline{\fA}(i)Q x + \fB(i) v, \\
        & f(x,v) = \bigg(F+P+\sum_{j=1}^k\overline{\fA}^\top(j) Q \fB(j)\bigg)^\top Q x
        +\bigg( S+N+\frac{1}{2}\sum_{j=1}^k\fB^\top(j) Q \fB(j)\bigg) v .
    \end{split}
    \end{equation}
 To emphasize the dependence on $Q$ we sometimes also say that \eqref{system} is an \emph{SLTI-PHS with $Q$}.   
\end{definition}

Note that in Definition~\ref{def-SLTIPHS}, the system \eqref{system} takes the form
\begin{align*}
\left\{
\begin{array}{rcl}
dX_t & = & \bigg[\bigg(J-R-\frac{1}{2}\sum_{j=1}^k\overline{\fA}^\top(j) Q \overline{\fA}(j)\bigg)Q X_t +(F-P) u_t \bigg] dt 
+ \sum_{j=1}^k ( \overline{\fA}(j)Q X_t + \fB(j) u_t ) dW_t^j
\\ Y_t & = & \bigg(F+P+\sum_{j=1}^k\overline{\fA}^\top(j) Q \fB(j)\bigg)^\top Q X_t
        +\bigg( S+N+\frac{1}{2}\sum_{j=1}^k\fB^\top(j) Q \fB(j)\bigg) u_t
\\ X_0 & = & x_0 \in \bbr^d
\\ u & \in & \cala.
\end{array}
\right.
\end{align*}

It is clear that any SLTI-PHS is, in particular, an SLTIS.

\begin{remark}\label{rem-SLTIPHS-params-SLTIS}
    Let 
    the system~\eqref{system} be an SLTI-PHS. 
    Then Assumption~\ref{ass-linear-system} is satisfied
    with
    \begin{equation*}
        \begin{split}
            & A = \bigg(J-R-\frac{1}{2}\sum_{j=1}^k\overline{\fA}^\top(j) Q \overline{\fA}(j)\bigg)Q, \quad
            B = F-P, \\
            &
            \fA(j) = \overline{\fA}(j)Q \quad \text{and} \quad
            \fB(j)=\fB(j), \quad j\in\{1,\dots,k\}, \\
            & C = \bigg(F+P+\sum_{j=1}^k\overline{\fA}^\top(j) Q \fB(j)\bigg)^\top Q, \quad
            D =  S+N+\frac{1}{2}\sum_{j=1}^k\fB^\top(j) Q \fB(j) .
        \end{split}
    \end{equation*}
    Moreover, we have that 
    \begin{equation*}
        \mathfrak{M}_Q
        =-2\begin{pmatrix}
    QRQ & QP\\
       P^\top Q & S
       \end{pmatrix} .
    \end{equation*}
\end{remark}

\begin{lemma}\label{lemma-SLTIPHS-LMI}
    Let $Q\in\bbr^{d\times d}$ be positive semidefinite and 
    assume that the system~\eqref{system} is an SLTI-PHS with $Q$.
    Then 
    we have that $\mathfrak{M}_Q\le 0$.
\end{lemma}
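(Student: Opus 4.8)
The plan is to reduce everything to the explicit representation of $\mathfrak{M}_Q$ already recorded in Remark~\ref{rem-SLTIPHS-params-SLTIS}. That remark tells us that, for an SLTI-PHS with $Q$, the matrix $\mathfrak{M}_Q$ given in \eqref{drift-cond-matrix} simplifies to
\begin{equation*}
\mathfrak{M}_Q = -2 \begin{pmatrix} QRQ & QP \\ P^\top Q & S \end{pmatrix}.
\end{equation*}
Hence proving $\mathfrak{M}_Q \le 0$ is equivalent to proving that the block matrix on the right-hand side is positive semidefinite.

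The key observation is that this block matrix is a congruence transform of the positive semidefinite matrix appearing in the definition of an SLTI-PHS. Concretely, set $T := \begin{pmatrix} Q & 0 \\ 0 & \Id \end{pmatrix} \in \bbr^{(d+n)\times(d+n)}$, which is symmetric because $Q = Q^\top$. A direct block multiplication then yields
\begin{equation*}
T^\top \begin{pmatrix} R & P \\ P^\top & S \end{pmatrix} T = \begin{pmatrix} Q & 0 \\ 0 & \Id \end{pmatrix} \begin{pmatrix} RQ & P \\ P^\top Q & S \end{pmatrix} = \begin{pmatrix} QRQ & QP \\ P^\top Q & S \end{pmatrix}.
\end{equation*}

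Next I would invoke the standard fact that congruence preserves positive semidefiniteness: for any $w \in \bbr^{d+n}$ we have $w^\top T^\top M T w = (Tw)^\top M (Tw) \ge 0$, since $M := \begin{pmatrix} R & P \\ P^\top & S \end{pmatrix} \ge 0$ by \eqref{eq:matrix_ineq_pH}. Applying this with the $M$ just named shows that $\begin{pmatrix} QRQ & QP \\ P^\top Q & S \end{pmatrix} \ge 0$, and therefore $\mathfrak{M}_Q = -2 \begin{pmatrix} QRQ & QP \\ P^\top Q & S \end{pmatrix} \le 0$.

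I do not anticipate a genuine obstacle here: the entire argument hinges on recognizing the congruence structure, after which positive semidefiniteness is automatic. The only point requiring a little care is the block computation verifying that the congruence $T^\top M T$ reproduces exactly the matrix from Remark~\ref{rem-SLTIPHS-params-SLTIS}, but this is a routine multiplication. Note also that positive semidefiniteness of $Q$ itself is not strictly needed for this congruence argument (only its symmetry is used); the hypothesis $Q \ge 0$ is consistent with the definition of an SLTI-PHS but the inequality $\mathfrak{M}_Q \le 0$ follows purely from \eqref{eq:matrix_ineq_pH} together with $Q = Q^\top$.
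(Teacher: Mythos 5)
Your proof is correct and follows essentially the same route as the paper: both start from the representation $\mathfrak{M}_Q = -2\,\mathrm{diag}(Q,\Id)\begin{pmatrix} R & P \\ P^\top & S \end{pmatrix}\mathrm{diag}(Q,\Id)$ recorded in Remark~\ref{rem-SLTIPHS-params-SLTIS} and conclude via the congruence with $\mathrm{diag}(Q,\Id)$ together with \eqref{eq:matrix_ineq_pH}. Your closing observation that only the symmetry of $Q$, not $Q\ge 0$, is needed for this step is a correct (if minor) refinement consistent with the paper's argument.
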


\begin{proof}
    Using Remark~\ref{rem-SLTIPHS-params-SLTIS}, 
    we obtain that 
    \begin{equation}
    \begin{split}
    \mathfrak{M}_Q
      =-2
      \begin{pmatrix}
    Q & 0\\
       0 & \Id
       \end{pmatrix}
      \begin{pmatrix}
    R & P\\
       P^\top & S
       \end{pmatrix}
       \begin{pmatrix}
    Q & 0\\
       0 & \Id
       \end{pmatrix}.
       \end{split}
    \end{equation}
    It hence follows from \eqref{eq:matrix_ineq_pH} that $\mathfrak{M}_Q\le 0$.
\end{proof}

Under appropriate assumptions on the set of admissible controls, we obtain the following passivity properties of SLTI-PHS.

\begin{corollary}\label{cor-SLTIPHS-passivityprop}
    Let $Q\in\bbr^{d\times d}$ be positive semidefinite and 
    assume that the system~\eqref{system} is an SLTI-PHS with $Q$.
    Suppose that Assumption~\ref{ass-control-cont-in-zero}
    and condition~\eqref{int-cond-u-linear} are fulfilled. Let $H(x)=\frac12 \langle Qx, x\rangle$, $x\in\bbr^d$.
    Then the following statements hold true:
        \begin{enumerate}
        \item[(i)] If Assumption~\ref{ass-control-cont-in-zero-and-integrable} is fulfilled, then the SLTI-PHS is stochastically passive with respect to the storage function~$H$.

        \item[(ii)] The SLTI-PHS has the passive locale supermartingale property with respect to the storage function~$H$.

        \item[(iii)] The SLTI-PHS has the passive supermartingale property with respect to the storage function~$H$.
    \end{enumerate}
\end{corollary}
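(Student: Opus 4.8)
The plan is to reduce everything to the SLTIS results already established, since an SLTI-PHS is a special SLTIS. First I would invoke Remark~\ref{rem-SLTIPHS-params-SLTIS} to record that the SLTI-PHS satisfies Assumption~\ref{ass-linear-system}, with the system matrices $A,B,\fA(j),\fB(j),C,D$ given explicitly there. The decisive algebraic step is then Lemma~\ref{lemma-SLTIPHS-LMI}: because $Q$ is positive semidefinite and the block matrix in~\eqref{eq:matrix_ineq_pH} is positive semidefinite, one obtains $\mathfrak{M}_Q\le 0$. This single inequality will drive all three conclusions.

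For parts (ii) and (iii), I would simply apply Theorem~\ref{thm-linear-1}. Its hypotheses---Assumptions~\ref{ass-control-cont-in-zero}, \ref{ass-linear-system} and condition~\eqref{int-cond-u-linear}, together with a quadratic storage function $H(x)=\tfrac12\langle Qx,x\rangle$---are all part of the corollary's standing assumptions, with Assumption~\ref{ass-linear-system} supplied by Remark~\ref{rem-SLTIPHS-params-SLTIS}. Since we have just verified $\mathfrak{M}_Q\le 0$, the equivalence stated in Theorem~\ref{thm-linear-1} immediately yields both the passive local supermartingale property and the passive supermartingale property.

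For part (i), I would observe that Assumption~\ref{ass-control-cont-in-zero-and-integrable} strengthens Assumption~\ref{ass-control-cont-in-zero}, so that all hypotheses of Theorem~\ref{thm-linear-3} are in force. That theorem adds stochastic passivity to the list of statements equivalent to $\mathfrak{M}_Q\le 0$, whence stochastic passivity follows. Alternatively, stochastic passivity is already a direct consequence of part~(iii) via the third item of Remark~\ref{rem-concepts}, so the additional assumption only serves to align the statement with Theorem~\ref{thm-linear-3}.

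I do not anticipate any genuine obstacle: the argument is purely a matter of matching the corollary's hypotheses to those of the cited results and confirming the matrix inequality $\mathfrak{M}_Q\le 0$. The only point warranting a moment's care is verifying that Assumption~\ref{ass-control-cont-in-zero-and-integrable} implies Assumption~\ref{ass-control-cont-in-zero}, which is immediate from comparing the two statements, and checking that the quadratic storage function indeed has the form required by Theorem~\ref{thm-linear-1} and Theorem~\ref{thm-linear-3}.
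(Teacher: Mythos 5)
Your proposal is correct and follows the paper's proof exactly: the paper also deduces $\mathfrak{M}_Q\le 0$ from Lemma~\ref{lemma-SLTIPHS-LMI} (with Remark~\ref{rem-SLTIPHS-params-SLTIS} supplying Assumption~\ref{ass-linear-system}) and then concludes (ii)--(iii) from Theorem~\ref{thm-linear-1} and (i) from Theorem~\ref{thm-linear-3}. Your observation that (i) could alternatively follow from (iii) via Remark~\ref{rem-concepts} is a valid minor addition but changes nothing substantive.
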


\begin{proof}
    This is an immediate consequence of Lemma~\ref{lemma-SLTIPHS-LMI}, Theorem~\ref{thm-linear-1}, and Theorem~\ref{thm-linear-3}.
\end{proof}

In the next lemma we provide an algebraic  condition under which an SLTIS is an SLTI-PHS.
This generalizes results from the deterministic case (see, e.g., \cite[Theorem~2.3]{vanderschaft2009port}) to the stochastic case.

\begin{lemma}\label{lemma:lmiimpliesphs}
    Suppose that Assumption~\ref{ass-linear-system} is fulfilled.
    Assume that there exists a positive semidefinite matrix
    $Q \in \R^{d\times d}$ such that 
    $\ker(Q)\subseteq \ker(A) \cap \ker(C) \cap (\cap_{j=1}^k \ker(\fA(j)))$ 
    and $\mathfrak{M}_Q\le 0$.
    Then the SLTIS~\eqref{system} is an SLTI-PHS with~$Q$.
\end{lemma}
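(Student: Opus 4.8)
The plan is to reconstruct the port-Hamiltonian data $(J,R,F,P,S,N,\overline{\fA}(j))$ from the system matrices $(A,B,\fA(j),\fB(j),C,D)$ under the hypothesis that $\ker(Q)$ is contained in the kernels of $A$, $C$, and all $\fA(j)$, and that $\mathfrak{M}_Q\le 0$. The kernel condition is what lets us "divide by $Q$": since $\ker(Q)\subseteq\ker(A)$, the matrix $A$ factors as $A=\widetilde{A}Q$ for some $\widetilde A\in\bbr^{d\times d}$, and likewise $C=\widetilde{C}Q$ and $\fA(j)=\overline{\fA}(j)Q$. (Concretely, on $(\ker Q)^\perp=\operatorname{ran}Q$ one inverts $Q$; on $\ker Q$ the target matrices vanish, so any extension works.) This immediately supplies the factor $\overline{\fA}(j)$ appearing in Definition~\ref{def-SLTIPHS}, and matches the prescribed forms $\fA(j)=\overline{\fA}(j)Q$ and $\sigma^i(x,v)=\overline{\fA}(i)Qx+\fB(i)v$ with the given $\fB(j)$.

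First I would read off the required decompositions by comparing the target expressions in~\eqref{eq:sltipH} with $(A,B,C,D)$. Writing $\widetilde A=J-R-\tfrac12\sum_j\overline{\fA}^\top(j)Q\overline{\fA}(j)$, we must split $\widetilde A$ into its symmetric and skew-symmetric parts: set $J:=\tfrac12(\widetilde A-\widetilde A^\top)$ and then solve $-R-\tfrac12\sum_j\overline{\fA}^\top(j)Q\overline{\fA}(j)=\tfrac12(\widetilde A+\widetilde A^\top)$ for $R$, i.e. $R:=-\tfrac12(\widetilde A+\widetilde A^\top)-\tfrac12\sum_j\overline{\fA}^\top(j)Q\overline{\fA}(j)$, which is automatically symmetric. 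For the remaining blocks, $B=F-P$ and $D=S+N+\tfrac12\sum_j\fB^\top(j)Q\fB(j)$ decompose $B$ (no symmetry constraint — it is the difference of two free matrices) and $D$ into symmetric part $S+\tfrac12\sum_j\fB^\top(j)Q\fB(j)$ and skew part $N$; and $C=(F+P+\sum_j\overline{\fA}^\top(j)Q\fB(j))^\top Q$ together with $C=\widetilde C Q$ pins down $F+P$ on $\operatorname{ran}Q$. So $F$ and $P$ are determined by the two linear relations $F-P=B$ and $F+P=\widetilde C^\top-\sum_j\overline{\fA}^\top(j)Q\fB(j)$ (the latter read off modulo $\ker Q$), giving $F:=\tfrac12(B+\widetilde C^\top-\sum_j\overline{\fA}^\top(j)Q\fB(j))$ and $P:=\tfrac12(-B+\widetilde C^\top-\sum_j\overline{\fA}^\top(j)Q\fB(j))$.

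Once these matrices are defined, the structural constraints $J=-J^\top$, $R=R^\top$, $S=S^\top$, $N=-N^\top$ hold by construction, and $Q=Q^\top\ge 0$ is a hypothesis. It remains to verify the one genuine inequality~\eqref{eq:matrix_ineq_pH}, namely that $\begin{pmatrix}R&P\\P^\top&S\end{pmatrix}\ge 0$. The route is Remark~\ref{rem-SLTIPHS-params-SLTIS}, which computes $\mathfrak{M}_Q=-2\begin{pmatrix}Q&0\\0&\Id\end{pmatrix}\begin{pmatrix}R&P\\P^\top&S\end{pmatrix}\begin{pmatrix}Q&0\\0&\Id\end{pmatrix}$ for any SLTI-PHS. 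Equivalently, I would substitute the chosen $R,P,S$ directly into $\mathfrak{M}_Q$ and check the identity $\mathfrak{M}_Q=-2\,\mathrm{diag}(Q,\Id)\begin{pmatrix}R&P\\P^\top&S\end{pmatrix}\mathrm{diag}(Q,\Id)$; this reduces to the block-by-block matching already implicit in the definitions above.

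\textbf{The main obstacle} is concluding $\begin{pmatrix}R&P\\P^\top&S\end{pmatrix}\ge 0$ from $\mathfrak{M}_Q\le 0$: the congruence only carries negative semidefiniteness \emph{through} $\mathrm{diag}(Q,\Id)$, so from $\mathfrak{M}_Q\le 0$ one directly gets $\mathrm{diag}(Q,\Id)\begin{pmatrix}R&P\\P^\top&S\end{pmatrix}\mathrm{diag}(Q,\Id)\ge 0$, i.e. nonnegativity of the quadratic form only on vectors of the shape $(Qx,v)$. Because $Q$ is singular, this is weaker than~\eqref{eq:matrix_ineq_pH} on all of $\bbr^{d+n}$. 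This is exactly where the kernel hypothesis must be used a second time: vectors in $\ker Q\times\{0\}$ lie in the kernels of $R$ and $P^\top$ as well (since $R$ and $P$ were built from $\widetilde A,\widetilde C$ and the $\overline{\fA}(j)$, each of which annihilates $\ker Q$ on the appropriate side, together with the term $\sum_j\overline{\fA}^\top(j)Q\overline{\fA}(j)$ which factors through $Q$), so the quadratic form of $\begin{pmatrix}R&P\\P^\top&S\end{pmatrix}$ is insensitive to the $\ker Q$-component of its argument. Hence testing on $(Qx,v)$ — equivalently on $(\operatorname{ran}Q)\times\bbr^n$ — already sees the whole form, and~\eqref{eq:matrix_ineq_pH} follows. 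I expect verifying this kernel-compatibility of $R$ and $P$ to be the delicate bookkeeping step; everything else is linear algebra that splits symmetric from skew parts.
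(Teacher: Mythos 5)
Your overall strategy is the same as the paper's: factor $A=\widetilde AQ$, $C=\widetilde CQ$, $\fA(j)=\overline{\fA}(j)Q$ through the kernel hypothesis, read off $J,R,F,P,S,N$ by splitting symmetric from skew parts, and transport $\mathfrak{M}_Q\le 0$ through the congruence with $\mathrm{diag}(Q,\Id)$. You also correctly locate the crux. But the resolution you offer for it is false: it is \emph{not} true that $R$ and $P^{\top}$ annihilate $\ker Q$. The containment $\ker Q\subseteq\ker\widetilde A$ (arranged by extending $\widetilde A$ by zero on $\ker Q$) does not pass to the transpose, and your $R=-\tfrac12(\widetilde A+\widetilde A^{\top})-\tfrac12\sum_j\overline{\fA}^{\top}(j)Q\overline{\fA}(j)$ contains $\widetilde A^{\top}$; likewise your $P=\tfrac12(-B+\widetilde C^{\top}-\sum_j\overline{\fA}^{\top}(j)Q\fB(j))$ contains $B$, and nothing in the hypotheses makes $B^{\top}$ vanish on $\ker Q$. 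Concretely, take $d=2$, $n=k=1$, $Q=\mathrm{diag}(1,0)$, $B=0$, $C=0$, $D=0$, $\fA(1)=\fB(1)=0$, and
\begin{align*}
A=\begin{pmatrix} a & 0\\ c & 0\end{pmatrix},\qquad a\le 0,\ c\neq 0 .
\end{align*}
Then $\ker Q=\mathrm{span}(e_2)$ lies in all required kernels and $\mathfrak{M}_Q=\mathrm{diag}(2a,0,0)\le 0$, but with the zero extension your recipe gives $R=-\tfrac12\begin{pmatrix}2a & c\\ c & 0\end{pmatrix}$, which is indefinite (its determinant is $-c^2/4<0$), so \eqref{eq:matrix_ineq_pH} fails even though the system \emph{is} an SLTI-PHS (take $J=\begin{pmatrix}0 & -c\\ c & 0\end{pmatrix}$, $R=\mathrm{diag}(-a,0)$). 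So the quadratic form of your block matrix is genuinely sensitive to the $\ker Q$-component, and the final reduction to $(\bild Q)\times\bbr^n$ collapses exactly at the step you flagged as ``delicate bookkeeping.''

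The repair is to use the residual freedom in the factorizations nontrivially, pushing the offending cross terms into the \emph{skew} data rather than extending by zero. For the input block, the lift $M$ of $F+P+\sum_j\overline{\fA}^{\top}(j)Q\fB(j)$ is only determined by $QM=C^{\top}$ on $\bild Q$; choosing its $\ker Q$-part so that $x_2^{\top}M=x_2^{\top}B$ for $x_2\in\ker Q$ makes $P^{\top}$ kill $\ker Q$. This is precisely the device in the paper's proof, where $G_3$ is declared to equal $-B^{\top}$ off $\bild(Q)$. For the state block the analogous choice is $\widetilde A x_2:=-\Pi H^{\top}x_2$ for $x_2\in\ker Q$, where $H$ denotes the $\bild(Q)$-part including the noise correction and $\Pi$ the orthogonal projection onto $\bild(Q)$; then the symmetric part equals $\Pi(H+H^{\top})\Pi$, the cross terms are absorbed into $J$, and negative semidefiniteness of the full block matrix really does follow from $\mathfrak{M}_Q\le 0$. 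Be aware that the paper's write-up is itself loose at this very point: it sets $G_1=0$ off $\bild(Q)$, which read literally is not realizable by a linear map, and read as a direct-sum definition leaves unhandled exactly the mixed term $x_2^{\top}G_1x_1$ that defeats your version (in the example above it reproduces your indefinite $R$, so the case split \eqref{eq:1703a}--\eqref{eq:1703b} does not by itself yield $G+G^{\top}\le 0$); the adjoint-type extension above is what actually closes the argument. In short: your instinct about where the difficulty lies is right, but the kernel-compatibility claim you base the conclusion on is wrong, and fixing it requires modifying the construction, not just checking it.
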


\begin{proof}
    Observe that $\ker(Q)\subseteq \ker(A) \cap \ker(C) \cap (\cap_{j=1}^k \ker(\fA(j)))$ implies that there exist $\overline{A}, \overline{\fA}(j) \in \R^{d\times d}$, $j\in \{1,\ldots,k\}$, and
    $\overline{C} \in \R^{n\times d}$ such that
    $\overline{\fA}(j) Q = \fA(j)$, $j\in \{1,\ldots,k\}$,
    $\overline{A} Q = A$,
    and $\overline{C} Q = C$.
    Denote by $\bild(Q)$ the image of $Q$. 
    Let $G_1 \in \R^{d\times d}$ be defined by $G_1x = 0$, $x \in \R^d\backslash\bild(Q)$, 
    and $G_1x=\overline{A}x + \frac12 \sum_{j=1}^k \overline{\fA}^\top(j) Q \overline{\fA}(j)x$, $x \in \bild(Q)$.
    Let $G_3 \in \R^{n\times d}$ be defined by $G_3x = -B^\top x$, $x \in \R^d\backslash\bild(Q)$, and $G_3x=-\overline{C}x + \sum_{j=1}^k \fB^\top(j) Q \overline{\fA}(j)x$, $x \in \bild(Q)$.
    Moreover, let $G_2=B$ and $G_4=- D + \frac12 \sum_{j=1}^k \fB^\top(j) Q \fB(j)$.
    Now, define
    \begin{equation}
            G = \begin{pmatrix}
                G_1
                & G_2 \\
                G_3
                & G_4
            \end{pmatrix}
            .
    \end{equation}
    Since $\mathfrak{M}_Q\le 0$, it then holds for all $z \in \R^{d+n}$ that 
    \begin{equation}\label{eq:1703a}
        \begin{split}
            &
            z^\top
            \begin{pmatrix}
                Q & 0 \\
                0 & \Id
            \end{pmatrix}
            \big(G+G^\top\big)
            \begin{pmatrix}
                Q & 0 \\
                0 & \Id
            \end{pmatrix}
            z
            =
            z^\top
            \begin{pmatrix}
                Q(G_1 Q) + (G_1 Q)^\top Q
                & Q G_2 + Q G_3^\top \\
                G_3 Q + G_2^\top Q
                & G_4+ G_4^\top
            \end{pmatrix}
            z
            \\
            & =
            z^\top
            \begin{pmatrix}
			QA+A^\top Q+\sum_{j=1}^k\fA^\top(j) Q \fA(j) & QB-C^\top +\sum_{j=1}^k\fA^\top(j) Q \fB(j)\\
			B^\top Q-C +\sum_{j=1}^k\fB^\top(j) Q \fA(j) & - (D+D^\top) +\sum_{j=1}^k\fB^\top(j) Q \fB(j)
		      \end{pmatrix}
            z 
            \le 0 .
        \end{split}
    \end{equation}
    Moreover, $\mathfrak{M}_Q\le 0$ shows for all $x \in \R^d\backslash\bild(Q)$ and $y \in \R^n$ that 
    \begin{equation}\label{eq:1703b}
    \begin{split}
        &
        \begin{pmatrix}
            x^\top & y^\top
        \end{pmatrix}
        \big(G+G^\top\big)
        \begin{pmatrix}
            x\\
            y
        \end{pmatrix}
        = y^\top \big( G_4 + G_4^\top \big) y \\
        & =
        \begin{pmatrix}
            0^\top & y^\top
        \end{pmatrix}
        \begin{pmatrix}
			QA+A^\top Q+\sum_{j=1}^k\fA^\top(j) Q \fA(j) & QB-C^\top +\sum_{j=1}^k\fA^\top(j) Q \fB(j)\\
			B^\top Q-C +\sum_{j=1}^k\fB^\top(j) Q \fA(j) & - (D+D^\top) +\sum_{j=1}^k\fB^\top(j) Q \fB(j)
		\end{pmatrix}
        \begin{pmatrix}
            0\\
            y
        \end{pmatrix}
        \le 0 .
    \end{split}
    \end{equation}
    It follows from \eqref{eq:1703a} and \eqref{eq:1703b} that
    $G+G^\top$ is negative semidefinite.
    Let
    \begin{alignat*}{2}
        &J=\frac12 \big( G_1 - G_1^\top \big) \in \R^{d\times d}, \quad 
        &&R=-\frac{1}{2} \big( G_1 + G_1^\top \big) \in \R^{d\times d},\\
        &P=-\frac{1}{2} \big( G_2 + G_3^\top \big) \in \R^{d\times n},  \quad 
        &&F=\frac{1}{2}\big( G_2 - G_3^\top \big) \in \R^{d\times n}, \\
        &S=-\frac12\big( G_4 + G_4^\top \big) \in \R^{n\times n}, \quad
        &&N=-\frac12\big( G_4 - G_4^\top \big)\in \R^{n\times n}.
    \end{alignat*}
    Observe that $-J^\top=J$, $R^\top=R$, $S^\top=S$, $-N^\top = N$,
    and
    \begin{equation}
        \begin{pmatrix}
            R & P \\
            P^\top & S
        \end{pmatrix}
        =
        -\frac12 \big( G + G^\top \big)
        \ge 0 .
    \end{equation}
    Moreover, note that
    \begin{equation}
        \begin{split}
            A & = \overline{A} Q
            = G_1 Q - \frac12 \sum_{j=1}^k \overline{\fA}^\top(j)Q\overline{\fA}(j) Q
            = (J-R)Q - \frac12 \sum_{j=1}^k \overline{\fA}^\top(j)Q\overline{\fA}(j) Q , \\
            B & = G_2 = F-P,\\
            C & = \overline{C} Q
            = \bigg(- G_3^\top + \sum_{j=1}^k \overline{\fA}^\top(j) Q \fB^\top(j) \bigg)^\top Q
            = \bigg( F+P + \sum_{j=1}^k \overline{\fA}^\top(j) Q \fB^\top(j) \bigg)^\top Q, \\
            D & = - G_4 + \frac12 \sum_{j=1}^k \fB^\top(j) Q \fB(j)
            = S + N + \frac12 \sum_{j=1}^k \fB^\top(j) Q \fB(j) .
        \end{split}
    \end{equation}
    This shows that \eqref{system} is an SLTI-PHS.
\end{proof}

\begin{remark}\label{rem-SLTIPHS-kernel-condition}
    \begin{enumerate}
        \item[(1)] Suppose that Assumption~\ref{ass-linear-system} is fulfilled.
        If $Q \in \R^{d\times d}$ is positive definite and it holds 
        that 
        $\mathfrak{M}_Q\le 0$, then
        all assumptions of Lemma~\ref{lemma:lmiimpliesphs} are satisfied.

        \item[(2)] There exist examples where  
        the linear matrix inequality $\mathfrak{M}_Q\le 0$ admits a positive semidefinite solution $Q \in \bbr^{d\times d}$, but the SLTIS~\eqref{system} is not an SLTI-PHS (see, e.g., the deterministic example \cite[Example~2]{cherifi2024difference}).

        \item[(3)]
        The converse of Lemma~\ref{lemma:lmiimpliesphs} is true as well: If the system~\eqref{system} is an SLTI-PHS with $Q$, then Remark~\ref{rem-SLTIPHS-params-SLTIS} shows that the system~\eqref{system} is an SLTIS whose system matrices satisfy  
        $\ker(Q)\subseteq \ker(A) \cap \ker(C) \cap (\cap_{j=1}^k \ker(\fA(j)))$ 
        and Lemma~\ref{lemma-SLTIPHS-LMI} shows that $\mathfrak{M}_Q\le 0$.
    \end{enumerate}
\end{remark}

In terms of passivity properties, we obtain the following 
sufficient conditions for an SLTI-PHS.

\begin{corollary}\label{cor-passivity-to-sltiphs}
    Suppose that Assumptions~\ref{ass-linear-system}, \ref{ass-control-cont-in-zero} and condition~\eqref{int-cond-u-linear} are fulfilled.
    Assume that there exists a positive semidefinite matrix
    $Q \in \R^{d\times d}$ such that $H(x)=\frac{1}{2}\langle Qx,x\rangle$, $x\in \R^d$,
    and 
    \begin{equation}\label{eq:SLTIS-kernel-condition}
        \ker(Q)\subseteq \ker(A) \cap \ker(C) \cap (\cap_{j=1}^k \ker(\fA(j))).
    \end{equation}
    Moreover, assume that at least one of the following conditions (i)--(iii) is satisfied:
        \begin{enumerate}
        \item[(i)] Assumption~\ref{ass-control-cont-in-zero-and-integrable} is fulfilled 
        and the SLTIS~\eqref{system} is stochastically passive with respect to the storage function~$H$.

        \item[(ii)] The SLTIS~\eqref{system} has the passive locale supermartingale property with respect to the storage function~$H$.

        \item[(iii)] The SLTIS~\eqref{system} has the passive supermartingale property with respect to the storage function~$H$.
    \end{enumerate}
    Then the SLTIS~\eqref{system} is an SLTI-PHS with $Q$.
\end{corollary}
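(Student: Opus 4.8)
The plan is to reduce the entire statement to verifying the single linear matrix inequality $\mathfrak{M}_Q \le 0$ and then to invoke Lemma~\ref{lemma:lmiimpliesphs}. Indeed, the standing hypotheses already supply Assumption~\ref{ass-linear-system}, the positive semidefiniteness of $Q$, and the kernel condition~\eqref{eq:SLTIS-kernel-condition}. Hence, once $\mathfrak{M}_Q \le 0$ has been established, Lemma~\ref{lemma:lmiimpliesphs} applies verbatim and immediately yields that the SLTIS~\eqref{system} is an SLTI-PHS with $Q$. So the only real work is to extract $\mathfrak{M}_Q \le 0$ from whichever of the passivity conditions (i)--(iii) is assumed.

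First I would treat the three cases separately, in each case producing $\mathfrak{M}_Q \le 0$ from an already-proven characterization. For cases (ii) and (iii), the hypotheses include Assumptions~\ref{ass-control-cont-in-zero}, \ref{ass-linear-system} and condition~\eqref{int-cond-u-linear}, so Theorem~\ref{thm-linear-1} applies; its equivalence of the passive local supermartingale property, the passive supermartingale property, and $\mathfrak{M}_Q \le 0$ delivers the inequality at once. For case (i) I additionally have Assumption~\ref{ass-control-cont-in-zero-and-integrable}, so I would instead invoke Theorem~\ref{thm-linear-3}, which adds stochastic passivity as a further equivalent statement; stochastic passivity with respect to $H$ therefore forces $\mathfrak{M}_Q \le 0$ as well.

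The argument is thus essentially an assembly of earlier results, and I do not anticipate a genuine obstacle. The one point requiring care is the bookkeeping of the control assumptions: case (i) must be routed through Theorem~\ref{thm-linear-3}, which relies on the stronger Assumption~\ref{ass-control-cont-in-zero-and-integrable} (present precisely under (i)), whereas cases (ii) and (iii) are already characterized under the weaker Assumption~\ref{ass-control-cont-in-zero} via Theorem~\ref{thm-linear-1}. Having obtained $\mathfrak{M}_Q \le 0$ in every case, I would then close the proof by combining this inequality with the kernel condition~\eqref{eq:SLTIS-kernel-condition} and the positive semidefiniteness of $Q$ and applying Lemma~\ref{lemma:lmiimpliesphs}.
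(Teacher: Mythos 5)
Your proposal is correct and matches the paper's proof, which cites exactly the same three results: Theorem~\ref{thm-linear-1} for cases (ii) and (iii), Theorem~\ref{thm-linear-3} for case (i), and Lemma~\ref{lemma:lmiimpliesphs} to convert $\mathfrak{M}_Q \le 0$ together with the kernel condition into the SLTI-PHS structure. Your routing of the control assumptions (the stronger Assumption~\ref{ass-control-cont-in-zero-and-integrable} only for case (i)) is precisely the bookkeeping the paper's one-line proof implicitly relies on.
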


\begin{proof}
    The claim follows from Lemma~\ref{lemma:lmiimpliesphs}, Theorem~\ref{thm-linear-1}, and Theorem~\ref{thm-linear-3}.
\end{proof}

If the SLTIS is observable (recall Definition~\ref{def-observable}), then 
Corollary~\ref{cor-linear-quadraticstorage-3} together with  Theorem~\ref{thm-linear-3} and Lemma~\ref{lemma:lmiimpliesphs} 
allows us to 
start from stochastic passivity with respect to any storage function that is bounded from below.
This, combined with Corollary~\ref{cor-SLTIPHS-passivityprop}, yields the following characterization of SLTI-PHS.

\begin{corollary}\label{cor:pass_and_obs_imply_phs}
    Suppose that Assumptions \ref{ass-linear-system} and \ref{ass-linear-system-controls-locsquareint} are fulfilled and that $(\calf_t)_{t\ge 0}$ is the natural filtration of $W$ augmented by all the $\IP$-null sets in $\calf$. 
    Assume that the SLTIS~\eqref{system} is observable. 
    Then the following statements are equivalent:
    \begin{enumerate}
        \item[(i)]
        The SLTIS~\eqref{system} is stochastically passive with respect to a storage function~$H$ that is bounded from below.

        \item[(ii)]
        There exists a positive definite matrix $Q\in\bbr^{d\times d}$ such that the SLTIS~\eqref{system} is an SLTI-PHS with~$Q$.
        
        \item[(iii)]
        The SLTIS~\eqref{system} is an SLTI-PHS. 
    \end{enumerate}
\end{corollary}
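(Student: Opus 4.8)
The plan is to prove the cycle of implications (i) $\Rightarrow$ (ii) $\Rightarrow$ (iii) $\Rightarrow$ (i), exploiting that Assumption~\ref{ass-linear-system-controls-locsquareint} is strong enough to supply all the auxiliary hypotheses appearing in the cited results. First I would record that under Assumption~\ref{ass-linear-system-controls-locsquareint} the constant process $u_t \equiv v$ belongs to $\cala$ for every $v \in \bbr^n$; this immediately yields Assumption~\ref{ass-control-cont-in-zero}, Assumption~\ref{ass-control-cont-in-zero-and-integrable}, and condition~\eqref{int-cond-u-linear}, so all of these may be invoked freely below.

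For (i) $\Rightarrow$ (ii), I would feed stochastic passivity with respect to a storage function bounded from below directly into Corollary~\ref{cor-linear-quadraticstorage-3}, whose hypotheses --- observability, the natural augmented filtration, and Assumptions~\ref{ass-linear-system}, \ref{ass-linear-system-controls-locsquareint} --- are exactly those in force. This produces a positive definite matrix $Q$ such that the system is stochastically passive with respect to the quadratic Hamiltonian $\cH(x) = \tfrac12 \la Qx, x \ra$. Theorem~\ref{thm-linear-3} then converts this stochastic passivity into the linear matrix inequality $\mathfrak{M}_Q \le 0$. Because $Q$ is positive definite, Remark~\ref{rem-SLTIPHS-kernel-condition}(1) guarantees that the kernel condition $\ker(Q) \subseteq \ker(A) \cap \ker(C) \cap (\cap_{j=1}^k \ker(\fA(j)))$ holds trivially, so Lemma~\ref{lemma:lmiimpliesphs} applies and shows the system is an SLTI-PHS with this positive definite $Q$, which is precisely (ii).

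The implication (ii) $\Rightarrow$ (iii) is immediate, since the existence of a positive definite $Q$ witnessing the SLTI-PHS structure in particular makes the system an SLTI-PHS. For (iii) $\Rightarrow$ (i), I would observe that by Definition~\ref{def-SLTIPHS} any SLTI-PHS comes equipped with a positive semidefinite matrix $Q$ with which it is an SLTI-PHS; setting $H(x) = \tfrac12 \la Qx, x \ra \ge 0$ gives a storage function bounded from below. Corollary~\ref{cor-SLTIPHS-passivityprop}(i) --- applicable since $Q \ge 0$ and Assumptions~\ref{ass-control-cont-in-zero}, \ref{ass-control-cont-in-zero-and-integrable} together with condition~\eqref{int-cond-u-linear} hold --- then yields stochastic passivity with respect to $H$, establishing (i).

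Since the substantive work (constructing the positive definite $Q$ from the available storage via the linear-quadratic control problem, and the algebraic passage from the LMI to port-Hamiltonian form) has already been carried out in Corollary~\ref{cor-linear-quadraticstorage-3} and Lemma~\ref{lemma:lmiimpliesphs}, the only real care needed is bookkeeping: verifying that each cited result's hypotheses are met and that positive \emph{definiteness} of $Q$, rather than mere semidefiniteness, is propagated into statement (ii). I expect the main subtlety to lie in the (i) $\Rightarrow$ (ii) step, where positive definiteness --- obtained from observability through Corollary~\ref{cor-linear-quadraticstorage-3} --- is exactly what discharges the kernel hypothesis of Lemma~\ref{lemma:lmiimpliesphs} for free.
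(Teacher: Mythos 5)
Your proposal is correct and follows essentially the same route as the paper: (i) $\Rightarrow$ (ii) via Corollary~\ref{cor-linear-quadraticstorage-3}, then Theorem~\ref{thm-linear-3} to get $\mathfrak{M}_Q \le 0$, then Lemma~\ref{lemma:lmiimpliesphs} (with the kernel condition trivial since $Q$ is positive definite); (ii) $\Rightarrow$ (iii) trivially; and (iii) $\Rightarrow$ (i) via Corollary~\ref{cor-SLTIPHS-passivityprop} applied to the nonnegative quadratic storage function. Your preliminary observation that constant controls lie in $\cala$ under Assumption~\ref{ass-linear-system-controls-locsquareint}, and hence that Assumptions~\ref{ass-control-cont-in-zero}, \ref{ass-control-cont-in-zero-and-integrable} and condition~\eqref{int-cond-u-linear} hold, is a correct piece of bookkeeping that the paper leaves implicit.
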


\begin{proof}
    (i) $\Rightarrow$ (ii): 
    Note that 
    Corollary~\ref{cor-linear-quadraticstorage-3} shows that there exists a positive definite matrix $Q\in\bbr^{d\times d}$ such that the SLTIS~\eqref{system} is stochastically passive with respect to $\cH\colon \bbr^d\to [0,\infty)$, $\cH(x)=\frac12 \langle Q x, x \rangle$.
    Then Theorem~\ref{thm-linear-3} implies that $\mathfrak{M}_Q\le 0$.
    Since $Q$ is positive definite, (ii) now follows from Lemma~\ref{lemma:lmiimpliesphs}.

    (ii) $\Rightarrow$ (iii):
    This is clearly true.

    (iii) $\Rightarrow$ (i): 
    Corollary~\ref{cor-SLTIPHS-passivityprop} ensures that the SLTI-PHS with some $Q\ge 0$ is stochastically passive with respect to the storage function $H\colon\bbr^d\to [0,\infty)$, $H(x)=\frac12 \langle Qx,x\rangle$.
\end{proof}

Observe that, in general, it is not guaranteed that an SLTI-PHS admits a representation where the matrix $Q$ in Definition~\ref{def-SLTIPHS} is positive definite. 
At least, for any SLTI-PHS there exists a (possibly non-invertible) transformation into an SLTI-PHS with $Q=\Id$.

\begin{remark}\label{rem-SLTIPHS-transformationQ}
    If the system \eqref{system} is an SLTI-PHS with $Q\ge 0$, then it can be transformed into an SLTI-PHS with $\widetilde Q=\Id$.
    Indeed, set $\widetilde{X}=Q^{1/2}X$, where $Q^{1/2}$ is the square root of $Q$ (i.e., $Q^{1/2}$ is positive semidefinite with $Q^{1/2}Q^{1/2}=Q$). Then we obtain for all $t\in [0,\infty)$ that
    \begin{equation}
    \begin{split}
        d\widetilde{X}_t&=Q^{1/2}\bigg[\bigg(J-R-\frac{1}{2}\sum_{j=1}^k\overline{\fA}^\top(j) Q \overline{\fA}(j)\bigg)Q X_t+(F-P)u_t\bigg]dt
        +\sum_{j=1}^kQ^{1/2}(\overline{\fA}(j)Q X_t+ \fB(j) u_t)dW_t^j\\
        &=\bigg[\bigg(\widetilde{J}-\widetilde{R}-\frac{1}{2}\sum_{j=1}^k\widetilde{\fA}^\top(j) \widetilde{\fA}(j) \bigg) \widetilde{X}_t + (\widetilde{F}-\widetilde{P}) u_t \bigg] dt 
        + \sum_{j=1}^k (\widetilde{\fA}(j) \widetilde{X}_t+ \widetilde{\fB}(j) u_t)dW_t^j
        \end{split}
    \end{equation}
    and
    \begin{equation}
        Y_t=\bigg(\widetilde{F}+\widetilde{P}+\sum_{j=1}^k\widetilde{\fA}^\top(j) \widetilde{\fB}(j) \bigg)^\top \widetilde{X}_t + \bigg( S+N+\frac{1}{2}\sum_{j=1}^k\widetilde{\fB}^\top(j)  \widetilde{\fB}(j) \bigg) u_t,
    \end{equation}
    with
    \begin{equation*}
        \begin{split}
            & \widetilde{J}=Q^{1/2}JQ^{1/2},
            \quad
            \widetilde{R}=Q^{1/2}RQ^{1/2},
            \quad
            \widetilde{F}=Q^{1/2}F,
            \quad
            \widetilde{P}=Q^{1/2}P,
            \\
            &
            \widetilde{\fA}(j)=Q^{1/2}\overline{\fA}(j) Q^{1/2},
            \quad
            \widetilde{\fB}(j)=Q^{1/2}\fB(j) .
        \end{split}
    \end{equation*}
    Moreover, one directly verifies that $\widetilde{R}$, $\widetilde{P}$ and $S$ satisfy \eqref{eq:matrix_ineq_pH}.
\end{remark}

We next discuss mean-square stabilization properties for SLTI-PHS. 

\begin{remark}\label{rem-SLTIPHS-stability}
        By applying established results from the literature one can derive stability and stabilization properties for SLTI-PHS in the mean-square sense. 
        For instance, if there is no control in the diffusion part (i.e., $\fB(j)=0$ for all $j\in\{1,\ldots,k\}$), $Q$ is positive definite, and there exists a constant $c \in \bbr$ such that $R+c(F-P)(F-P)^\top$ is positive definite,  
        then it follows from, e.g., \cite[Theorem 1]{rami2000linear} that the SLTI-PHS is asymptotically mean-square stabilizable. 
        In particular, if both $Q$ and $R$ in Definition~\ref{def-SLTIPHS} are positive definite, then the uncontrolled stochastic differential equation \eqref{SDE} is asymptotically mean-square stable.
\end{remark}

In addition to passivity, a crucial feature of deterministic PHS is the fact that any power-conserving interconnection of deterministic PHS yields again a deterministic PHS. 
We show that this property also holds in the stochastic context for SLTI-PHS.

\begin{remark}\label{rem-SLTI-interconnection}
    We first explain how we interconnect two SLTIS. Let 
    \begin{equation*}
        A(i),B(i),(\fA(j,i))_{j=1,\ldots,k},(\fB(j,i))_{j=1,\ldots,k},C(i),D(i)
    \end{equation*}
    denote the system matrices of dimensions $d(i)$ (state) and $n(i)$ (control) of the SLTIS (i), where $i\in\{1,2\}$ (cf.\ Assumption~\ref{ass-linear-system} and~\eqref{eq:SLTIS}). 
    Assume for all $i\in\{1,2\}$ that the control set $\cala(i)$ of the SLTIS~(i) 
    satisfies Assumption~\ref{ass-linear-system-controls-locsquareint} (with $n$ replaced by $n(i)$). 
    To interconnect these two systems, let 
    $\hat{n} \in \bbn$ with $\hat{n}\le\min\{n(1),n(2)\}$ 
    and $K \in \bbr^{2\hat{n}\times 2\hat{n}}$. 
    We take (for simplicity) the first $\hat{n}$ components of the controls for connecting and leave the remaining components as external inputs for the connected system, i.e., for each $i\in\{1,2\}$ we split $u(i)\in\cala(i)$ into the $\bbr^{\hat{n}}$-valued process $u^c(i)$ and the $\bbr^{n(i)-\hat{n}}$-valued process $u^e(i)$ such that $u(i)=((u^c(i))^\top,(u^e(i))^\top)^\top$. 
    We also partition the system matrices accordingly, e.g., $B(i)=(B^c(i),B^e(i))$ with $B^c(i) \in \bbr^{d(i)\times \hat{n}}$ and $B^e(i) \in \bbr^{d(i)\times (n(i)-\hat{n})}$, and 
    \begin{equation*}
        C(i) = \begin{pmatrix}
            C^c(i) \\
            C^e(i)
        \end{pmatrix},
        \quad 
        D(i) = \begin{pmatrix}
            D^c(i) & D^{ce}(i) \\
            D^{ec}(i) & D^e(i)
        \end{pmatrix}
        .
    \end{equation*}
    Note that for each $i\in\{1,2\}$, initial value $x_0(i) \in \bbr^{d(i)}$ and control $u(i) \in\cala(i)$ this also splits the output into $Y(i)=((Y^c(i))^\top,(Y^e(i))^\top)^\top$ 
    with 
    \begin{equation*}
        \begin{split}
            Y^c(i) & = C^c(i) X(i) + D^c(i) u^c(i) + D^{ce}(i) u^e(i),\\
            Y^e(i) & = C^e(i) X(i) + D^{ec}(i) u^c(i) + D^{e}(i) u^e(i) .
        \end{split}
    \end{equation*}
    If the matrix 
    \begin{equation}\label{eq:matrix-to-be-inverted}
        \mathfrak{D} :=
        \Id - K \begin{pmatrix}
            D^c(1) & 0 \\
            0 & D^c(2)
        \end{pmatrix}
    \end{equation}
    is invertible, then 
    the interconnection 
    \begin{equation}\label{eq:interconnection-equation}
        \begin{pmatrix}
            u^c(1) \\
            u^c(2)
        \end{pmatrix}
        = K \begin{pmatrix}
            Y^c(1)\\
            Y^c(2)
        \end{pmatrix}
    \end{equation}
    yields an SLTIS 
    with the state $X=((X(1))^\top,(X(2))^\top)^\top$, the input $u=u^e=((u^e(1))^\top,(u^e(2))^\top)^\top$ and the output $Y=Y^e=((Y^e(1))^\top,(Y^e(2))^\top)^\top$.  
    This can be shown by 
    noticing that \eqref{eq:interconnection-equation} is equivalent to 
    \begin{equation*}
        \begin{split}
            \mathfrak{D} \begin{pmatrix}
            u^c(1) \\
            u^c(2)
            \end{pmatrix}
            & = K 
            \begin{pmatrix}
                C^c(1) & 0 \\
                0 & C^c(2)
            \end{pmatrix}
            \begin{pmatrix}
                X(1) \\
                X(2)
            \end{pmatrix}
            + K \begin{pmatrix}
                D^{ce}(1) & 0 \\
                0 & D^{ce}(2)
            \end{pmatrix}
            \begin{pmatrix}
                u^e(1) \\
                u^e(2)
            \end{pmatrix}
            .
        \end{split}
    \end{equation*} 
    Note that the connected SLTIS 
    reads 
    \begin{align}\label{eq:SLTIS-interconnected}
    \left\{
    \begin{array}{rcl}
    dX_t & = & \big[ \big( A+B^c\mathfrak{D}^{-1} KC^c \big)  X_t  
    + \big( B^c\mathfrak{D}^{-1} KD^{ce} + B^e \big) u_t \big] dt \\
    & & + \sum_{j=1}^k \big[ \big( \mathfrak{A}(j)+\fB^c(j) \mathfrak{D}^{-1} KC^c \big) X_t + \big( \fB^c(j) \mathfrak{D}^{-1} KD^{ce} + \fB^e(j) \big) u_t \big] dW_t^j
    \\ Y_t & = & \big( C^e+D^{ec}\mathfrak{D}^{-1} KC^c \big) X_t
    + \big( D^{ec}\mathfrak{D}^{-1} KD^{ce}+D^e \big) u_t
    \\ X_0 & = & x_0 \in \bbr^{d(1)+d(2)}
    \\ u & \in & \cala,
    \end{array}
    \right.
    \end{align}
    where $\cala$ satisfies Assumption~\ref{ass-linear-system-controls-locsquareint} (with $n$ replaced by $n(1)+n(2)-2\hat{n}$) and 
    where for any matrices $G(1),G(2)$, we denote 
    \begin{equation*}
        G = 
        \begin{pmatrix}
            G(1) & 0 \\
            0 & G(2)
        \end{pmatrix}
        .
    \end{equation*}
    Moreover, note that if we use all control components of the initial systems for connecting, then the connected system will have no input and no output (but we still refer to it as an SLTIS).
\end{remark}

\begin{proposition}\label{lemma_interconnection}
    For each $i\in\{1,2\}$ consider an SLTI-PHS (i) of dimensions $d(i)$ (state) and $n(i)$ (control) 
    with $N(i)=0$ in Definition~\ref{def-SLTIPHS} 
    and 
    assume that the control set $\cala(i)$ 
    satisfies Assumption~\ref{ass-linear-system-controls-locsquareint}.  
    Let $\hat{n} \in \{1,\ldots,\min\{n(1),n(2)\}\}$
    and let $K \in \bbr^{2\hat{n}\times 2\hat{n}}$ satisfy $K=-K^\top$.  
    Then the system~\eqref{eq:SLTIS-interconnected} (i.e., the interconnection of the SLTI-PHS (1) and the SLTI-PHS (2) via~\eqref{eq:interconnection-equation}) is an SLTI-PHS.
\end{proposition}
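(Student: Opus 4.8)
The plan is to exhibit the interconnected system as an SLTI-PHS by verifying the hypotheses of Lemma~\ref{lemma:lmiimpliesphs} for the block-diagonal matrix $Q:=\diag(Q(1),Q(2))$, which is positive semidefinite and corresponds to the additive Hamiltonian $H(x)=\tfrac12\langle Q(1)x(1),x(1)\rangle+\tfrac12\langle Q(2)x(2),x(2)\rangle=\tfrac12\langle Qx,x\rangle$. Before doing so I would check that the matrix $\mathfrak{D}$ in~\eqref{eq:matrix-to-be-inverted} is invertible, so that the interconnected system~\eqref{eq:SLTIS-interconnected} is a well-defined SLTIS in the sense of Remark~\ref{rem-SLTI-interconnection}. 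Since $N(i)=0$, the feedthrough $D(i)=S(i)+\tfrac12\sum_{j=1}^k\fB^\top(j,i)Q(i)\fB(j,i)$ is symmetric, and it is positive semidefinite because $S(i)\ge 0$ (a diagonal block of the matrix in~\eqref{eq:matrix_ineq_pH}) and $Q(i)\ge 0$. Hence each $D^c(i)$, being a principal submatrix, is symmetric and positive semidefinite, and so is the block-diagonal matrix $D^c=\diag(D^c(1),D^c(2))$. For skew-symmetric $K$ this makes $\mathfrak{D}=\Id-KD^c$ invertible: if $\mathfrak{D}w=0$, i.e.\ $w=KD^cw$, then $\langle D^cw,w\rangle=\langle D^cw,KD^cw\rangle=0$, so $D^cw=0$ and thus $w=0$. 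This is precisely where the assumption $N(i)=0$ enters.

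The key step is the linear matrix inequality $\mathfrak{M}_Q\le 0$ for the interconnected system, which I would obtain from a power-balance argument rather than a direct manipulation of~\eqref{drift-cond-matrix}. By Lemma~\ref{rem-linear-generator} (equivalently Corollary~\ref{lemma-linear-driftcond-lmi}) it suffices to show that the generator $\call H$ of the interconnected system is nonpositive. Fix $x=((x(1))^\top,(x(2))^\top)^\top$ and an external input $u^e$; invertibility of $\mathfrak{D}$ determines the internal input $u^c$ uniquely through~\eqref{eq:interconnection-equation}, and with $v(i)=((u^c(i))^\top,(u^e(i))^\top)^\top$ the combined drift, diffusion and external output are, by construction of~\eqref{eq:SLTIS-interconnected}, the concatenations of those of the two subsystems evaluated at $(x(i),v(i))$. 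Since the Hamiltonian is additive, the part $\langle\nabla H,b\rangle+\tfrac12\sum_j\langle Q\sigma^j,\sigma^j\rangle$ of the generator splits as a sum over $i$; subtracting the external supply rate $\langle u^e,Y^e\rangle=\sum_i\langle u^e(i),Y^e(i)\rangle$ and writing $\call H^{(i)}$ for the map~\eqref{generator} built from the data of subsystem $(i)$ yields the identity
\begin{align*}
\call H(x,u^e)=\sum_{i=1}^2\call H^{(i)}(x(i),v(i))+\langle u^c,Y^c\rangle,
\end{align*}
where $u^c=((u^c(1))^\top,(u^c(2))^\top)^\top$ and $Y^c=((Y^c(1))^\top,(Y^c(2))^\top)^\top$. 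The interconnection reads $u^c=KY^c$, so skew-symmetry of $K$ gives $\langle u^c,Y^c\rangle=\langle KY^c,Y^c\rangle=0$. Because each subsystem is an SLTI-PHS, Lemma~\ref{lemma-SLTIPHS-LMI} and Corollary~\ref{lemma-linear-driftcond-lmi} give $\call H^{(i)}\le 0$, whence $\call H\le 0$ and therefore $\mathfrak{M}_Q\le 0$.

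It remains to verify the kernel condition of Lemma~\ref{lemma:lmiimpliesphs}, namely $\ker(Q)\subseteq\ker(A)\cap\ker(C)\cap\bigcap_{j}\ker(\fA(j))$ for the matrices of~\eqref{eq:SLTIS-interconnected}. By Remark~\ref{rem-SLTIPHS-kernel-condition}(3) each subsystem satisfies $\ker(Q(i))\subseteq\ker(A(i))\cap\ker(C(i))\cap\bigcap_j\ker(\fA(j,i))$, so the blocks $A(i)$, $C^c(i)$, $C^e(i)$ and $\fA(j,i)$ all vanish on $\ker(Q(i))$. For $x\in\ker(Q)$ this gives $C^cx=0$, so every interconnection correction term in~\eqref{eq:SLTIS-interconnected}—each of which carries a right factor $C^c$—vanishes, while the remaining diagonal blocks annihilate $x$ as well; hence $Ax=0$, $Cx=0$ and $\fA(j)x=0$ for all $j$. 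With $Q\ge 0$, $\mathfrak{M}_Q\le 0$ and this kernel inclusion, Lemma~\ref{lemma:lmiimpliesphs} shows that the interconnected system is an SLTI-PHS with $Q$. The main obstacle I anticipate is purely organizational: setting up the block partitions of the six system matrices of each subsystem (and of $Q$) so that the additive splitting of $\call H$ and the cancellation $\langle KY^c,Y^c\rangle=0$ become transparent; once the notation of Remark~\ref{rem-SLTI-interconnection} is adopted, the remaining algebra is routine.
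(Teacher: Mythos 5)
Your proof is correct, but it reaches the key inequality $\mathfrak{M}_Q\le 0$ by a genuinely different route than the paper. The paper works at the process level: it invokes Corollary~\ref{cor-SLTIPHS-passivityprop} to get stochastic passivity of each subsystem, uses the cancellation $\langle u^c_s,Y^c_s\rangle=\langle KY^c_s,Y^c_s\rangle=0$ inside the integral identity $H(X_t)-\int_0^t\langle u^e_s,Y_s\rangle ds=\sum_i\big(H_i(X_t(i))-\int_0^t\langle u_s(i),Y_s(i)\rangle ds\big)$ to conclude that the interconnected system is stochastically passive with respect to $H(x)=\frac12\langle Qx,x\rangle$, $Q=\diag(Q(1),Q(2))$, and then passes back to the LMI and the PHS structure via Corollary~\ref{cor-passivity-to-sltiphs} (i.e., through Theorem~\ref{thm-linear-3} and Lemma~\ref{lemma:lmiimpliesphs}). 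You instead stay entirely at the algebraic level: the pointwise generator identity $\call H(x,u^e)=\sum_{i=1}^2\call H^{(i)}(x(i),v(i))+\langle u^c,Y^c\rangle$ together with skew-symmetry of $K$ gives $\call H\le 0$ directly (each $\call H^{(i)}\le 0$ by Lemma~\ref{lemma-SLTIPHS-LMI} and Corollary~\ref{lemma-linear-driftcond-lmi}), and Corollary~\ref{lemma-linear-driftcond-lmi} then yields $\mathfrak{M}_Q\le 0$ without any probabilistic input. This is a legitimate shortcut: it bypasses the stochastic machinery the paper's route relies on (moment estimates, the martingale argument behind Lemma~\ref{lemma-linear-stoch-passive-lmi}, and the control-set assumptions needed for Theorem~\ref{thm-linear-3}), since Lemma~\ref{lemma:lmiimpliesphs} only needs the algebraic data $Q\ge 0$, $\mathfrak{M}_Q\le 0$ and the kernel inclusion. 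What the paper's detour buys is the conceptual statement that the interconnection is power-conserving at the level of trajectories, i.e., stochastic passivity itself is preserved, which is the physically meaningful formulation; in your version that fact is recovered only a posteriori from Corollary~\ref{cor-SLTIPHS-passivityprop}. The remaining two ingredients of your argument — invertibility of $\mathfrak{D}$ from $N(i)=0$ (so $D^c$ is symmetric positive semidefinite and $w=KD^cw$ forces $D^cw=0$, hence $w=0$) and the verification of the kernel condition $\ker(Q)\subseteq\ker(A)\cap\ker(C)\cap\bigcap_j\ker(\fA(j))$ via Remark~\ref{rem-SLTIPHS-kernel-condition} and the right factor $C^c$ in every correction term of~\eqref{eq:SLTIS-interconnected} — coincide essentially verbatim with the paper's proof.
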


\begin{proof}
    We use the notation of Remark~\ref{rem-SLTI-interconnection}. 
    In particular, note that $\mathfrak{D}=\Id - K D^c$.
    
    Suppose that $v \in\bbr^{2\hat{n}}$ satisfies $\mathfrak{D}v=0$. 
    It then holds that $v=KD^cv$. 
    Since $K$ is skew symmetric, it follows that 
    $$v^\top (D^c)^\top v 
    = v^\top (D^c)^\top K D^c v 
    = \tfrac12 v^\top (D^c)^\top K D^c v + \tfrac12 v^\top (D^c)^\top K^\top D^c v 
    =0.$$
    For each $i\in\{1,2\}$, the fact that we have an SLTI-PHS 
    with $N(i)=0$ shows that $D(i)$ is positive semidefinite. Hence, $D^c$ is positive semidefinite. We thus obtain that $D^c v = 0$. This and $v=KD^cv$ 
    demonstrate that $v=0$. Therefore, $\mathfrak{D}$ admits an inverse.  

    Note that for all $u(i) \in\cala(i)$ 
    and $x_0(i)\in\bbr^{d(i)}$, $i\in\{1,2\}$, it follows from~\eqref{eq:interconnection-equation} and 
    $K=-K^\top$ that 
    \begin{equation*}
        \langle u^c_s, Y^c_s \rangle = \langle K Y^c_s, Y^c_s \rangle = 0, \quad s\in[0,\infty). 
    \end{equation*}
    This implies for all $u(i) \in\cala(i)$ 
    and $x_0(i)\in\bbr^{d(i)}$, $i\in\{1,2\}$, that (recall $Y=Y^e$) 
    \begin{equation}\label{eq:adding-components-to-the-supply}
        \langle u^e_s , Y_s \rangle 
        = \langle u^e_s , Y^e_s \rangle +  \langle u^c_s, Y^c_s \rangle 
        = \langle u_s(1), Y_s(1) \rangle 
        + \langle u_s(2), Y_s(2) \rangle, \quad s\in[0,\infty). 
    \end{equation}
    From Corollary~\ref{cor-SLTIPHS-passivityprop} we have for all $i\in\{1,2\}$ that the SLTI-PHS (i) is stochastically passive with respect to the storage function $H_i(x)=\frac12 \langle Q(i) x, x\rangle$, $x\in\bbr^{d(i)}$, where $Q(i) \in \bbr^{d(i)\times d(i)}$ comes from Definition~\ref{def-SLTIPHS}. 
    Let $Q=\diag(Q(1),Q(2))$ and $H(x)=\frac12 \langle Q x, x\rangle$, $x\in\bbr^{d(1)+d(2)}$.
    Due to \eqref{eq:adding-components-to-the-supply}, it holds for all $x_0\in\bbr^{d(1)+d(2)}$, $u^e \in\cala$, and $t\in[0,\infty)$ that
    \begin{equation*}
        \begin{split}
            H(X_{t}) - \int_0^t \langle u^e_s, Y_s \rangle ds
            & = H_1(X_{t}(1)) 
            - \int_0^t \langle u_s(1), Y_s(1) \rangle ds 
            + H_2(X_{t}(2)) - \int_0^t \langle u_s(2), Y_s(2) \rangle ds .
        \end{split}
    \end{equation*}
    Therefore, stochastic passivity of the SLTI-PHS (i) with respect to $H_i$ for all $i\in\{1,2\}$ implies that the system~\eqref{eq:SLTIS-interconnected} is stochastically passive with respect to $H$.

    Suppose that $x=((x(1))^\top,(x(2))^\top)^\top \in \bbr^{d(1)+d(2)}$ such that $Qx=0$. It then holds for all $i\in\{1,2\}$ that $x(i)\in\ker(Q(i))$. This and Remark~\ref{rem-SLTIPHS-params-SLTIS} imply for all $i\in\{1,2\}$ that $x(i)\in\ker(A(i))\cap \ker(C(i)) \cap (\cap_{j=1}^k \ker(\fA(j,i)))$. 
    It follows that 
    $x\in\ker(A)\cap\ker(C)\cap(\cap_{j=1}^k \ker(\fA(j)))$. 
    Moreover, we can show that $x \in \ker(C^c)$ and $x \in \ker(C^e)$.  
    We hence obtain that the SLTIS~\eqref{eq:SLTIS-interconnected} satisfies the condition~\eqref{eq:SLTIS-kernel-condition}. 
    The claim then follows from the stochastic passivity of the SLTIS~\eqref{eq:SLTIS-interconnected} and Corollary~\ref{cor-passivity-to-sltiphs}.
\end{proof}

To conclude this section, we provide generalizations of two classic examples of linear PHS to SLTI-PHS 
and exploit the interconnection property to obtain a third example. 

\begin{example}
    \begin{enumerate}
        \item[(1)] We consider a stochastic variant of the mass-spring-damper system. To this end, we denote by $m>0$ the mass of the object, by $c>0$ the damping constant, and by $\kappa>0$ the spring constant. Let 
        \begin{equation*}
            Q=\begin{pmatrix}
                \kappa & 0\\
                0& \frac{1}{m}
            \end{pmatrix}
        \end{equation*}
        and assume that $\sigma \in [-\sqrt{2mc}, + \sqrt{2mc}]$. 
        Then \begin{equation}\label{eq:massspringdamperwithnoise}
        \begin{split}
            dX_t
        &=\left(\begin{pmatrix}
                0&1\\
                -1 & -c
            \end{pmatrix}
             Q X_t
        +\begin{pmatrix}
            0\\1
        \end{pmatrix}
        u_t\right)dt
        + \begin{pmatrix}
            0 & 0 \\
            0 & \sigma 
        \end{pmatrix}
        QX_tdW_t,\\
        Y_t&=\begin{pmatrix}
            0 & 1
        \end{pmatrix} QX_t, 
        \end{split}
        \end{equation}
        is an SLTI-PHS. 
        The first component of the state in \eqref{eq:massspringdamperwithnoise} is the displacement of the mass and the second component is the momentum, which here is subject to multiplicative noise. 
        The control $u$ is interpreted as an external force applied to the system while the output gives the velocity of the mass. 
        The Hamiltonian $H$ is the sum of the potential and kinetic energy, namely 
        \begin{equation*}
            H(x) = \frac12 \langle Qx,x\rangle 
            = \frac{1}{2} \kappa q^2 + \frac{1}{2} m^{-1} p^2, \quad x = (q,p)^\top \in \bbr^2 . 
        \end{equation*}
        Note that if $\sigma=0$ and the controls are deterministic, then \eqref{eq:massspringdamperwithnoise}
        is the classic mass-spring-damper system in port-Hamiltonian formulation.
        Stochastic mass-spring-damper systems are also called stochastic oscillator in the literature, see, e.g., \cite{ryashko1997meansquare}. We remark that our condition on the size of the noise, $\sigma \in [-\sqrt{2mc}, + \sqrt{2mc}]$, which we require to obtain an SLTI-PHS, appears in \cite[Equation~(3.7)]{ryashko1997meansquare} as the condition for mean-square stability.

        \item[(2)] 
        We next consider a stochastic variant of a series RLC circuit.
        Let $r>0$ denote the resistance, $l>0$ the inductance, and $c>0$ the capacitance. 
        Define  
        \begin{equation*}
            Q=\begin{pmatrix}
                \frac{1}{c}& 0\\
                0& \frac{1}{l}
            \end{pmatrix}
            .
        \end{equation*}
        Let $\sigma_1,\sigma_2 \in \bbr$ satisfy $2r \ge \frac{1}{c} \sigma_1^2 + \frac{1}{l} \sigma_2^2$. 
        Then \begin{equation}\label{eq:rlcwithnoise}
        \begin{split}
            dX_t
        &=\left(\begin{pmatrix}
                0&1\\
                -1 & -r
            \end{pmatrix}
             Q X_t
        +\begin{pmatrix}
            0\\1
        \end{pmatrix}
        u_t\right)dt
        + \begin{pmatrix}
            0 & \sigma_1 \\
            0 & \sigma_2 
        \end{pmatrix}
        QX_tdW_t,\\
        Y_t&=\begin{pmatrix}
            0 & 1
        \end{pmatrix} QX_t, 
        \end{split}
        \end{equation}
        is an SLTI-PHS. 
        The first component of the state in \eqref{eq:rlcwithnoise} is the charge on the capacitor and the second component is the magnetic flux through the inductor. 
        The current is given by the output, and as an input one applies a voltage source. 
        The Hamiltonian 
            \begin{equation*}
                H(x) = \frac12 \langle Qx,x\rangle 
                = \frac{1}{2} c^{-1} q^2 + \frac{1}{2} l^{-1} \phi^2, \quad x = (q,\phi)^\top \in \bbr^2 , 
            \end{equation*}
        is the sum of the electric and magnetic energy. 
        Observe that by setting $\sigma_1=0=\sigma_2$ and allowing only deterministic controls, we recover 
        the deterministic port-Hamiltonian formulation of an RLC circuit.
        For further stochastic RLC models, see, for example, \cite[Section~1.9.7]{damm2004book}, \cite[Section~1.1]{ugurinovskii1999absolutestabil}, or \cite[Section~4]{EhKr_cordoni2023weak}. 
    
        \item[(3)]
        Proposition~\ref{lemma_interconnection} demonstrates that we can construct further SLTI-PHS examples by coupling. In particular, we can couple systems which come from different physical domains, such as the mass-spring-damper system (1) and the electrical circuit (2): 
        Since in both (1) and (2) there is no control in the output, we have that Proposition~\ref{lemma_interconnection} shows that interconnecting (1) and (2) via 
        \begin{equation*}
            K=\begin{pmatrix}
                0 & 1\\
                -1 & 0
            \end{pmatrix}
        \end{equation*}
        yields an SLTI-PHS with four-dimensional state (and no external inputs and outputs).  
        Note that in the interconnection the output current from (2) acts as an input force for (1), and the velocity of the mass (in opposite direction) plays the role of a voltage source for (2).
    \end{enumerate}
\end{example}

\begin{appendix}

\section{Semimartingale theory}\label{app-semimartingales}

In this appendix we provide the required results about semimartingale theory. Let $(\Omega,\calf,(\calf_t)_{t \in \bbr_+},\bbp)$ be a stochastic basis satisfying the usual conditions, see \cite[Def. I.1.3]{Jacod-Shiryaev}. Let us recall some notation:
\begin{itemize}
\item We denote by $\calm$ the space of all uniformly integrable martingales.

\item We denote by $\calm_{\loc}$ the space of all local martingales.

\item We denote by $\calm_{\loc}^0$ the space of all local martingales $M$ with $M_0 = 0$.

\item We denote by $\calv^+$ the convex cone of all c\`{a}dl\`{a}g, adapted and increasing processes $A$ with $A_0 = 0$.

\item We denote by $\calv^-$ the convex cone of all c\`{a}dl\`{a}g, adapted and decreasing processes $A$ with $A_0 = 0$.

\item We denote by $\calv$ the space of all c\`{a}dl\`{a}g, adapted processes $A$ with locally finite variation such that $A_0 = 0$.

\item We denote by $\scra^+$ the convex cone of all $A \in \calv^+$ such that $A_{\infty} \in L^1$.

\item We denote by $\scra^-$ the convex cone of all $A \in \calv^-$ such that $A_{\infty} \in L^1$.

\item We denote by $\scra$ the space of all $A \in \calv^-$ such that ${\rm Var}(A)_{\infty} \in L^1$.
\end{itemize}

\begin{definition}
Every process $X$ of the form $X = X_0 + M + A$ with $M \in \calm_{\loc}^0$ and $A \in \calv$ is called a \emph{semimartingale}.
\end{definition}

\begin{definition}
Every process $X$ of the form $X = X_0 + M + A$ with $M \in \calm_{\loc}^0$ and a predictable process $A \in \calv$ is called a \emph{special semimartingale}.
\end{definition}

We denote by $\cals$ the space of all semimartingales, and by $\cals_p$ the space of all special semimartingales.

\begin{proposition}\cite[Cor. I.3.16]{Jacod-Shiryaev}\label{prop-can-decomp}
For every special semimartingale $X$ the semimartingale decomposition $X = X_0 + M + A$ with $M \in \calm_{\loc}^0$ and a predictable process $A \in \calv$ is unique. It is called the \emph{canonical decomposition} of $X$.
\end{proposition}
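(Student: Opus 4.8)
The plan is to reduce the uniqueness assertion to the fundamental fact that a predictable local martingale of finite variation starting at zero must vanish identically. Suppose $X$ admits two canonical decompositions, $X = X_0 + M + A$ and $X = X_0 + M' + A'$, with $M, M' \in \calm_{\loc}^0$ and with $A, A' \in \calv$ predictable. Setting $N := M - M' = A' - A$, I observe that $N$ is simultaneously an element of $\calm_{\loc}^0$ and a predictable process in $\calv$ with $N_0 = 0$. Hence it suffices to show that any such $N$ satisfies $N \equiv 0$, since this forces $M = M'$ and $A = A'$.

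First I would show that $N$ is continuous, i.e.\ that it has no jumps. Because $N$ is predictable and of finite variation, every jump of $N$ occurs at a predictable time $T$, and the jump size $\Delta N_T$ is $\calf_{T-}$-measurable. On the other hand, for any local martingale and any predictable time $T$ one has $\bbe[\Delta N_T \mid \calf_{T-}] = 0$ on $\{T < \infty\}$. Combining these two facts yields $\Delta N_T = \bbe[\Delta N_T \mid \calf_{T-}] = 0$ at every predictable jump time; since a predictable finite-variation process jumps only at predictable times, $N$ is continuous.

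Next, with $N$ now a continuous local martingale of finite variation, I would pass to the quadratic variation. A continuous process of finite variation has vanishing quadratic variation, so $[N,N] \equiv 0$. Localizing $N$ to a bounded martingale along a sequence of stopping times $\tau_n \uparrow \infty$ and using that $N^2 - [N,N]$ is a local martingale, I obtain $\bbe[N_{t \wedge \tau_n}^2] = \bbe[[N,N]_{t \wedge \tau_n}] = 0$, whence $N_{t \wedge \tau_n} = 0$ almost surely; letting $n \to \infty$ gives $N \equiv 0$.

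The main obstacle is the jump step, which rests on two nontrivial inputs from the general theory: that a predictable finite-variation process jumps only at predictable times with $\calf_{T-}$-measurable jump sizes, and that local-martingale increments at predictable times have vanishing $\calf_{T-}$-conditional expectation. These are precisely the ingredients packaged in the theory of predictable projections. An alternative, more structural route sidesteps the jump analysis entirely: since $N \in \calm_{\loc}^0$ is predictable and of finite variation, it equals its own dual predictable projection, whereas the dual predictable projection of any local martingale is zero, so uniqueness of the compensator forces $N = 0$. I would likely present this compensator argument as the cleanest, citing the existence and uniqueness of the dual predictable projection as the sole deep ingredient.
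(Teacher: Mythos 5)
The paper does not actually prove this proposition: it is imported verbatim from Jacod--Shiryaev [Cor.\ I.3.16], so there is no in-paper argument to compare against, and your proposal must be judged as a reconstruction of the reference proof. As such it is correct and follows the standard line: the reduction to showing that a process $N \in \calm_{\loc}^0$ that is simultaneously predictable and of finite variation with $N_0 = 0$ must vanish is exactly the right pivot, and both of your routes for that fact are sound. Two points deserve explicit care if you write it out in full. First, in the jump step, the identity $\bbe[\Delta N_T \mid \calf_{T-}] = 0$ at a predictable time $T$ is not immediate for a general local martingale, since $\Delta N_T$ need not be integrable a priori; it is obtained by localization, using that $\{T \leq \tau_n\} \in \calf_{T-}$ for a localizing sequence $(\tau_n)$, and you also implicitly use the covering result that the jumps of a predictable c\`adl\`ag process are exhausted by a sequence of predictable times (a thin predictable set is a countable union of graphs of predictable times). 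Second, in your preferred compensator route, the dual predictable projection of $N$ exists only if $N$ has \emph{locally integrable} variation; this is not true of an arbitrary element of $\calv$, but it holds here precisely because $N$ is predictable and of finite variation --- this is Jacod--Shiryaev Lemma I.3.10, the very lemma the paper itself invokes in the proof of Proposition \ref{prop-super-MT}. So the existence and uniqueness of the compensator is not quite the ``sole'' deep ingredient; Lemma I.3.10 is the second one. With these two citations made explicit, either version of your argument is complete, and the compensator version is indeed the cleanest.
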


Let $\cals^-$ be the convex cone of all supermartingales.

\begin{definition}
A process $X$ is \emph{of class (D)} if the set of random variables $\{ X_T : T \text{ is a finite stopping time} \}$ is uniformly integrable.
\end{definition}

We can now restate the Doob-Meyer decomposition theorem by means of special semimartingales as follows.

\begin{proposition}\label{prop-Doob-Meyer}
For an adapted c\`{a}dl\`{a}g process $X$ the following statements are equivalent:
\begin{enumerate}
\item[(i)] $X$ is a supermartingale of class (D).

\item[(ii)] $X$ is a special semimartingale with canonical decomposition
\begin{align*}
X = X_0 + M + A
\end{align*}
such that $X_0 \in L^1$, $M \in \calm$ and $A \in \scra^-$.
\end{enumerate}
\end{proposition}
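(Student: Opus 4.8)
The plan is to prove the two implications separately. The direction (ii) $\Rightarrow$ (i) is elementary and I would verify it by hand, while (i) $\Rightarrow$ (ii) is the substantive content and I would obtain it from the classical Doob--Meyer decomposition theorem, combined with the uniqueness statement of Proposition \ref{prop-can-decomp}.

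For (ii) $\Rightarrow$ (i), suppose $X = X_0 + M + A$ is the canonical decomposition, with $X_0 \in L^1$, $M \in \calm$ (so in particular $M_0 = 0$), and $A \in \scra^-$. To check the supermartingale property, fix $s \le t$; since $A \in \calv^-$ is decreasing we have $A_t \le A_s$, whence $\bbe[X_t \mid \calf_s] = X_0 + M_s + \bbe[A_t \mid \calf_s] \le X_0 + M_s + A_s = X_s$. For the class (D) property I would bound the family $\{X_T : T \text{ a finite stopping time}\}$ summand by summand: the singleton $\{X_0\}$ is trivially uniformly integrable; the family $\{M_T\}$ is uniformly integrable because $M_T = \bbe[M_\infty \mid \calf_T]$ for a uniformly integrable martingale, and the conditional expectations of the fixed random variable $M_\infty \in L^1$ form a uniformly integrable family; and $\{A_T\}$ is uniformly integrable since $|A_T| = -A_T \le -A_\infty = |A_\infty| \in L^1$ dominates it. As a finite sum of uniformly integrable families is uniformly integrable, $X$ is of class (D).

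For (i) $\Rightarrow$ (ii), I would start from the classical Doob--Meyer theorem for a c\`{a}dl\`{a}g supermartingale $X$ of class (D), which provides a decomposition $X = \widetilde M - B$ with $\widetilde M$ a uniformly integrable martingale satisfying $\widetilde M_0 = X_0$ and $B$ a predictable process in $\scra^+$ (increasing, $B_0 = 0$, $B_\infty \in L^1$). Setting $M := \widetilde M - X_0$ and $A := -B$, I obtain $X = X_0 + M + A$ with $M \in \calm$ and $M_0 = 0$, while $A$ is predictable, decreasing with $A_0 = 0$, and $A_\infty = -B_\infty \in L^1$, so $A \in \scra^-$; moreover $X_0 \in L^1$ because every supermartingale is $L^1$-valued. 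This is precisely the claimed decomposition, and the fact that it is the canonical one (in particular the uniqueness of $M$ and $A$) follows from Proposition \ref{prop-can-decomp}.

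The main obstacle is the existence assertion inside (i) $\Rightarrow$ (ii), which is exactly the hard part of the Doob--Meyer theorem; I would not reprove it but cite a standard reference such as \cite{Jacod-Shiryaev}. The only genuine care required is the sign bookkeeping needed to translate the classical increasing-process convention into the decreasing cone $\scra^-$ used here, together with checking that integrability of the terminal value and the normalization $M_0 = 0$ transfer correctly.
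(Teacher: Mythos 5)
Your proposal is correct and takes essentially the same route as the paper: the hard direction (i) $\Rightarrow$ (ii) is delegated to the classical Doob--Meyer theorem (the paper cites \cite[Thm.\ I.3.15]{Jacod-Shiryaev}) together with the sign and normalization bookkeeping you describe, and (ii) $\Rightarrow$ (i) is verified by hand in the same way. Your optional-sampling argument showing $\{M_T\}$ is uniformly integrable is precisely the content of the result \cite[Prop.\ I.1.47]{Jacod-Shiryaev} that the paper invokes, and your domination $|A_T| \leq |A_\infty|$ and conditional-expectation computation of the supermartingale inequality match the paper's argument line for line.
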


\begin{proof}
(i) $\Rightarrow$ (ii): This is a consequence of \cite[Thm. I.3.15]{Jacod-Shiryaev}.

\noindent(ii) $\Rightarrow$ (i): By \cite[Prop. I.1.47]{Jacod-Shiryaev} the martingale $M$ is of class (D). Furthermore, the process $A$ is of class (D) because $|A_T| \leq |A_{\infty}|$ for every finite stopping time $T$. Therefore, the semimartingale $X$ is of class (D). Moreover, for all $s,t \in \bbr_+$ with $s \leq t$ and all $B \in \calf_s$ we obtain
\begin{align*}
\bbe[X_t \bbI_B] &= \bbe[X_0 \bbI_B] + \bbe[M_t \bbI_B] + \bbe[A_t \bbI_B]
\leq \bbe[X_0 \bbI_B] + \bbe[M_s \bbI_B] + \bbe[A_s \bbI_B] = \bbe[X_s \bbI_B],
\end{align*}
proving that $X$ is a supermartingale.
\end{proof}

The next goal is to provide a local version of Proposition \ref{prop-Doob-Meyer}. For this purpose, we prepare two auxiliary results.

\begin{lemma}\label{lemma-contained}
We have $\cals_{\loc}^- \subset \cals_p$.
\end{lemma}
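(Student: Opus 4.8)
The plan is to reduce the statement to the class-(D) case already covered by Proposition~\ref{prop-Doob-Meyer}, exploiting that the class $\cals_p$ of special semimartingales is stable under localization: if $(\rho_\ell)_{\ell\in\bbn}$ is a localizing sequence with $X^{\rho_\ell}\in\cals_p$ for every $\ell$, then $X\in\cals_p$ (see, e.g., \cite{Jacod-Shiryaev}). Thus, given $X\in\cals^-_{\loc}$ with a localizing sequence $(\tau_n)_{n\in\bbn}$ such that each $X^{\tau_n}$ is a supermartingale, it suffices to prove that an \emph{arbitrary} supermartingale lies in $\cals_p$; the assertion then follows by applying the localization stability twice, namely once to reduce each $X^{\tau_n}$ and once along $(\tau_n)$.

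So let $Y$ be an arbitrary supermartingale. The key step is to construct a localizing sequence that turns $Y$ into a supermartingale of class (D), to which Proposition~\ref{prop-Doob-Meyer} applies directly. For $m\in\bbn$ I would set
\[
\sigma_m := \inf\{\, t\geq 0 : |Y_t|\geq m \,\}\wedge m .
\]
Because $Y$ has c\`{a}dl\`{a}g, hence locally bounded, sample paths, the spatial hitting times $\inf\{t:|Y_t|\geq m\}$ increase to $\infty$ almost surely, so $\sigma_m\uparrow\infty$ and $(\sigma_m)_{m\in\bbn}$ is a localizing sequence. Moreover, $Y^{\sigma_m}$ is again a supermartingale by optional stopping.

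The main obstacle — indeed the only delicate point — is to verify that each $Y^{\sigma_m}$ is of class (D), since a jump of $Y$ at the crossing time could a priori make the stopped value non-integrable. This is exactly why the intersection with the deterministic horizon $m$ is built into $\sigma_m$: as $\sigma_m\leq m$ is a bounded stopping time, optional stopping for supermartingales yields $Y_{\sigma_m}\in L^1$. On the other hand, for any finite stopping time $\tau$ one has $|Y_{\tau\wedge\sigma_m}|<m$ on $\{\tau<\sigma_m\}$ by the definition of $\sigma_m$, while $Y_{\tau\wedge\sigma_m}=Y_{\sigma_m}$ on $\{\tau\geq\sigma_m\}$; hence $|Y^{\sigma_m}_\tau|\leq m+|Y_{\sigma_m}|$ for every finite stopping time $\tau$. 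The family $\{\,Y^{\sigma_m}_\tau : \tau \text{ a finite stopping time}\,\}$ is therefore dominated by the fixed integrable random variable $m+|Y_{\sigma_m}|$, and so it is uniformly integrable. Consequently $Y^{\sigma_m}$ is a supermartingale of class (D), and Proposition~\ref{prop-Doob-Meyer} gives $Y^{\sigma_m}\in\cals_p$. Letting $m\to\infty$ and using the localization stability of $\cals_p$ yields $Y\in\cals_p$, which, combined with the reduction described above, completes the argument.
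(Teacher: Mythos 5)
Your argument is correct, but it is genuinely different from what the paper does: the paper's entire proof of Lemma~\ref{lemma-contained} is a citation (``the statement on page 32 in \cite{Jacod} following Prop.\ 2.18''), whereas you give a self-contained derivation from Proposition~\ref{prop-Doob-Meyer}. Your key step --- stopping a c\`{a}dl\`{a}g supermartingale $Y$ at $\sigma_m = \inf\{t : |Y_t|\geq m\}\wedge m$ and bounding $|Y^{\sigma_m}_\tau|\leq m + |Y_{\sigma_m}|$, with $Y_{\sigma_m}\in L^1$ by optional sampling at the bounded stopping time $\sigma_m\leq m$ --- is exactly a proof of the paper's Lemma~\ref{lemma-sup-loc-D} (every local supermartingale is locally a supermartingale of class (D)), which the paper likewise only proves by reference to \cite{Jacod}; so your write-up fills in the black box that the paper leaves to the literature, at the modest cost of invoking two standard external facts, namely stability of $\cals_p$ under localization (e.g.\ \cite[\S I.4c]{Jacod-Shiryaev}) and Doob's optional sampling theorem for right-continuous supermartingales. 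All the individual steps check out: the hitting times are stopping times under the usual conditions, $\sigma_m\uparrow\infty$ since c\`{a}dl\`{a}g paths are bounded on compacts, $Y^{\sigma_m}$ is a supermartingale by optional stopping, the domination by the integrable variable $m+|Y_{\sigma_m}|$ gives uniform integrability over all finite stopping times, Proposition~\ref{prop-Doob-Meyer} then yields $Y^{\sigma_m}\in\cals_p$, and the two-stage localization (first along $(\sigma_m)_{m\in\bbn}$, then along the sequence $(\tau_n)_{n\in\bbn}$ localizing $X\in\cals_{\loc}^-$) is legitimate precisely because $\cals_p$ is stable under localization. Two cosmetic points you could make explicit: local supermartingales here are tacitly c\`{a}dl\`{a}g (this is what makes the paths locally bounded and what the comparison with $\cals_p$ presupposes), and the localization-stability of $\cals_p$ deserves a precise reference rather than a generic one, since it is the one ingredient your proof does not reduce to Proposition~\ref{prop-Doob-Meyer}.
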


\begin{proof}
This is the statement on page 32 in \cite{Jacod} following Prop. 2.18.
\end{proof}

\begin{lemma}\label{lemma-sup-loc-D}
Every local supermartingale $X \in \cals_{\loc}^-$ is locally a supermartingale of class (D).
\end{lemma}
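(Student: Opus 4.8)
The plan is to start from the defining property of a local supermartingale and to refine the localizing sequence using the special-semimartingale structure supplied by Lemma~\ref{lemma-contained}. By definition of $\cals_{\loc}^-$ there is a localizing sequence $(\sigma_n)_{n\in\bbn}$ with $\sigma_n\uparrow\infty$ such that each stopped process $X^{\sigma_n}$ is a genuine supermartingale; in particular $X_0\in L^1$. It therefore remains to produce a localizing sequence $(\tau_n)$ with $\tau_n\uparrow\infty$ for which every $X^{\tau_n}$ is not only a supermartingale but also of class (D), and the idea is to stop simultaneously the two building blocks of the canonical decomposition of $X$.

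First I would invoke Lemma~\ref{lemma-contained}, which gives $X\in\cals_p$, and write the canonical decomposition $X=X_0+M+A$ with $M\in\calm_{\loc}^0$ and a predictable process $A\in\calv$ (Proposition~\ref{prop-can-decomp}). Next I localize each part separately. For the local martingale part, either by the definition of $\calm_{\loc}$ or by the standard reduction (stopping a true-martingale localizer $\rho_n'$ at the deterministic times $n$, so that $M^{\rho_n'\wedge n}=\bbe[M_{\rho_n'\wedge n}\mid\calf_\cdot]\in\calm$ is closed), one obtains $(\rho_n)$ with $\rho_n\uparrow\infty$ and $M^{\rho_n}\in\calm$. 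For the predictable finite-variation part I use the classical fact that a predictable process of finite variation has locally integrable total variation; thus there is $(\pi_n)$ with $\pi_n\uparrow\infty$ and $\bbe[\Var(A)_{\pi_n}]<\infty$, i.e.\ $A^{\pi_n}\in\scra$. Setting $\tau_n:=\sigma_n\wedge\rho_n\wedge\pi_n$ then yields a localizing sequence with $\tau_n\uparrow\infty$.

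Finally I would verify the two required properties of $X^{\tau_n}$. Since $\tau_n\le\sigma_n$, we have $X^{\tau_n}=(X^{\sigma_n})^{\tau_n}$, so $X^{\tau_n}$ is a supermartingale by optional stopping. For the class (D) property I split $X^{\tau_n}=X_0+M^{\tau_n}+A^{\tau_n}$: the constant $X_0\in L^1$ is trivially of class (D); $M^{\tau_n}=(M^{\rho_n})^{\tau_n}$ is a stopped uniformly integrable martingale, hence of class (D); and $A^{\tau_n}$ satisfies $|A^{\tau_n}_T|\le\Var(A)_{\pi_n}\in L^1$ for every finite stopping time $T$, so it is dominated and of class (D). As a finite sum of class (D) families is again of class (D), each $X^{\tau_n}$ is a supermartingale of class (D), which is the assertion. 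The point requiring care is that the supermartingale property of $X^{\tau_n}$ must be extracted solely from the hypothesis (through $\sigma_n$): one may not here appeal to any sign of $A$, since the fact that the predictable part of a local supermartingale is decreasing is precisely the content of the local Doob--Meyer result (Proposition~\ref{prop-super-MT}) that this lemma is meant to prepare. The only other slightly technical ingredient is the local integrability of the variation of the predictable process $A$, which I would cite as a standard property of predictable finite-variation processes.
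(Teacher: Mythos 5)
Your proof is correct, but it takes a genuinely different route from the paper's. The paper's proof is a one-liner: localize so that $X$ may be assumed to be a genuine supermartingale, then invoke the proof of \cite[Prop. 2.18]{Jacod}, which rests on the primitive truncation argument that any c\`adl\`ag supermartingale is locally of class (D) --- stop at $T_n = \inf\{t : |X_t| \geq n\} \wedge n$, so that $X^{T_n}$ is dominated by the integrable variable $n + |X_{T_n}|$; no decomposition is needed at all. You instead route through the canonical decomposition $X = X_0 + M + A$ obtained from Lemma~\ref{lemma-contained} and Proposition~\ref{prop-can-decomp}, localize $M$ into $\calm$ and $A$ into locally integrable variation (the fact you call ``standard'' is exactly \cite[Lemma I.3.10]{Jacod-Shiryaev}, which the paper itself quotes in the proof of Proposition~\ref{prop-super-MT}), and intersect the three localizers; your verification that $X^{\tau_n}$ is a supermartingale (via $(\sigma_n)$ and optional stopping) and of class (D) (constant plus stopped uniformly integrable martingale plus variation-dominated finite-variation part) is complete, and your explicit refusal to use any sign information on $A$ correctly avoids circularity with Proposition~\ref{prop-super-MT}. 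Within the paper's deductive ordering your argument is unimpeachable, since Lemma~\ref{lemma-contained} is stated beforehand as a black-box citation; what each approach buys is this: your proof is more self-contained relative to this paper, using only results already quoted in it rather than the interior of Jacod's proof, whereas the paper's citation points at the ground-level argument --- and indeed, if one unwinds the reference behind Lemma~\ref{lemma-contained}, Jacod derives the containment $\cals_{\loc}^- \subset \cals_p$ precisely from the local class-(D) property of supermartingales, so at the level of first principles your argument is downstream of the fact being proved, while the truncation argument is the primitive one.
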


\begin{proof}
By localization we may assume that $X$ is a supermartingale, and then the statement follows from proof of \cite[Prop. 2.18]{Jacod}.
\end{proof}

Now, we can present a local version of the Doob-Meyer decomposition theorem in terms of special semimartingales as follows.

\begin{proposition}\label{prop-super-MT}
For an adapted c\`{a}dl\`{a}g process $X$ the following statements are equivalent:
\begin{enumerate}
\item[(i)] $X$ is a local supermartingale.

\item[(ii)] $X$ is a special semimartingale with canonical decomposition
\begin{align*}
X = X_0 + M + A
\end{align*}
such that $X_0 \in L^1$ and $A \in \calv^-$.
\end{enumerate}
\end{proposition}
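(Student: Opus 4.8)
The plan is to prove both implications by localizing to the situation covered by the class~(D) Doob--Meyer decomposition (Proposition~\ref{prop-Doob-Meyer}), exploiting the two auxiliary lemmas together with the uniqueness of the canonical decomposition (Proposition~\ref{prop-can-decomp}).

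For (i)~$\Rightarrow$~(ii), I would first invoke Lemma~\ref{lemma-contained} to conclude that the local supermartingale $X \in \cals_{\loc}^-$ is a special semimartingale, so it admits a canonical decomposition $X = X_0 + M + A$ with $M \in \calm_{\loc}^0$ and predictable $A \in \calv$. Lemma~\ref{lemma-sup-loc-D} then supplies a localizing sequence $\tau_n \uparrow \infty$ such that each $X^{\tau_n}$ is a supermartingale of class~(D). Applying Proposition~\ref{prop-Doob-Meyer} to $X^{\tau_n}$ yields $X_0 \in L^1$ together with a canonical decomposition whose martingale part lies in $\calm$ and whose finite-variation part lies in $\scra^-$. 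The point is that $X^{\tau_n} = X_0 + M^{\tau_n} + A^{\tau_n}$ is itself a valid canonical decomposition, since stopping preserves the local martingale property of $M$ as well as the predictability and finite variation of $A$; hence uniqueness (Proposition~\ref{prop-can-decomp}) forces the finite-variation part of $X^{\tau_n}$ to equal $A^{\tau_n}$, so that $A^{\tau_n} \in \scra^- \subset \calv^-$ is decreasing. Letting $n \to \infty$ gives $A \in \calv^-$.

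For (ii)~$\Rightarrow$~(i), the idea is to choose a single localizing sequence that simultaneously turns $M$ into a uniformly integrable martingale and renders $A$ integrable, and then to read off the supermartingale property from the converse direction of Proposition~\ref{prop-Doob-Meyer}. I would pick $\sigma_n \uparrow \infty$ with $M^{\sigma_n} \in \calm$, and $\pi_n \uparrow \infty$ with $A_{\pi_n} \in L^1$; the latter is possible because a predictable process of finite variation is locally of integrable variation, and for the decreasing process $A$ one has $\Var(A)_{\pi_n} = -A_{\pi_n}$. Setting $\rho_n := \sigma_n \wedge \pi_n$, the stopped process $X^{\rho_n} = X_0 + M^{\rho_n} + A^{\rho_n}$ satisfies $X_0 \in L^1$, $M^{\rho_n} \in \calm$ and $A^{\rho_n} \in \scra^-$, so Proposition~\ref{prop-Doob-Meyer} shows that $X^{\rho_n}$ is a supermartingale; since $\rho_n \uparrow \infty$, the process $X$ is a local supermartingale.

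The main obstacle is the local integrability of the decreasing predictable part $A$ needed in (ii)~$\Rightarrow$~(i): without it one cannot even assert that $X^{\rho_n}_t \in L^1$, and this is precisely the place where predictability of $A$ is essential. I would address it by citing the standard fact that predictable finite-variation processes belong to $\scra_{\loc}$ (locally integrable variation), with an explicit localizing sequence obtained from the predictable increasing process $\Var(A)$. A secondary technical point to verify carefully is that stopping a canonical decomposition again yields the canonical decomposition of the stopped process, which is what legitimizes the uniqueness argument in (i)~$\Rightarrow$~(ii).
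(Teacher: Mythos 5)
Your proof is correct and takes essentially the same route as the paper's: for (i) $\Rightarrow$ (ii) you combine Lemma~\ref{lemma-contained}, Lemma~\ref{lemma-sup-loc-D}, Proposition~\ref{prop-Doob-Meyer} and the uniqueness of the canonical decomposition (Proposition~\ref{prop-can-decomp}) exactly as the paper does, and for (ii) $\Rightarrow$ (i) your appeal to the standard fact that predictable finite-variation processes are locally of integrable variation is precisely the paper's citation of \cite[Lemma I.3.10]{Jacod-Shiryaev}. The only cosmetic difference is that you intersect two localizing sequences $\sigma_n \wedge \pi_n$ where the paper picks a single sequence achieving both properties, which is immaterial.
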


\begin{proof}
(i) $\Rightarrow$ (ii): By Lemma \ref{lemma-contained} we have $X \in \cals_p$. Let us denote by $X = X_0 + M + A$ the canonical decomposition of $X$. Since $X \in \cals_{\loc}^-$, we have $X_0 \in L^1$. By Lemma \ref{lemma-sup-loc-D} there exists a localizing sequence $(T_n)_{n \in \bbn}$ such that $X^{T_n}$ is a supermartingale of class (D) for each $n \in \bbn$. By Proposition \ref{prop-Doob-Meyer}, for each $n \in \bbn$ the process $X^{T_n}$ is a special semimartingale with canonical decomposition $X^{T_n} = X_0 + M(n) + A(n)$ such that $M(n) \in \calm$ and $A(n) \in \scra^-$. Since $\calm_{\loc}$ and the class of all predictable processes from $\calv$ are stable under stopping, the canonical decomposition of $X^{T_n}$ can also be expressed as $X^{T_n} = X_0 + M^{T_n} + A^{T_n}$ for each $n \in \bbn$. By the uniqueness of the canonical decomposition (see Proposition \ref{prop-can-decomp}) we deduce that $M^{T_n} = M(n)$ and $A^{T_n} = A(n)$ for each $n \in \bbn$. Therefore, we conclude that $A \in \calv^-$.

\noindent (ii) $\Rightarrow$ (i):  By \cite[Lemma I.3.10]{Jacod-Shiryaev} we have $A \in \scra_{\loc}^-$. Hence, there exists a localizing sequence $(T_n)_{n \in \bbn}$ such that $M^{T_n} \in \calm$ and $A^{T_n} \in \scra^-$ for each $n \in \bbn$. For each $n \in \bbn$, the canonical decomposition of the special semimartingale $X^{T_n}$ is given by $X^{T_n} = X_0 + M^{T_n} + A^{T_n}$, and thus, by Proposition \ref{prop-Doob-Meyer} it follows that $X^{T_n} \in \cals^-$. Consequently, we have $X \in \cals_{\loc}^-$.
\end{proof}

\begin{lemma}\label{lemma-supermartingale-integrable}
Let $X = X_0 + M + A$ be a local supermartingale. Suppose that $M$ is a martingale and that $A \in \calv^-$ satisfies $A_t \in L^1$ for all $t \geq 0$. Then $X$ is a supermartingale.
\end{lemma}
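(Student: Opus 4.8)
The plan is to verify directly the three defining properties of a supermartingale—adaptedness, integrability, and the conditional-expectation inequality—using the given decomposition $X=X_0+M+A$. The argument parallels the implication (ii) $\Rightarrow$ (i) in Proposition \ref{prop-Doob-Meyer}, but now we no longer have a class (D) hypothesis at our disposal; instead the two structural assumptions (that $M$ is a genuine martingale and that $A$ is decreasing with integrable marginals) take its place.

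First I would settle integrability and adaptedness. Since $X$ is a local supermartingale, Proposition \ref{prop-super-MT} yields $X_0\in L^1$. As $M$ is a (true) martingale we have $M_t\in L^1$ for every $t$, and by hypothesis $A_t\in L^1$ for every $t\geq 0$; hence $X_t=X_0+M_t+A_t\in L^1$ for all $t\geq 0$. Adaptedness of $X$ is immediate, because $X_0$ is $\calf_0$-measurable and both $M$ and $A$ are adapted.

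Second, I would establish the supermartingale inequality by testing against events. Fix $s,t\in\bbr_+$ with $s\leq t$ and let $B\in\calf_s$ be arbitrary. Then
\begin{align*}
\bbe[X_t \bbI_B] &= \bbe[X_0 \bbI_B] + \bbe[M_t \bbI_B] + \bbe[A_t \bbI_B]
\\ &\leq \bbe[X_0 \bbI_B] + \bbe[M_s \bbI_B] + \bbe[A_s \bbI_B] = \bbe[X_s \bbI_B],
\end{align*}
where the middle step uses that $M$ is a martingale, so $\bbe[M_t\bbI_B]=\bbe[M_s\bbI_B]$, and that $A\in\calv^-$ is decreasing, so $A_t\leq A_s$ pathwise and therefore $\bbe[A_t\bbI_B]\leq\bbe[A_s\bbI_B]$. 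Since $B\in\calf_s$ was arbitrary, this gives $\bbe[X_t\mid\calf_s]\leq X_s$ $\bbp$-almost surely, which is the desired supermartingale property.

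I do not anticipate a genuine obstacle here: the only point requiring care is the integrability of $X_0$, which the local supermartingale assumption supplies through Proposition \ref{prop-super-MT}, and the fact that the hypothesis ``$M$ is a martingale'' (rather than merely a local martingale) is precisely what makes the martingale identity valid in expectation against $\bbI_B$; the remainder is the routine monotonicity estimate for the decreasing part $A$.
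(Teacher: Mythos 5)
Your proposal is correct and follows essentially the same route as the paper's proof: the paper verifies $X_t \in L^1$ and then computes $\bbe[X_t \mid \calf_s] \leq X_s$ directly via the decomposition, using the martingale identity for $M$ and the pathwise monotonicity of $A$, exactly as you do (your event-testing formulation with $B \in \calf_s$ is just the equivalent dual phrasing, which the paper itself uses in Proposition~\ref{prop-Doob-Meyer}). Your additional remark that $X_0 \in L^1$ follows from Proposition~\ref{prop-super-MT} is a correct and slightly more explicit justification of the integrability the paper states as immediate from the assumptions.
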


\begin{proof}
By assumption we have $X_t \in L^1$ for all $t \geq 0$. Moreover, for all $s,t \in \bbr_+$ with $s \leq t$ we have
\begin{align*}
\bbe[X_t | \calf_s] &= X_0 + \bbe[M_t | \calf_s] + \bbe[A_t | \calf_s] \leq X_0 + M_s + \bbe[A_s | \calf_s]
= X_0 + M_s + A_s = X_s,
\end{align*}
showing that $X$ is a supermartingale.
\end{proof}

\begin{lemma}\label{lemma-supermartingale-below}
Let $X$ be a local supermartingale of the form $X = Y + A$ such that $Y$ is a special semimartingale which is bounded from below, and $A \in \calv$ satisfies $|A| \leq B$ for some $B \in \calv^+$ with $B_t \in L^1$ for all $t \in \bbr_+$. Then $X$ is a supermartingale.
\end{lemma}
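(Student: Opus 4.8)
The plan is to exploit the decomposition $X = Y + A$ to bound $X$ from below, at each fixed time, by a single integrable random variable that works uniformly along a localizing sequence, and then to pass to the limit by (conditional) Fatou. Since $Y$ is bounded from below there is a constant $c \ge 0$ with $Y_t \ge -c$ for all $t \ge 0$, and since $|A| \le B$ we have $A_t \ge -B_t$; hence $X_t \ge -c - B_t$ for every $t \ge 0$. Because $X$ is a local supermartingale, $X_0 \in L^1$ and there is a localizing sequence $(T_n)_{n \in \bbn}$ such that each stopped process $X^{T_n}$ is a supermartingale. As $B$ is increasing we have $B_{t \wedge T_n} \le B_t$, so for every $n$ the estimate $X_{t \wedge T_n} \ge -c - B_{t \wedge T_n} \ge -c - B_t$ holds, where $-c - B_t \in L^1$ by hypothesis on $B$.

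Next I would verify $X_t \in L^1$ for each fixed $t \ge 0$. The supermartingale property of $X^{T_n}$ gives $\bbe[X_{t \wedge T_n}] \le \bbe[X_0]$, and together with $X_{t \wedge T_n}^- \le c + B_t$ this yields the uniform bound $\bbe[X_{t \wedge T_n}^+] \le \bbe[X_0] + c + \bbe[B_t]$. Since $X$ is right-continuous and $T_n \uparrow \infty$, we have $X_{t \wedge T_n} \to X_t$ almost surely; Fatou's lemma then bounds both $\bbe[X_t^+]$ and $\bbe[X_t^-]$ by finite quantities, so $X_t \in L^1$.

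Finally, for $0 \le s \le t$ I would apply the conditional Fatou lemma to the nonnegative random variables $U_n := X_{t \wedge T_n} + c + B_t$, which converge almost surely to $X_t + c + B_t$. This gives $\bbe[X_t + c + B_t \mid \calf_s] \le \liminf_n \bbe[U_n \mid \calf_s]$, and after cancelling the almost surely finite term $c + \bbe[B_t \mid \calf_s]$ (finite since $B_t \in L^1$) one obtains $\bbe[X_t \mid \calf_s] \le \liminf_n \bbe[X_{t \wedge T_n} \mid \calf_s]$. The supermartingale property of $X^{T_n}$ yields $\bbe[X_{t \wedge T_n} \mid \calf_s] \le X_{s \wedge T_n}$, and $X_{s \wedge T_n} \to X_s$ almost surely, whence $\bbe[X_t \mid \calf_s] \le X_s$. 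This is precisely the supermartingale inequality, completing the argument.

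The only delicate point is securing a \emph{single} integrable lower envelope valid for all $n$ simultaneously, so that the conditional Fatou step is legitimate; this is exactly where the monotonicity of $B$ (to replace $B_{t \wedge T_n}$ by $B_t$) together with $B_t \in L^1$ is essential, and where the role of the hypothesis $|A| \le B$ with $B \in \calv^+$ becomes visible. Once this uniform bound is in place, the two limit passages are routine, and I do not expect any further obstruction.
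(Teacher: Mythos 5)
Your proof is correct and follows essentially the same route as the paper: localize with $(T_n)_{n\in\bbn}$, then pass to the limit under conditional expectations using the lower bound on $Y$ together with the integrable dominating process $B_t$. The only cosmetic difference is that you apply conditional Fatou once to the envelope $X_{t \wedge T_n} + c + B_t \geq 0$, whereas the paper treats the two pieces separately (conditional Fatou for $Y^{T_n}$, conditional dominated convergence for $A^{T_n}$); you also spell out the verification that $X_t \in L^1$, which the paper leaves implicit.
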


\begin{proof}
Let $(T_n)_{n \in \bbn}$ be a localizing sequence such that $X^{T_n}$ is a supermartingale for each $n \in \bbn$. Furthermore, let $s,t \in \bbr_+$ with $s \leq t$ be arbitrary. By Fatou's lemma we have
\begin{align*}
\bbe[Y_t | \calf_s] = \bbe \bigg[ \lim_{n \to \infty} Y_{t \wedge T_n} | \calf_s \bigg] \leq \liminf_{n \to \infty} \bbe [ Y_{t \wedge T_n} | \calf_s ].
\end{align*}
Moreover, dominated convergence ensures that
\begin{align*}
\bbe[A_t | \calf_s] = \bbe \bigg[ \lim_{n \to \infty} A_{t \wedge T_n} | \calf_s \bigg] = \lim_{n \to \infty} \bbe [ A_{t \wedge T_n} | \calf_s ].
\end{align*}
Therefore, we obtain
\begin{align*}
\bbe[X_t | \calf_s] &= \bbe \bigg[ \lim_{n \to \infty} Y_t^{T_n} | \calf_s \bigg] + \bbe \bigg[ \lim_{n \to \infty} A_t^{T_n} | \calf_s \bigg]
\leq \liminf_{n \to \infty} \bbe [ Y_t^{T_n} | \calf_s ] + \lim_{n \to \infty} \bbe [ A_t^{T_n} | \calf_s ]
\\ &\leq \liminf_{n \to \infty} \bbe[X_t^{T_n} | \calf_s] \leq \liminf_{n \to \infty} X_s^{T_n} = X_s,
\end{align*}
showing that $X$ is a supermartingale.
\end{proof}

\begin{remark}
Note that Lemma \ref{lemma-supermartingale-below} generalizes the well-known result that every local supermartingale which is bounded from below is a supermartingale.
\end{remark}

Recall that for a process $X$ we denote by $X^*$ the running maximum
\begin{align*}
X_t^* := \sup_{s \in [0,t]} |X_s|, \quad t \in \bbr_+.
\end{align*}

\begin{lemma}\label{lemma-martingale-suff}
Let $M \in \calm_{\loc}$ be a local martingale such that $M_t^* \in L^1$ for all $t \in \bbr_+$. Then $M$ is a martingale.
\end{lemma}

\begin{proof}
Let $t \in \bbr_+$ be arbitrary. Then we have $M_T^t \leq M_t^*$ for every finite stopping time $T$, showing that the family
\begin{align*}
\{ M_T^t : \text{$T$ is a finite stopping time} \}
\end{align*}
is uniformly integrable. Thus the local martingale $M^t$ is a process of class (D), and hence, by \cite[Prop. I.1.47.c]{Jacod-Shiryaev} we deduce that $M^t \in \calm$. Since $t \in \bbr_+$ was arbitrary, it follows that $M$ is a martingale.
\end{proof}

\section{An existence result for SDEs}\label{app-SDEs}

In this appendix we provide an existence result for SDEs, which we require in the paper. Let $(\Omega,\calf,(\calf_t)_{t \in \bbr_+},\bbp)$ be a stochastic basis satisfying the usual conditions, and consider an $\bbr^d$-valued SDE of the form
\begin{align}\label{SDE-tilde}
\left\{
\begin{array}{rcl}
dX_t & = & \widetilde{b}(t,X_t)dt + \widetilde{\sigma}(t,X_t)dW_t
\\ X_0 & = & x_0
\end{array}
\right.
\end{align}
driven by an $\bbr^k$-valued Wiener process $W$ with coefficients
\begin{align*}
&\widetilde{b} : \Omega \times \bbr_+ \times \bbr^d \to \bbr^d,
\\ &\widetilde{\sigma} : \Omega \times \bbr_+ \times \bbr^d \to \bbr^{d \times k}.
\end{align*}

\begin{theorem}\label{thm-SDE-2}
We assume that the following conditions are fulfilled:
\begin{enumerate}
\item For each $x \in \bbr^d$ the processes
\begin{align*}
&\Omega \times \bbr_+ \to \bbr^d, \quad (\omega,t) \mapsto \widetilde{b}(\omega,t,x),
\\ &\Omega \times \bbr_+ \to \bbr^{d \times k}, \quad (\omega,t) \mapsto \widetilde{\sigma}(\omega,t,x)
\end{align*}
are progressively measurable.

\item For each $R \in \bbr_+$ there is an adapted, nonnegative process $\widetilde{L}^R \geq 0$ such that for all $\omega \in \Omega$ and $t \in \bbr_+$ we have
\begin{align}\label{cond-LR-SDE}
\int_0^t |\widetilde{L}_s^R(\omega)|^2 ds < \infty,
\end{align}
and for all $\omega \in \Omega$, all $t \in \bbr_+$ and all $x,y \in \bbr^d$ with $\| x \|, \| y \| \leq R$ we have
\begin{align}\label{cond-1-SDE}
\| \widetilde{b}(\omega,t,x)-\widetilde{b}(\omega,t,y) \| + \| \widetilde{\sigma}(\omega,t,x) - \widetilde{\sigma}(\omega,t,y) \| \leq \widetilde{L}_t^R(\omega) \| x-y \|.
\end{align}
\item There is an adapted, nonnegative process $\widetilde{L} \geq 0$ such that for all $\omega \in \Omega$ and $t \in \bbr_+$ we have
\begin{align}\label{cond-L-SDE}
\int_0^t |\widetilde{L}_s(\omega)|^2 ds < \infty,
\end{align}
and for all $\omega \in \Omega$, all $t \in \bbr_+$ and all $x \in \bbr^d$ we have
\begin{align}\label{cond-2-SDE}
\| \widetilde{b}(\omega,t,x) \| + \| \widetilde{\sigma}(\omega,t,x) \| \leq \widetilde{L}_t(\omega) (1 + \| x \|).
\end{align}
\end{enumerate}
Then existence and uniqueness of strong solutions for the SDE \eqref{SDE-tilde} holds true.
\end{theorem}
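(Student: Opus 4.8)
The plan is to run the classical It\^o scheme — truncation, Picard iteration, and a no-explosion argument — but carried out with extra care, because the Lipschitz and growth coefficients in \eqref{cond-1-SDE} and \eqref{cond-2-SDE} are random, time-dependent, and only square-integrable over finite horizons rather than bounded. The randomness and unboundedness of $\widetilde L^R$ and $\widetilde L$ are precisely what pushes the argument beyond the textbook case, and handling them is the main obstacle; the remedy throughout is a second layer of localization that freezes the accumulated coefficient energy.

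\textbf{Uniqueness.} First I would treat uniqueness. Given two strong solutions $X^1,X^2$ with the same initial value $x_0$, set $\rho_n := \inf\{t \geq 0 : \|X^1_t\| \vee \|X^2_t\| \geq n\}$ and $S_m := \inf\{t \geq 0 : \int_0^t |\widetilde L^n_s|^2\,ds \geq m\}$. On the stochastic interval $[0,\rho_n \wedge S_m]$ both solutions stay in the ball of radius $n$, so the local Lipschitz bound \eqref{cond-1-SDE} applies with the coefficient $\widetilde L^n$, whose accumulated square is bounded by $m$. Writing $\Delta := X^1 - X^2$ and applying It\^o's formula to $\|\Delta\|^2$, the Burkholder--Davis--Gundy inequality, and \eqref{cond-1-SDE}, I obtain an estimate of the form
\begin{align*}
\bbe\Big[\sup_{s \leq t}\|\Delta_{s \wedge \rho_n \wedge S_m}\|^2\Big] \leq C\,\bbe\bigg[\int_0^{t \wedge \rho_n \wedge S_m} |\widetilde L^n_s|^2\,\|\Delta_s\|^2\,ds\bigg].
\end{align*}
Since the random coefficient $|\widetilde L^n_s|^2$ cannot simply be pulled out of the expectation, I would invoke a stochastic Gronwall inequality (or, equivalently, a time-change by the increasing process $\int_0^\cdot |\widetilde L^n_s|^2\,ds$, after which the Lipschitz coefficient becomes bounded and ordinary Gronwall applies) to conclude $\Delta_{\cdot \wedge \rho_n \wedge S_m} \equiv 0$. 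Letting $m \to \infty$ and then $n \to \infty$, using $\rho_n \wedge S_m \uparrow \infty$ almost surely, yields indistinguishability of $X^1$ and $X^2$.

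\textbf{Existence via truncation and Picard iteration.} For existence I would first reduce to a globally Lipschitz problem. Using the $1$-Lipschitz retraction $r_n$ onto the closed ball of radius $n$, define the truncated coefficients $\widetilde b^{\,n}(\omega,t,x) := \widetilde b(\omega,t,r_n(x))$ and $\widetilde\sigma^{\,n}(\omega,t,x) := \widetilde\sigma(\omega,t,r_n(x))$; by \eqref{cond-1-SDE} and \eqref{cond-2-SDE} these are globally Lipschitz in $x$ with coefficient $\widetilde L^n$ and bounded in $x$ by $\widetilde L(1+n)$ for each fixed time. For the truncated system I would run the Picard iteration $X^{(0)} \equiv x_0$, $X^{(p+1)}_t = x_0 + \int_0^t \widetilde b^{\,n}(s,X^{(p)}_s)\,ds + \int_0^t \widetilde\sigma^{\,n}(s,X^{(p)}_s)\,dW_s$, and show that the successive differences contract. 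Again the iterated estimate produces a factor $\int_0^t |\widetilde L^n_s|^2\,ds$ that is only almost surely finite, so I would carry out the contraction on each interval $[0,S_m]$, where this energy is bounded by $m$, obtaining a solution $X^n$ first on $[0,S_m]$ and then, by patching along $S_m \uparrow \infty$, on all of $\bbr_+$. Measurability of the limit and of the stochastic integrals is guaranteed by the progressive measurability assumed in condition~(1).

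\textbf{Removing the truncation and ruling out explosion.} By the uniqueness just established, the solutions $X^n$ of the truncated systems are consistent: $X^{n} \equiv X^{n+1}$ on $[0,\tau_n]$ with $\tau_n := \inf\{t : \|X^n_t\| \geq n\}$, so they patch to a process $X$ defined up to $\tau := \lim_n \tau_n$, which solves \eqref{SDE-tilde} on $[0,\tau)$. To see $\tau = \infty$ almost surely, I would derive a moment estimate from the linear growth bound \eqref{cond-2-SDE}: applying It\^o, the Burkholder--Davis--Gundy inequality, and — once more localizing the growth energy $\int_0^\cdot |\widetilde L_s|^2\,ds$ — a Gronwall argument gives a bound on $\bbe[\sup_{s \leq T \wedge \tau_n}\|X^n_s\|^2]$ on each bounded region. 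Chebyshev's inequality then forces $\bbp(\tau_n \leq T) \leq n^{-2}\,\bbe[\sup_{s \leq T \wedge \tau_n}\|X^n_s\|^2] \to 0$, so $\tau \geq T$ for every $T$, i.e.\ $\tau = \infty$. I expect the recurring difficulty — present in every one of these steps — to be the passage from the random, merely square-integrable coefficients to usable Gronwall estimates, which the energy-localization stopping times $S_m$ (or an equivalent time-change) are designed to resolve.
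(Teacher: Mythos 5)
Your proof is essentially correct, but it takes a genuinely different route from the paper. The paper does not construct the solution by hand: it verifies that the hypotheses \eqref{cond-LR-SDE}--\eqref{cond-2-SDE} imply the local weak monotonicity and weak coercivity conditions of the variational framework of Liu and R\"ockner --- namely, it sets $\widetilde{K}^R = 2\widetilde{L}^R + |\widetilde{L}^R|^2 + 4\widetilde{L} + 2|\widetilde{L}|^2$ and checks $2\la x-y,\widetilde{b}(\omega,t,x)-\widetilde{b}(\omega,t,y)\ra + \|\widetilde{\sigma}(\omega,t,x)-\widetilde{\sigma}(\omega,t,y)\|^2 \leq \widetilde{K}^R_t(\omega)\|x-y\|^2$ locally, $2\la x,\widetilde{b}(\omega,t,x)\ra + \|\widetilde{\sigma}(\omega,t,x)\|^2 \leq \widetilde{K}^1_t(\omega)(1+\|x\|^2)$, continuity in $x$, and the local integrability of the random coefficients --- and then cites Theorem~3.1.1 of Liu--R\"ockner to conclude. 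That approach buys brevity and outsources exactly the difficulty you identify (random, merely square-integrable Lipschitz/growth rates) to a known result; your approach buys a self-contained classical construction (truncation, Picard iteration, non-explosion) and makes explicit how the randomness is tamed, via the energy-localization times $S_m$ together with a stochastic Gronwall argument or time-change --- note that for uniqueness an even simpler device works: with $a_s := 2\widetilde{L}^n_s + |\widetilde{L}^n_s|^2$, the process $\exp(-\int_0^t a_s\,ds)\,\|X^1_t-X^2_t\|^2$ stopped at $\rho_n$ is a nonnegative local supermartingale started at $0$, hence vanishes, with no need for $S_m$ at all. Two routine points in your sketch deserve tightening: in the Picard step the localization must also cap the growth energy, i.e.\ $S_m$ should be defined via $\int_0^t \big(|\widetilde{L}^n_s|^2 + |\widetilde{L}_s|^2\big)\,ds \geq m$, since already the zeroth iterate $X^{(1)}-X^{(0)}$ and the a priori moment bounds require \eqref{cond-2-SDE} with controlled energy, not only \eqref{cond-1-SDE}; and in the non-explosion argument the Gronwall constant depends on $m$, so Chebyshev should first be applied to $\bbp(\tau_n \leq T \wedge S_m)$ for fixed $m$ (giving $\tau \geq T \wedge S_m$ a.s.), and only then should $m \to \infty$ be taken using $S_m \uparrow \infty$. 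With these standard adjustments your argument goes through and constitutes a valid, more elementary alternative to the paper's citation-based proof.
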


\begin{proof}
For each $\omega \in \Omega$ and $t \in \bbr_+$ the mappings
\begin{align*}
&\bbr^d \to \bbr^d, \quad x \mapsto \widetilde{b}(\omega,t,x),
\\ &\bbr^d \to \bbr^{d \times k}, \quad x \mapsto \widetilde{\sigma}(\omega,t,x)
\end{align*}
are continuous by virtue of \eqref{cond-1-SDE}. We define the adapted, nonnegative process $\widetilde{K} \geq 0$ as
\begin{align*}
\widetilde{K}_t(\omega) := \widetilde{L}_t(\omega) + |\widetilde{L}_t(\omega)|^2, \quad (\omega,t) \in \Omega \times \bbr_+.
\end{align*}
Then by \eqref{cond-L-SDE} we have
\begin{align*}
\int_0^t \widetilde{K}_s(\omega) ds < \infty, \quad (\omega,t) \in \Omega \times \bbr_+.
\end{align*}
Therefore, by \eqref{cond-2-SDE} for all $\omega \in \Omega$ and all $T,R \in \bbr_+$ we obtain
\begin{align*}
\int_0^T \sup_{\| x \| \leq R} \big\{ \| \widetilde{b}(\omega,t,x) \| + \| \widetilde{\sigma}(\omega,t,x) \|^2 \big\} dt &\leq 2 \int_0^T \sup_{\| x \| \leq R} \widetilde{K}_t(\omega) (1 + \| x \|)^2 dt
\\ &\leq 2 (1 + R)^2 \int_0^T \widetilde{K}_t(\omega) dt < \infty,
\end{align*}
showing that condition (3.1) from \cite{Liu-Roeckner} is fulfilled. For each $R \in \bbr_+$ we define the adapted, nonnegative process $\widetilde{K}^R \geq 0$ as
\begin{align*}
\widetilde{K}_t^R(\omega) := 2 \widetilde{L}_t^R(\omega) + |\widetilde{L}_t^R(\omega)|^2 + 4 \widetilde{L}_t(\omega) + 2 |\widetilde{L}_t(\omega)|^2, \quad (\omega,t) \in \Omega \times \bbr_+.
\end{align*}
Then by \eqref{cond-LR-SDE} and \eqref{cond-L-SDE} we have
\begin{align*}
\int_0^T \widetilde{K}_t^R(\omega) dt < \infty.
\end{align*}
Furthermore, by \eqref{cond-1-SDE} for all $\omega \in \Omega$, all $t,R \in \bbr_+$ and all $x,y \in \bbr^d$ with $\| x \|, \| y \| \leq R$ we have
\begin{align*}
&2 \la x-y, \widetilde{b}(\omega,t,x) - \widetilde{b}(\omega,t,y) \ra + \| \widetilde{\sigma}(\omega,t,x) - \widetilde{\sigma}(\omega,t,y) \|^2
\\ &\leq 2 \| \widetilde{b}(\omega,t,x) - \widetilde{b}(\omega,t,y) \| \cdot \| x-y \| + \| \widetilde{\sigma}(\omega,t,x) - \widetilde{\sigma}(\omega,t,y) \|^2
\\ &\leq \big( 2 \widetilde{L}_t^R(\omega) + |\widetilde{L}_t^R(\omega)|^2 \big) \| x-y \|^2 \leq \widetilde{K}_t^R(\omega) \| x-y \|^2.
\end{align*}
Moreover, by \eqref{cond-2-SDE} for all $\omega \in \Omega$, all $t \in \bbr_+$ and all $x \in \bbr^d$ we have
\begin{align*}
2 \la x,\widetilde{b}(\omega,t,x) \ra + \| \widetilde{\sigma}(\omega,t,x) \|^2 &\leq 2 \| \widetilde{b}(\omega,t,x) \| \cdot \| x \| + \| \widetilde{\sigma}(\omega,t,x) \|^2
\\ &\leq 2 \widetilde{L}_t(\omega) \| x \| (1 + \| x \|) + |\widetilde{L}_t(\omega)|^2 \, (1 + \| x \|)^2
\\ &\leq \big( 2 \widetilde{L}_t(\omega) + |\widetilde{L}_t(\omega)|^2 \big) (1 + \| x \|)^2
\\ &\leq \widetilde{K}_t^1(\omega) (1 + \| x \|^2).
\end{align*}
Consequently, applying \cite[Thm. 3.1.1]{Liu-Roeckner} concludes the proof.
\end{proof}

\end{appendix}

\section*{Acknowledgments}

Julia Ackermann and Thomas Kruse acknowledge funding by the Deutsche For\-schungsgemeinschaft (DFG, German Research Foundation) -- Project-ID 531152215 -- CRC 1701.

\bibliographystyle{plain}
\bibliography{literature}

@article{di2025port,
  title={Port-{H}amiltonian Neural Networks: From Theory to Simulation of Interconnected Stochastic Systems},
  author={Di Persio, Luca and Ehrhardt, Matthias and Outaleb, Youness and Rizzotto, Sofia},
  journal={arXiv:2509.06674},
  year={2025}
}

@article{ryashko1997meansquare,
    AUTHOR = {Ryashko, Lev B. and Schurz, Henri},
     TITLE = {Mean square stability analysis of some linear stochastic systems},
   JOURNAL = {Dynam. Systems Appl.},
  FJOURNAL = {Dynamic Systems and Applications},
    VOLUME = {6},
      YEAR = {1997},
    NUMBER = {2},
     PAGES = {165--189},
      ISSN = {1056-2176},
   MRCLASS = {60H10 (34F05 65U05 93E15)}
}

@article{ait2000well,
  title={Well-posedness and attainability of indefinite stochastic linear quadratic control in infinite time horizon},
  author={Ait Rami, Mustapha and Zhou, Xun Yu and Moore, JB},
  journal={Systems \& control letters},
  volume={41},
  number={2},
  pages={123--133},
  year={2000},
  publisher={Elsevier}
}

@article{ugurinovskii1999absolutestabil,
    AUTHOR = {Ugrinovskii, Valery A. and Petersen, Ian R.},
     TITLE = {Absolute stabilization and minimax optimal control of uncertain systems with stochastic uncertainty},
   JOURNAL = {SIAM J. Control Optim.},
  FJOURNAL = {SIAM Journal on Control and Optimization},
    VOLUME = {37},
      YEAR = {1999},
    NUMBER = {4},
     PAGES = {1089--1122},
      ISSN = {0363-0129,1095-7138},
   MRCLASS = {93E20 (93D15)},
  MRNUMBER = {1681087},
MRREVIEWER = {Edwin\ Engin\ Yaz},
       DOI = {10.1137/S0363012996309964},
       URL = {https://doi.org/10.1137/S0363012996309964},
}

@book{sun2020stochastic,
  title={Stochastic linear-quadratic optimal control theory: Open-loop and closed-loop solutions},
  author={Sun, Jingrui and Yong, Jiongmin},
  year={2020},
  publisher={Springer Nature}
}

@article{sun2018stochastic,
  title={Stochastic linear quadratic optimal control problems in infinite horizon},
  author={Sun, Jingrui and Yong, Jiongmin},
  journal={Applied Mathematics \& Optimization},
  volume={78},
  number={1},
  pages={145--183},
  year={2018},
  publisher={Springer}
}

@article{albert1969conditions,
  title={Conditions for positive and nonnegative definiteness in terms of pseudoinverses},
  author={Albert, Arthur},
  journal={SIAM Journal on Applied Mathematics},
  volume={17},
  number={2},
  pages={434--440},
  year={1969},
  publisher={SIAM}
}

@article{rami2000linear,
  title={Linear matrix inequalities, {R}iccati equations, and indefinite stochastic linear quadratic controls},
  author={Ait Rami, Mustapha and Zhou, Xun Yu},
  journal={IEEE Transactions on Automatic Control},
  volume={45},
  number={6},
  pages={1131--1143},
  year={2000},
  publisher={IEEE}
}

@INPROCEEDINGS{vanderschaftsurvey,
  author={van der Schaft, Arjan},
  title={Port-Hamiltonian systems: an introductory survey},
  booktitle={Proceedings of the International Congress
of Mathematicians},
  address={Madrid, Spain},
  year={2006},
  volume={},
  number={},
  pages={1339-1365},
  doi={10.4171/022-3/65}
}

@book{Liu-Roeckner,
  title={Stochastic Partial Differential Equations: An Introduction},
  author={Liu, Wei and R\"{o}ckner, Michael},
  year={2015},
  publisher={Springer}
}

@book{Jacod,
  title={Calcul stochastique et problemes de martingales},
  author={Jacod, Jean},
  volume={714},
  year={2006},
  publisher={Springer}
}

@book{Jacod-Shiryaev,
  title={Limit theorems for stochastic processes},
  author={Jacod, Jean and Shiryaev, Albert},
  volume={288},
  year={2013},
  publisher={Springer Science \& Business Media}
}

@book{karatzas1991book,
    AUTHOR = {Karatzas, Ioannis and Shreve, Steven E.},
     TITLE = {Brownian motion and stochastic calculus},
    SERIES = {Graduate Texts in Mathematics},
    VOLUME = {113},
   EDITION = {Second},
 PUBLISHER = {Springer-Verlag, New York},
      YEAR = {1991},
     PAGES = {xxiv+470},
      ISBN = {0-387-97655-8},
       DOI = {10.1007/978-1-4612-0949-2},
       URL = {https://doi.org/10.1007/978-1-4612-0949-2},
}

@book{sontag1998book,
    AUTHOR = {Sontag, Eduardo D.},
     TITLE = {Mathematical control theory},
    SERIES = {Texts in Applied Mathematics},
    VOLUME = {6},
   EDITION = {Second},
      NOTE = {Deterministic finite-dimensional systems},
 PUBLISHER = {Springer-Verlag, New York},
      YEAR = {1998},
     PAGES = {xvi+531},
      ISBN = {0-387-98489-5},
       DOI = {10.1007/978-1-4612-0577-7},
       URL = {https://doi.org/10.1007/978-1-4612-0577-7},
}

@book{damm2004book,
    AUTHOR = {Damm, Tobias},
     TITLE = {Rational Matrix Equations in Stochastic Control},
    SERIES = {Lecture Notes in Control and Information Sciences},
   EDITION = {First},
 PUBLISHER = {Springer Berlin, Heidelberg},
      YEAR = {2004},
     PAGES = {xv+200},
      ISBN = {978-3-540-40001-1},
       DOI = {10.1007/b10906},
       URL = {https://doi.org/10.1007/b10906},
}

@book{jacob2012book,
    AUTHOR = {Jacob, Birgit and Zwart, Hans J.},
     TITLE = {Linear port-{H}amiltonian systems on infinite-dimensional spaces},
    SERIES = {Operator Theory: Advances and Applications},
    VOLUME = {223},
      NOTE = {Linear Operators and Linear Systems},
 PUBLISHER = {Birkh\"auser/Springer Basel AG, Basel},
      YEAR = {2012},
     PAGES = {xii+217},
      ISBN = {978-3-0348-0398-4},
   MRCLASS = {93-02 (47D03 93-01 93C25)},
  MRNUMBER = {2952349},
MRREVIEWER = {Bernhard\ M.\ Maschke},
       DOI = {10.1007/978-3-0348-0399-1},
       URL = {https://doi.org/10.1007/978-3-0348-0399-1},
}

@article{lanchares2023thermo,
  title     = {Stochastic thermodynamics: dissipativity, accumulativity, energy storage and entropy production},
  author    = {Lanchares, Manuel and Haddad, Wassim M},
  journal   = {Philos. Trans. A Math. Phys. Eng. Sci.},
  publisher = {The Royal Society},
  volume    =  {381},
  number    =  {2256},
  pages     = {},
  year      =  {2023},
    doi = {10.1098/rsta.2022.0284},
url = {https://doi.org/10.1098/rsta.2022.0284}
}

@article{fang2023weak,
    AUTHOR = {Fang, Zhou and Gao, Chuanhou and Dochain, Denis},
     TITLE = {Stochastic weak passivity for weakly stabilizing stochastic systems with nonvanishing noise},
   JOURNAL = {Systems Control Lett.},
  FJOURNAL = {Systems \& Control Letters},
    VOLUME = {180},
      YEAR = {2023},
     PAGES = {Paper No. 105606, 10},
      ISSN = {0167-6911,1872-7956},
   MRCLASS = {93E15 (60H10)},
  MRNUMBER = {4633740},
MRREVIEWER = {Ju-Liang\ Yin},
       DOI = {10.1016/j.sysconle.2023.105606},
       URL = {https://doi.org/10.1016/j.sysconle.2023.105606},
}

@article{ruediger2024carfollowing,
doi = {10.1088/1751-8121/ad5d2f},
url = {https://dx.doi.org/10.1088/1751-8121/ad5d2f},
year = {2024},
month = {jul},
publisher = {IOP Publishing},
volume = {57},
number = {29},
pages = {295203},
author = {Rüdiger, Barbara and Tordeux, Antoine and Ugurcan, Baris E},
title = {Stability analysis of a stochastic port-Hamiltonian car-following model},
journal = {Journal of Physics A: Mathematical and Theoretical},
}

@article{beattie2025port,
  title={{P}ort-{H}amiltonian realizations of nonminimal linear time invariant systems},
  author={Beattie, Christopher and Mehrmann, Volker and Xu, Hongguo},
  journal={arXiv:2201.05355v3},
  year={2025}
}

@article{cherifi2024difference,
    AUTHOR = {Cherifi, Karim and Gernandt, Hannes and Hinsen, Dorothea},
     TITLE = {The difference between port-{H}amiltonian, passive and
              positive real descriptor systems},
   JOURNAL = {Math. Control Signals Systems},
  FJOURNAL = {Mathematics of Control, Signals, and Systems},
    VOLUME = {36},
      YEAR = {2024},
    NUMBER = {2},
     PAGES = {451--482},
      ISSN = {0932-4194,1435-568X}
}

@article{willems1972dissipative,
  title={Dissipative dynamical systems part II: {L}inear systems with quadratic supply rates},
  author={Willems, Jan C},
  journal={Archive for rational mechanics and analysis},
  volume={45},
  pages={352--393},
  year={1972},
  publisher={Springer}
}

@article{sun2016open,
  title={Open-loop and closed-loop solvabilities for stochastic linear quadratic optimal control problems},
  author={Sun, Jingrui and Li, Xun and Yong, Jiongmin},
  journal={SIAM Journal on Control and Optimization},
  volume={54},
  number={5},
  pages={2274--2308},
  year={2016},
  publisher={SIAM}
}

@article{tang2015dpp,
    AUTHOR = {Tang, Shanjian},
     TITLE = {Dynamic programming for general linear quadratic optimal
              stochastic control with random coefficients},
   JOURNAL = {SIAM J. Control Optim.},
  FJOURNAL = {SIAM Journal on Control and Optimization},
    VOLUME = {53},
      YEAR = {2015},
    NUMBER = {2},
     PAGES = {1082--1106},
      ISSN = {0363-0129,1095-7138},
   MRCLASS = {93E20 (49K45 49L20 49N10 60H10)},
  MRNUMBER = {3337990},
       DOI = {10.1137/140979940},
       URL = {https://doi.org/10.1137/140979940}
}

@article{zhang2004stabilizability,
    AUTHOR = {Zhang, Weihai and Chen, Bor-Sen},
     TITLE = {On stabilizability and exact observability of stochastic
              systems with their applications},
   JOURNAL = {Automatica J. IFAC},
  FJOURNAL = {Automatica. A Journal of IFAC, the International Federation of
              Automatic Control},
    VOLUME = {40},
      YEAR = {2004},
    NUMBER = {1},
     PAGES = {87--94},
      ISSN = {0005-1098,1873-2836},
   MRCLASS = {93D15 (93E03 93E15)},
  MRNUMBER = {2143992},
       DOI = {10.1016/j.automatica.2003.07.002},
       URL = {https://doi.org/10.1016/j.automatica.2003.07.002},
}

@article{chen2004stochastic,
    AUTHOR = {Chen, Bor-Sen and Zhang, Weihai},
     TITLE = {Stochastic {$H_2/H_\infty$} control with state-dependent
              noise},
   JOURNAL = {IEEE Trans. Automat. Control},
  FJOURNAL = {Institute of Electrical and Electronics Engineers.
              Transactions on Automatic Control},
    VOLUME = {49},
      YEAR = {2004},
    NUMBER = {1},
     PAGES = {45--57},
      ISSN = {0018-9286,1558-2523},
   MRCLASS = {93E20 (93B36)},
  MRNUMBER = {2028541},
MRREVIEWER = {Zidong\ Wang},
       DOI = {10.1109/TAC.2003.821400},
       URL = {https://doi.org/10.1109/TAC.2003.821400},
}

@article{liu1999diss,
  title={Backward stochastic differential equation and stochastic control system},
  author={Liu, Yazeng},
  journal={Shandong University},
  year={1999}
}

@article{hou2012some,
    AUTHOR = {Hou, Ting and Zhang, Weihai and Ma, Hongji},
     TITLE = {Some properties of exact observability of linear stochastic
              systems and their applications},
   JOURNAL = {Asian J. Control},
  FJOURNAL = {Asian Journal of Control},
    VOLUME = {14},
      YEAR = {2012},
    NUMBER = {3},
     PAGES = {868--873},
      ISSN = {1561-8625,1934-6093},
   MRCLASS = {93B07 (60F25 60H10 93E03)},
  MRNUMBER = {2926017},
MRREVIEWER = {Xu\ Liu},
       DOI = {10.1002/asjc.391},
       URL = {https://doi.org/10.1002/asjc.391},
}

@article{li2010unified,
    AUTHOR = {Li, Zhao-Yan and Wang, Yong and Zhou, Bin and Duan, Guang-Ren},
     TITLE = {On unified concepts of detectability and observability for
              continuous-time stochastic systems},
   JOURNAL = {Appl. Math. Comput.},
  FJOURNAL = {Applied Mathematics and Computation},
    VOLUME = {217},
      YEAR = {2010},
    NUMBER = {2},
     PAGES = {521--536},
      ISSN = {0096-3003,1873-5649},
   MRCLASS = {60G35 (93C05 93E10)},
  MRNUMBER = {2678564},
       DOI = {10.1016/j.amc.2010.05.086},
       URL = {https://doi.org/10.1016/j.amc.2010.05.086},
}

@inbook{vanderschaft2009port,
  author    = {Van der Schaft, Arjan},
  title     = {Modeling and Control of Complex Physical Systems: The Port-Hamiltonian Approach},
  chapter   = {Port-Hamiltonian Systems},
  publisher = {Springer},
  year      = {2009}
}

@article{EhKr_cordoni2019stochastic,
  author={Cordoni, Francesco and Di Persio, Luca and Muradore, Riccardo},
  title={Stochastic Port-{H}amiltonian Systems},
  fjournal={Journal of Nonlinear Science},
  journal={J. Nonlinear Sci.},
  volume={32},
  number={6},
  pages={1--53},
  year={2022},
  publisher={Springer},
  doi={10.1007/s00332-022-09853-2}
}

@article{EhKr_cordoni2022discrete,
  author={Cordoni, Francesco Giuseppe and Di Persio, Luca and Muradore, Riccardo},
  title={Discrete stochastic port-{H}amiltonian systems},
  journal={Automatica},
  volume={137},
  pages={110122},
  year={2022},
  publisher={Elsevier},
doi={10.1016/j.automatica.2021.110122},
abstract={The present paper aims at defining discrete stochastic port-Hamiltonian systems (SPHS). We introduce a suitable definition of discrete SPHS based on symplectic variational integrators. By properly choosing the collocation points for discrete-time SPHS we are able to approximate a continuous SPHS. Moreover, under suitable assumptions on the Hamiltonian of the system, we guarantee energy conservation, which is a key property in the standard PHS framework. Numerical examples are provided to show the goodness of the proposed method.}
}

@article{EhKr_cordoni2020variable,
  title={A variable stochastic admittance control framework with energy tank},
  author={Cordoni, Francesco and Di Persio, Luca and Muradore, Riccardo},
  journal={IFAC-PapersOnLine},
  volume={53},
  number={2},
  pages={9986--9991},
  year={2020},
  publisher={Elsevier},
  doi={10.1016/j.ifacol.2020.12.2716},
abstract={In this paper we address the problem of implementing a stochastic variable admittance control. Both the variable part of the admittance control and the noise affecting the system may concur to the instability of the system. We propose an energy tank approach, based on the theory of stochastic port–Hamiltonian systems and weak passivity, where the energy dissipated by the stochastic system, if any, is stored into the tank to implement the desired actions. As we consider a non–vanishing noise, we need to consider weaker notion of passivity and convergence. We will show how the notion of weak passivity can be properly defined so that equipping a stochastic system with a suitable energy tank, variable admittance control can be efficiently implemented. We prove that the overall system is weakly passive and it converges toward an invariant measure. Simulation results show the effectiveness of the derived theoretical framework.}
}

@article{EhKr_cordoni2021bilateral,
  author={Cordoni, Francesco and Di Persio, Luca and Muradore, Riccardo},
  title={Bilateral teleoperation of stochastic port-{H}amiltonian systems using energy tanks},
  fjournal={International Journal of Robust and Nonlinear Control},
  journal={Int. J. Robust Nonlin. Control},
  volume={31},
  number={18},
  pages={9332-9357},
  year={2021},
  publisher={Wiley Online Library},
  doi={10.1002/rnc.5780},
abstract={In this article we consider the general problem of how to properly endow a stochastic port-Hamiltonian system (SPHS) with an energy tank, that is an energy reservoir that allows to guarantee the passivity property. We show that a stochastic bilateral teleoperation system, composed by a master robot and a slave robot modeled as SPHS, can be connected in a power-preserving manner to energy tanks. The stored energy is continuously monitored to keep the system passive despite time-varying communication delays and interaction with unknown environment that may destabilize the overall system. We will address latter problem considering a SPHS affected by a noise composed by a linear, multiplicative component in Itô form plus an additive one. We underline that such a scenario requires the introduction of an ad hoc notion of passivity.}
}

@article{EhKr_cordoni2021stabilization,
  author={Cordoni, Francesco and Di Persio, Luca and Muradore, Riccardo},
  title={Stabilization of bilateral teleoperators with asymmetric stochastic delay},
  fjournal={Systems \& Control Letters},
  journal={Systems Control Lett.},
  volume={147},
  pages={104828},
  year={2021},
  publisher={Elsevier},
  doi={10.1016/j.sysconle.2020.104828},
abstract={In this paper we consider the problem of position tracking and error boundedness for a master–slave teleoperation system when the communication channel is characterized by a time-varying stochastic delay. In particular, we assume the delay to be a time-varying Markov regime switching process. Our solution is based on a suitable proportional–derivative (PD) like controller. Exploiting a Lyapunov–Krasovskii functional approach, we are able to show that the velocity and position errors remain bounded, provided specific inequality is satisfied. Moreover, zero steady-state position error is achieved when the human operator and the environment do not interact with the system. In order to be able to exploit Lyapunov–Krasovskii approach we need to lift the underlying process to an infinite-dimensional process taking values in a suitable Banach space so that, in the new space, the process is a Markov process. Our conditions for guaranteeing boundedness and the zero steady-state position tracking error are the natural generalization of the deterministic counterparts available in the literature, i.e. when considering the trivial case of a single-state Markov process previous results are recovered. Simulation results show the effectiveness of the proposed approach.}
}

@article{EhKr_cordoni2023weak,
author = {Cordoni, Francesco and Di Persio, Luca and Muradore, Riccardo},
title = {Weak Energy Shaping for Stochastic Controlled Port-{H}amiltonian Systems},
fjournal = {SIAM Journal on Control and Optimization},
journal = {SIAM J. Contr. Optim.},
volume = {61},
number = {5},
pages = {2902-2926},
year = {2023},
doi = {10.1137/22M1482585},
abstract={The present work addresses the problem of energy shaping for stochastic port-Hamiltonian systems. Energy shaping is a powerful technique that allows one to systematically find feedback laws to shape the Hamiltonian of a controlled system so that, under a general passivity condition, it converges to a desired configuration. Energy shaping has been recently generalized to consider stochastic port-Hamiltonian systems. Nonetheless, the resulting theory presents several limitations so that relevant examples, such as the additive noise case, are immediately ruled out from the possible use of energy shaping. In the current paper we continue the investigation of the properties of a weak notion of passivity for a stochastic system and derive a weak notion of convergence for the controlled system. Such weak notion of passivity is strictly related to the existence and uniqueness of an invariant measure for the system so that the theory developed has a purely probabilistic flavor. We will show how all the relevant results of energy shaping can be recover under the proposed weak setting.}
}

@article{EhKr_haddad2018energy,
   author={Haddad, Wassim M and Rajpurohit, Tanmay and Jin, Xu},
  title={Energy-based feedback control for stochastic port-controlled {H}amiltonian systems},
  journal={Automatica},
  volume={97},
  pages={134--142},
  year={2018},
  publisher={Elsevier},
doi={10.1016/j.automatica.2018.07.031},
abstract={In this paper, we develop an energy-based static and dynamic control framework for stochastic port-controlled Hamiltonian systems. In particular, we obtain constructive sufficient conditions for stochastic feedback stabilization that provide a shaped energy function for the closed-loop system while preserving a Hamiltonian structure at the closed-loop level. In the dynamic control case, energy shaping is achieved by combining the physical energy of the plant and the emulated energy of the controller. Several numerical examples are presented that demonstrate the efficacy of the proposed passivity-based stochastic control framework.}
}

@article{EhKr_satoh2017input,  
  author={Satoh, Satoshi},
  title={Input-to-state stability of stochastic port-{H}amiltonian systems using stochastic generalized canonical transformations},
  fjournal={International Journal of Robust and Nonlinear Control},
  fjournal={International Journal of Robust Nonlinear Control},
  journal={Int. J. Robust Nonlin. Control},
  volume={27},
  number={17},
  pages={3862--3885},
  year={2017},
  publisher={Wiley Online Library},
  doi={10.1002/rnc.3769},
abstract={As a practically important class of nonlinear stochastic systems, this paper considers stochastic port-Hamiltonian systems (SPHSs) and investigates the stochastic input-to-state stability (SISS) property of a class of SPHSs. We clarify necessary conditions for the closed-loop system of an SPHS to be SISS. Moreover, we provide a systematic construction of both the SISS controller and Lyapunov function so that the proposed necessary conditions hold. In the main results, the stochastic generalized canonical transformation plays a key role. The stochastic generalized canonical transformation technique enables to design both coordinate transformation and feedback controller with preserving the SPHS structure of the closed-loop system. Consequently, the main theorem guarantees that the closed-loop system obtained by the proposed method is SISS against both deterministic disturbance and stochastic noise.}
}

@article{EhKr_satoh2014bounded,
author = {Satoshi Satoh and Masami Saeki},
title = {Bounded stabilisation of stochastic port-{H}amiltonian systems},
fjournal = {International Journal of Control},
journal = {Int. J. Contr.},
volume = {87},
number = {8},
pages = {1573-1582},
year = {2014},
publisher = {Taylor & Francis},
doi = {10.1080/00207179.2014.880127},
abstract={This paper proposes a stochastic bounded stabilisation method for a class of stochastic port-Hamiltonian systems. Both full-actuated and underactuated mechanical systems in the presence of noise are considered in this class. The proposed method gives conditions for the controller gain and design parameters under which the state remains bounded in probability. The bounded region and achieving probability are both assignable, and a stochastic Lyapunov function is explicitly provided based on a Hamiltonian structure. Although many conventional stabilisation methods assume that the noise vanishes at the origin, the proposed method is applicable to systems under persistent disturbances.}
}

@article{EhKr_satoh2012passivity,
  author={Satoh, Satoshi and Fujimoto, Kenji},
  title={Passivity based control of stochastic port-{H}amiltonian systems},
  fjournal={IEEE Transactions on Automatic Control},
   journal={IEEE Trans. Autom. Control},
  volume={58},
  number={5},
  pages={1139--1153},
  year={2012},
  publisher={IEEE},
  doi={10.1109/TAC.2012.2229791},
abstract={This paper introduces Stochastic Port-Hamiltonian Systems (SPHS's), whose dynamics are described by Itô stochastic differential equations. SPHS's are extension of the deterministic port-Hamiltonian systems which are used to express various passive systems. First, we show a necessary and sufficient condition to preserve the stochastic port-Hamiltonian structure of the system under a class of coordinate transformations. Second, we derive a condition for the system to be stochastic passive. Third, we equip Stochastic Generalized Canonical Transformations (SGCT's), which are pairs of coordinate and feedback transformations preserving the stochastic port-Hamiltonian structure. Finally, we propose a stochastic stabilization framework based on stochastic passivity and SGCT's.}
}

@article{EhKr_fang2017stabilization,
  title={Stabilization of input-disturbed stochastic port-{H}amiltonian systems via passivity},
  author={Fang, Zhou and Gao, Chuanhou},
  fjournal={IEEE Transactions on Automatic Control},
  journal={IEEE Trans. Automat. Contr.},
  volume={62},
  number={8},
  pages={4159--4166},
  year={2017},
  publisher={IEEE},
  doi={10.1109/TAC.2017.2676619}
}

@article{EhKr_Eh24,
title={The Collective Dynamics of a Stochastic Port-{H}amiltonian Self-Driven Agent Model in One Dimension},
  author={Ehrhardt, Matthias and Kruse, Thomas and Tordeux, Antoine},
  fjournal={ESAIM: Mathematical Modelling and Numerical Analysis},
  journal={ESAIM: Math. Model. Numer. Anal.},
volume = {58},
pages = {515-544},
  year={2024},
  doi={10.1051/m2an/2024004}
}

@article{EhKr_ackermann_tordeux24,
author={Ackermann, Julia and Ehrhardt, Matthias and Kruse, Thomas and Tordeux, Antoine},
  title={Stabilisation of stochastic single-file dynamics using port-{H}amiltonian systems},
journal = {IFAC-PapersOnLine},
volume = {58},
number = {17},
pages = {145-150},
year = {2024},
note = {26th International Symposium on Mathematical Theory of Networks and Systems MTNS 2024},
issn = {2405-8963},
doi = {https://doi.org/10.1016/j.ifacol.2024.10.128},
}

@article{EhKr_DELVENNE2014123,
author={Delvenne, Jean-Charles and Sandberg, Henrik},
title = {Finite-time thermodynamics of port-{H}amiltonian systems},
fjournal = {Physica D: Nonlinear Phenomena},
journal = {Physica D: Nonlin. Phenom.},
volume = {267},
pages = {123-132},
year = {2014},
note = {Evolving Dynamical Networks},
doi = {10.1016/j.physd.2013.07.017}
}

@article{EhKr_florchinger1999passive,
author = {Florchinger, Patrick},
title = {A Passive System Approach to Feedback Stabilization of Nonlinear Control Stochastic Systems},
fjournal = {SIAM Journal on Control and Optimization},
journal = {SIAM J. Control Optim.},
volume = {37},
number = {6},
pages = {1848-1864},
year = {1999},
doi = {10.1137/S0363012997317478}
}

@article{florchinger2016global,
    AUTHOR = {Florchinger, Patrick},
     TITLE = {Global asymptotic stabilisation in probability of nonlinear
              stochastic systems via passivity},
   JOURNAL = {Internat. J. Control},
  FJOURNAL = {International Journal of Control},
    VOLUME = {89},
      YEAR = {2016},
    NUMBER = {7},
     PAGES = {1406--1415},
       DOI = {10.1080/00207179.2015.1132009}
}

@article{EhKr_Haddad2023DissStochDynSys,
title = {Dissipative stochastic dynamical systems},
journal = {Systems \& Control Letters},
volume = {172},
pages = {105451},
year = {2023},
issn = {0167-6911},
doi = {10.1016/j.sysconle.2022.105451},
author = {Manuel Lanchares and Wassim M. Haddad}
}

@book{EhKr_khasminskii2011stochastic,
  title={Stochastic stability of differential equations},
  author={Khasminskii, Rafail},
  volume={66},
  year={2011},
  publisher={Springer Science \& Business Media},
doi={10.1007/978-3-642-23280-0}
}

@book{EhKr_vandSchaft2016l2,
  title={$L_2$-gain and passivity techniques in nonlinear control},
  author={Van der Schaft, Arjan},
  year={2016},
  publisher={Springer},
  doi={10.1007/978-1-4471-0507-7}
}

\end{document}